\documentclass{amsart}
\usepackage[all]{xy}
\usepackage{graphicx}
\usepackage{enumitem}
\usepackage{amscd}
\usepackage{mathrsfs}
\usepackage{amsmath}
\usepackage{amssymb}
\usepackage[OT2,T1]{fontenc}
\usepackage{commath}
\usepackage{tikz-cd}
\tikzset{
    labl/.style={anchor=north, rotate=90}
}

\usepackage{adjustbox}

\usepackage{xr}

\usepackage{thmtools}
\usepackage[colorlinks]{hyperref}
\usepackage{cleveref}
\crefformat{equation}{(#2#1#3)}
\crefname{section}{\S}{\S\S}
\crefname{subsection}{\S}{\S\S}

\numberwithin{equation}{subsection}

\newtheorem{theorem}[equation]{Theorem}
\newtheorem{lemma}[equation]{Lemma}
\newtheorem{proposition}[equation]{Proposition}
\newtheorem*{theorem*}{Theorem}
\newtheorem{corollary}[equation]{Corollary}
\newtheorem*{corollary*}{Corollary}
\newtheorem*{proposition*}{Proposition}

\theoremstyle{remark}
\newtheorem{remark}[equation]{Remark}
\newtheorem{example}[equation]{Example}
\newtheorem{claim}[equation]{Claim}
\newtheorem*{claim*}{Claim}

\theoremstyle{definition}
\newtheorem{definition}[equation]{Definition}

\DeclareMathOperator{\Spec}{Spec}
\DeclareMathOperator{\gp}{gp}
\DeclareMathOperator{\rk}{rk}
\DeclareMathOperator{\triv}{triv}
\DeclareMathOperator{\sat}{sat}

\DeclareMathOperator{\td}{tr.deg}
\DeclareMathOperator{\cok}{cok}
\DeclareMathOperator{\Hom}{Hom}

\DeclareMathOperator{\Tor}{Tor}

\DeclareMathOperator{\pr}{pr}
\DeclareMathOperator{\depth}{depth}
\DeclareMathOperator{\D}{D}
\DeclareMathOperator{\PD}{pd}

\title{Toric flatness {I}}
\author[R. Lodh]{Rémi Lodh}
\email{remi.shankar@gmail.com}
\hypersetup{hypertexnames=false}

\begin{document}
\begin{abstract}
We establish some basic properties of \emph{t-flat} fine log schemes, i.e., fine log schemes whose underlying schemes are flat over their log structure (in a suitable sense). Notably, we show that the depth of a t-flat fs log scheme is simply determined by its logarithmic stratification. We also show that there is a good theory of log cotangent complexes for t-flat fine log schemes, and use the log cotangent complex to give a homological criterion for t-flatness.
\end{abstract}
\maketitle

\section{Introduction}
In \cite{logreg}, Kazuya Kato extended regularity to log schemes. Known as log regularity, the notion he introduced proved a crucial tool for working with singular schemes, notably being used by Gabber \cite{travauxGabber} to obtain fundamental results in \'etale cohomology. Log regularity is a strong condition: it implies very good properties of the underlying scheme, even desingularisation. However, like regularity, it is rather rigid; for instance, it does not localise in the flat topology.

In his paper, Kato showed that log regular singularities are particularly well behaved as modules over their log structure---we would say that they are flat over their log structure---and used this fact to establish basic properties. We propose here to isolate this feature of log regularity and study it for its own sake. In other words, to study those log schemes whose underlying schemes are flat, in the monoidal sense, over the log structure. Following Gabber, one calls such log schemes \emph{t-flat}, where the \emph{t} stands for \emph{toric}. The notion is not new: it was first defined and studied by Gabber over a quarter of a century ago (unpublished, cf. \cite{toricflattening}) and then used by Thompson \cite{thompson} to extend log regularity to non-saturated log schemes; later, foundations in the monoidal setting were laid in the book of Gabber and Ramero \cite{gabberramero}.

The theme of this paper is that \emph{t-flatness} in logarithmic geometry is a natural, well-behaved condition that strengthens analogies between schemes and log schemes. It forms a bond between a log scheme and its logarithmic stratification, reducing the study of certain local properties of the former to the latter. Here are some of its properties.

\begin{theorem*}
For fine log schemes with locally noetherian underlying schemes, t-flatness has the following properties.
\begin{enumerate}[label=\normalfont(\roman*)]
\item It is open in the \'etale topology.
\item It is stable under log flat base change.
\item For fs log schemes, it is local for the (Kummer) log flat topology.
\item The log cotangent complex of t-flat fine log schemes satisfies all of the desired properties.
\end{enumerate}
Let $(X,M_X)$ be a t-flat fs log scheme with $X$ locally noetherian.
\begin{enumerate}[label=\normalfont(\roman*),resume]
\item If $\mathcal{O}(X)$ contains a field, then $X$ is \'etale-locally flat over a toric variety.
\item The depth of $X$ is simply determined by the depth of its logarithmic stratification. For instance, $X$ is Cohen--Macaulay if and only if its logarithmic strata are so.
\end{enumerate}
\end{theorem*}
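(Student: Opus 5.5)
The introductory theorem is a summary of results proved later, so my plan is to establish each item from one working definition. A fine log scheme $(X,M_X)$ is \emph{t-flat} at a geometric point $\bar x$ if, for a (local) chart $P\to \mathcal{O}_{X,\bar x}$ with $P$ fine and the chart neat at $\bar x$, the module $\mathcal{O}_{X,\bar x}$ is flat over the monoid $P$ in the sense of Gabber--Ramero. Equivalently, the natural maps
\[
\Tor_i^{\mathbb{Z}[P]}(\mathbb{Z}[P/F],\mathcal{O}_{X,\bar x})\to 0 \quad (i>0)
\]
vanish for all faces $F$, and the ideals generated by the images of the faces $\mathfrak{p}_F$ behave well. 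The first task is to show this is independent of the chart. The tool is the monoidal flatness criterion: flatness over $P$ is detected by the vanishing of $\Tor_1$ against $\mathbb{Z}[P]/\mathfrak{p}_F$ for the prime ideals $\mathfrak{p}_F$ attached to faces $F$. Since neat charts differ by a free factor, chart independence follows.

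Items (i)--(iii) are then formal.
\begin{enumerate}[label=\normalfont(\roman*)]
\item Openness: I would reduce, via the local criterion of flatness and the finiteness of the faces of $P$, to the openness of the flat locus of a finite type module over a noetherian ring. This is applied to each $\mathbb{Z}[P]\to\mathcal{O}_X$ restricted along the strata.
\item Stability under log flat base change: I would factor a log flat morphism locally as a strict flat map composed with a base change of $\mathbb{Z}[P]\to\mathbb{Z}[Q]$ for an injective chart $P\to Q$. Flatness of $\mathbb{Z}[Q]$ over $\mathbb{Z}[P]\otimes$ (monoidally) then combines with the transitivity of monoidal flatness.
\item Descent for Kummer log flat covers: I would use that Kummer maps $P\to Q$ of saturated monoids make $\mathbb{Z}[Q]$ faithfully flat over $\mathbb{Z}[P]$ on each stratum. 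Faithfully flat descent of $\Tor$-vanishing then applies.
\end{enumerate}

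For (iv), I would define the log cotangent complex following Gabber's simplicial construction, or Olsson's. The point is that, for t-flat $X\to\Spec\mathbb{Z}[P]$, the derived tensor product $\mathbb{Z}[Q]\otimes^L_{\mathbb{Z}[P]}\mathcal{O}_X$ agrees with the underived one. Consequently base change and transitivity triangles hold, and the complex is computed by $L_{X/\mathbb{Z}[P]}$ together with the monoid term $\mathcal{O}_X\otimes P^{\gp}$. The homological criterion for t-flatness is then the statement that the comparison map is a quasi-isomorphism.

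For (v), I would argue from a fs chart $P$ at $\bar x$, where the t-flatness definition directly gives flatness of $\mathcal{O}_{X,\bar x}$ over $\mathbb{Z}[P]$. When a field $k$ is contained in $\mathcal{O}(X)$, I would pass to $k[P]$. This requires that $\mathcal{O}_X$ be flat over $k[P]$, which follows since $k\otimes\mathbb{Z}[P]=k[P]$ and $\mathcal{O}_X$ is already a $k$-algebra; I expect this step to be the subtle one. Openness from (i) then spreads the flatness étale-locally.

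For (vi), I would use that for flat local homomorphisms, depth is additive over the closed fibre:
\[
\depth \mathcal{O}_{X,x}=\depth k[P]_{\mathfrak{p}}+\depth(\mathcal{O}_{X,x}/\mathfrak{p}_P).
\]
Hochster's theorem gives that normal toric rings are Cohen--Macaulay, so $\depth k[P]_{\mathfrak{p}}=\dim$. The fibre $\mathcal{O}_{X,x}/\mathfrak{p}_P$ is the local ring of the log stratum through $x$. Over $\mathbb{Z}$, I would replace the field by the residue characteristic using the fibre over $\mathbb{Z}$.

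The main obstacle is the depth formula in (vi) in mixed characteristic. There, $\mathbb{Z}[P]$ is not local and the flat-depth formula must be applied carefully to $\mathbb{Z}[P]_{\mathfrak{q}}$, which is still Cohen--Macaulay by Hochster. I would try that route first.
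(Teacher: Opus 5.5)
Your proposal breaks down at the claim in (v) that t-flatness ``directly gives flatness of $\mathcal{O}_{X,x}$ over $\mathbb{Z}[P]$''. This is false, and your (vi) inherits the error. For instance, $\D(P)\times\Spec\mathbb{F}_p$ is t-flat, although $p$ is a nonzerodivisor on $\mathbb{Z}[P]$ that kills $\mathbb{F}_p[P]$. Likewise, $\mathbb{Z}_p$ with log structure $\mathbb{N}\to\mathbb{Z}_p$, $1\mapsto p$, is log regular (hence t-flat), but it is killed by the nonzerodivisor $t-p$ of $\mathbb{Z}[t]=\mathbb{Z}[\mathbb{N}]$. t-flatness only controls $\Tor$ against quotients $\mathbb{Z}[P]/(J)$ by monoid ideals.

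The missing step in (v) is the local flatness criterion along $(P\setminus\{1\})$. Take a chart $P$ neat at $x$; it exists \'etale locally since $\overline{M}^{\gp}_{X,x}$ is torsion free. Base-ring independence (\Cref{primeideals}) gives $\Tor_1^{k[P]}(k,\mathcal{O}_{X,x})=0$. Flatness of the closed fibre over $k[P]/(P\setminus\{1\})=k$ is automatic, so $\mathcal{O}_{X,x}$ is flat over $k[P]$ (\Cref{tlflogflat}).

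With that repair, your flat-depth argument plus Hochster does give the depth and dimension formulas of (vi) whenever $\mathcal{O}_{X,x}$ has equal characteristic. In that case this is more elementary than the paper's route. In mixed characteristic, however, your plan cannot work. The $\mathbb{Z}_p$ example shows there need be no flat local map from a localisation of $\mathbb{Z}[P]$. Passing to the fibre mod $p$ also destroys t-flatness: $\mathbb{F}_p$ with $t\mapsto 0$ is not $\mathbb{N}$-flat.

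What you are missing is the paper's homological argument (\Cref{pdlemma}, \Cref{depthequality}):
\begin{enumerate}[label=\normalfont(\alph*)]
\item complete, and choose a surjection $R\to S$ from a complete regular local ring;
\item use $P$-flatness to get $\Tor_{>0}^{R[[P]]}(R,S)=0$, hence $\PD_{R[[P]]}S<\infty$;
\item combine Auslander's depth formula for Tor-independent modules with Auslander--Buchsbaum and the Cohen--Macaulayness of $R[[P]]$.
\end{enumerate}
The Cohen--Macaulay assertion also needs $\dim\mathcal{O}_{X,x}=\rk\overline{M}^{\gp}_{X,x}+\dim C(M_{X,x})$. In mixed characteristic this likewise does not come from flatness (\Cref{diminequality}).

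Item (iii) has a gap of the same kind: Kummer descent is not faithfully flat descent. For a Kummer homomorphism $P\to Q$ of fs monoids, $\mathbb{Z}[P]\to\mathbb{Z}[Q]$ is in general not flat. Take $P$ the cone over a square and $Q=\frac12P$; mod $2$ this is the Frobenius of the singular ring $\mathbb{F}_2[P]$, which is not flat by Kunz. Flatness over torus orbits does not help, because t-flatness is a $\Tor$ condition over all of $\mathbb{Z}[P]$ and does not decompose orbitwise. What is needed is monoidal descent: for Kummer $P\to Q$, if $M\otimes_{\mathbb{Z}[P]}\mathbb{Z}[Q]$ is $Q$-flat then $M$ is $P$-flat \cite[6.1.56]{gabberramero}. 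This is \Cref{monoidflatness}, applied after the strict flat reduction of \Cref{klftlf}.

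Your (ii) otherwise matches the paper, but the correct input is narrower than you suggest. What is needed is that $-\otimes_{\mathbb{Z}[P]}\mathbb{Z}[Q]$ preserves monoidal flatness, because a $P$-flat module has no higher $\Tor$ against the integral modules $\mathbb{Z}[Q]/(K)$. It is not that $\mathbb{Z}[Q]$ is itself $P$-flat, which fails for $\mathbb{N}^2\subset\{(a,b)\in\mathbb{Z}^2:a+b\geq0\}$.

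For (i), a chart neat at $x$ is not neat at generisations, so you need chart independence for arbitrary fine charts (Gabber's \Cref{chartindependent}). Note also that neat charts differ, up to an automorphism of $P$, by a twist $P^{\gp}\to\mathcal{O}_{X,x}^*$, not by a free factor. Openness then comes from coherence of $\Tor_1^{\mathbb{Z}[P]}(\mathbb{Z}[P]/\mathfrak{p},\mathcal{O}_X)$ for the finitely many primes $\mathfrak{p}\subset P$, not from openness of a flat locus.

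In (iv), your $\Tor$-independence observation is exactly what proves \Cref{GabberOlsson}. However, agreement of the Gabber and Olsson complexes is not a criterion for t-flatness; it holds trivially for the identity of any fine log scheme. The paper's criterion is the vanishing of $H^{-2}L_{C/(X,M_X)}$, via $H^{-2}(K_C)_x\cong\Tor_1^{\mathbb{Z}[P]}(\mathbb{Z},\mathcal{O}_{X,x})$ (\Cref{distinguished}).
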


Notably, it follows from (iii) and (v) that the depth of t-flat fs log schemes behaves similarly in the log flat topology as it does in the classical flat topology: the depth of a covering is at least that of the target. In particular, under t-flatness, a quasi-finite Kummer log flat covering of a Cohen--Macaulay scheme must itself be Cohen--Macaulay.

More generally, we define notions of logarithmic dimension and logarithmic depth and show they behave in the log flat topology like the classical notions in the flat topology. Crucially, for t-flat fs log schemes these notions coincide with the usual notions of dimension and depth, thereby providing a means of studying the latter via logarithmic geometry. For example, we show that the depth of a t-flat fs log scheme is stable under log blowups.

Concerning (iv), we show that the two log cotangent complexes studied in \cite{logcotangent} coincide for morphisms of fine log schemes with t-flat target. Thus, log cotangent complexes of morphisms with t-flat targets have properties analogous to the classical cotangent complex. Furthermore, we characterise t-flatness by the vanishing of the homology of a log cotangent complex---in perfect analogy with the classical cotangent complex characterisation of a regular closed immersion---and use this to recover a recent homological criterion for log regularity due to Conde-Lago and Majadas \cite{cotangentlogreg}.

%\tableofcontents

\subsection*{Overview}
In \cref{monoidalflatnesssection}, we first state the results on flatness over monoids from \cite{gabberramero} that we need in the paper, suitably formulated so as to be later applicable to fine log schemes. Then, we prove inequalities for the dimension and depth of noetherian local rings and give conditions for equality. The depth (in)equality (\Cref{depthequality}) appears to be new. Finally, we show that, under certain conditions, a noetherian local ring is flat over a sharp fine monoid if and only if it is flat over a ring that is close to being log regular (\Cref{localpresentation}).

In \cref{tflatsection}, we define t-flatness and prove some basic properties. In particular, we study the relationship between t-flatness and log flatness and show that, over a field with trivial log structure, a log scheme is t-flat if and only if it is log flat (\Cref{tlflogflat}). Then, we define \emph{logarithmic dimension} and \emph{logarithmic depth} and generalise the inequalities obtained in \cref{monoidalflatnesssection} to t-flat fine log schemes. Notably, in \Cref{ldimineq} (resp. \Cref{depthequality2}) we show that the logarithmic dimension (resp. logarithmic depth) of a t-flat fine (resp. fs) log scheme coincides with the usual dimension (resp. depth). From this we deduce a close relationship between the depth of a t-flat fs log scheme and its logarithmic stratification (cf. \Cref{CM}).

In \cref{logcotangentsection}, we first show that the Gabber and Olsson log cotangent complexes coincide for a morphism of fine log schemes with t-flat target (\Cref{GabberOlsson}). Then, in \Cref{distinguished}, we use a log cotangent complex to characterise t-flatness. The statement can be interpreted as saying that an fs log scheme is t-flat if and only if the inclusion of its logarithmic stratification is a `log regular immersion' (however, we do not define such a notion). From this result, we recover the characterisation from \cite{cotangentlogreg} of log regularity in terms of the vanishing of the homology of a log cotangent complex (\Cref{logregvanishing}), and also a version for \emph{log complete intersections} (\Cref{lcivanishing}). Finally, we use t-flatness to define the notion of \emph{t-lci} log scheme, which for fs log schemes of finite type over a field is equivalent to being log syntomic in the sense of \cite{niziolKlog}.

\subsection*{Acknowledgements} As I hope to have made clear above, the central concept of this paper is due to Ofer Gabber. I merely develop it here, relying crucially on some results from his book with Lorenzo Ramero \cite{gabberramero}. I thank Ramero for invaluable comments on \cref{monoidalflatnesssection}, Gabber for sharing his notes \cite{toricflattening}, and Luc Illusie for encouraging correspondence.

\subsection{Conventions and notation}\label{conventions}
All rings and monoids are commutative with unity. We usually write the monoid operation multiplicatively with identity element $1$, except for the standard monoid $\mathbb{N}$ and its derivatives. For a monoid (resp. sheaf of monoids) $P$, we denote by $P^*$ the subgroup (resp. subsheaf) of invertible elements and write $\overline{P}:=P/P^*$. If $A$ is a finitely generated abelian group, its rank is denoted $\rk A$.

For a ring $R$ and set $S$ we write $R^{(S)}$ for the free $R$ module on basis $S$ and denote $[s]\in R^{(S)}$ the basis element defined by $s\in S$. If $S$ is a monoid, then $R^{(S)}=R[S]$ is an $R$-algebra. If $S$ is a sharp monoid, then we denote by $R[[S]]$ the completion of $R[S]$ by the ideal generated by the elements $[s]$ for $s\in S\setminus\{1\}$.

Given a monoid $P$, we write $\D(P)$ for the log scheme consisting of the scheme $\Spec(\mathbb{Z}[P])$ and the natural log structure induced by $P$.\footnote{this notation, from \cite{illusieoverview}, differs from that of \cite{logbook}} If $P$ is a group, then $\D(P)$ is the usual diagonalisable group scheme with trivial log structure. Note that $\D(\mathbb{N})$ represents the contravariant functor on the category of log schemes defined by the log structure, i.e., $\Hom((X,M),\D(\mathbb{N}))=M(X)$ for a log scheme $(X,M)$ (cf. \cite[III, 1.2.4]{logbook}).

Schemes can be viewed as log schemes with the trivial log structure; in that case we will often simply omit the log structure from notation. Unless explicitly stated to the contrary, when speaking of fine (resp. fs) log schemes, categorical constructions such as fibre products are made in the category of fine (resp. fs) log schemes.

For a log scheme $(X,M_X)$, we denote by $(X,M_X)_{\triv}\subset X$ the subset of points $x\in X$ such that $M_{X,\bar{x}}\cong\mathcal{O}_{X,\bar{x}}^*$ for any geometric point $\bar{x}\to X$ lying above $x$. If $M_X$ is coherent, then $(X,M_X)_{\triv}\subset X$ is open (\cite[III, 1.2.8]{logbook}). We will usually drop $M_X$ from the notation and write $X_{\triv}=(X,M_X)_{\triv}$ when there is no ambiguity about $M_X$.

We define Kummer homomorphisms of integral monoids (resp. fine log schemes) as in \cite[I, 4.3.1]{logbook} (resp. \cite[III, 2.3.2]{logbook}). In particular, we do not assume the monoids (resp. log schemes) to be necessarily saturated.

If $X$ is a scheme and $S$ the spectrum of a separably closed field, we denote a geometric point $x:S\to X$ simply by $x\to X$. We write $X_{(x)}=\Spec(\mathcal{O}_{X,x})$ for the strict localisation of $X$ at $x$, and denote by $k(x)$ the residue field of $\mathcal{O}_{X,x}$.\footnote{the notation $k(x)$ is nonstandard but should otherwise not cause any issue} By \emph{neighbourhood of $x$}, we mean an \'etale neighbourhood. If $f:X\to Y$ (resp. $f:(X,M_X)\to (Y,M_Y)$) is a morphism of schemes (resp. log schemes), then we denote by $f(x)$ the geometric point $x\to X\overset{f}{\to}Y$ and call it the \emph{image} of $x$ (under $f$). The transcendence degree of the extension $k(f(x))\subset k(x)$ is denoted $\td (k(x)/k(f(x)))$.

\section{Monoidal flatness}\label{monoidalflatnesssection}

\subsection{Pointed monoids}
Let $P$ be a monoid. A \emph{pointed $P$-module} is a $P$-module $M$ (i.e., a $P$-set in the terminology of \cite{logbook}) with a distinguished \emph{zero element} $0_M\in M$ satisfying $p0_M=0_M$ for all $p\in P$. Pointed $P$-modules form a category, the morphisms being $P$-module morphisms that preserve the zero elements. Given a pointed sub-$P$-module $N\subset M$, one defines the quotient pointed $P$-module $M/N$ as the colimit, in the category of pointed $P$-modules, of the diagram $0\leftarrow N\rightarrow M$. Its underlying set is just the colimit of the diagram in the category of sets, which can be identified with $M\setminus (N\setminus\{0_N\})$.

$P$ is a \emph{pointed monoid} if it is pointed as a module over itself. An \emph{ideal} of a pointed monoid $P$ is a pointed $P$-submodule $J\subset P$; $J$ is \emph{prime} if $P\setminus J$ is a monoid. Given an ideal $J\subset P$, the quotient pointed $P$-module $P/J$ is itself a pointed monoid.

Given a pointed monoid $P$ and ring $R$, one defines
\begin{equation}\label{pointedalgebra1}
R\langle P\rangle :=R[P]/([0_P])
\end{equation}
If $J\subset P$ is an ideal, then (cf. \cite[6.1.31]{gabberramero})
\begin{equation}\label{pointedalgebra2}
R\langle P/J\rangle\cong R[P]/JR[P]
\end{equation}

Given a non-pointed monoid $P$, one can formally adjoin a zero element $0_P$ to obtain a pointed monoid $P_{\circ}$ (\cite[4.8.14]{gabberramero}), and similarly for ideals $J\subset P$.

\begin{lemma}\label{idealcorrespondence}
The map $J\mapsto J_{\circ}$ induces a bijection on the sets of ideals of $P$ and ideals of $P_{\circ}$, preserving prime ideals.
\end{lemma}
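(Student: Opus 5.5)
We unwind the definitions and check directly that $J\mapsto J_{\circ}=J\sqcup\{0_P\}$ is bijective. Recall $P_{\circ}=P\sqcup\{0_P\}$, with multiplication extended by $p\cdot 0_P=0_P\cdot 0_P=0_P$. An ideal of the pointed monoid $P_{\circ}$ is a pointed $P_{\circ}$-submodule, i.e. a subset $I\subset P_{\circ}$ containing $0_P$ and stable under multiplication by $P_{\circ}$. An ideal of $P$ is a subset $J\subset P$ stable under multiplication by $P$; the empty ideal is allowed, so that $\emptyset_{\circ}=\{0_P\}$ is the zero ideal.

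First, $J_{\circ}$ is an ideal of $P_{\circ}$. It contains $0_P$. For $p\in P$ and $j\in J$ we have $pj\in J$, and any product involving $0_P$ equals $0_P$. For the inverse map, send an ideal $I\subset P_{\circ}$ to $I\cap P=I\setminus\{0_P\}$. This set is stable under multiplication by $P$, because $P$ is closed under multiplication in $P_{\circ}$ (no new zero is created) and $I$ is stable under $P_{\circ}$. The two maps are mutually inverse, since $J_{\circ}\cap P=J$ and $(I\cap P)_{\circ}=I$ because $0_P\in I$.

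For primes we use the identity $P_{\circ}\setminus J_{\circ}=P\setminus J$, as subsets with the induced multiplication. Thus $P_{\circ}\setminus J_{\circ}$ is a submonoid of $P_{\circ}$ exactly when $P\setminus J$ is a submonoid of $P$: both conditions say that the complement contains $1$ and is closed under products, and products of elements of $P$ are computed in $P$. So $J$ is prime if and only if $J_{\circ}$ is prime.

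The only point needing care is the convention on ideals and primes. Whether the empty ideal, or the whole of $P$, counts as an ideal or a prime ideal must be matched on both sides. With the conventions above, which follow \cite{gabberramero}, these cases correspond to $\{0_P\}$ and $P_{\circ}$, and the argument goes through unchanged.
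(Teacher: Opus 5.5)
Your proposal is correct and follows the paper's proof. The inverse is $K\mapsto K\cap P$, with $K=(K\cap P)\cup\{0_P\}$, and primality is matched through the identity $P_{\circ}\setminus J_{\circ}=P\setminus J$. You additionally spell out checks the paper leaves implicit: that $J_{\circ}$ is an ideal, that products of elements of $P$ stay in $P$ inside $P_{\circ}$, and the empty and whole-ideal edge cases.
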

\begin{proof}
Given an ideal $K\subset P_{\circ}$, $J:=K\cap P$ is an ideal of $P$ (possibly empty) and $K=J\cup\{0_P\}$, so the map $J\mapsto J_{\circ}$ is bijective. Moreover, $K$ is prime if and only if $P_{\circ}\setminus K=P\setminus J$ is a monoid, i.e., if and only if $J$ is prime.
\end{proof}

We prove a lemma for later use.

\begin{lemma}\label{torindlemma}
Let $P$ be a pointed monoid and $R$ be a ring.
\begin{enumerate}[label=\normalfont(\roman*)]
\item The canonical morphism $R\otimes_{\mathbb{Z}}^L\mathbb{Z}\langle P\rangle\to R\langle P\rangle$ is a quasi-isomorphism.
\item For any homomorphism of pointed monoids $P\to Q$, the canonical morphism $R\langle P\rangle\otimes^L_{\mathbb{Z}\langle P\rangle}\mathbb{Z}\langle Q\rangle\to R\langle Q\rangle$ is a quasi-isomorphism.
\end{enumerate}
\end{lemma}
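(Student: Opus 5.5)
The idea is that for a pointed monoid $P$, the algebra $\mathbb{Z}\langle P\rangle$ is a free $\mathbb{Z}$-module. By \eqref{pointedalgebra1}, $\mathbb{Z}\langle P\rangle=\mathbb{Z}[P]/([0_P])$. Since $p\cdot 0_P=0_P$ for every $p\in P$, the ideal $([0_P])$ is $\mathbb{Z}$-spanned by the single basis element $[0_P]$. Hence $\mathbb{Z}\langle P\rangle\cong \mathbb{Z}^{(P\setminus\{0_P\})}$ is free over $\mathbb{Z}$, with basis the classes $[p]$ for $p\neq 0_P$. The same description holds over any ring: $R\langle P\rangle\cong R^{(P\setminus\{0_P\})}$.

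For (i), freeness means $\mathbb{Z}\langle P\rangle$ is flat over $\mathbb{Z}$. So $R\otimes^L_{\mathbb{Z}}\mathbb{Z}\langle P\rangle$ is quasi-isomorphic to the underived tensor product $R\otimes_{\mathbb{Z}}\mathbb{Z}\langle P\rangle$. It remains to identify this tensor product with $R\langle P\rangle$. This follows from right exactness of $R\otimes_{\mathbb{Z}}-$ applied to $0\to([0_P])\to \mathbb{Z}[P]\to\mathbb{Z}\langle P\rangle\to 0$, together with $R\otimes_{\mathbb{Z}}\mathbb{Z}[P]=R[P]$, where $([0_P])$ maps onto $([0_P])R[P]$. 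One can also just compare bases.

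For (ii), I would use associativity of derived tensor products together with (i):
\[
R\langle P\rangle\otimes^L_{\mathbb{Z}\langle P\rangle}\mathbb{Z}\langle Q\rangle\simeq (R\otimes^L_{\mathbb{Z}}\mathbb{Z}\langle P\rangle)\otimes^L_{\mathbb{Z}\langle P\rangle}\mathbb{Z}\langle Q\rangle\simeq R\otimes^L_{\mathbb{Z}}\mathbb{Z}\langle Q\rangle\simeq R\langle Q\rangle .
\]
The first and last quasi-isomorphisms are (i) for $P$ and for $Q$. The middle one is the standard associativity of derived base change $R\otimes^L_{\mathbb{Z}}(-)$ along $\mathbb{Z}\to\mathbb{Z}\langle P\rangle\to\mathbb{Z}\langle Q\rangle$, which can be checked by taking a $\mathbb{Z}\langle P\rangle$-flat resolution of $\mathbb{Z}\langle Q\rangle$.

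The main point to check is that the resulting composite is the canonical morphism in the statement. That morphism is induced by the ring map $\mathbb{Z}\langle P\rangle\to\mathbb{Z}\langle Q\rangle$ coming from the pointed homomorphism $P\to Q$; this ring map is well defined because $0_P\mapsto 0_Q$. The identification is routine once each step is written as a natural transformation that reduces, on $H_0$, to the multiplication map $R\langle P\rangle\otimes_{\mathbb{Z}\langle P\rangle}\mathbb{Z}\langle Q\rangle\to R\langle Q\rangle$, which is an isomorphism by the same presentation argument as in (i).
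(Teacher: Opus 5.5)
Your proposal is correct and follows the paper's proof. For (i), you use that $\mathbb{Z}\langle P\rangle$ is free over $\mathbb{Z}$ and that $R\otimes_{\mathbb{Z}}\mathbb{Z}\langle P\rangle\cong R\langle P\rangle$; for (ii), you use the same chain of quasi-isomorphisms via associativity of derived tensor products, and your observation that $([0_P])=\mathbb{Z}\cdot[0_P]$ justifies the freeness claim the paper states without proof.
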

\begin{proof}
(i) follows from the fact that $\mathbb{Z}\langle P\rangle$ is a free $\mathbb{Z}$-module and $R\otimes_{\mathbb{Z}}\mathbb{Z}\langle P\rangle\cong R\langle P\rangle$. This implies
\begin{align*}
R\langle P\rangle\otimes^L_{\mathbb{Z}\langle P\rangle}\mathbb{Z}\langle Q\rangle &\cong R\otimes_{\mathbb{Z}}^L\mathbb{Z}\langle P\rangle\otimes^L_{\mathbb{Z}\langle P\rangle}\mathbb{Z}\langle Q\rangle \\
& \cong R\otimes_{\mathbb{Z}}^L\mathbb{Z}\langle Q\rangle \\
&\cong R\langle Q\rangle
\end{align*}
proving (ii).
\end{proof}

See \cite[\S 4.8 and \S 6.1]{gabberramero} for details about pointed monoids and modules.

\subsection{Flatness over a monoid}
Our discussion of monoidal flatness will be limited to integral (non-pointed) monoids $P$ and $\mathbb{Z}[P]$-modules.

\begin{definition}\label{Pflatdef}
Let $P$ be an integral monoid and $M$ a $\mathbb{Z}[P]$-module. We say $M$ is \emph{$P$-flat} if, for any ideal $J\subset P$, the canonical map
\begin{equation}\label{idealtensormap} (J)\otimes_{\mathbb{Z}[P]}M\to M
\end{equation}
is injective, where $(J)\subset\mathbb{Z}[P]$ is the ideal generated by $\{[j]:j\in J\}$.
\end{definition}

\begin{lemma}\label{nonpointedPflat}
Let $P$ be an integral monoid and $M$ a $\mathbb{Z}[P]$-module. The following are equivalent.
\begin{enumerate}[label=\normalfont(\roman*)]
\item $M$ is $P$-flat.
\item $M$ is $\varphi$-flat in the sense of \cite[6.1.34]{gabberramero}, where $\varphi:P_{\circ}\to\mathbb{Z}\langle P_{\circ}\rangle\cong\mathbb{Z}[P]$ is the canonical homomorphism.
\end{enumerate}
\end{lemma}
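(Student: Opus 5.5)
The plan is to unwind the definition of $\varphi$-flatness in \cite[6.1.34]{gabberramero} and compare it with \Cref{Pflatdef}, using \Cref{idealcorrespondence} to match up the ideals on the two sides. Recall that for a homomorphism of pointed monoids $\varphi:Q\to(A,\cdot)$ into the multiplicative monoid of a ring, an $A$-module $M$ is $\varphi$-flat if for every ideal $K$ of $Q$ the natural map $\mathbb{Z}\langle K\rangle\otimes_{\mathbb{Z}\langle Q\rangle}M\to M$ is injective. Here $\mathbb{Z}\langle K\rangle$ is the pointed-module algebra $\mathbb{Z}^{(K)}/\mathbb{Z}[0]$, viewed as an ideal of $\mathbb{Z}\langle Q\rangle$. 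Equivalently, $\varphi$-flatness asks that $\mathrm{Tor}_1^{\mathbb{Z}\langle Q\rangle}(\mathbb{Z}\langle Q/K\rangle,M)=0$ for all $K$. I will first check which of these formulations \cite{gabberramero} actually uses and, if needed, pass between them via the short exact sequence $0\to\mathbb{Z}\langle K\rangle\to\mathbb{Z}\langle Q\rangle\to\mathbb{Z}\langle Q/K\rangle\to0$.

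Specialise to $Q=P_\circ$ and $A=\mathbb{Z}\langle P_\circ\rangle\cong\mathbb{Z}[P]$. By \Cref{idealcorrespondence}, the ideals of $P_\circ$ are exactly the $J_\circ=J\cup\{0\}$, where $J\subset P$ ranges over ideals of $P$, possibly empty.

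The key computation identifies the ideal $\mathbb{Z}\langle J_\circ\rangle\subset\mathbb{Z}\langle P_\circ\rangle$. It is the image of $\mathbb{Z}^{(J_\circ)}$ modulo $[0]$, so it is the free $\mathbb{Z}$-module on $J$. Under the isomorphism $\mathbb{Z}\langle P_\circ\rangle\cong\mathbb{Z}[P]$, which sends $[p]\mapsto[p]$ for $p\in P$, this is precisely the ideal $(J)\subset\mathbb{Z}[P]$. Similarly, $\mathbb{Z}\langle P_\circ/J_\circ\rangle\cong\mathbb{Z}[P]/(J)$ by \cref{pointedalgebra2}.

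Hence the map tested by $\varphi$-flatness for the ideal $J_\circ$ is exactly the map \cref{idealtensormap} for $J$. For the trivial ideal $\{0\}$, corresponding to $J=\emptyset$, both sides are $0$ and the condition is vacuous. The two conditions therefore coincide ideal by ideal, which proves (i)$\Leftrightarrow$(ii).

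The main obstacle is purely bookkeeping. One must make sure that the definition in \cite[6.1.34]{gabberramero} is phrased through the ideals $\mathbb{Z}\langle K\rangle$ (or the quotients $\mathbb{Z}\langle Q/K\rangle$) rather than through some other class of test modules. If it is phrased via arbitrary pointed $Q$-modules or via a Tor condition, I would reduce to cyclic modules $Q/K$ by the corresponding criterion in \cite[\S6.1]{gabberramero}, the monoidal analogue of the ideal criterion for flatness. Then I would apply the identification above together with the Tor long exact sequence.
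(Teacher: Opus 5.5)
Your proposal is correct and is essentially the paper's argument. The paper identifies $\mathbb{Z}[P]/(J)\cong\mathbb{Z}\langle P_{\circ}/J_{\circ}\rangle$ via \cref{pointedalgebra2}, matches the ideals of $P$ and $P_{\circ}$ via \Cref{idealcorrespondence}, and then invokes the ideal criterion \cite[6.1.40]{gabberramero} rather than unwinding \cite[6.1.34]{gabberramero}; this is exactly your anticipated fallback, combined with the sequence $0\to(J)\to\mathbb{Z}[P]\to\mathbb{Z}[P]/(J)\to 0$ to pass between injectivity of \cref{idealtensormap} and vanishing of $\Tor_1$.
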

\begin{proof}
Indeed, by \cref{pointedalgebra2}, for any ideal $J\subset P$ we have $\mathbb{Z}[P]/(J)\cong\mathbb{Z}\langle P_{\circ}/J_{\circ}\rangle$, so the result follows from \cite[6.1.40]{gabberramero} via \Cref{idealcorrespondence}.
\end{proof}

\begin{remark}
In \cite[\S6]{gabberramero}, monoidal flatness is developed in a more general context. For our purposes, the situation of \Cref{Pflatdef} will suffice, with \Cref{nonpointedPflat} allowing us to make use of results of \cite{gabberramero}.
\end{remark}

\begin{proposition}\label{primeideals}
Let $P$ be an integral monoid, $R$ a ring and $M$ an $R[P]$-module. The following are equivalent.
\begin{enumerate}[label=\normalfont(\roman*)]
\item $M$ is $P$-flat.
\item For every injective homomorphism of integral monoids $P\to Q$ and ideal $K\subset Q$, $\Tor_i^{R[P]}(M,R[Q]/KR[Q])=0$ for $i>0$.
\item For every ideal $J\subset P$, $\Tor_i^{R[P]}(M,R[P]/JR[P])=0$ for $i>0$.
\end{enumerate}
If $P$ is fine, then this is also equivalent to
\begin{enumerate}[label=\normalfont(\roman*),resume]
\item the map \textup{\cref{idealtensormap}} is injective for all prime ideals $J\subset P$.
\end{enumerate}
\end{proposition}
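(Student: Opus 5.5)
The plan is to reduce everything to results of \cite[\S6.1]{gabberramero} via \Cref{nonpointedPflat} and \Cref{torindlemma}, and to prove the cycle (i)$\Rightarrow$(ii)$\Rightarrow$(iii)$\Rightarrow$(i), treating (iv) separately.

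First, $P$-flatness of $M$ as an $R[P]$-module is the same as $P$-flatness of $M$ regarded as a $\mathbb{Z}[P]$-module. Indeed, $(J)R[P]=(J)_{\mathbb{Z}}\otimes_{\mathbb{Z}}R$, so $(J)R[P]\otimes_{R[P]}M\cong (J)_{\mathbb{Z}}\otimes_{\mathbb{Z}[P]}M$.

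For the Tor conditions, I will use \Cref{torindlemma}(ii) applied to $P_\circ\to P_\circ/J_\circ$, together with \cref{pointedalgebra2}. This gives
\[
R[P]\otimes^L_{\mathbb{Z}[P]}\mathbb{Z}[P]/(J)\cong R[P]/JR[P].
\]
Hence
\[
M\otimes^L_{R[P]}R[P]/JR[P]\cong M\otimes^L_{\mathbb{Z}[P]}\mathbb{Z}[P]/(J),
\]
so $\Tor^{R[P]}_i(M,R[P]/JR[P])\cong\Tor_i^{\mathbb{Z}[P]}(M,\mathbb{Z}[P]/(J))$. The same argument with $Q_\circ/K_\circ$ handles (ii). These reductions let me assume $R=\mathbb{Z}$ throughout.

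With $R=\mathbb{Z}$, the implication (ii)$\Rightarrow$(iii) is trivial: take $Q=P$ and $K=J$.

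For (iii)$\Rightarrow$(i), consider the short exact sequence $0\to(J)\to\mathbb{Z}[P]\to\mathbb{Z}[P]/(J)\to 0$. The vanishing of $\Tor_1(M,\mathbb{Z}[P]/(J))$ says exactly that \cref{idealtensormap} is injective.

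For (i)$\Rightarrow$(ii), I would invoke the result in \cite[\S6.1]{gabberramero} that $\varphi$-flat modules have vanishing higher Tor against $\mathbb{Z}\langle N\rangle$ for all flat pointed $P_\circ$-modules $N$. An integral $Q$ is a flat $P$-module when $P\to Q$ is injective with $Q$ integral? This is false in general, so instead I would argue as follows. For injective $P\to Q$ of integral monoids, $Q$ decomposes as a $P$-set into orbits, and each orbit $Pq\cong P$ is free because $Q$ is integral. It follows that $Q_\circ/K_\circ$ is filtered by pointed $P_\circ$-modules whose graded pieces are quotients $P_\circ/J_\circ$. To make this precise, I would order the orbits and use the filtration by ideals; the subquotients are $Pq/(Pq\cap K)\cong P/J_q$ with $J_q=\{p: pq\in K\}$, an ideal. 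This requires some care, since $Q$ is a union of orbits $Pq$ that are not disjoint. The cleanest route is probably to cite the Tor criterion \cite[6.1.40--6.1.42]{gabberramero} directly, namely that $\varphi$-flatness is equivalent to vanishing of higher Tor against $\mathbb{Z}\langle N\rangle$ for every pointed $P_\circ$-module $N$ that is integral in their sense. Then $Q_\circ/K_\circ$ has to be shown to satisfy that hypothesis. I expect this step to be the main obstacle.

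Finally, for (iv) with $P$ fine, the implication (i)$\Rightarrow$(iv) is clear. For the converse, I would use that every ideal $J$ of a fine monoid admits a finite filtration $\emptyset=J_0\subset\dots\subset J_n=J$ whose successive quotients $J_k/J_{k-1}$ are isomorphic, as pointed $P$-modules, to pieces of the form $(P/\mathfrak{p})_\circ$-shifts with $\mathfrak{p}$ prime. This is a monoid analogue of prime filtrations, available in \cite{gabberramero}. I would then run a dévissage on Tor, using induction on $n$ and the five lemma, to get $\Tor_1(M,\mathbb{Z}[P]/(J))=0$ from the prime case. This reduces (i) to the primes $J=\mathfrak{p}$, which is exactly (iv).
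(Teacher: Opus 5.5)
Your reduction to $R=\mathbb{Z}$ via \Cref{torindlemma} and \cref{pointedalgebra2} matches the paper's proof. So do the trivial implications (ii)$\Rightarrow$(iii)$\Rightarrow$(i), and so does the route you finally settle on for (i)$\Rightarrow$(ii): apply the Tor criterion \cite[6.1.40]{gabberramero} for $\varphi$-flat modules to $\mathbb{Z}\langle Q_\circ/K_\circ\rangle$. The gap is that you stop at the one step with content, the integrality of $Q_\circ/K_\circ$ as a pointed $P_\circ$-module, and call it the main obstacle.

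It is in fact a one-line check, and the paper makes it. Integrality in the sense of \cite[6.1.21]{gabberramero} is a cancellation condition on the action: $as=bs\neq 0$ forces $a=b$. $Q_\circ$ satisfies it because $Q$ is cancellative and $P\to Q$ is injective. It passes to $Q_\circ/K_\circ$, whose nonzero elements are just those of $Q\setminus K$ with the induced action (\cite[6.1.27 (iii)]{gabberramero}). You should simply delete the earlier detours. As you note, $Q$ is in general not a flat $P$-set. The orbits $Pq$ overlap, so there is no decomposition of $Q$ into free orbits to filter by.

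For (iv) the paper again just cites \cite[6.1.40]{gabberramero}. Replacing this by a prime-filtration dévissage is a reasonable, more self-contained alternative, but as you set it up it runs in the wrong direction. Take a filtration $\emptyset=J_0\subset\dots\subset J_n=J$ of $J$ itself. It gives exact sequences $0\to(J_{k-1})\to(J_k)\to\mathbb{Z}[P]/(\mathfrak{p}_k)\to 0$. From these, $\Tor_1(M,\mathbb{Z}[P]/(\mathfrak{p}_k))=0$ yields only $\Tor_1(M,(J))=0$, i.e.\ $\Tor_2(M,\mathbb{Z}[P]/(J))=0$. If you read the same data as $0\to\mathbb{Z}[P]/(\mathfrak{p}_k)\to\mathbb{Z}[P]/(J_{k-1})\to\mathbb{Z}[P]/(J_k)\to 0$ instead, the induction needs injectivity of $[x_k]\colon M/\mathfrak{p}_kM\to M/J_{k-1}M$, which you do not have.

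What works is to filter $P_\circ/J_\circ$ instead.
\begin{enumerate}
\item Using ACC on ideals of the finitely generated monoid $P$, build a chain $J=I_0\subsetneq I_1\subsetneq\dots\subsetneq I_n=P$ with $I_k=I_{k-1}\cup Px_k$.
\item Choose $x_k\notin I_{k-1}$ so that $\mathfrak{p}_k:=\{p\in P: px_k\in I_{k-1}\}$ is maximal among such colon ideals; it is then prime.
\item Cancellativity of $P$ gives $\mathbb{Z}\langle I_k/I_{k-1}\rangle\cong\mathbb{Z}[P]/(\mathfrak{p}_k)$.
\item The exact sequences $\Tor_1(M,\mathbb{Z}[P]/(\mathfrak{p}_k))\to\Tor_1(M,\mathbb{Z}[P]/(I_{k-1}))\to\Tor_1(M,\mathbb{Z}[P]/(I_k))$ then give $\Tor_1(M,\mathbb{Z}[P]/(J))=0$ by descending induction from $I_n=P$.
\end{enumerate}
No five lemma is needed, only middle-exactness.
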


\begin{proof}
In (ii), consider the quotient pointed monoid $Q_{\circ}/K_{\circ}$. By \Cref{torindlemma}, we have $R\langle Q_{\circ}/K_{\circ}\rangle\cong R\langle P_{\circ}\rangle\otimes_{\mathbb{Z}\langle P_{\circ}\rangle}^L\mathbb{Z}\langle Q_{\circ}/K_{\circ}\rangle$, hence
\[ M\otimes_{R\langle P_{\circ}\rangle}^LR\langle Q_{\circ}/K_{\circ}\rangle\cong M\otimes_{\mathbb{Z}\langle P_{\circ}\rangle}^L\mathbb{Z}\langle Q_{\circ}/K_{\circ}\rangle \]
Thus, by \cref{pointedalgebra2}, in (ii) we may assume $R=\mathbb{Z}$. Similarly for (iii).

Now note that, since $P\to Q$ is injective and $Q$ is integral, $Q_{\circ}$ is an integral pointed $P_{\circ}$-module in the sense of \cite[6.1.21]{gabberramero}, hence so is $Q_{\circ}/K_{\circ}$ (cf. \cite[6.1.27 (iii)]{gabberramero}). If (i) holds, then, by \Cref{nonpointedPflat}, $M$ is $\varphi$-flat so, by \cite[6.1.40]{gabberramero}, we have $\Tor_i^{\mathbb{Z}\langle P_{\circ}\rangle}(M,\mathbb{Z}\langle Q_{\circ}/K_{\circ}\rangle )=0$ for $i>0$, hence (ii) (cf. \cite[6.1]{logreg}). Clearly, (ii) $\Rightarrow$ (iii) and, since (iii) $\Rightarrow$ (i) by definition, this proves the equivalence of (i)--(iii). The equivalence with (iv) in the fine case follows again from \cite[6.1.40]{gabberramero}.
\end{proof}

\begin{proposition}\label{monoidflatness}
Let $P\to Q$ be an injective homomorphism of integral monoids and $M$ a $\mathbb{Z}[P]$-module. Consider the following assertions.
\begin{enumerate}[label=\normalfont(\roman*)]
\item $M$ is $P$-flat.
\item $M\otimes_{\mathbb{Z}[P]}\mathbb{Z}[Q]$ is $Q$-flat.
\end{enumerate}
We have \textup{(i)} $\Rightarrow$ \textup{(ii)}. If $P$ and $Q$ are fine and $P\to Q$ is Kummer,\footnote{recall that we do not necessarily assume the monoids in Kummer homomorphisms to be saturated} then \textup{(ii)} $\Rightarrow$ \textup{(i)}.
\end{proposition}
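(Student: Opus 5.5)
Write $N:=M\otimes_{\mathbb{Z}[P]}\mathbb{Z}[Q]$. The plan is to prove (i) $\Rightarrow$ (ii) by a base-change argument for Tor, using \Cref{primeideals}(iii) as the working criterion. For (ii) $\Rightarrow$ (i), we use the Kummer hypothesis to show that $\mathbb{Z}[P]/(J)$ is, after tensoring with $\mathbb{Z}[Q]$, "detected" by $Q$-ideals.

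\textbf{(i) $\Rightarrow$ (ii).} Let $K\subset Q$ be an ideal. We have
\[ N\otimes^L_{\mathbb{Z}[Q]}\mathbb{Z}[Q]/(K)\cong (M\otimes^L_{\mathbb{Z}[P]}\mathbb{Z}[Q])\otimes^L_{\mathbb{Z}[Q]}\mathbb{Z}[Q]/(K)\cong M\otimes^L_{\mathbb{Z}[P]}\mathbb{Z}[Q]/(K), \]
provided $M\otimes^L_{\mathbb{Z}[P]}\mathbb{Z}[Q]$ is concentrated in degree $0$. This follows from \Cref{primeideals}(ii) with $K=\emptyset$, since $P\to Q$ is injective and $M$ is $P$-flat. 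The right-hand side then has no higher homology, again by \Cref{primeideals}(ii) applied to $K$. Hence $\Tor_i^{\mathbb{Z}[Q]}(N,\mathbb{Z}[Q]/(K))=0$ for $i>0$, and \Cref{primeideals}(iii) for $Q$ gives that $N$ is $Q$-flat.

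\textbf{(ii) $\Rightarrow$ (i).} Assume $P,Q$ fine and $P\to Q$ Kummer. Fix an ideal $J\subset P$ and let $JQ$ be the ideal it generates in $Q$. We aim to show $\Tor_1^{\mathbb{Z}[P]}(M,\mathbb{Z}[P]/(J))=0$. The steps are as follows.

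First, we show that $\mathbb{Z}[P]\to\mathbb{Z}[Q]$ is faithfully flat, or at least that $\mathbb{Z}[Q]$ is $P$-flat and $\mathbb{Z}[P]$ is a direct summand of $\mathbb{Z}[Q]$ as a $\mathbb{Z}[P]$-module, compatibly with the ideals $J$. For Kummer $P\to Q$, the $P$-set $Q$ decomposes into orbits. The summand $\mathbb{Z}[P]\subset\mathbb{Z}[Q]$ should split off as $\mathbb{Z}[P]$-modules via a $P$-set retraction $Q\to P_\circ$. This retraction sends $q$ to itself if $q\in P$ and to $0$ otherwise. It is $P$-equivariant provided $pq\in P$ with $p\in P$ forces $q\in P$. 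That fails in general: it holds when $P^{\gp}\cap Q=P$, which is true for Kummer maps ($q^n\in P$ and $q\in P^{\gp}$ give $q\in P^{\gp}\cap Q$, and the Kummer condition $P\to Q$ exact gives $q\in P$). We should therefore check exactness of Kummer maps, which holds since $P\to Q$ is injective and every element of $Q$ has a power in $P$.

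Granting that, $\mathbb{Z}[P]/(J)$ is a $\mathbb{Z}[P]$-direct summand of $\mathbb{Z}[Q]/(JQ)$. This holds because the retraction sends $JQ$ into $J\cup\{0\}$: if $jq\in P$ then $q\in P$ by exactness, so $jq\in J$.

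Second, compute $M\otimes^L_{\mathbb{Z}[P]}\mathbb{Z}[Q]/(JQ)$. If $M\otimes^L_{\mathbb{Z}[P]}\mathbb{Z}[Q]=N$ (no higher Tor), the same base-change identity as above together with $Q$-flatness of $N$ shows that this complex has no higher homology. Its direct summand $M\otimes^L_{\mathbb{Z}[P]}\mathbb{Z}[P]/(J)$ then also has none, so (i) holds by \Cref{primeideals}(iii).

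The main obstacle is the hypothesis $\Tor_i^{\mathbb{Z}[P]}(M,\mathbb{Z}[Q])=0$, which is not given in (ii). To handle it, I would instead argue via a decomposition of $\mathbb{Z}[Q]$ as a $\mathbb{Z}[P]$-module into summands $\mathbb{Z}[P\cdot q]$ indexed by classes in $Q^{\gp}/P^{\gp}$. Each summand is isomorphic to a fractional ideal $I$ of $P$, i.e. a $P$-stable subset of $P^{\gp}$ with $aI\subset P$ for some $a$, so as a module it is $\mathbb{Z}[I]\cong(aI)$. The $P^{\gp}$-component of $N$ is then $M$ itself. Combined with the graded structure, this should let us read off the higher Tor's directly rather than assume them.

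Alternatively, reduce to $J$ prime using \Cref{primeideals}(iv), and check injectivity of $(J)\otimes M\to M$ directly. Tensor with $\mathbb{Z}[Q]$ and take the degree-$P^{\gp}$ graded component. The map becomes $(JQ)\otimes_{\mathbb{Z}[Q]}N\to N$ restricted to that component, and the component of $(JQ)\otimes_{\mathbb{Z}[P]}M\otimes\mathbb{Z}[Q]$ is $(J)\otimes M$ by exactness. This gives injectivity without any Tor vanishing, and I would pursue this version first.
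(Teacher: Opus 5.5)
Your proof of (i) $\Rightarrow$ (ii) is the paper's argument. You use Tor-independence of $M$ and $\mathbb{Z}[Q]$ from \Cref{primeideals} (ii) with $K=\emptyset$, then base change, then \Cref{primeideals} (ii)--(iii) again.

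For (ii) $\Rightarrow$ (i) there is a genuine gap. Your graded argument is correct as far as it goes. As $\mathbb{Z}[P]$-modules, $\mathbb{Z}[Q]=\bigoplus_{c\in Q^{\gp}/P^{\gp}}\mathbb{Z}[Q_c]$ and $(JQ)=\bigoplus_c\mathbb{Z}[(JQ)_c]$. So injectivity of $(JQ)\otimes_{\mathbb{Z}[P]}M\to\mathbb{Z}[Q]\otimes_{\mathbb{Z}[P]}M$, which is the $Q$-flatness of $N$ for the ideal $JQ$, gives injectivity of $\mathbb{Z}[(JQ)_1]\otimes_{\mathbb{Z}[P]}M\to\mathbb{Z}[Q_1]\otimes_{\mathbb{Z}[P]}M$ with $Q_1=Q\cap P^{\gp}$.

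This is the map $(J)\otimes M\to M$ only if $Q\cap P^{\gp}=P$, i.e.\ only if $P\to Q$ is exact. Your justification that Kummer maps are exact is wrong: from $q\in Q\cap P^{\gp}$ and $q^n\in P$ you only get $q\in P^{\sat}$. Exactness therefore holds when $P$ is saturated, but the statement explicitly allows non-saturated monoids.

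Take $P=\langle 2,3\rangle\subset Q=\mathbb{N}$, written additively; this is a Kummer map of fine monoids. Here $1\in(Q\cap P^{\gp})\setminus P$ while $2+1\in P$, so your retraction $Q\to P_\circ$ is not $P$-equivariant. Since $Q^{\gp}=P^{\gp}$, the grading is trivial: the ``degree-$P^{\gp}$ component'' of $N$ is all of $N$, and nothing has been isolated. The same problem affects your fractional-ideal variant, where you assert that the $P^{\gp}$-component of $N$ is $M$.

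This is not a marginal case. The paper applies the proposition, through \Cref{klftlf}, to $Q\to Q^{\sat}$ in \Cref{saturationtflat}, and that map is never exact unless $Q$ is already saturated.

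What your argument does prove is that (ii) $\Rightarrow$ (i) holds for any injective exact $P\to Q$ of integral monoids, with no finiteness needed. Factoring $P\to P':=Q\cap P^{\gp}\to Q$, it therefore shows that $M\otimes_{\mathbb{Z}[P]}\mathbb{Z}[P']$ is $P'$-flat. The missing step is to descend $P'$-flatness along $P\subset P'$, where $P'^{\gp}=P^{\gp}$ and $P'$ is a finitely generated $P$-module. This is where the fine and Kummer hypotheses must really be used, through the finiteness of $Q$ over $P$ (\cite[I, 4.3.6]{logbook}). The paper handles this step by invoking \cite[6.1.56]{gabberramero}, not by a splitting argument.
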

\begin{proof}
Assume (i) and let $N:=M\otimes_{\mathbb{Z}[P]}\mathbb{Z}[Q]$. Applying \Cref{primeideals} (ii) with $K=\emptyset$ we get $N\cong M\otimes_{\mathbb{Z}[P]}^L\mathbb{Z}[Q]$. Thus, for any ideal $K\subset Q$ we have $N\otimes_{\mathbb{Z}[Q]}^L(\mathbb{Z}[Q]/K\mathbb{Z}[Q])\cong M\otimes_{\mathbb{Z}[P]}^L(\mathbb{Z}[Q]/K\mathbb{Z}[Q])$ which is quasi-isomorphic to $N/KN$ by \Cref{primeideals} (ii). Now (i) follows from \Cref{primeideals} (iii). For the second assertion, note that $Q$ is a finitely generated $P$-module (\cite[I, 4.3.6]{logbook}), so, by \Cref{nonpointedPflat}, the assertion follows from \cite[6.1.56]{gabberramero}.
\end{proof}

In the local noetherian setting, one can check monoidal flatness at a single prime ideal.

\begin{proposition}\label{maxideals}
Let $P$ be a fine monoid, $\alpha:P\to S$ a homomorphism to a noetherian local ring $S$ with maximal ideal $\mathfrak{m}$, and $\mathfrak{p}:=\alpha^{-1}\mathfrak{m}$. For a finitely generated $S$-module $M$, the following are equivalent.
\begin{enumerate}[label=\normalfont(\roman*)]
\item $M$ is $P$-flat.
\item The map \textup{\cref{idealtensormap}} is injective for $J=\mathfrak{p}$.
\end{enumerate}
\end{proposition}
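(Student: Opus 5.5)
Note first that (i) $\Rightarrow$ (ii) holds by definition, so the work is in (ii) $\Rightarrow$ (i). By \Cref{primeideals} (iv), since $P$ is fine it suffices to show that \cref{idealtensormap} is injective for every prime ideal $J\subset P$. Equivalently, by the long exact Tor sequence, it suffices to show $\Tor_1^{\mathbb{Z}[P]}(M,\mathbb{Z}[P]/(J))=0$ for all prime $J$. Here $\mathfrak{p}=\alpha^{-1}\mathfrak{m}$ is a prime ideal of $P$, possibly empty, in which case (ii) is vacuous and we must still show $M$ is $P$-flat.

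The plan is to reduce everything to the face $F:=P\setminus\mathfrak{p}$, whose elements map to units of $S$. The key steps, in order, are as follows.

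\textbf{Step 1: localise at $F$.} Since $\alpha(F)\subset S^*$, the $\mathbb{Z}[P]$-module structure on $M$ factors through $\mathbb{Z}[P_F]$, where $P_F=F^{-1}P$. Tensor with $\mathbb{Z}[P]\to\mathbb{Z}[P_F]$, which is a flat localisation. Then for any ideal $J\subset P$,
\[(J)\otimes_{\mathbb{Z}[P]}M\cong (JP_F)\otimes_{\mathbb{Z}[P_F]}M.\]
So $P$-flatness of $M$ is equivalent to $P_F$-flatness, and the image of $\mathfrak{p}$ in $P_F$ is $\mathfrak{p}P_F$.

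\textbf{Step 2: pass to the sharp quotient.} The monoid $P_F$ has unit group $F^{\gp}$, and its prime ideals are $JP_F$ for primes $J\subset\mathfrak{p}$. Every nonempty such ideal is contained in the maximal ideal $\mathfrak{p}P_F$, and all of them map into $\mathfrak{m}$. It is probably simplest to use \Cref{nonpointedPflat} and the Gabber--Ramero local criterion directly rather than reducing to $\overline{P_F}$.

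\textbf{Step 3: apply a local flatness criterion.} For a prime $J\subset\mathfrak{p}$, I want to show that $N_J:=\Tor_1^{\mathbb{Z}[P]}(M,\mathbb{Z}[P]/(J))$ vanishes. Since $M$ is a finitely generated $S$-module and $\mathbb{Z}[P]$ is noetherian, the Tor can be computed with a resolution of $\mathbb{Z}[P]/(J)$ by finite free $\mathbb{Z}[P]$-modules. Hence $N_J$ is a finitely generated $S$-module, and $\mathfrak{p}$ acts on it through $\mathfrak{m}$. I would then argue by a Nakayama-type dévissage in the spirit of the classical local criterion for flatness. Filter $\mathbb{Z}[P]/(J)$ using the monoid filtration by the ideals $\mathfrak{p}^n\cup J$, whose graded pieces are pointed modules over $P/\mathfrak{p}$. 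Injectivity at $\mathfrak{p}$ (hypothesis (ii)), combined with flatness over the group $F^{\gp}$ (which is automatic after the localisation in Step 1), should give vanishing of Tor on these graded pieces. The Artin--Rees lemma in the noetherian local ring $S$, together with Krull's intersection theorem, would then pass this to the limit.

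I expect the main obstacle to be Step 3: making the monoidal analogue of the local criterion precise, namely that $\Tor_1$ against $\mathbb{Z}[P]/(\mathfrak{p})$ vanishing implies $\Tor_1$ against all $\mathbb{Z}[P]/(J)$ vanishing. I would first look for this directly in \cite[\S6.1]{gabberramero}, where a local flatness criterion for $\varphi$-flat modules over noetherian local rings should exist (along the lines of 6.1.40--6.1.56). If such a criterion is available, I would invoke it via \Cref{nonpointedPflat}, which reduces the proof to the bookkeeping in Steps 1--2.
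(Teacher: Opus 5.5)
Your main line is the paper's proof: the paper translates $P$-flatness into $\varphi$-flatness via \Cref{nonpointedPflat} and cites Gabber--Ramero's local criterion, which is \cite[6.4.31]{gabberramero} (in \S6.4, not \S6.1), applied directly without your Steps 1--2. Your fallback dévissage in Step 3 also works if made precise: after localising at $F$, the graded pieces are free modules over $\mathbb{Z}[F^{\gp}]\cong\mathbb{Z}[P_F]/(\mathfrak{p}P_F)$, because $F^{\gp}$ acts freely on the integral monoid $P_F$, so hypothesis (ii) alone kills their $\Tor_1$ (flatness of $M/\mathfrak{p}M$ over $\mathbb{Z}[F^{\gp}]$ is not needed, and it can fail); the limit step then uses $J\cap\mathfrak{p}^n\subset\mathfrak{p}^{n-1}J$ for prime $J$ (or Artin--Rees in $\mathbb{Z}[P]$, not in $S$) together with Krull's intersection theorem for the finitely generated $S$-module $(J)\otimes_{\mathbb{Z}[P]}M$.
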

\begin{proof}
By \Cref{nonpointedPflat}, this follows from \cite[6.4.31]{gabberramero}.
\end{proof}

\subsection{Dimension and depth formulae}
We show that $P$-flat local rings have particularly nice dimension and depth formulae, just like in the log regular case. We begin with dimension.

\begin{proposition}\label{diminequality}
Let $P$ be a sharp fine monoid, $P\to S$ a homomorphism to a noetherian local ring $S$ such that the image of $\mathfrak{p}:=P\setminus\{1\}$ lies in the maximal ideal of $S$. We have
\[ \dim S\leq \rk P^{\gp}+\dim S/\mathfrak{p}S \]
with equality if $S$ is $P$-flat.
\end{proposition}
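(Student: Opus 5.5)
We prove the inequality and the equality separately. The inequality will follow from a system-of-parameters count, and the equality from a filtration of $P$ by ideals combined with $P$-flatness.

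\emph{Inequality.} Since $P$ is fine, $\mathfrak{p}$ is generated as an ideal by finitely many elements $p_1,\dots,p_n$ (the irreducibles of $P$). Hence $\mathfrak{p}S=(\alpha(p_1),\dots,\alpha(p_n))$, and Krull gives $\dim S\le n+\dim S/\mathfrak{p}S$. This has the wrong count, since $n$ may exceed $\rk P^{\gp}$. Put $d:=\rk P^{\gp}$. Because $P$ is sharp and fine, there are $q_1,\dots,q_d\in\mathfrak{p}$ such that the ideal $I$ they generate has $\sqrt{I}=\mathfrak{p}$. To get them, note that $\mathbb{Z}[P]$ is a domain of dimension $d$ and the ideal $(\mathfrak{p})\subset\mathbb{Z}[P]$ corresponds to the closed torus-fixed locus. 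Alternatively, choose $q_i$ by prime avoidance among the faces of $P$; there are finitely many faces, and one picks the $q_i$ successively so that each new element avoids as many faces as possible, with $d$ steps sufficing. Then $\sqrt{(q_1,\dots,q_d)S}=\sqrt{\mathfrak{p}S}$, since each $p^m\in I$ for $p\in\mathfrak{p}$. Applying Krull to the images of the $q_i$ gives $\dim S\le d+\dim S/\mathfrak{p}S$.

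\emph{Equality under $P$-flatness.} It suffices to show $\dim S\ge d+\dim S/\mathfrak{p}S$. We argue by induction on $d$. If $d=0$ then $P=\{1\}$ and there is nothing to prove. Otherwise, use the stratification of $S$ by faces. Choose a chain of faces $\{1\}=F_0\subset F_1\subset\dots\subset F_d=P$ with $\rk F_i^{\gp}=i$, and let $\mathfrak{q}_i:=P\setminus F_i$ be the corresponding prime ideals. Then $\mathfrak{q}_d=\emptyset\subset\mathfrak{q}_{d-1}\subset\dots\subset\mathfrak{q}_0=\mathfrak{p}$.

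By \Cref{primeideals}~(iii), $P$-flatness gives $\Tor$-vanishing against each $\mathbb{Z}[P]/(\mathfrak{q}_i)$. Moreover $S/\mathfrak{q}_iS\cong S\otimes_{\mathbb{Z}[P]}\mathbb{Z}[F_i]$, because $\mathbb{Z}[P]/(\mathfrak{q}_i)\cong\mathbb{Z}[F_i]$. Combining this with the base change statement \Cref{monoidflatness} (or its proof), $S/\mathfrak{q}_iS$ is $F_i$-flat, where $F_i$ is again sharp fine. The key step is then
\[ \dim S/\mathfrak{q}_{i}S\ \ge\ 1+\dim S/\mathfrak{q}_{i-1}S . \]
To prove it, pick $f\in F_i\setminus F_{i-1}$ whose image lies in $\mathfrak{m}$. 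Flatness over $F_i$ along the ideal generated by $f$ shows that $[f]$ acts injectively on $S/\mathfrak{q}_iS$, because the map $(f)\otimes M\to M$ is injective and $(f)\cong\mathbb{Z}[F_i]$ is free, as $\mathbb{Z}[F_i]$ is a domain. Hence $\dim S/(\mathfrak{q}_i,f)S=\dim S/\mathfrak{q}_iS-1$, since a non-zero-divisor in the maximal ideal of a noetherian local ring lowers the dimension by exactly one. Finally, $\sqrt{(\mathfrak{q}_i,f)}$ should equal $\mathfrak{q}_{i-1}$ in $P$ when $F_{i-1}$ is a facet of $F_i$ and $f$ is chosen suitably. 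That requires the ideal of $F_i$ generated by the elements outside $F_{i-1}$ to be radically generated by a single $f$, which holds after choosing $f$ in the relative interior of a complementary direction. Chaining the inequalities from $i=d$ down to $i=1$ gives $\dim S\ge d+\dim S/\mathfrak{p}S$.

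The main obstacle is the radical statement $\sqrt{(\mathfrak{q}_i)+(f)}=\mathfrak{q}_{i-1}$. If it fails, I would replace the single element $f$ by a filtration of $S/\mathfrak{q}_iS$ whose graded pieces are indexed by $F_i\setminus F_{i-1}$, which flatness makes torsion-free over $S/\mathfrak{q}_{i-1}S$, and compute dimensions via Hilbert–Samuel functions instead.
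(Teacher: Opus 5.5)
The inequality half rests on a false claim. A sharp fine monoid of rank $d$ need not contain $q_1,\dots,q_d\in\mathfrak{p}$ with $\sqrt{(q_1,\dots,q_d)}=\mathfrak{p}$. If some one-dimensional face $F$ of $P$ contains none of the $q_i$, then $(q_1,\dots,q_d)\subseteq P\setminus F$, which is a prime ideal strictly smaller than $\mathfrak{p}$. Two distinct one-dimensional faces meet only in $\{1\}$, so each $q_i$ lies in at most one of them. Hence you need at least as many $q_i$ as $P$ has extremal rays.

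A concrete counterexample is the cone over a square: $P\subset\mathbb{Z}^3$ generated by $a=(1,0,0)$, $b=(0,1,0)$, $c=(0,0,1)$, $e=(1,1,-1)$. Here $ab=ce$ and $\mathbb{Z}[P]\cong\mathbb{Z}[x,y,z,w]/(xy-zw)$. It has $d=3$ but four rays, so no three elements work.

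Neither of your justifications gets around this. First, $\mathbb{Z}[P]$ has dimension $d+1$, not $d$. Second, a system of parameters of $k[P]$ at the vertex is in general not monomial, so it says nothing about radicals of ideals of $P$. Prime avoidance over the faces fails by the same count. The inequality therefore needs non-monomial input:
\begin{itemize}
\item If $S$ contains a field $k$, it follows from $\dim S\le\dim k[P]_{(\mathfrak{p})}+\dim S/\mathfrak{p}S$, applied to the local homomorphism $k[P]_{(\mathfrak{p})}\to S$, whose source has dimension $d$.
\item In mixed characteristic, the corresponding localisation of $\mathbb{Z}[P]$ has dimension $d+1$, so this bound is off by one. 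This is why the paper simply invokes \cite[6.4.27]{gabberramero}.
\end{itemize}

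Your lower bound under $P$-flatness, by contrast, is correct. It is a more elementary route than the paper's, which lifts a chain of primes of $P$ descending from $\mathfrak{p}$ to a chain in $S$ via going-down for $P$-flat modules \cite[6.4.34]{gabberramero}.

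However, the `main obstacle' you identify is not one. Only $\geq$ is needed, and $(\mathfrak{q}_i,f)\subseteq\mathfrak{q}_{i-1}$. So $S/\mathfrak{q}_{i-1}S$ is a quotient of $S/(\mathfrak{q}_i,f)S$, which gives $\dim S/\mathfrak{q}_{i-1}S\le\dim S/\mathfrak{q}_iS-1$ at once. The radical identity you were aiming for is actually false in general: in the square example, no $f\neq 1$ lies on three of the four facets. So drop it.

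Two smaller repairs are needed:
\begin{itemize}
\item $\mathbb{Z}[F_i]$ need not be a domain when $F_i^{\gp}$ has torsion. But $[f]$ is still a non-zero-divisor on it, because $F_i$ is cancellative.
\item \Cref{monoidflatness} does not apply to $P\to F_i$, which is not a monoid homomorphism. You do not need $F_i$-flatness, though. Tensor $0\to\mathbb{Z}[F_i]\xrightarrow{[f]}\mathbb{Z}[F_i]\to\mathbb{Z}[P]/(\mathfrak{q}_i\cup fP)\to0$ with $S$ and use $\Tor_1^{\mathbb{Z}[P]}(S,\mathbb{Z}[P]/(\mathfrak{q}_i\cup fP))=0$. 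This shows directly that $f$ is regular on $S/\mathfrak{q}_iS$.
\end{itemize}
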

\begin{proof}
Note that, since $P$ is sharp, our hypothesis on $P\to S$ implies it is a local homomorphism of monoids. Thus, by \cite[6.4.27]{gabberramero}, we have
\begin{equation}\label{dimineqeq}
\dim S\leq \dim P+\dim S/\mathfrak{p}S
\end{equation}
On the other hand, if $S$ is $P$-flat, then, by \cite[6.4.34]{gabberramero} (and \Cref{idealcorrespondence} and \Cref{nonpointedPflat}), given prime ideals $\mathfrak{Q}\subset S$ and $\mathfrak{r}\subset\mathfrak{q}:=\mathfrak{Q}\times_SP$ there exists a prime ideal $\mathfrak{R}\subset\mathfrak{Q}$ such that $\mathfrak{r}=\mathfrak{R}\times_SP$. Since $\mathfrak{p}$ is the maximal ideal of $P$, it is the inverse image of any minimal prime ideal of $S/\mathfrak{p}S$, so this implies
\[ \dim S\geq \dim P+\dim S/\mathfrak{p}S \]
hence we have equality in \cref{dimineqeq} if $S$ is $P$-flat. As $\dim P=\rk P^{\gp}$ (\cite[I, 1.4.7 (2)]{logbook}), the result follows.
\end{proof}

For depth we have a similar formula.

\begin{theorem}\label{depthequality}
Let $P$ be a sharp fine monoid, $P\to S$ a homomorphism to a noetherian local ring $S$ such that the image of $\mathfrak{p}:=P\setminus\{1\}$ lies in the maximal ideal of $S$. If $S$ is $P$-flat, then
\[ \depth(S)\leq\rk P^{\gp}+\depth(S/\mathfrak{p}S) \]
with equality if $P$ is saturated.
\end{theorem}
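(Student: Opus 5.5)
Set $r=\rk P^{\gp}$, $A=\mathbb{Z}[P]$ and $\bar S=S/\mathfrak{p}S$. The plan is to compare $S$ with $\bar S$ through a resolution of $A/(\mathfrak{p})=\mathbb{Z}$ over the monoid algebra. The key input from $P$-flatness is \Cref{primeideals} (iii): $\Tor_i^{A}(S,A/JA)=0$ for $i>0$ and every ideal $J\subset P$. Consequently $S\otimes^L_A N\cong S\otimes_A N$ for every $A$-module $N$ admitting a finite filtration with graded pieces of the form $A/JA$, or, more generally, $\mathbb{Z}[Q]/K\mathbb{Z}[Q]$ for $P\to Q$ injective. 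Depth will be computed via Ext and Koszul-type long exact sequences, localising everything at $\mathfrak{m}$.

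\textbf{Inequality.} Pass to a regular sequence. Faithfully flat extensions of $S$ change neither side, so we may complete and assume the residue field infinite. Write $I=\mathfrak{p}S$.

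I would not try to prove $\depth S\le\depth\bar S+r$ from a general fact about ideals; no bound of the form $\depth S\le\depth(S/I)+\mu(I)$ holds in that generality. Instead I would use the monoidal structure directly, by induction on $r$ or on the faces of $P$. Choose a face $F\subset P$ of codimension one, so that $\mathfrak{q}=P\setminus F$ is a height-one prime. Then $\mathfrak{q}S$ defines a ring $S/\mathfrak{q}S$ that is $F$-flat, with $\rk F^{\gp}=r-1$; this should follow from the $\Tor$-vanishing via \Cref{primeideals} (ii), since $S\otimes\mathbb{Z}[P]/\mathfrak{q}$ is a module over $\mathbb{Z}[F]$. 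The reduction then comes down to showing
\[
\depth S\le 1+\depth S/\mathfrak{q}S.
\]
For this, filter $A$ by the ideals generated by the $\mathfrak{q}$-levels. Concretely, take $h:P^{\gp}\to\mathbb{Z}$ vanishing on $F$ and positive on $\mathfrak{q}$, and use the $h$-grading: the associated graded pieces are $\mathbb{Z}[F]$-modules of the form $\mathbb{Z}[F]\cdot$(translates), and these are handled by \Cref{primeideals}. Tensoring with $S$ realises $S$ as $\mathfrak{q}$-adically separated with gr pieces that are direct sums of modules of the form (rank-one-ish modules over $S/\mathfrak{q}S$). Choose an element $t\in\mathfrak{q}$ with $h(t)$ minimal. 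The Tor-vanishing gives that $t$ is a nonzerodivisor on $S$, and $S/tS$ is filtered with quotients controlled by $S/\mathfrak{q}S$. The standard depth lemma on short exact sequences then gives $\depth S-1=\depth S/tS\le\depth S/\mathfrak{q}S$, provided the filtration has finitely many steps. The finiteness comes from the fact that $\mathfrak{q}^n\subset tP$ fails in general, but the quotient $P/tP$ is finite modulo $F$-translates. Induction on $r$ then gives the inequality.

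\textbf{Equality for saturated $P$.} Here I would use Hochster: for $P$ saturated, $\mathbb{Z}[P]$ is Cohen–Macaulay, and more precisely $k[P]$ over $k[P]/(\mathfrak{p})$ has a resolution by a Koszul-like complex that behaves well. My approach is to use the Ishida complex, or equivalently that each $A/\mathfrak{q}_F A$ for a face is a normal Cohen–Macaulay monoid algebra. Each step $S\to S/tS$ above then drops depth by exactly one, provided the pieces $S/\mathfrak{q}S$ and the $h$-graded components have the same depth. For saturated $P$, the graded components of $P$ along $h$ are, as $F$-modules, isomorphic to ideals of $F$ up to translation (the relative interior lattice points). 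Their tensor products with $S$ therefore have depth equal to $\depth S/\mathfrak{q}S$ by the inductive hypothesis applied to $F$-flat modules, where the ideal case is handled via the inequality plus Hochster's result that such ideals are maximal Cohen–Macaulay. This yields $\depth S=\depth S/tS+1=\depth S/\mathfrak{q}S+1$, and induction finishes the proof.

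\textbf{Main obstacle.} The hard step is the equality. One must show that the modules $S\otimes_A(\text{graded piece})$ have depth exactly $\depth\bar S+\rk$; this is where saturation (via Hochster's theorem) is essential. For non-saturated $P$, such pieces can have smaller depth, which is why only the inequality holds there.
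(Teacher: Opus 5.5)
There is a genuine gap: nothing in your sketch actually produces an \emph{upper} bound on $\depth S$, and the filtration you rely on does not behave as claimed.

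First, a finite filtration of $S/tS$ whose graded pieces have depth $\ge d$ only gives $\depth S/tS\ge d$. It cannot give $\depth S/tS\le\depth S/\mathfrak qS$, which is what the inequality needs. Note also that the surjection $S/tS\twoheadrightarrow S/\mathfrak qS$ is in general not split over $S$.

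Second, the pieces are not controlled by $S/\mathfrak qS$, and $P\setminus tP$ need not be finite modulo $F$-translates, even for saturated $P$. Writing monoids additively, take
\begin{itemize}
\item $P=\langle(1,0),(1,1),(1,2)\rangle\subset\mathbb{Z}^2$, which is saturated;
\item $F=\langle(1,0)\rangle$, $h(a,b)=b$, and $t=(1,1)$.
\end{itemize}
Then $P\setminus tP=F\cup\{(k,2k):k\ge1\}$. Also $\mathbb{Z}[P]/(t)\cong\mathbb{Z}[u,w]/(uw)$, with $u=[(1,0)]$ in $h$-degree $0$ and $w=[(1,2)]$ in $h$-degree $2$. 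The graded pieces in degrees $2k\ge2$ are $\mathbb{Z}$ with $\mathfrak p$ acting by $0$.

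After tensoring with $S$, the filtration of $S/tS$ is infinite: it has one piece $S/\mathfrak qS$ and infinitely many pieces $S/\mathfrak pS$. The latter can have strictly smaller depth. For $S$ the localised monoid algebra over a field, $\depth S/tS=\depth S/\mathfrak qS=1$ but $\depth S/\mathfrak pS=0$. So the depth lemma gives nothing in either direction.

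Your equality argument fails for the same reason. It also mixes the equally infinite $h$-filtration of $S$ with that of $S/tS$, and invokes an unproved module version of the statement.

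The facet reduction itself is legitimate: $S/\mathfrak qS$ is $F$-flat and $t$ is $S$-regular. But it does not simplify anything. The argument below shows that
\[ \depth S-\depth S/\mathfrak qS=\depth k[[P]]-\depth k[[F]] \]
for every $P$-flat $S$ with residue field $k$. So your inequality $\depth S\le 1+\depth S/\mathfrak qS$ is equivalent to the same statement for a possibly non-Cohen--Macaulay monoid algebra, and your filtration argument does not establish that.

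The missing idea is homological. After completing, take a complete regular local ring $R$ surjecting onto $S$ and work over $A=R[[P]]$.
\begin{enumerate}
\item Flatness of $R[P]\to R[[P]]$ gives $\Tor_j^{A}(R,S)\cong\Tor_j^{R[P]}(R,S)$, which vanishes for $j>0$ by $P$-flatness.
\item Combining this with the change-of-rings spectral sequence for $A\to R\to k$ and the regularity of $R$ gives $\PD_A S<\infty$ (\Cref{pdlemma}).
\item Auslander's depth formula for $\Tor$-independent modules then yields $\depth R=\depth(S/\mathfrak pS)+\PD_A S$.
\item Auslander--Buchsbaum yields $\PD_A S+\depth S=\depth A\le\dim A=\dim R+\rk P^{\gp}$.
\end{enumerate}
Since $\depth R=\dim R$, this is exactly the inequality. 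Equality holds precisely when $A$ is Cohen--Macaulay, which is the case for saturated $P$ by Hochster's theorem. No induction over faces and no filtration are needed.
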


We first show a lemma.

\begin{lemma}\label{pdlemma}
With notation and assumptions as in \Cref{depthequality}. Assume $S$ is complete and let $R\to S$ be a finite local homomorphism with $R$ a regular local ring. If $S$ is $P$-flat, then $\Tor_j^{R[[P]]}(R,S)=0$ for $j>0$ and $S$ has finite projective dimension as an $R[[P]]$-module.
\end{lemma}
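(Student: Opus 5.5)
The plan is to reduce both assertions to the vanishing in \Cref{primeideals}~(iii), transported from $R[P]$ to its completion $R[[P]]$. First I fix the module structure. Since $S$ is complete, the image of $\mathfrak{p}$ lies in $\mathfrak{m}_S$, and $R\to S$ is local, the ring homomorphism $R[P]\to S$ induced by $R\to S$ and $P\to S$ extends uniquely to $R[[P]]\to S$. This map is a local homomorphism of noetherian local rings, since $R[[P]]$ is local and noetherian ($P$ fine, $R$ noetherian) with residue field $k(R)$. Moreover $S$ is a finitely generated $R[[P]]$-module, because it is already finite over $R$. The ring $R$ is viewed as an $R[[P]]$-module via $R[[P]]/\mathfrak{p}R[[P]]\cong R$.

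The first step is to show $\Tor_j^{R[[P]]}(R,S)\cong\Tor_j^{R[P]}(R[P]/\mathfrak{p}R[P],S)$. The ring $R[[P]]$ is the completion of the noetherian ring $R[P]$ at the ideal $\mathfrak{p}R[P]$, so $R[P]\to R[[P]]$ is flat, and $R[[P]]\otimes_{R[P]}R[P]/\mathfrak{p}R[P]\cong R$. Flat base change then gives
\[ R\otimes^L_{R[[P]]}S\cong \bigl(R[P]/\mathfrak{p}R[P]\otimes^L_{R[P]}R[[P]]\bigr)\otimes^L_{R[[P]]}S\cong R[P]/\mathfrak{p}R[P]\otimes^L_{R[P]}S, \]
where $S$ is regarded as an $R[P]$-module by restriction. $P$-flatness of $S$ depends only on the underlying $\mathbb{Z}[P]$-module, so \Cref{primeideals}~(iii) with $J=\mathfrak{p}$ gives the vanishing for $j>0$. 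It also shows that $R\otimes^L_{R[[P]]}S\cong S/\mathfrak{p}S$.

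The second step is finite projective dimension. Write $k$ for the common residue field of $R$ and $R[[P]]$. Since $k$ is an $R$-module, associativity of derived tensor products gives
\[ S\otimes^L_{R[[P]]}k\cong \bigl(S\otimes^L_{R[[P]]}R\bigr)\otimes^L_R k\cong (S/\mathfrak{p}S)\otimes^L_R k. \]
Now $S/\mathfrak{p}S$ is a finite $R$-module and $R$ is regular, so the right-hand side has homology only in degrees $\le\dim R$. Hence $\Tor_j^{R[[P]]}(S,k)=0$ for $j>\dim R$. For a finite module over a noetherian local ring, the minimal free resolution has ranks $\dim_k\Tor_j(S,k)$, so this vanishing yields $\PD_{R[[P]]}S<\infty$. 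In fact $\PD_{R[[P]]}S=\PD_R(S/\mathfrak{p}S)$.

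I expect the main obstacle to be the first step. One must justify that $R[P]\to R[[P]]$ is flat, and be careful that the $R[P]$-module structure on $S$ restricted from $R[[P]]$ is the one to which \Cref{primeideals} applies. This is immediate from the construction, but the base-change identification has to be set up so that no completeness of $S$ beyond what is given is needed. If flatness of completion causes trouble, an alternative is to resolve $R$ over $R[[P]]$ by completing a free resolution of $R[P]/\mathfrak{p}R[P]$ over $R[P]$, using that $R[P]$ is noetherian and its modules are finitely generated.
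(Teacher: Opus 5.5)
Your proposal is correct and follows essentially the same route as the paper. Both use flatness of $R[P]\to R[[P]]$ to identify $\Tor_j^{R[[P]]}(R,S)$ with $\Tor_j^{R[P]}(R,S)$ and then invoke \Cref{primeideals}. Both then pass through $R\to k$ and use the regularity of $R$ to get finite projective dimension.

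The difference is cosmetic. The paper uses the change-of-rings spectral sequence together with the criterion of \cite[0\textsubscript{IV}, 17.2.6]{ega}. You collapse that spectral sequence directly via derived associativity. Your version also gives the explicit equality $\PD_{R[[P]]}S=\PD_R(S/\mathfrak{p}S)$.
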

\begin{proof}
Let $k$ be the residue field of $R$. Consider the $\Tor$ spectral sequence for the composition of the quotient map $R[[P]]\to R$ of ideal generated by $P\setminus\{1\}$ with $R\twoheadrightarrow k$
\[ \Tor_i^R(k,\Tor_j^{R[[P]]}(R,S))\Rightarrow \Tor_{i+j}^{R[[P]]}(k,S) \]
Since $R$ is regular, we have $\Tor_i^R(k,-)=0$ for $i>\dim R$, so, by \cite[0\textsubscript{IV}, 17.2.6]{ega}, it suffices to show that $\Tor_j^{R[[P]]}(R,S)=0$ for $j>0$. To see this, note that $R[P]\to R[[P]]$ is flat (\cite[0\textsubscript{I}, 7.3.3]{ega}), hence $R\otimes_{R[P]}^LR[[P]]\cong R\otimes_{R[P]}R[[P]]\cong R$, whence $R\otimes_{R[[P]]}^LS\cong R\otimes_{R[P]}^LR[[P]]\otimes_{R[[P]]}^LS\cong R\otimes_{R[P]}^LS$. Thus,
\[ \Tor_j^{R[[P]]}(R,S)\cong \Tor_j^{R[P]}(R,S) \]
for all $j$. By \Cref{primeideals}, this implies $\Tor_j^{R[[P]]}(R,S)=0$ for $j>0$.
\end{proof}

\begin{proof}[Proof of \Cref{depthequality}]
We may assume $S$ is complete. By the Cohen structure theorem (\cite[0\textsubscript{IV}, 19.8.8]{ega}), there exists a complete regular local ring $R$ with a surjective (hence finite) local homomorphism $R\to S$. So, by \Cref{pdlemma}, $S$ has finite projective dimension as an $R[[P]]$-module and $\Tor_j^{R[[P]]}(R,S)=0$ for $j>0$. By a well-known result of Auslander (\cite[1.2]{auslander}), this implies
\[ \depth (R)=\depth (S\otimes_{R[[P]]}R)+\PD_{R[[P]]}(S) \]
On the other hand, by the Auslander--Buchsbaum formula, we have
\[ \PD_{R[[P]]}(S)+\depth (S)=\depth (R[[P]])\leq \dim (R[[P]])=\dim R+\rk P^{\gp} \]
where we have used \Cref{diminequality} for the last identity. Since $\dim R=\depth R$ and $S\otimes_{R[[P]]}R\cong S/\mathfrak{p}S$, the desired inequality follows. If $P$ is saturated, then $R[[P]]$ is Cohen--Macaulay (cf. \cite[I, 3.4.3]{logbook}) and we have equality.
\end{proof}

\subsection{Local presentation}
A complete log regular local ring has an explicit presentation as a quotient of a monoid algebra. Here we show that, in the case $P^{\gp}$ is torsion free, a $P$-flat local ring is flat over a similar quotient, under a certain assumption on its reduction modulo the maximal ideal of $P$. For example, it holds if the local ring contains a field.

\begin{proposition}\label{localpresentation}
Let $P$ be a sharp fine monoid such that $P^{\gp}$ is torsion free, and $P\to S$ a homomorphism to a noetherian local ring such that the image of $\mathfrak{p}:=P\setminus\{1\}$ lies in the maximal ideal of $S$. The following are equivalent.
\begin{enumerate}[label=\normalfont(\roman*)]
\item $S$ is $P$-flat and $\Spec(S/\mathfrak{p}S)\to\Spec(\mathbb{Z})$ is flat over its scheme-theoretic image.\footnote{\cite[6.10.1]{ega1}}
\item $\Spec(S)\to\Spec(\mathbb{Z}[P])$ is flat over its scheme-theoretic image.
\item $S$ is flat over $\mathbb{Z}[P]/(\theta)$, for some $\theta\in\mathbb{Z}[P]$ whose constant term is zero if and only if $\theta=0$.
\end{enumerate}
\end{proposition}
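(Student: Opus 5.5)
The plan is to go around the cycle (iii) $\Rightarrow$ (i) $\Rightarrow$ (ii) $\Rightarrow$ (iii). Throughout we replace $\mathbb{Z}[P]$ by its localisation $A:=\mathbb{Z}[P]_{\mathfrak{q}}$, where $\mathfrak{q}$ is the inverse image of $\mathfrak{m}_S$, so that $A\to S$ is a local homomorphism of noetherian local rings. This is harmless for flatness statements and for scheme-theoretic images near the closed point.

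\emph{A preliminary claim.} For $\theta\in\mathbb{Z}[P]$ with constant term $c\neq 0$, we claim $B:=\mathbb{Z}[P]/(\theta)$ is $P$-flat. By \Cref{primeideals}(iii), applied with the free resolution $0\to\mathbb{Z}[P]\xrightarrow{\theta}\mathbb{Z}[P]\to B\to 0$, this amounts to showing that $\theta$ is a nonzerodivisor on $\mathbb{Z}[P]/J\mathbb{Z}[P]=\mathbb{Z}^{(P\setminus J)}$ for every ideal $J\subset P$. Since $P$ is sharp and fine with $P^{\gp}$ torsion free, we can fix a total order on $P^{\gp}$ compatible with the group law in which $P\setminus\{1\}$ is positive (for instance a lexicographic order refining a positive linear form on $P$). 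Then $\theta\cdot\sum a_p[p]$ has lowest term $c\,a_{p_0}[p_0]$, where $p_0$ is the lowest index with $a_{p_0}\neq 0$. This term is nonzero because $p_0\notin J$ and $c\neq 0$ in $\mathbb{Z}$. This is the only place the torsion-freeness of $P^{\gp}$ should be needed. If instead $\theta=0$, then $B=\mathbb{Z}[P]$ is trivially $P$-flat.

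\emph{(iii) $\Rightarrow$ (i).} Suppose $S$ is flat over $B=\mathbb{Z}[P]/(\theta)$, and let $c$ be the constant term of $\theta$. Since $B$ is $P$-flat, we get $S\otimes^L_{\mathbb{Z}[P]}\mathbb{Z}[P]/J\simeq S\otimes_B(B\otimes^L_{\mathbb{Z}[P]}\mathbb{Z}[P]/J)$, whose higher homology vanishes. Hence $S$ is $P$-flat by \Cref{primeideals}. Moreover $S/\mathfrak{p}S$ is flat over $B/\mathfrak{p}B=\mathbb{Z}/(c)$. After localising, $\mathbb{Z}/(c)$ becomes $\mathbb{Z}_{(\ell)}/(\ell^e)$ or $\mathbb{Z}_{(\ell)}$, and a flat local map is faithfully flat, so this is exactly the scheme-theoretic image.

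\emph{(i) $\Rightarrow$ (ii), (ii) $\Rightarrow$ (iii).} Let $n\mathbb{Z}$ be the kernel of $\mathbb{Z}\to S/\mathfrak{p}S$; by locality $n=0$ or $n=\ell^e$. Given (i), apply the local flatness criterion (Matsumura 22.3) to $A':=A/(\text{kernel of }A\to S)$ with respect to the ideal $\mathfrak{p}A'$. The criterion needs $S/\mathfrak{p}S$ to be flat over $A'/\mathfrak{p}A'$, which is the hypothesis that it is flat over its image $\mathbb{Z}/(n)$. It also needs $\Tor_1^{A'}(S,A'/\mathfrak{p}A')=0$. We will compare this with $\Tor^{A}$, which vanishes by \Cref{primeideals}. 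The comparison requires $A'$ to have the form $A/(\theta)$ with $\theta$ as in the preliminary claim, and this is the main obstacle. We must show that the kernel $K$ of $A\to S$ is principal, generated (up to a unit) by an element with constant term $n$.

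The first thing to try is a dimension count, comparing \Cref{diminequality} for $S$ with the same formula for $A/K$ and for $A/(\theta)$. Take any $\theta\in K$ whose constant term is a unit multiple of $n$. Such an element exists after localisation: $n\in\mathfrak{p}S$, and one can hope to lift using that $S$ is $P$-flat and $\Tor^A_1(S,A/\mathfrak{p}A)=0$. Then $A/(\theta)$ is $P$-flat with fibre $\mathbb{Z}_{(\ell)}/(n)$, and $A/(\theta)\to A/K$ should be an isomorphism by comparing fibres over $\mathfrak{p}$ via Nakayama. For (ii) $\Rightarrow$ (iii) one runs the same argument with $K$ the ideal of the scheme-theoretic image. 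The preliminary claim, combined with flatness of $S$ over $A/K$, forces $K$ to be generated by such a $\theta$, or $K=0$ when $n=0$.
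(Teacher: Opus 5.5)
\textbf{Overall.} Your (iii) $\Rightarrow$ (i) is correct. Your ordering argument for the $P$-flatness of $\mathbb{Z}[P]/(\theta)$ is a clean global substitute for the paper's use of \Cref{depthlemma2}, which needs a local setting and the Gabber--Ramero criterion \Cref{maxideals}: in $\theta f$ the coefficient of $[p_0]$, for $p_0$ minimal in the support of $f$, is $c\,a_{p_0}\neq 0$. The other two steps, however, have gaps that cannot be closed, because as written those implications are false.

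\textbf{The gap in (i) $\Rightarrow$ (ii).} You identify $A'/\mathfrak{p}A'$ with $\mathbb{Z}_{(\ell)}/(n)$, i.e.\ you need an element of $K=\ker(A\to S)$ whose constant term is a unit times $n$. Writing $n=\sum s_p[p]$ in $S$ yields such an element only if the $s_p$ come from $A$; this is why the paper first reduces to $S=R/I$. Hypothesis (i) gives no control over this. Take $P=\mathbb{N}$, $S=\mathbb{Z}[x]_{(\ell,x)}$ and $t\mapsto \ell(1+x)$.
\begin{itemize}
\item $S$ is $P$-flat, and $S/\mathfrak{p}S=\mathbb{F}_\ell[x]_{(x)}$ is flat over its scheme-theoretic image $\Spec(\mathbb{F}_\ell)$ in $\Spec(\mathbb{Z})$. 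So (i) holds with $n=\ell$.
\item But $\ell(1+x)$ is transcendental over $\mathbb{Q}$, so $K=0$ and $A'/\mathfrak{p}A'=\mathbb{Z}_{(\ell)}$.
\item $S$ is not flat over $A=\mathbb{Z}[t]_{(\ell,t)}$: flatness would give $\dim S=\dim A+\dim S/\mathfrak{m}_AS=2+1$, whereas $\dim S=2$.
\item Since only $\theta=0$ maps to $0$ in $S$, (iii) fails too.
\end{itemize}

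\textbf{The gap in (ii) $\Rightarrow$ (iii).} You propose to run the same argument, which needs $\Tor_1^A(S,A/\mathfrak{p}A)=0$, i.e.\ $P$-flatness of $S$. Going around the cycle, you only have (ii) at this point. Take $t\mapsto 0$ in $S=\mathbb{Z}_{(\ell)}$.
\begin{itemize}
\item (ii) holds: $K=(t)$ and $S$ is flat over $\mathbb{Z}[t]/(t)=\mathbb{Z}$.
\item $S$ is not $P$-flat, and $n=0$ while $K\neq 0$.
\item (iii) fails: an admissible $\theta$ lies in $(t)$, so has zero constant term and must be $0$, while $S$ is not flat over $\mathbb{Z}[t]$ because $t$ kills it.
\end{itemize}

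\textbf{The paper's proof has the same two holes.} The proof of \Cref{imageflat} applies the local flatness criterion over $\bar R/\mathfrak{p}\bar R$, but the hypothesis only concerns the possibly smaller image $\mathbb{Z}/(n)$. Its proof of (ii) $\Rightarrow$ (iii) uses $P$-flatness of $S$, which the implication (i) $\Rightarrow$ (ii) of \Cref{imageflat} does not provide.

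\textbf{What does hold.} Add $P$-flatness of $S$ to condition (ii). The strengthened condition is then equivalent to (iii) and implies (i). Conversely, (i) implies it when, e.g., $S$ contains a field, since then $n$ is automatically, up to a unit at $\ell$, the constant term of an element of $K$.

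For the strengthened (ii) $\Rightarrow$ (iii), your method works once $\theta$ is obtained properly. Faithful flatness of $A'\to S$ gives $\mathfrak{p}S\cap A'=\mathfrak{p}A'$, hence $A'/\mathfrak{p}A'=\mathbb{Z}_{(\ell)}/(n)$. After clearing denominators, some $\theta\in K\cap\mathbb{Z}[P]$ therefore has constant term $n$ times a unit at $\ell$ (take $\theta=0$ if $n=0$). Your $\Tor$ comparison through the preliminary claim, together with the local flatness criterion over $A/(\theta)$, then gives flatness directly. You need no dimension count and no prior proof that $K=(\theta)$.
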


We first show a couple of lemmata.

\begin{lemma}\label{imageflat}
Let $S$ be a noetherian local ring with maximal ideal $\mathfrak{m}$, $P$ be a fine monoid, $\alpha:P\to S$ a homomorphism, and $\mathfrak{p}:=\alpha^{-1}\mathfrak{m}$. Consider the following conditions.
\begin{enumerate}[label=\normalfont(\roman*)]
\item $\Spec(S)\to\Spec(\mathbb{Z}[P])$ is flat over its scheme-theoretic image.
\item $\Spec(S/\mathfrak{p}S)\to\Spec(\mathbb{Z}[P]/(\mathfrak{p}))$ is flat over its scheme-theoretic image.
\end{enumerate}
We have \textup{(i)} $\Rightarrow$ \textup{(ii)}. If $S$ is $P$-flat, then \textup{(ii)} $\Rightarrow$ \textup{(i)}.
\end{lemma}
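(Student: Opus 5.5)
Let $A:=\mathbb{Z}[P]$, $I\subset A$ the kernel of $A\to S$ (so $\Spec(A/I)$ is the scheme-theoretic image of $\Spec(S)$), and $B:=A/I$. Similarly let $\bar I$ be the kernel of $A/(\mathfrak{p})\to S/\mathfrak{p}S$ and $\bar B:=A/((\mathfrak{p})+\bar I)$ the image ring for (ii). Since $A\to S$ factors through $B$ and $A/(\mathfrak{p})\to S/\mathfrak{p}S$ factors through $\bar B$, both conditions are statements about flatness of $S$ over $B$, resp. of $S/\mathfrak{p}S$ over $\bar B$. Localising at the prime $\mathfrak{q}$ of $A$ pulled back from $\mathfrak{m}$ is harmless, because $S$ is local and flatness over $B$ is equivalent to flatness over $B_{\mathfrak{q}}$.

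\textbf{(i) $\Rightarrow$ (ii).} Assume $S$ is flat over $B$. Then $S/\mathfrak{p}S=S\otimes_B B/\mathfrak{p}B$ is flat over $B/\mathfrak{p}B$. The ring $\bar B$ is the quotient of $B/\mathfrak{p}B$ by the kernel of $B/\mathfrak{p}B\to S/\mathfrak{p}S$. This kernel is killed when tensoring with $S$, since a flat module $N$ over $C$ with $J$ the kernel of $C\to N$-image satisfies $N\otimes_C C/J= N$. Hence $S/\mathfrak{p}S=S/\mathfrak{p}S\otimes_{B/\mathfrak{p}B}\bar B$ is flat over $\bar B$ by base change.

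\textbf{(ii) $\Rightarrow$ (i), assuming $S$ is $P$-flat.} I would use the local flatness criterion (\cite[0\textsubscript{IV}, 10.2.2]{ega} style, or Matsumura 22.3) for the local homomorphism $B_{\mathfrak{q}}\to S$ with ideal $\mathfrak{p}B_{\mathfrak{q}}$, which lies in the radical. The module $S$ is finite over $S$, which is noetherian, and $B_{\mathfrak{q}}$ is noetherian, so the ideal-adic version applies. It then suffices to show two things:
\begin{enumerate}[label=(\alph*)]
\item $S/\mathfrak{p}S$ is flat over $B/\mathfrak{p}B$;
\item $\Tor_1^{B}(S,B/\mathfrak{p}B)=0$.
\end{enumerate}

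For (b), I want to exploit $P$-flatness, which by \Cref{primeideals} gives $\Tor_i^A(S,A/(\mathfrak{p}))=0$ for $i>0$. The key step is to show first that $I\subset\bigcap$ is compatible, i.e.\ that $B$ itself is $P$-flat near $\mathfrak{q}$, so that $\Tor_1^B(S,B/\mathfrak{p}B)$ is controlled by $\Tor^A$. Rather than prove $P$-flatness of $B$, I would use the base-change spectral sequence
\[
\Tor_i^B\bigl(S,\Tor_j^A(B,A/(\mathfrak{p}))\bigr)\Rightarrow \Tor_{i+j}^A(S,A/(\mathfrak{p})).
\]
Its low-degree exact sequence gives a surjection $\Tor_2^A(S,A/(\mathfrak{p}))=0\to \Tor_2^B(S,\ldots)$ and an exact sequence
\[
0=\Tor_1^A(S,A/(\mathfrak{p}))\to \Tor_1^B(S,B/\mathfrak{p}B)\to\ldots,
\]
whose precise form I expect to need care.

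For (a), I need to pass from flatness over $\bar B$ to flatness over $B/\mathfrak{p}B$. I would argue that $\bar B$ and $B/\mathfrak{p}B$ differ only by the kernel of $B/\mathfrak{p}B\to S/\mathfrak{p}S$. Using that $S$ is $P$-flat, one gets $(\mathfrak{p})\otimes_A S\cong\mathfrak{p}S$, and from this I would try to show that $B/\mathfrak{p}B\to S/\mathfrak{p}S$ has the same kernel as $A/(\mathfrak{p})\to S/\mathfrak{p}S$ modulo $I$, i.e.\ $\bar B=B/\mathfrak{p}B$ after localisation.

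\textbf{Main obstacle.} The main obstacle is this kernel identification $\bar B\cong B/\mathfrak{p}B$ (equivalently $(I+(\mathfrak{p}))$ equals the full kernel of $A\to S/\mathfrak{p}S$), together with the vanishing of $\Tor_1^B(S,B/\mathfrak{p}B)$. The identification is false in general, since the image can be smaller than the fibre, so I expect to need $P$-flatness essentially. My first attempt would be to apply the going-down-type result \cite[6.4.34]{gabberramero}, as used in \Cref{diminequality}, to compare associated primes, or else to replace $B$ by an auxiliary flat-in-$P$ quotient.
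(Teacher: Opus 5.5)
Your argument for (i) $\Rightarrow$ (ii) is correct and is exactly the paper's. For (ii) $\Rightarrow$ (i), the paper also applies the local flatness criterion to $\bar R\to S$ with the ideal $\mathfrak p\bar R$, where $\bar R$ is the image of $\mathbb Z[P]$ in $S$ (your $B$).

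The paper gets your condition (b) in one line. By $P$-flatness, $(\mathfrak p)\otimes_{\mathbb Z[P]}S\to S$ is injective. This map factors through the surjection $(\mathfrak p)\otimes_{\mathbb Z[P]}S\twoheadrightarrow\mathfrak p\bar R\otimes_{\bar R}S$. Hence $\mathfrak p\bar R\otimes_{\bar R}S\to S$ is injective, i.e.\ $\Tor_1^{\bar R}(\bar R/\mathfrak p\bar R,S)=0$. Your spectral sequence gives the same result, but you read the low-degree sequence the wrong way round. The edge map $\Tor_1^A(S,A/(\mathfrak p))\to E^2_{1,0}=\Tor_1^B(S,B/\mathfrak pB)$ is surjective, and its source is $0$, so $E^2_{1,0}$ vanishes.

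The genuine gap is your step (a), and it cannot be closed. $P$-flatness does not force $\mathfrak pS\cap\bar R_{\mathfrak q}=\mathfrak p\bar R_{\mathfrak q}$ (with $\mathfrak q$ the contraction of $\mathfrak m$). In fact (ii) $\Rightarrow$ (i) is false as stated. Here is a counterexample:
\begin{itemize}
\item Take $S=\mathbb Z[x]_{(p,x)}$, $P=\mathbb N$, and $\alpha(1)=\tau:=p/(1+x)$.
\item Since $\tau$ is a nonzerodivisor, $S$ is $P$-flat; for $P=\mathbb N$ this is all $P$-flatness asks.
\item We have $\mathfrak pS=\tau S=pS$, so $S/\mathfrak pS=\mathbb F_p[x]_{(x)}$. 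This is flat over $\mathbb F_p$, which is the scheme-theoretic image of $\Spec(S/\mathfrak pS)\to\Spec(\mathbb Z)$. So (ii) holds.
\item But $\tau$ is transcendental over $\mathbb Q$, so $\mathbb Z[t]\to S$ is injective, and (i) would say $S$ is flat over $\mathbb Z[t]$.
\item Then $S/\tau S$ would be flat over $\mathbb Z[t]/(t)=\mathbb Z$. Instead it is a nonzero $\mathbb F_p$-vector space, so (i) fails.
\end{itemize}
In this example (b) holds and (a) fails. The map $\bar R_{\mathfrak q}/\mathfrak p\bar R_{\mathfrak q}=\mathbb Z_{(p)}\to S/\mathfrak pS$ has kernel $p\mathbb Z_{(p)}$. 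This is precisely the discrepancy between image and fibre that you flagged; neither going-down nor any other use of $P$-flatness can remove it.

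The paper's proof has the same hole. The local flatness criterion needs $S/\mathfrak pS$ flat over $\bar R/\mathfrak p\bar R$ itself, while (ii) only gives flatness over its image in $S/\mathfrak pS$. What can be proved is the variant of (ii) asserting that $S/\mathfrak pS$ is flat over $\bar R_{\mathfrak q}/\mathfrak p\bar R_{\mathfrak q}$. Then (a) is a hypothesis, and (b) plus the local criterion give (i). The same example also breaks the paper's local presentation result: $P=\mathbb N$ is sharp with torsion-free $P^{\gp}$, and the example satisfies condition (i) but not (ii) of \Cref{localpresentation}.
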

\begin{proof}
Let $R=\mathbb{Z}[P]$ and $\bar{R}$ the image of $R$ in $S$; $\Spec(\bar{R})$ is the scheme-theoretic image of $\Spec(S)$. If (i) holds, then $\bar{R}\to S$ is flat, hence so is $\bar{R}/\mathfrak{p}\bar{R}\to S/\mathfrak{p}S$, whence (ii).

Now assume $S$ is $P$-flat.

\begin{claim}\label{firstclaim}
If $K\subset\ker(R\to S)$ is an ideal and $\tilde{R}:=R/K$, then $\Tor_1^{\tilde{R}}(\tilde{R}/\mathfrak{p}\tilde{R},S)=0$.
\end{claim}
\begin{proof}[Proof of claim]\renewcommand{\qedsymbol}{$\Diamond$}
By $P$-flatness, the map $(\mathfrak{p})\otimes_RS\to S$ is injective and factors over a surjective map $(\mathfrak{p})\otimes_RS\twoheadrightarrow \mathfrak{p}\tilde{R}\otimes_{\tilde{R}}S$. It follows that the map $\mathfrak{p}\tilde{R}\otimes_{\tilde{R}}S\to S$ is also injective, which implies the claim.
\end{proof}

\noindent In particular, $\Tor_1^{\bar{R}}(\bar{R}/\mathfrak{p}\bar{R},S)=0$ and (ii) implies (i) by the local flatness criterion (\cite[0\textsubscript{III}, 10.2.2]{ega}).
\end{proof}

\begin{lemma}\label{depthlemma2}
Let $S$ be a noetherian local ring with maximal ideal $\mathfrak{m}$, $P$ be a fine monoid, $\alpha:P\to S$ a homomorphism, and $\mathfrak{p}:=\alpha^{-1}\mathfrak{m}$. For a finitely generated $S$-module $M$ and an $M$-regular sequence $x_1,...,x_n\in S$, the following are equivalent.
\begin{enumerate}[label=\normalfont(\roman*)]
\item $M/(x_1,...,x_m)M$ is $P$-flat for $0\leq m\leq n$.
\item The images of $x_1,...,x_n$ in $M/\mathfrak{p}M$ form a regular sequence.
\end{enumerate}
\end{lemma}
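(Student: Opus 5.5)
The plan is to induct on $n$, using \Cref{maxideals} to reduce $P$-flatness of every module in sight to a single $\Tor$-vanishing. Put $A:=\mathbb{Z}[P]/(\mathfrak{p})$. For any $\mathbb{Z}[P]$-module $N$, the map $(\mathfrak{p})\otimes_{\mathbb{Z}[P]}N\to N$ is injective if and only if $\Tor_1^{\mathbb{Z}[P]}(A,N)=0$. This follows from the long exact sequence attached to $0\to(\mathfrak{p})\to\mathbb{Z}[P]\to A\to 0$. Each $M_m:=M/(x_1,\dots,x_m)M$ is a finitely generated $S$-module, so by \Cref{maxideals} we get
\[ M_m \text{ is } P\text{-flat} \iff T(M_m)=0, \qquad T(N):=\Tor_1^{\mathbb{Z}[P]}(A,N). \]
I also need $T(N)$ to be a finitely generated $S$-module when $N$ is. Since $\mathbb{Z}[P]$ is noetherian, $A$ has a resolution by finite free $\mathbb{Z}[P]$-modules. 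Tensoring with $N$ gives a complex of finitely generated $S$-modules, so its homology is finitely generated. We may take $x_i\in\mathfrak{m}$, since otherwise regularity fails because the quotient vanishes.

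The key step is the case of one element. Let $x\in\mathfrak{m}$ be $N$-regular. Apply $A\otimes^L_{\mathbb{Z}[P]}-$ to $0\to N\xrightarrow{x}N\to N/xN\to 0$. This gives an exact sequence
\[ T(N)\xrightarrow{x}T(N)\to T(N/xN)\to N/\mathfrak{p}N\xrightarrow{x}N/\mathfrak{p}N . \]
Two consequences follow.

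(a) Suppose $N$ is $P$-flat. Then $T(N/xN)\cong\ker(x\mid N/\mathfrak{p}N)$. Hence $N/xN$ is $P$-flat if and only if $x$ is a nonzerodivisor on $N/\mathfrak{p}N$.

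(b) Suppose $x$ is a nonzerodivisor on $N/\mathfrak{p}N$. Then $T(N/xN)\cong T(N)/xT(N)$. By Nakayama (using that $T(N)$ is finitely generated and $x\in\mathfrak{m}$), $N$ is $P$-flat if and only if $N/xN$ is $P$-flat.

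The non-vanishing part of "regular sequence" comes for free. If $N/xN\neq0$, then $N/(\mathfrak{p},x)N\neq0$ by Nakayama, because $\alpha(\mathfrak{p})\subset\mathfrak{m}$. The same argument applies at every stage.

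Now iterate with $N=M_{m-1}$ and $x=x_m$, so that $N/xN=M_m$ and $N/\mathfrak{p}N=(M/\mathfrak{p}M)/(x_1,\dots,x_{m-1})$.

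For (i)$\Rightarrow$(ii): at each $m$, both $M_{m-1}$ and $M_m$ are $P$-flat. By (a), $x_m$ is regular on $(M/\mathfrak{p}M)/(x_1,\dots,x_{m-1})$.

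For (ii)$\Rightarrow$(i): by (b), the modules $M_0,\dots,M_n$ are either all $P$-flat or none are. So (i) holds as soon as one of them is $P$-flat, for instance $M=M_0$.

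The main obstacle is this last point. For $n=0$, condition (ii) is vacuous while (i) asserts that $M$ is $P$-flat, so the implication (ii)$\Rightarrow$(i) cannot hold verbatim without extra input. I would therefore prove the lemma in the form: under (ii), $P$-flatness of $M$ is equivalent to $P$-flatness of $M_n$ and of every $M_m$; and if $M$ is $P$-flat, then (i)$\Leftrightarrow$(ii). This appears to be the form needed later, for $M=S$ $P$-flat. The only other point requiring care is the finiteness of $T(N)$ that justifies Nakayama in (b), and that is handled by the finite free resolution of $A$ above.
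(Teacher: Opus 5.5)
Your argument is essentially the paper's. The paper applies the snake lemma to multiplication by $x$ on the map $(\mathfrak{p})\otimes_{\mathbb{Z}[P]}N\to N$. That map has kernel $T(N)$ and cokernel $N/\mathfrak{p}N$, so the snake lemma produces exactly your four-term sequence, and the paper then inducts on $n$ using \Cref{maxideals}.

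Your caveat about (ii)$\Rightarrow$(i) is correct, and the failure is not confined to $n=0$. Take $\alpha:\mathbb{N}\to S=k[t,s]_{(t,s)}/(t^2)$ with $1\mapsto t$, $M=S$ and $x_1=s$. Then (ii) holds, since $M/\mathfrak{p}M\cong k[s]_{(s)}$ and $s$ is regular on it. But neither $M$ nor $M/sM$ is $P$-flat: $(\mathfrak{p})\otimes_{\mathbb{Z}[\mathbb{N}]}N\cong N$, and the map to $N$ is multiplication by $t$, which is not injective on either module.

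The paper's proof has exactly this gap. Its claim that the snake lemma gives (ii)$\Rightarrow$(i) when $n=1$, and the base of its induction, both tacitly assume that $M$ is $P$-flat. That assumption does hold wherever the paper uses (ii)$\Rightarrow$(i): in the proofs of \Cref{localpresentation} and \Cref{lcivanishing}, $M$ is a local ring of $\D(P)$, or of $\D(P)$ times a regular scheme. So your amended statement is the right one.

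Your Nakayama step (b) is not needed for the paper's purposes. It does give slightly more: under (ii), it suffices to know that $M_n$ alone is $P$-flat.
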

\begin{proof}
For a finitely generated $S$-module $N$ and $N$-regular element $x\in S$, we have a commutative diagram with exact rows
\[ \begin{tikzcd}
& (\mathfrak{p})\otimes_{\mathbb{Z}[P]}N \arrow[r,"x"] \arrow[d] & (\mathfrak{p})\otimes_{\mathbb{Z}[P]}N \arrow[r] \arrow[d] & (\mathfrak{p})\otimes_{\mathbb{Z}[P]}(N/xN) \arrow[r] \arrow[d] & 0 \\
0 \arrow[r] & N \arrow[r,"x"] & N \arrow[r] & N/xN & 
\end{tikzcd} \]
and the snake lemma combined with \Cref{maxideals} proves the equivalence of (i) and (ii) in the case $n=1$. We prove the general case by induction on $n$. If (i) holds, then, as $x_n$ is regular on $N:=M/(x_1,...,x_{n-1})$, the diagram proves that $x_n$ is regular on $N/\mathfrak{p}N$, so by induction (ii) holds. Conversely, if (ii) holds, then $x_n$ is regular on $M/(x_1,...,x_{n-1},\mathfrak{p})M=N/\mathfrak{p}N$, so the commutative diagram and induction imply (i).
\end{proof}

\begin{proof}[Proof of \Cref{localpresentation}]
By \Cref{imageflat}, (i) and (ii) are equivalent, so it suffices to show that (iii) $\Rightarrow$ (i) and (ii) $\Rightarrow$ (iii).

Let $R=\mathbb{Z}[P]$; $R$ is an integral domain since $P^{\gp}$ is torsion free and contains $P$, hence any nonzero element of $R$ is regular. In particular, for $\theta$ as in (iii), $\mathbb{Z}[P]/(\theta)$ is $P$-flat by \Cref{depthlemma2}. Thus, (iii) implies (i) and it remains to show that (ii) implies (iii). For this we may assume $S=R/I$ is a quotient of $R$.

Let $n$ be a generator of the kernel of $\mathbb{Z}\cong R/\mathfrak{p}R\to S/\mathfrak{p}S$.  We have $n\in\mathfrak{p}S$, so we can write $n=\sum_{p\in\mathfrak{p}}s_p[p]$ for some $s_p\in S$. Choose lifts $r_p\in R$ of $s_p$ for all $p$ and define $\theta:=n-\sum_{p\in\mathfrak{p}}r_p[p]$; then $\theta\in I$.

Now, by \Cref{firstclaim}, the $P$-flatness of $S$ implies $\Tor_1^{R/(\theta)}(R/(\theta,\mathfrak{p}),S)=0$. Since $S/\mathfrak{p}S$ is flat over $R/(\theta,\mathfrak{p})\cong\mathbb{Z}/n$ (by \Cref{imageflat}), the local flatness criterion (\cite[0\textsubscript{III}, 10.2.2]{ega}) implies that $S$ is flat over $R/(\theta)$. So it remains to show that $\theta$ has nonzero constant term if $\theta\neq 0$. For this we may assume $R=\mathbb{Z}_l[[P]]$, where $l$ is the residue characteristic of $S$, and $S=R/(\theta)$. Then, by \Cref{depthlemma2}, we have $\theta\not\in\mathfrak{p}R$, as required.
\end{proof}

\section{t-flatness}\label{tflatsection}
Here we extend the results of the previous section to fine log schemes using the notion of t-flatness for log schemes (\Cref{tflatdef}), an intrinsic property that is local for the log flat topology. We introduce the notion of logarithmic dimension (resp. logarithmic depth) and show that it coincides with the classical notion for t-flat fine (resp. fs) log schemes. We also discuss the log cotangent complex of t-flat schemes and use it to give homological characterisations of t-flatness and related local properties.

For a log scheme $(X,M_X)$ and geometric point $x\to X$,\footnote{see \cref{conventions} for our conventions on geometric points} define $C(M_{X,x})\subset X_{(x)}$ to be the closed subscheme of ideal generated by the image of $M_{X,x}\setminus M_{X,x}^*$ in $\mathcal{O}_{X,x}$. We call $C(M_{X,x})$ the \emph{logarithmic stratum \emph{(of $(X_{(x)},M_X|_{X_{(x)}})$)} through $x$}. For a fine Zariski log structure $M_X$ and $x\in X$, one can define $C(M_{X,x})$ as a closed subscheme of the localisation of $X$ at $x$; then, as $x$ varies, the union of the $C(M_{X,x})$ forms a stratification of $X$, called the \emph{logarithmic \emph{(or \emph{canonical})} stratification} (\cite[2.2.10]{ATtorification}).

We assume all schemes are locally noetherian.

\subsection{Definition and basic properties}
We begin with a definition.

\begin{definition}\label{tflatdef}
Let $(X,M_X)$ be a log scheme with integral log structure and $x\to X$ a geometric point. We say that $(X,M_X)$ is \emph{t-flat at $x$} if $\mathcal{O}_{X,x}$ is $M_{X,x}$-flat.

We say that $(X,M_X)$ is \emph{t-flat} if it is $t$-flat at every geometric point $x\to X$.
\end{definition}

\begin{remark}
This notion is due to Gabber (cf. \cite{toricflattening}, \cite{thompson}), who defined it more generally for $\mathcal{O}_X$-modules to show a toric analogue of the classical Raynaud--Gruson flattening theorem. The terminology \emph{t-flat} (\emph{t} for \emph{toric}) is natural since an fs log scheme over a field is t-flat if and only if, locally for the \'etale topology, it is flat over a toric variety (cf. \Cref{tlflogflat}).
\end{remark}

\begin{example}
A log regular scheme is t-flat (cf. \cite[6.1]{logreg}). In particular, if $P$ is an fs monoid, then $\D(P)$ is t-flat.\footnote{see \cref{conventions} for the notation $\D(P)$} In fact, this is true for any fine monoid $P$ and, for a scheme $S$, $\D(P)\times S$ is also t-flat (see \Cref{tlflogflat} below).
\end{example}

\begin{proposition}\label{logstrinjective}
Let $(X,M_X)$ be a log scheme with integral log structure. If $(X,M_X)$ is t-flat at a geometric point $x\to X$, then the structure map $M_{X,x}\to\mathcal{O}_{X,x}$ is injective.
\end{proposition}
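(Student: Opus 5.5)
To show injectivity, I will take two elements $m,m'\in M_{X,x}$ with the same image in $S:=\mathcal{O}_{X,x}$ and prove $m=m'$. Write $P:=M_{X,x}$; it is integral, so $P\to P^{\gp}$ is injective. The idea is to apply the definition of t-flatness (\Cref{tflatdef}, i.e.\ \Cref{Pflatdef}) to a principal ideal of $P$, where $\mathbb{Z}[P]$-flatness becomes a statement about relations in $S$. First I dispose of units: since $M_{X,x}^*\to\mathcal{O}_{X,x}^*$ is an isomorphism (log structure), multiplying by units lets me reduce the question. For instance, if $m=um'$ with $u\in P^*$, then $\alpha(m)=\alpha(m')$ forces $\alpha(u)=1$ whenever $\alpha(m')$ is a nonzerodivisor, hence $u=1$. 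However, $\alpha(m')$ could a priori be a zero divisor, or even zero, so this only handles a special case and the main work lies elsewhere.

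The main step uses the principal ideal $J:=mP\cup m'P$, or simply $J=m'P$. Consider the element $\xi:=[m]\otimes 1-[m']\otimes 1\in (J)\otimes_{\mathbb{Z}[P]}S$ with $J=mP\cup m'P$. Its image in $S$ is $\alpha(m)-\alpha(m')=0$, so injectivity of \cref{idealtensormap} gives $\xi=0$ in $(J)\otimes_{\mathbb{Z}[P]}S$. I then need to show that this forces $m=m'$. For this I construct a $\mathbb{Z}[P]$-linear functional, or rather an $S$-valued map, on $(J)\otimes S$ that separates $[m]\otimes1$ from $[m']\otimes1$. The ideal $(J)\subset\mathbb{Z}[P]$ is free as a $\mathbb{Z}$-module on the basis $J$, and it carries a $P$-graded structure. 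Tensoring with $S$ realises $(J)\otimes_{\mathbb{Z}[P]}S$ as the quotient of $S^{(J)}$ by the relations $[pj]\otimes s=[j]\otimes \alpha(p)s$. The task is then to show that $[m]-[m']$ is not in the span of these relations.

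The cleanest way to do this is to use the equivalent form \Cref{primeideals}(iii): $\Tor_1^{\mathbb{Z}[P]}(\mathbb{Z}[P]/(J),S)=0$, together with base change to the quotient $\mathbb{Z}[P]/(J)$. Concretely, if $m\neq m'$, I would choose an ideal separating them, namely $J:=m'P$ when $m\notin m'P$. Then in $\mathbb{Z}[P]/(J)\otimes S=S/\alpha(J)S$ the class of $[m]$ equals $\alpha(m)=\alpha(m')\in\alpha(J)S$. Using flatness to transfer the relation back along the $P$-graded resolution should produce a relation $m=pm'$ inside $P$, and hence $m\in m'P$, which contradicts the choice of $J$. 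Symmetrically, one obtains $m'\in mP$, so that $m=um'$ with $u\in P^*$. To finish I would use \Cref{maxideals} and \Cref{primeideals}(iv) to test against the prime ideal $\mathfrak{p}=P\setminus P^*$, which reduces the remaining unit ambiguity to the unit case handled by $P^*\cong S^*$.

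The main obstacle is precisely this transfer step: extracting a monoid relation from the vanishing of $\xi$. I would attack it via the equational criterion of flatness adapted to monoids, which is the content of \cite[6.1.40]{gabberramero} behind \Cref{nonpointedPflat}. That criterion says $\varphi$-flatness means that every relation in $S$ among the images of elements of $P$ factors through relations holding in $P$ itself. Applied to the single relation $1\cdot\alpha(m)-1\cdot\alpha(m')=0$, it yields elements $p,p'\in P$ and $s\in S$ with $pm=p'm'$ in $P$ and compatible coefficients. Iterating this and using integrality of $P$ should give $m=m'$.
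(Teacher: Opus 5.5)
There is a genuine gap: the step that turns $\xi=0$ into a relation in $P$ is never carried out (you only say it ``should'' follow), and the mechanisms you propose cannot deliver it.

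\begin{itemize}
\item With $J=m'P$, integrality of $P$ makes $[m']$ a nonzerodivisor of $\mathbb{Z}[P]$, so $(J)\cong\mathbb{Z}[P]$. Injectivity of \cref{idealtensormap} for this $J$ therefore says exactly that $\alpha(m')$ is a nonzerodivisor on $S$. It says nothing that would force $m\in m'P$ from $\alpha(m)\in\alpha(m')S$.
\item ``Iterating and using integrality'' cannot suffice on its own. If $P$ is a group, its only ideals are $\emptyset$ and $P$, so every $\mathbb{Z}[P]$-module is $P$-flat, yet $P\to S$ need not be injective. What must enter at this step is that $S=\mathcal{O}_{X,x}$ is local and $\alpha^{-1}(S^*)=P^*\cong S^*$. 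Your main step never uses this.
\item The closing appeal to \Cref{maxideals} and \Cref{primeideals}(iv) is unavailable. Both require $P$ fine, while $M_{X,x}$ is merely integral (it contains $\mathcal{O}_{X,x}^*$).
\item Testing against $\mathfrak{p}=P\setminus P^*$ only returns the relation $\alpha(m')(\alpha(u)-1)=0$ you started from.
\end{itemize}

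The repair is short.

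\begin{enumerate}
\item The observation above, applied with $J=pP$, shows every $\alpha(p)$ is a nonzerodivisor on $S$. This settles your unit case in general. Your worry that $\alpha(m')$ might be a zero divisor or zero is vacuous under t-flatness.
\item Take $J=mP\cup m'P$. By integrality, the kernel of $\mathbb{Z}[P]^2\to(J)$, $(f,g)\mapsto f[m]+g[m']$, is spanned over $\mathbb{Z}$ by the vectors $([p],-[q])$ with $pm=qm'$. Hence $(J)\otimes_{\mathbb{Z}[P]}S$ is $S^2$ modulo the $S$-span of the $(\alpha(p),-\alpha(q))$, and $\xi=0$ means $(1,-1)=\sum_i s_i(\alpha(p_i),-\alpha(q_i))$ with $p_im=q_im'$.
\item Since $S$ is local, $1=\sum_i s_i\alpha(p_i)$ forces some $\alpha(p_i)\in S^*$. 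Hence $p_i\in P^*$, and $m=cm'$ with $c=p_i^{-1}q_i\in P$.
\item Then $(\alpha(c)-1)\alpha(m')=0$, so $\alpha(c)=1$ by step 1. Thus $c\in\alpha^{-1}(S^*)=P^*$, and $c=1$ because $P^*\to S^*$ is injective.
\end{enumerate}

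Completed this way, your argument would be a self-contained, elementary alternative to the paper's proof. The paper instead reduces to strictly local $X$ and invokes the argument of \cite[12.5.2]{gabberramero}.
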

\begin{proof}
We may assume $X$ is strictly local of closed point $x$. Write $M:=M_{X,x}$ and let $\mathcal{M}$ denote the log structure associated to the canonical map $M\to\mathcal{O}_{X,x}\cong\Gamma(X,\mathcal{O}_X)$. Then $\mathcal{M}_x=M$ and, since $\mathcal{O}_{X,x}$ is $M$-flat, the proof of \cite[12.5.2]{gabberramero} shows that the map $\mathcal{M}_x\to\mathcal{O}_{X,x}$ is injective.
\end{proof}

The following key property, which depends on local noetherianity, is due to Gabber \cite{toricflattening}.

\begin{proposition}\label{chartindependent}
Assume $(X,M_X)$ is fine. The following are equivalent.
\begin{enumerate}[label=\normalfont(\roman*)]
\item $(X,M_X)$ is t-flat at a geometric point $x\to X$.
\item For any fine chart $P\to M_{X}$, $\mathcal{O}_{X,x}$ is $P$-flat.
\end{enumerate}
\end{proposition}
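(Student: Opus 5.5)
Throughout, set $M:=M_{X,x}$. Given a fine chart $P\to M_X$ near $x$, write $\beta\colon P\to M$ for the induced map and $\mathfrak{p}:=\beta^{-1}(M\setminus M^*)$. The plan is to use \Cref{maxideals} to reduce both conditions to a single injectivity statement at the maximal ideals. Then we compare that statement for $P$ and for $M$ through the chart.

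By \Cref{maxideals} applied to $P\to\mathcal{O}_{X,x}$, $P$-flatness of $\mathcal{O}_{X,x}$ is equivalent to injectivity of
\[(\mathfrak{p})\otimes_{\mathbb{Z}[P]}\mathcal{O}_{X,x}\to\mathcal{O}_{X,x}.\]
The monoid $M$ is not fine, but $\overline{M}$ is fine and $M\cong M^*\times\overline{M}$ noncanonically. So the first step is to reduce $M$-flatness to flatness over a fine monoid. Choose a section $\overline{M}\to M$; this is possible since $\overline{M}^{\gp}$ is finitely generated and we may pass to a sharp fine chart $\overline{M}\to M$ if $\overline{M}^{\gp}$ is torsion free, or otherwise work with an exact chart. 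Ideals of $M$ correspond to ideals of $\overline{M}$, and for an ideal $J\subset M$ with image $\bar J$ we have
\[(J)\otimes_{\mathbb{Z}[M]}\mathcal{O}_{X,x}\cong(\bar J)\otimes_{\mathbb{Z}[\overline{M}]}\mathcal{O}_{X,x},\]
because units of $M$ act by units of $\mathcal{O}_{X,x}$. Hence $M$-flatness is equivalent to injectivity of
\[(\mathfrak{m}_M)\otimes_{\mathbb{Z}[M]}\mathcal{O}_{X,x}\to\mathcal{O}_{X,x}\]
for the maximal ideal $\mathfrak{m}_M$ of $M$, by \Cref{maxideals} applied to the fine monoid $\overline{M}$ (or more directly to a fine chart lifting $\overline M$).

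The key comparison is then between the two tensor products
\[(\mathfrak{p})\otimes_{\mathbb{Z}[P]}\mathcal{O}_{X,x}\quad\text{and}\quad(\mathfrak{m}_M)\otimes_{\mathbb{Z}[M]}\mathcal{O}_{X,x}.\]
Since $P\to M$ is a chart, $\overline{M}=P/\beta^{-1}(M^*)$ after localisation. Concretely, $M$ is the pushout $P_{\mathfrak{q}}\oplus_{\beta^{-1}(M^*)}\mathcal{O}^*$ with $\mathfrak{q}$ the face $\beta^{-1}(M^*)$. Thus $\mathbb{Z}[P]\to\mathbb{Z}[M]$ factors as a localisation $\mathbb{Z}[P]\to\mathbb{Z}[P_{\mathfrak{q}}]$, which is flat and under which $(\mathfrak{p})$ extends to $(\mathfrak{p}P_{\mathfrak{q}})$, followed by base change along the units. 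Elements of $\mathfrak{q}$ map to units of $\mathcal{O}_{X,x}$, so
\[(\mathfrak{p})\otimes_{\mathbb{Z}[P]}\mathcal{O}_{X,x}\cong(\mathfrak{p}P_{\mathfrak{q}})\otimes_{\mathbb{Z}[P_{\mathfrak{q}}]}\mathcal{O}_{X,x}.\]
Likewise $(\mathfrak{m}_M)=(\mathfrak{p}P_{\mathfrak{q}})\mathbb{Z}[M]$, and $\mathbb{Z}[M]=\mathbb{Z}[P_{\mathfrak{q}}]\otimes_{\mathbb{Z}[\mathfrak{q}^{\gp}]}\mathbb{Z}[M^*]$ is flat over $\mathbb{Z}[P_{\mathfrak{q}}]$, since $\mathfrak{q}^{\gp}\to M^*$ is injective by integrality. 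Flat base change then identifies the two tensor products compatibly with their maps to $\mathcal{O}_{X,x}$. This gives (i)$\Leftrightarrow$(ii) for every fine chart.

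The main obstacle is justifying that the chart presents $M$ as exactly this pushout, including injectivity of $\mathfrak{q}^{\gp}\to M^*$ and the required flatness, since charts need not be injective or exact. I would first reduce to the strict localisation, where $M=P\oplus_{\beta^{-1}(\mathcal{O}^*)}\mathcal{O}^*_{X,x}$ holds by the definition of the associated log structure. I would then check the needed flatness using integrality of $M$, or alternatively invoke \Cref{monoidflatness} with an injective Kummer-type comparison if a direct argument fails.
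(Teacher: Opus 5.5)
Your direct comparison through the pushout description of $M:=M_{X,x}$ is a different route from the paper, which only cites Gabber--Ramero. As written, though, it has two gaps.

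The first gap is in the key comparison. You claim that $\mathfrak{q}^{\gp}\to M^*$ is injective ``by integrality'', so that $\mathbb{Z}[M]$ is flat over $\mathbb{Z}[P_{\mathfrak q}]$. This is false, because charts need not be injective. Take $P=\mathbb{N}^2\to\mathcal{O}_{X,x}$, $(a,b)\mapsto t^a$, with $t$ in the maximal ideal; this is a fine chart of the log structure generated by $t$. Here $\mathfrak{q}=0\oplus\mathbb{N}$ and the map $\mathfrak{q}^{\gp}\to\mathcal{O}_{X,x}^*$ is trivial. The map $\mathbb{Z}[P_{\mathfrak q}]=\mathbb{Z}[s,u^{\pm1}]\to\mathbb{Z}[M]$ sends the nonzerodivisor $u-1$ to $0$, so it is not flat.

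What integrality of $P$ actually gives is that $G:=\mathfrak{q}^{\gp}=P_{\mathfrak q}^*$ acts freely on $P_{\mathfrak q}$, so that $M=(P_{\mathfrak q}\times\mathcal{O}_{X,x}^*)/G$. That is enough. An ideal $I\subset P_{\mathfrak q}$ is a union of free $G$-orbits, so $(I)$ is a $\mathbb{Z}[G]$-direct summand of the free $\mathbb{Z}[G]$-module $\mathbb{Z}[P_{\mathfrak q}]$. Hence
\[ (I)\otimes_{\mathbb{Z}[P_{\mathfrak q}]}\mathbb{Z}[M]\cong (I)\otimes_{\mathbb{Z}[G]}\mathbb{Z}[\mathcal{O}_{X,x}^*]\longrightarrow \mathbb{Z}[P_{\mathfrak q}]\otimes_{\mathbb{Z}[G]}\mathbb{Z}[\mathcal{O}_{X,x}^*]\cong\mathbb{Z}[M] \]
is injective with image $(IM)$, even though $\mathbb{Z}[G]\to\mathbb{Z}[\mathcal{O}_{X,x}^*]$ need not be flat. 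Combined with your (correct) localisation step, this gives $(I)\otimes_{\mathbb{Z}[P]}\mathcal{O}_{X,x}\cong (IM)\otimes_{\mathbb{Z}[M]}\mathcal{O}_{X,x}$ for every ideal $I\subset P$, compatibly with the maps to $\mathcal{O}_{X,x}$.

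The second gap is the reduction of $M$-flatness to the single ideal $\mathfrak{m}_M$. A section $\overline{M}\to M$ exists if and only if the extension $1\to\mathcal{O}_{X,x}^*\to M^{\gp}\to\overline{M}^{\gp}\to 1$ splits, and finite generation of $\overline{M}^{\gp}$ does not ensure this. If $\overline{M}^{\gp}$ has $p$-torsion, with $p$ the residue characteristic, splitting can fail because $\mathcal{O}_{X,x}^*$ need not be $p$-divisible. This is why \Cref{neatcharts} (ii) only provides neat charts fppf-locally. You cannot pass to an fppf cover here either, since \Cref{flatcover} is deduced from the present proposition. Your fallback of working with an exact chart $P_0$ would need $M$-flatness $\Leftrightarrow$ $P_0$-flatness, which is the statement being proved.

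Both gaps disappear if you apply the identification above to all ideals, not just the maximal one. Any ideal $J\subset M$ is stable under $M^*$, hence $J=\beta^{-1}(J)M$. So $P$-flatness and $M$-flatness are equivalent ideal by ideal, for any fine chart. This gives a self-contained elementary proof, with no need for a section or for \Cref{maxideals}, in place of the paper's appeal to \cite[12.1.11, 12.5.1]{gabberramero}.
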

\begin{proof}
By \cite[12.1.11]{gabberramero} and \Cref{nonpointedPflat}, we may pass to pointed monoids and then the assertion follows from \cite[12.5.1]{gabberramero}.
\end{proof}

In particular, this implies that t-flatness is open in the \'etale topology.

\begin{corollary}\label{gentflat}
Assume $(X,M_X)$ is fine. If $(X,M_X)$ is t-flat at a geometric point $x\to X$, then $(X,M_X)$ is t-flat in an \'etale neighbourhood of $x$.
\end{corollary}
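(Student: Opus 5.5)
We will reduce the statement to a chart and a finitely presented module-theoretic condition, and then spread it out. Since $(X,M_X)$ is fine, after replacing $X$ by an étale neighbourhood of $x$ we may choose a fine chart $P\to M_X$ on $X$. We may even take it to be neat at $x$, so that $P\cong \overline{M}_{X,x}$ is sharp. By \Cref{chartindependent}, t-flatness at any geometric point $y$ of this neighbourhood is then equivalent to $\mathcal{O}_{X,y}$ being $P$-flat. By \Cref{primeideals} (iv), this amounts to injectivity of $(J)\otimes_{\mathbb{Z}[P]}\mathcal{O}_{X,y}\to\mathcal{O}_{X,y}$ for the finitely many prime ideals $J\subset P$; recall that a fine monoid has only finitely many faces. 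Since $P$ is fine and $\mathbb{Z}[P]$ is noetherian, each $(J)$ is a finitely presented $\mathbb{Z}[P]$-module.

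Shrinking $X$ to be affine, say $X=\Spec A$ with $A$ noetherian, we consider for each prime $J$ the finitely generated $A$-module
\[ K_J:=\ker\bigl((J)\otimes_{\mathbb{Z}[P]}A\to A\bigr). \]
Formation of this kernel commutes with the flat base change $A\to\mathcal{O}_{X,y}$, because strict localisation is flat. Hence $(X,M_X)$ is t-flat at $y$ if and only if $(K_J)_{\bar y}=0$ for all $J$, which is equivalent to $(K_J)_{\mathfrak{q}}=0$ at the underlying prime $\mathfrak{q}$ of $y$. The t-flat locus of the neighbourhood is therefore the complement of $\bigcup_J\operatorname{Supp}K_J$. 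This is a finite union of closed sets, since each $K_J$ is finitely generated, so the complement is open and contains the image of $x$. Pulling this open subset back to the étale neighbourhood gives the desired étale neighbourhood of $x$.

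The main point to check is the reliance on \Cref{chartindependent}: it lets us test $P$-flatness at every $y$ using the single chart $P$, rather than the varying stalks $M_{X,y}$. This is exactly where local noetherianity enters. A second point is the compatibility of the kernel with flat base change, which is routine. It suffices to test only the prime ideals by \Cref{primeideals} (iv), or alternatively all ideals generated by finitely many elements of $P$. Once those are in place, the argument is a standard support computation.
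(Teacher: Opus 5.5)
Your argument is correct and is essentially the paper's proof. You choose a fine chart, use \Cref{chartindependent} and \Cref{primeideals} (iv) to reduce to injectivity of $(J)\otimes_{\mathbb{Z}[P]}\mathcal{O}_X\to\mathcal{O}_X$ for the finitely many primes $J\subset P$, and spread this out from $x$ to an open neighbourhood. One small correction: a chart neat at $x$ exists étale locally only when $\overline{M}_{X,x}^{\gp}$ is torsion free (in general only fppf locally, \Cref{neatcharts} (ii)), but you never use neatness, so you should simply drop that sentence.
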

\begin{proof}
We may assume we have a fine chart $P\to M_{X}$. Then, for a prime ideal $J\subset P$, the map $(J)\otimes_{\mathbb{Z}[P]}\mathcal{O}_X\to \mathcal{O}_X$ is injective at $x$, hence also in an open neighbourhood $U_J$. Since $\Spec(P)$ is finite (\cite[5.5]{logreg}), setting $U:=\cap_{J\in\Spec(P)}U_J$ we deduce from \Cref{primeideals} that $\mathcal{O}_{X,y}$ is $P$-flat for any geometric point $y\to U$. By \Cref{chartindependent}, this implies that $(U,M_X|_U)$ is t-flat.
\end{proof}

\begin{corollary}\label{flatcover}
Let $f:(X,M_X)\to (Y,M_Y)$ be a morphism of fine log schemes and $x\to X$ a geometric point. If $f$ is strict log flat at $x$, then $(X,M_X)$ is t-flat at $x$ if and only if $(Y,M_Y)$ is t-flat at $f(x)$.
\end{corollary}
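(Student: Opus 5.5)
The idea is to reduce the statement to a comparison of charts and then use faithful flatness of local rings. Since $f$ is strict, $f^*M_Y\to M_X$ is an isomorphism, so $M_{X,x}\cong M_{Y,f(x)}$. Moreover, a fine chart $P\to M_Y$ near $f(x)$ pulls back to a fine chart $P\to M_X$ near $x$. For strict morphisms, log flatness at $x$ is the same as flatness of the underlying morphism of schemes. I would justify this by taking the chart $P\to M_Y$ with the identity $P\to P$ as a chart of $f$, and noting that $X\to Y\times_{\D(P)}\D(P)=Y$ is then the underlying map. This part depends on which definition of log flatness the paper uses; if needed I would quote the standard fact that strict log flat means flat. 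Consequently $\mathcal{O}_{Y,f(x)}\to\mathcal{O}_{X,x}$ is a flat local homomorphism of strictly local noetherian rings, hence faithfully flat.

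Now fix such a fine chart $P$. By \Cref{chartindependent}, t-flatness of $(X,M_X)$ at $x$ is equivalent to $\mathcal{O}_{X,x}$ being $P$-flat, and t-flatness of $(Y,M_Y)$ at $f(x)$ is equivalent to $\mathcal{O}_{Y,f(x)}$ being $P$-flat. Here the $\mathbb{Z}[P]$-module structure on $\mathcal{O}_{X,x}$ is the one induced through $\mathcal{O}_{Y,f(x)}$. Write $A=\mathcal{O}_{Y,f(x)}$ and $B=\mathcal{O}_{X,x}$. For an ideal $J\subset P$ we have
\[ (J)\otimes_{\mathbb{Z}[P]}B\cong\big((J)\otimes_{\mathbb{Z}[P]}A\big)\otimes_AB, \]
and the map to $B$ is the base change along $A\to B$ of the map $(J)\otimes_{\mathbb{Z}[P]}A\to A$.

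Since $A\to B$ is flat, base change preserves kernels, so the kernel of the map for $B$ is $K\otimes_AB$, where $K$ is the kernel of the map for $A$. Faithful flatness then gives $K=0$ if and only if $K\otimes_AB=0$. Hence \cref{idealtensormap} is injective for $A$ exactly when it is injective for $B$, for every $J$, and $A$ is $P$-flat if and only if $B$ is. Combined with \Cref{chartindependent}, this proves the equivalence.

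The only delicate step is the first one: identifying strict log flatness with flatness of the underlying scheme morphism at $x$, and checking that the chart pulls back correctly. After that the argument is formal.
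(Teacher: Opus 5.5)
Your proof is correct and is essentially the paper's argument: reduce via \cite[IV, 4.1.2 (2)]{logbook} to the case where the underlying map is flat, so faithfully flat on strict local rings, and then apply \Cref{chartindependent} with the pulled-back chart; your faithfully flat descent of the injectivity of \cref{idealtensormap} is exactly the detail the paper leaves implicit. One point to fix: your identity-chart remark only shows that strict and flat implies log flat, which is the wrong direction here, so citing the standard fact that strict log flat morphisms have flat underlying morphisms is genuinely needed, not optional.
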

\begin{proof}
Up to localising, by \cite[IV, 4.1.2 (2)]{logbook} we may assume $X\to Y$ is faithfully flat and then the result follows from \Cref{chartindependent}.
\end{proof}

\begin{corollary}\label{tlfcomplete}
Let $(X,M_X)$ be a fine log scheme and $\hat{X}_{(x)}$ the completion of the strict localisation $X_{(x)}$ of $X$ at a geometric point $x\to X$. The following are equivalent.
\begin{enumerate}[label=\normalfont(\roman*)]
\item $(X,M_X)$ is t-flat at $x$
\item $(\hat{X}_{(x)},M_{X}|_{\hat{X}_{(x)}})$ is t-flat at $x$.
\end{enumerate}
\end{corollary}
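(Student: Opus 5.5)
The idea is to reduce everything to a single prime ideal of a chart and then use faithful flatness of completion. Since $x$ is a geometric point of a fine log scheme, we may work étale locally and choose a fine chart $P\to M_X$ near $x$. The log structure on $\hat{X}_{(x)}$ is the pullback of $M_X$, so $P\to M_X|_{\hat{X}_{(x)}}$ is still a fine chart. Moreover, the strict localisation of $\hat{X}_{(x)}$ at its closed point $x$ is $\hat{X}_{(x)}$ itself: its local ring $\hat{\mathcal{O}}_{X,x}$ is complete with separably closed residue field, hence strictly henselian.

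By \Cref{chartindependent}, applied to both log schemes with the same chart $P$, condition (i) is equivalent to $\mathcal{O}_{X,x}$ being $P$-flat. Likewise, condition (ii) is equivalent to $\hat{\mathcal{O}}_{X,x}$ being $P$-flat. Set $S=\mathcal{O}_{X,x}$ and $\hat{S}$ its completion, both noetherian local. Let $\mathfrak{m}$ be the maximal ideal of $S$. Since $\hat{\mathfrak{m}}\cap S=\mathfrak{m}$, the prime ideal $\mathfrak{p}:=\alpha^{-1}\mathfrak{m}\subset P$ is the same for $S$ and for $\hat{S}$. By \Cref{maxideals}, $P$-flatness of either ring is equivalent to injectivity of the single map
\[ (\mathfrak{p})\otimes_{\mathbb{Z}[P]}S\to S, \]
respectively of the same map with $S$ replaced by $\hat{S}$.

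Now use that $S\to\hat{S}$ is faithfully flat. Tensoring over $S$ gives
\[ \bigl((\mathfrak{p})\otimes_{\mathbb{Z}[P]}S\bigr)\otimes_S\hat{S}\cong(\mathfrak{p})\otimes_{\mathbb{Z}[P]}\hat{S}. \]
Under this identification, the base change of the map for $S$ is the map for $\hat{S}$. A map of $S$-modules is injective if and only if its base change along a faithfully flat extension is injective. Hence the two injectivity conditions are equivalent, which proves (i) $\Leftrightarrow$ (ii).

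The only point needing care is the bookkeeping about charts and strict localisations: one must check that \Cref{chartindependent} really applies to $\hat{X}_{(x)}$ with the pulled-back chart, and that the stalk at $x$ there is $\hat{S}$. The faithful flatness argument itself is routine. Alternatively, one could apply \Cref{flatcover} to the strict, flat morphism $\hat{X}_{(x)}\to X$, but faithful flatness may not be available in the generality that corollary requires. The direct argument above avoids this issue.
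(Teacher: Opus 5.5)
Your proof is correct and is essentially the paper's argument: the paper applies \Cref{flatcover} to the strict flat morphism $\hat{X}_{(x)}\to X$, and the proof of \Cref{flatcover} is exactly your reduction, via \Cref{chartindependent}, to $P$-flatness being preserved and reflected under a faithfully flat base change. Your worry about that route is unfounded, since $\mathcal{O}_{X,x}\to\hat{\mathcal{O}}_{X,x}$ is a flat local homomorphism and hence faithfully flat; likewise the appeal to \Cref{maxideals} is unnecessary, because faithful flatness already transfers injectivity of $(J)\otimes_{\mathbb{Z}[P]}S\to S$ for every ideal $J\subset P$.
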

\begin{proof}
Since $X_{(x)}$ is noetherian, $\hat{X}_{(x)}\to X$ is flat and \Cref{flatcover} applies.
\end{proof}

\subsection{Relation to log flatness}
Over a point with trivial log structure, a (locally noetherian) fine log scheme is t-flat if and only if it is log flat.

\begin{proposition}\label{tlflogflat}
Let $(X,M_X)$ be a fine log scheme, $X\to S$ a morphism of schemes and $x\to X$ be a geometric point. View $S$ as a log scheme with trivial log structure.
\begin{enumerate}[label=\normalfont(\roman*)]
\item If $(X,M_X)\to S$ is log flat at $x$, then $(X,M_X)$ is t-flat at $x$.
\item If $S$ is the spectrum of a field and $(X,M_X)$ is t-flat at $x$, then $(X,M_X)\to S$ is log flat at $x$.
\end{enumerate}
\end{proposition}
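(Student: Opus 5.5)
We will use Kato's chart criterion for log flatness. Since $S$ carries the trivial log structure, $(X,M_X)\to S$ is log flat at $x$ if and only if, étale locally at $x$, there is a fine chart $P\to M_X$ such that the induced map $X\to S\times\D(P)=S\otimes\mathbb{Z}[P]$ is flat at $x$. One can use the chart $\{1\}\to\mathcal{O}_S^*$ on $S$ together with a chart $\{1\}\to P$ of the morphism; the injectivity and torsion conditions on $P^{\gp}$ are automatic here, up to passing to a chart with $P^{\gp}$ of order invertible torsion. See \cite[IV, 4.3.5]{logbook}.

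\emph{Proof of (i).} Choose such a chart $P\to M_X$ near $x$ with $\mathcal{O}_{X,x}$ flat over $A:=\mathcal{O}_S(S)[P]$, after localising $S$ so that it is affine. By \Cref{chartindependent}, it suffices to show that $\mathcal{O}_{X,x}$ is $P$-flat. Let $J\subset P$ be an ideal. By \Cref{torindlemma} (or directly, since $\mathbb{Z}[P]/(J)$ is $\mathbb{Z}$-free), we have
\[
(J)\otimes_{\mathbb{Z}[P]}\mathcal{O}_{X,x}\cong JA\otimes_A\mathcal{O}_{X,x}.
\]
This uses $(J)\otimes_{\mathbb{Z}[P]}A\cong JA$, which holds because $A=R\otimes_{\mathbb{Z}}\mathbb{Z}[P]$ and the sequence $0\to(J)\to\mathbb{Z}[P]\to\mathbb{Z}[P]/(J)\to 0$ is split over $\mathbb{Z}$, so it stays exact after applying $R\otimes_{\mathbb{Z}}-$. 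Flatness of $\mathcal{O}_{X,x}$ over $A$ then makes the map $JA\otimes_A\mathcal{O}_{X,x}\to\mathcal{O}_{X,x}$ injective. Hence the map \cref{idealtensormap} is injective, and $\mathcal{O}_{X,x}$ is $P$-flat. The only delicate points are the torsion in $P^{\gp}$ allowed by the chart criterion, which does not matter since $P$ is only required to be fine, and the fact that $P$-flatness for one chart suffices, which is exactly \Cref{chartindependent}.

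\emph{Proof of (ii).} Let $S=\Spec k$, and take a neat chart $P=\overline{M}_{X,x}\to M_X$ near $x$. This is possible after an étale localisation; $P$ is sharp fine, although $P^{\gp}$ may have torsion. It suffices to show that $\mathcal{O}_{X,x}$ is flat over $k[P]$, localised at the prime $\mathfrak{q}$ given by the image of $x$. The ideal $\mathfrak{q}$ contains $\mathfrak{p}k[P]$, where $\mathfrak{p}=P\setminus\{1\}$. I would apply the local flatness criterion \cite[0\textsubscript{III}, 10.2.2]{ega} with respect to the ideal $I=\mathfrak{p}k[P]_{\mathfrak{q}}$. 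There are two things to check.
\begin{enumerate}[label=\normalfont(\alph*)]
\item $\mathcal{O}_{X,x}/I\mathcal{O}_{X,x}$ is flat over $k[P]_{\mathfrak{q}}/I$. This ring is a localisation of $k[P]/(\mathfrak{p})\cong k$, hence a field, so the flatness is automatic. This is where the hypothesis that $S$ is a field is used.
\item $\Tor_1^{k[P]}(k[P]/(\mathfrak{p}),\mathcal{O}_{X,x})=0$. This follows from $P$-flatness by \Cref{primeideals} (iii) with $R=k$.
\end{enumerate}
Strictly, the criterion needs the $\Tor_1$ vanishing over $k[P]_{\mathfrak{q}}$ and $I$-adic separatedness. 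Separatedness holds because $\mathcal{O}_{X,x}$ is noetherian local and $I$ maps into its maximal ideal. The localisation is harmless, since localising is flat. This gives flatness of $X\to\Spec k[P]$ at $x$, and hence log flatness at $x$ via the chart criterion.

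I expect the main obstacle to be getting the chart criterion for log flatness in the right form. It requires the torsion part of $P^{\gp}$ to be controlled, or else one must allow Kummer-type charts \cite[IV, 4.3.5]{logbook}. If a neat chart with a bad torsion order is problematic, I would replace $P$ by an appropriate chart as allowed by the logbook's formulation. Alternatively, I would observe that flatness over $k[P]$ for some fine chart already suffices, by the criterion's form "there exists a chart such that $X\to S\times_{\D(\mathbb{Z}[1])}\D(P)$ is flat".
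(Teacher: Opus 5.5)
Your proof follows the paper's proof in substance. In (i) you check by hand that a ring flat over $R[P]=R\otimes_{\mathbb{Z}}\mathbb{Z}[P]$ is $P$-flat; the paper gets the same thing by noting that $R[P]$ is $P$-flat (\Cref{primeideals} (iii)) and applying \Cref{flatcover}. Your (ii) is the paper's argument: a sharp neat chart, \Cref{primeideals} (iii), and the local flatness criterion with $k[P]/(\mathfrak{p})\cong k$.

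What is wrong as written is the claim that the charts you use exist étale locally at $x$.
\begin{itemize}
\item A neat chart at $x$ amounts to a splitting of $0\to\mathcal{O}^*_{X,x}\to M^{\gp}_{X,x}\to\overline{M}^{\gp}_{X,x}\to 0$. When $\overline{M}^{\gp}_{X,x}$ has $p$-torsion, with $p$ the residue characteristic, the class of this extension can be a nonzero element of $\mathcal{O}^*_{X,x}/(\mathcal{O}^*_{X,x})^p$. For example, take $k$ separably closed of characteristic $2$, $c\notin k^2$, and $X=\Spec k[s,t]/(s^2-ct^2)$ with the fine log structure in which $s^2=c\,t^2$ holds in $M_X$. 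This is why \Cref{neatcharts} (ii) only provides neat charts fppf locally.
\item Likewise, a chart $P\to M_X$ (with trivial chart on $S$) for which $X\to S\times\D(P)$ is flat at $x$ is in general only available fppf locally. The paper obtains it from a neat chart via \Cref{neatcharts} (vi).
\end{itemize}
The torsion-order conditions you mention belong to the chart criteria for log smoothness and log étaleness, not log flatness, and play no role here.

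As a result, your computation in (i) only shows that $(X',M_X|_{X'})$ is t-flat at a point $x'$ of some flat neighbourhood $X'\to X$ of $x$. You cannot apply \Cref{chartindependent} on $X$ itself, because the chart lives on $X'$. You need \Cref{flatcover} to descend t-flatness from $x'$ to $x$, which is exactly how the paper concludes. In (ii) the fppf localisation is harmless, since log flatness is fppf local by definition. But you again need \Cref{flatcover} to know that $(X',M_X|_{X'})$ is still t-flat at $x'$ before applying \Cref{chartindependent} to the neat chart there. Once these appeals to \Cref{flatcover} are added, your argument is complete.
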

\begin{proof}
If $(X,M_X)\to S$ is log flat, then locally for the fppf topology there is a fine chart $P\to M_X$ and a strict log flat map $(X,M_X)\to \D(P)\times S$. We claim that $\D(P)\times S$ is t-flat at the image of $x$. Indeed, this is local so we may assume $S=\Spec(R)$ and then, by \Cref{chartindependent}, it suffices to show that $R[P]$ is $P$-flat, which follows from \Cref{primeideals} (iii). Now (i) follows from \Cref{flatcover}. For (ii), let $S=\Spec(k)$. By \cite[III, 1.2.7 (4)]{logbook} (or \Cref{neatcharts} below), after passing to a flat cover of $X$, we may assume there is a neat chart $P\to M_{X}$ with $P$ sharp fine. By t-flatness, we have $\Tor_1^{k[P]}(k,\mathcal{O}_{X,x})=0$, which implies that $X\to\Spec(k[P])$ is flat at $x$ (\cite[0\textsubscript{III}, 10.2.2]{ega}), hence (ii).
\end{proof}

In the last proof we used the existence of \emph{neat} charts in the sense of \cite[II, \S 2]{logbook}. The following lemma gathers the results on charts we will need.

\begin{lemma}\label{neatcharts}
Let $f:(X,M_X)\to (Y,M_Y)$ be a morphism of fine log schemes and $x\to X$ a geometric point.
\begin{enumerate}[label=\normalfont(\roman*)]
\item Locally for the \'etale topology at $f(x)$, there is a fine chart $Q\to M_Y$ that is exact\footnote{\cite[II, 2.3.1]{logbook}} at $f(x)$.
\item Locally for the fppf topology at $f(x)$, there is a fine chart $Q\to M_Y$ that is neat\footnote{\cite[II, 2.3.1]{logbook}} at $f(x)$. If $\overline{M}_{Y,f(x)}^{\gp}$ is torsion free, then this chart exists locally for the \'etale topology.
\item Assume given a fine chart $Q\to M_Y$. Locally for the \'etale topology at $x$, there exists an injective chart $Q\to P$ for $f$ with $P$ fine and $P\to M_X$ exact at $x$. If the log structures are fs and $Q^{\gp}$ is torsion free, we may take $P$ such that $P^{\gp}$ is torsion free.
\item Assume given a fine chart $Q\to M_Y$. Locally for the fppf topology at $x$, there is a neat\footnote{\cite[II, 2.4.4]{logbook}} chart $Q\to P$ for $f$ with $P\to M_X$ a fine exact chart at $x$.
\item Assume given a chart as in \textup{(ii)}. If $f$ is s-injective\footnote{in the sense of \cite{logbook}, i.e., $\overline{M}_{Y,f(x)}\to\overline{M}_{X,x}$ is injective} at $x$, then, in \textup{(iv)}, we can take $P\to M_X$ neat at $x$. If $f$ is s-finite\footnote{in the sense of \cite[III, 2.3.2]{logbook}, i.e., $\overline{M}_{X,x}$ is a finitely generated $\overline{M}_{Y,f(x)}$-module} at $x$ and $\ker(\overline{M}_{Y,f(x)}^{\gp}\to \overline{M}_{X,x}^{\gp})$ is torsion, then, in \textup{(iv)}, we can take $Q\to P$ to be a Kummer\footnote{recall that we do not assume \emph{Kummer} to necessarily imply \emph{saturated}} homomorphism. In particular, if $f$ is Kummer at $x$, then, in \textup{(iv)}, we can take $P\to M_X$ to be neat at $x$ and $Q\to P$ Kummer.
\item Assume given a neat chart $Q\to P$ of fine monoids for $f$ at $x$. If $f$ is log flat, then $(X,M_X)\to (Y,M_Y)\times_{\D(Q)}\D(P)$ is strict log flat at $x$.
\item If $f$ is log \'etale, then \textup{(iv)} holds locally for the \'etale topology at $x$, the order of $P^{\gp}/Q^{\gp}$ is invertible at $x$, and $(X,M_X)\to (Y,M_Y)\times_{\D(Q)}\D(P)$ is strict log \'etale.
\end{enumerate}
\end{lemma}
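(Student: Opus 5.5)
This lemma collects standard facts on charts, so the plan is to reduce each item to the corresponding statement in \cite{logbook} (Ogus), checking only the refinements used here: exactness and neatness at the given point, torsion freeness, and the Kummer condition.

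For (i), I choose a chart $Q'\to M_Y$ near $f(x)$. I then replace $Q'$ by the localisation $Q$ at the inverse image of $M_{Y,f(x)}^*$, intersected with the preimage of the face. Concretely, $Q:=\{q\in Q'^{\gp}: \text{image in } M^{\gp}_{Y,f(x)} \text{ lies in } M_{Y,f(x)}\}$. This $Q$ is fine and still a chart in an étale neighbourhood (\cite[II, 2.3]{logbook}), and $Q\to M_{Y,f(x)}$ is exact by construction.

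For (ii), I use \cite[II, 2.3.7]{logbook}. The map $\overline{M}_{Y,f(x)}^{\gp}\leftarrow$ (stalk group) must be split, and an fppf extraction of roots of the units kills the obstruction in $\mathrm{Ext}(\overline{M}^{\gp},\mathcal{O}^*)$. If $\overline{M}^{\gp}_{Y,f(x)}$ is torsion free, the extension splits already on the strict henselisation, so the chart exists étale-locally. Items (iii) and (iv) are the relative versions, \cite[II, 2.4.2, 2.4.4]{logbook}. For (iii), I take a chart $Q\to M_Y$ and $P_0\to M_X$ and replace $P_0$ by $Q\oplus P_0$, which makes $Q\to P$ injective. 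I then localise as in (i) to get exactness at $x$. In the fs case with $Q^{\gp}$ torsion free, I take $P$ to be the preimage of $\overline{M}_{X,x}$ in a torsion-free group, namely $Q^{\gp}\oplus \overline{M}^{\gp}_{X,x}$ after choosing a splitting.

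For (v), s-injectivity makes $\overline{M}_{Y}\to\overline{M}_{X}$ injective at the stalks. The chart of (iv) built from $\overline{M}_{X,x}^{\gp}$ then becomes neat, since $\ker(P^{\gp}\to \overline{M}^{\gp}_{X,x})$ can be made trivial. For the Kummer part, s-finiteness and a torsion kernel let me choose $P$ with $P^{\gp}/Q^{\gp}$ finite. After localising, $P$ is then integral over $Q$, i.e.\ Kummer by \cite[I, 4.3]{logbook}.

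Items (vi) and (vii) come from Kato's criterion for log flatness and log étaleness, \cite[IV, 4.1.2, 3.3.1]{logbook}. Given a neat chart, the fibre-product morphism is strict, and log flatness (resp.\ log étaleness) descends to it as flatness (resp.\ étaleness) of the underlying schemes. In the étale case, the order of $P^{\gp}/Q^{\gp}$ being invertible is part of Kato's structure theorem.

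I expect the main obstacle to be (v). There I must arrange neatness, exactness and the Kummer property simultaneously, and verify that the fppf-local choice is compatible with the given chart of $Q$.
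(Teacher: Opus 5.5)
The gap is in (v), which is exactly the part of the lemma the paper proves rather than cites. Items (i)--(iv) and (vii) are handled at the same citation level as in the paper. One caveat for (iii): your ``localisation'' of $Q\oplus P_0$ must be read as the exactification $(Q\oplus P_0)^{\gp}\times_{M^{\gp}_{X,x}}M_{X,x}$, since a literal localisation of $Q\oplus P_0$ need not be a chart. Your splitting construction for the fs case is fine.

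For the Kummer assertion in (v), you pass from finiteness of $P^{\gp}/Q^{\gp}$ to ``after localising, $P$ is integral over $Q$''. That implication is false. Both $\mathbb{N}\to\mathbb{Z}$ and $\mathbb{N}^2\to\mathbb{N}^2$, $(a,b)\mapsto(a+b,b)$ (whose target is sharp) are injective with $Q^{\gp}\cong P^{\gp}$. Yet $-1$, resp.\ $(0,1)$, has no multiple in the image. Localising $P$ only adds units, which can only make things worse.

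Tellingly, finiteness of $P^{\gp}/Q^{\gp}$ already follows from s-finiteness alone. So the torsion hypothesis on $K:=\ker(\overline{M}^{\gp}_{Y,f(x)}\to\overline{M}^{\gp}_{X,x})$ does no work in your argument, and the hypothesis that $Q\to M_Y$ is neat at $f(x)$ is never used. Likewise, in the s-injective case the kernel of $P^{\gp}\to\overline{M}^{\gp}_{X,x}$ is not something you can ``make'' trivial by a choice; it is determined by the data.

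The missing idea is the identification of $P^*$, using three facts:
\begin{itemize}
\item $Q^{\gp}\cong\overline{M}^{\gp}_{Y,f(x)}$, by neatness of $Q\to M_Y$;
\item $Q^{\gp}\hookrightarrow P^{\gp}$ with $P^{\gp}/Q^{\gp}\cong\cok(\overline{M}^{\gp}_{Y,f(x)}\to\overline{M}^{\gp}_{X,x})$, by neatness of $Q\to P$;
\item $P^{\gp}/P^*\cong\overline{M}^{\gp}_{X,x}$, by exactness of $P\to M_X$ at $x$.
\end{itemize}
A diagram chase then gives $P^*\cong K$.

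If $f$ is s-injective, then $K=0$, since an injective homomorphism of integral monoids stays injective on groups. So $P$ is sharp and the chart is neat at $x$. Without neatness of $Q$ this fails, because $P^*$ would contain $\ker(Q^{\gp}\to\overline{M}^{\gp}_{Y,f(x)})$.

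If $K$ is torsion, then $P^*$ is finite, say killed by $m$. Write $\theta:Q\to P$. By \cite[I, 4.3.6]{logbook}, s-finiteness gives that $Q\to\overline{M}_{X,x}\cong P/P^*$ is $\mathbb{Q}$-surjective \emph{as a map of monoids}, which is strictly more than finiteness of $P^{\gp}/Q^{\gp}$. Hence each $p\in P$ satisfies $p^n=\theta(q)u$ with $u\in P^*$, so $p^{mn}=\theta(q^m)$. Together with injectivity of $Q^{\gp}\to P^{\gp}$, this is the Kummer property.

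A smaller point concerns (vi). In the log \'etale case, the infinitesimal criterion lets log \'etaleness pass to the strict comparison map. For log flatness there is no such cancellation: log flatness of $f$ and of $(Y,M_Y)\times_{\D(Q)}\D(P)\to (Y,M_Y)$ does not formally give flatness of $(X,M_X)\to (Y,M_Y)\times_{\D(Q)}\D(P)$. You need the chart-independence argument of \cite[IV, 4.1.7]{logbook}. Its local finite presentation hypothesis is removed here by local noetherianity and \cite[IV\textsubscript{3}, 11.3.10]{ega}.
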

\begin{proof}
(i) is \cite[II, 2.3.5]{logbook} and (ii) is \cite[2.1]{olsson} (or \cite[III, 1.2.7 (2)]{logbook}). For (iii) use \cite[2.2]{olsson} and the fact that any fine chart has a localisation that is a fine exact chart \cite[II, 2.3.2]{logbook}. (iv) is \cite[III, 1.2.7 (4)]{logbook}. Since $P\to M_{X,x}$ is exact, we have an isomorphism $\overline{P}\cong\overline{M}_{X,x}$ (\cite[II, 2.3.1]{logbook}), hence $P^{\gp}/P^*\cong\overline{M}_{X,x}^{\gp}$. For (v), from the commutative diagram with exact rows
\begin{equation}
\begin{tikzcd}
0 \ar[r] & Q^{\gp} \ar[d,"\sim" labl] \ar[r] & P^{\gp} \ar[r] \ar[d] & P^{\gp}/Q^{\gp} \ar[d,"\sim" labl] \ar[r] & 0 \\
& \overline{M}_{Y,f(x)}^{\gp} \ar[r] & \overline{M}_{X,x}^{\gp} \ar[r] & \overline{M}_{X,x}^{\gp}/\overline{M}_{Y,f(x)}^{\gp} \ar[r] & 0
\end{tikzcd}\label{commdiag}
\end{equation}
we deduce that $P^*\cong\ker(\overline{M}_{Y,f(x)}^{\gp}\to \overline{M}_{X,x}^{\gp})$. Thus, if $f$ is s-injective at $x$, $P^*=\{1\}$ and $P\to M_X$ is neat at $x$. If $\ker(\overline{M}_{Y,f(x)}^{\gp}\to \overline{M}_{X,x}^{\gp})$ is torsion, then $P^*$ is torsion, hence finite. So, if $f$ is s-finite at $x$, $Q\to\overline{M}_{X,x}\cong P/P^*$ is $\mathbb{Q}$-surjective (\cite[I, 4.3.6]{logbook}), hence so is $Q\to P$. Thus, $Q\to P$ is a Kummer homomorphism. This proves (v) and (vi) can be proven as \cite[IV, 4.1.7]{logbook} (the local finite presentation assumption is superfluous since our schemes are locally noetherian, cf. \cite[IV\textsubscript{3}, 11.3.10]{ega}).

Finally, if $f$ is log \'etale, then, by \cite[IV, 3.3.1 (3)]{logbook}, the chart of (iv) exists \'etale locally on $X$ and the order of $P^{\gp}/Q^{\gp}$ is invertible at $x$. We have a strict morphism $(X,M_X)\to (Y,M_Y)\times_{\D(Q)}\D(P)$ of log \'etale fine $(Y,M_Y)$-log schemes, which must itself be log \'etale (use the infinitesimal criterion \cite[3.3]{log}), proving (vi).
\end{proof}

Since an fppf covering can, Zariski-locally on the target, be refined to a quasi-finite covering (\cite[IV\textsubscript{4}, 17.16.2]{ega}), \Cref{neatcharts} (v)-(vi) implies the following.

\begin{corollary}\label{goodchart}
Let $f:(X,M_X)\to (Y,M_Y)$ be a Kummer, log flat morphism of fine log schemes and $x\to X$ a geometric point. Up to passing to quasi-finite flat neighbourhoods of $x$ and $f(x)$, there are charts $P\to M_X$ and $Q\to M_Y$ neat at $x$ and $f(x)$, respectively, and a Kummer homomorphism $Q\to P$ providing a neat chart for $f$ at $x$, such that the morphism $(X,M_X)\to (Y,M_Y)\times_{\D(Q)}\D(P)$ is strict log flat at $x$.
\end{corollary}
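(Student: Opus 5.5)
The corollary is obtained by combining \Cref{neatcharts} with a refinement of fppf neighbourhoods by quasi-finite flat ones. The order is: first build the chart on $Y$, then the chart for $f$, then show it is Kummer and neat, and finally use log flatness.

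First, by \Cref{neatcharts} (ii), fppf locally at $f(x)$ there is a fine chart $Q\to M_Y$ that is neat at $f(x)$. By \cite[IV\textsubscript{4}, 17.16.2]{ega}, applied Zariski-locally on $Y$, the fppf neighbourhood can be refined to a quasi-finite flat neighbourhood of $f(x)$. We replace $Y$ by this neighbourhood and $X$ by its base change. This base change is again a quasi-finite flat neighbourhood of (a lift of) $x$. The replacement is a strict flat base change, so $f$ stays Kummer and log flat, and $Q\to M_Y$ remains a chart neat at the image point, since neatness only involves $Q\cong\overline{M}_{Y,f(x)}$.

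Next, apply \Cref{neatcharts} (iv) to this chart. Fppf locally at $x$, and again refined to a quasi-finite flat neighbourhood in the same way, we obtain a neat chart $Q\to P$ for $f$ with $P\to M_X$ a fine chart exact at $x$. Since $f$ is Kummer at $x$, the final assertion of \Cref{neatcharts} (v) lets us choose $P\to M_X$ neat at $x$ and $Q\to P$ Kummer. The point to check is that the hypotheses of (v) really follow from ``Kummer''. The map $\overline{M}_{Y,f(x)}\to\overline{M}_{X,x}$ is injective, which gives s-injectivity. It is also $\mathbb{Q}$-surjective, so $\overline{M}_{X,x}$ is finitely generated over $\overline{M}_{Y,f(x)}$ by \cite[I, 4.3.6]{logbook}, which gives s-finiteness. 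Finally, the kernel on groups is torsion, indeed trivial up to torsion, because a Kummer homomorphism of integral monoids induces an injection on $\mathbb{Q}$-groups.

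Finally, since $f$ is log flat and $Q\to P$ is a neat chart for $f$ at $x$, \Cref{neatcharts} (vi) shows that $(X,M_X)\to (Y,M_Y)\times_{\D(Q)}\D(P)$ is strict log flat at $x$.

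The main obstacle is bookkeeping, not any single step. We must ensure that the successive passages to quasi-finite flat neighbourhoods preserve the properties already established: neatness of $Q$ at the image point, the Kummer property of $f$, and the chart property. Each is stable under strict flat base change, or is a statement about stalks of $\overline{M}$, and these are unchanged under strict flat neighbourhoods. So this should go through routinely.
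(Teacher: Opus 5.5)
Your proposal is correct and is essentially the paper's argument, which derives the corollary directly from \Cref{neatcharts} (ii) and (iv)--(vi), using \cite[IV\textsubscript{4}, 17.16.2]{ega} to refine fppf neighbourhoods to quasi-finite flat ones, exactly as you do. Your check that Kummer gives the hypotheses of \Cref{neatcharts} (v) is fine; in fact the kernel on groups is zero, since an injective homomorphism of integral monoids stays injective on groupifications.
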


Next, we show that t-flatness descends under (not necessarily fs) Kummer log flat morphisms; in particular, it is local for the log flat topology.

\begin{proposition}\label{klftlf}
Let $f:(X,M_X)\to (Y,M_Y)$ be a log flat morphism of log schemes and $x\to X$ a geometric point. If $(Y,M_Y)$ is t-flat at $f(x)$, then $(X,M_X)$ is t-flat at $x$. The converse holds if, locally for the fppf topology, there is a neat Kummer chart $Q\to P$ of fine monoids for $f$ at $x$.
\end{proposition}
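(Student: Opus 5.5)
Both directions reduce to the local structure of log flat morphisms given by \Cref{neatcharts}, combined with \Cref{flatcover} and \Cref{monoidflatness}. For the forward direction, t-flatness at $x$ and at $f(x)$ is invariant under strict flat (in particular fppf) localisation by \Cref{flatcover}. So we may pass to fppf neighbourhoods and, by \Cref{neatcharts} (ii) and (iv), assume given a fine chart $Q\to M_Y$ neat at $f(x)$ and a neat chart $Q\to P$ for $f$ at $x$ with $P\to M_X$ a fine exact chart at $x$. A neat chart of a morphism is in particular injective on monoids. By \Cref{neatcharts} (vi), the morphism $g:(X,M_X)\to (Y',M_{Y'}):=(Y,M_Y)\times_{\D(Q)}\D(P)$ is strict log flat at $x$. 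Hence, again by \Cref{flatcover}, it suffices to show $(Y',M_{Y'})$ is t-flat at $g(x)$.

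To do this, note that the fibre product is taken in fine log schemes. Since $Q\to P$ is injective with $P$ integral, the underlying scheme of $(Y',M_{Y'})$ is the naive fibre product $Y\times_{\Spec\mathbb{Z}[Q]}\Spec\mathbb{Z}[P]$ (integralisation does nothing here), with chart $P$. Locally $Y=\Spec A$, and by \Cref{chartindependent} the stalk $A_{y}$ is $Q$-flat at $y=f(x)$. By \Cref{monoidflatness} (i)$\Rightarrow$(ii), the module $A\otimes_{\mathbb{Z}[Q]}\mathbb{Z}[P]$ is then $P$-flat after localising at points over $y$. Here one should check that $P$-flatness is compatible with localisation and strict henselisation, which holds because these are flat ring maps and $P$-flatness is a Tor-vanishing condition (\Cref{primeideals} (iii)). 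Applying \Cref{chartindependent} once more shows that $(Y',M_{Y'})$ is t-flat at $g(x)$, which proves the first assertion.

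For the converse, assume that fppf locally there is a neat Kummer chart $Q\to P$ for $f$ at $x$, and that $(X,M_X)$ is t-flat at $x$. The plan is to run the same reduction backwards. First I need strict flatness of $g$ to transfer t-flatness down, and this is where the main obstacle lies. \Cref{flatcover} gives ``iff'' only for strict log flat morphisms, so the t-flatness of $X$ at $x$ yields the t-flatness of $Y'$ at $g(x)$. Next I must pass from $Y'$ to $Y$. By \Cref{monoidflatness} (ii)$\Rightarrow$(i), applicable since $Q\to P$ is Kummer between fine monoids, the $P$-flatness of $A\otimes_{\mathbb{Z}[Q]}\mathbb{Z}[P]$ implies the $Q$-flatness of $A$. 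The subtlety is that we only know $P$-flatness of the stalk at one point $g(x)$, not of the whole module. To handle this, I would use that $\Spec\mathbb{Z}[P]\to\Spec\mathbb{Z}[Q]$ is finite, because $P$ is a finitely generated $Q$-module for Kummer maps \cite[I, 4.3.6]{logbook}. Hence $Y'\to Y$ is finite, and the fibre over $y$ consists of finitely many points. Using neatness I would argue that, after strictly localising $Y$ at $y$, there is a single point over $y$, since $\overline{P}/\overline{Q}$ is torsion and the fibre is a spectrum of a local Artinian ring up to a radicial extension. Alternatively, I would use \Cref{maxideals} to check $Q$-flatness only at $\alpha^{-1}\mathfrak{m}$, reducing to a Tor computation that localises to that point. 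With $A$ strictly local and $A\otimes_{\mathbb{Z}[Q]}\mathbb{Z}[P]$ local and $P$-flat, \Cref{monoidflatness} gives $Q$-flatness of $A$, so $(Y,M_Y)$ is t-flat at $f(x)$ by \Cref{chartindependent}.
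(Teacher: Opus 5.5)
Your forward direction is correct and is the paper's argument. After fppf localisation, \Cref{neatcharts} (vi) and \Cref{flatcover} let you replace $(X,M_X)$ by $Y':=(Y,M_Y)\times_{\D(Q)}\D(P)$. Then \Cref{monoidflatness} (i)$\Rightarrow$(ii) applied to $\mathcal{O}_{Y,y}$, flat base change to $\mathcal{O}_{Y',g(x)}$, and \Cref{chartindependent} finish the proof.

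For the converse you correctly isolate the real issue, which the paper's one-line proof also passes over. T-flatness at $x$ only gives $P$-flatness of $\mathcal{O}_{Y',g(x)}$, whereas \Cref{monoidflatness} (ii)$\Rightarrow$(i) needs $P$-flatness of all of $B:=\mathcal{O}_{Y,y}\otimes_{\mathbb{Z}[Q]}\mathbb{Z}[P]$.

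Your fix, that $g(x)$ is the only point of $Y'$ over $y$, is false. The group that matters is $P^*$, not $\overline{P}/\overline{Q}$. With $Q\to M_Y$ neat at $y$ and $P\to M_X$ exact at $x$, $Q$ is sharp and $P^*\cong\ker(\overline{M}^{\gp}_{Y,y}\to\overline{M}^{\gp}_{X,x})$ is finite (diagram \cref{commdiag}). The closed fibre of $\Spec B$ then has one point for each character $P^*\to k(y)^*$, and neatness of $Q\to P$ does not prevent this.

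Nor can \Cref{maxideals} rescue the step, because the converse fails as stated. Here is a counterexample.
Let $Q\subset\mathbb{Z}\oplus\mathbb{Z}/2$ be generated by $e_0=(1,0)$ and $e_1=(1,1)$, so that $\mathbb{Z}[Q]=\mathbb{Z}[x_0,x_1]/(x_0^2-x_1^2)$. Let $P=Q^{\sat}=\mathbb{N}\oplus\mathbb{Z}/2$. Let $Y=\Spec k[[t]]$, with $k$ separably closed of characteristic $\neq 2$, carry the log structure of the chart $e_0,e_1\mapsto t$. Then:
\begin{enumerate}[label=\normalfont(\alph*)]
\item $Q\to M_Y$ is neat, and $Q\to P$ is a neat Kummer chart for the saturation $f\colon Y'\to Y$.
\item $B\cong k[[t]]\times k$, i.e.\ a log regular trait together with a standard log point.
\item $Y'$ is t-flat at the closed point of the first factor.
\item $k[[t]]$ is not $Q$-flat: the class of $x_0\otimes 1-x_1\otimes 1$ is a nonzero element of the kernel of $(Q\setminus\{1\})\otimes_{\mathbb{Z}[Q]}k[[t]]\to k[[t]]$. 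Indeed $\Tor_1^{\mathbb{Z}[Q]}(\mathbb{Z},k[[t]])\cong k$.
\end{enumerate}
The same example contradicts \Cref{saturationtflat}.

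Your argument does work under an extra hypothesis that makes every character of $P^*$ trivial. Examples are: $f$ s-injective at $x$ (so $P^*=1$); $\overline{M}^{\gp}_{Y,y}$ torsion free; or $P^*$ a $p$-group with $p$ the residue characteristic. In that case the closed fibre $k(y)[P]/(Q\setminus\{1\})$ is local Artinian with residue field $k(y)$. Hence $B$ is strictly local and equals $\mathcal{O}_{Y',g(x)}$, and \Cref{monoidflatness} applies directly.
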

\begin{proof}
By \Cref{neatcharts} (vi), locally for the fppf topology there is a strict log flat morphism $(X,M_Y)\to (Y,M_Y)\times_{\D(Q)}\D(P)$. Thus, by \Cref{flatcover}, we are reduced to the case $(X,M_X)=(Y,M_Y)\times_{\D(Q)}\D(P)$. In this case, the result follows from \Cref{monoidflatness} and \Cref{chartindependent}.
\end{proof}

\begin{corollary}\label{saturationtflat}
Let $f:(X,M_X)\to (Y,M_Y)$ be the saturation of a fine log scheme and $x\to X$ a geometric point. $(Y,M_Y)$ is t-flat at $f(x)$ if and only if $(X,M_X)$ is t-flat at $x$.
\end{corollary}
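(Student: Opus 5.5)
The plan is to apply \Cref{klftlf} to the saturation morphism $f:(X,M_X)\to (Y,M_Y)$. This needs two inputs: that $f$ is log flat, and that it admits, fppf locally, a neat Kummer chart of fine monoids at $x$.

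First, I reduce to a local computation. Since t-flatness is étale local (\Cref{chartindependent}), and saturation commutes with strict étale base change, I may work étale locally at $f(x)$. By \Cref{neatcharts} (i) I then choose a fine chart $Q\to M_Y$ that is exact at $f(x)$. The saturation is then $(X,M_X)=(Y,M_Y)\times_{\D(Q)}\D(Q^{\sat})$, computed in fine log schemes, with the chart $Q^{\sat}\to M_X$.

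Next, I check the hypotheses of \Cref{klftlf} for $f$ at $x$. The inclusion $Q\to Q^{\sat}$ is injective and Kummer, since every element of $Q^{\sat}$ has a power in $Q$. It is also integral in the sense of \cite{logbook}. Hence $\D(Q^{\sat})\to\D(Q)$ is log flat, and log flatness is preserved by base change. Alternatively, one can argue directly: the base change $(Y,M_Y)\times_{\D(Q)}\D(Q^{\sat})$ is a strict-flat-local model, and the chart criterion gives log flatness of $f$. So $f$ is log flat and Kummer, and the forward implication of \Cref{klftlf} already gives: $Y$ t-flat at $f(x)$ implies $X$ t-flat at $x$.

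For the converse I need a neat Kummer chart. Since $f$ is Kummer at $x$, \Cref{neatcharts} (v) applies once we have a neat chart on $Y$ from \Cref{neatcharts} (ii). It gives, fppf locally, a Kummer chart $Q'\to P'$ for $f$ with $P'\to M_X$ neat at $x$, where $Q'\to M_Y$ is neat at $f(x)$; neatness of the homomorphism follows as in that proof. With this chart the converse direction of \Cref{klftlf} applies, and the equivalence follows.

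I expect the main obstacle to be confirming that the saturation morphism is log flat, i.e. that the integrality of $Q\to Q^{\sat}$ (or some substitute) holds. If integrality fails, the fallback is to bypass log flatness of $f$ altogether. Using the chart $Q\to Q^{\sat}$ directly, $\mathcal{O}_X=\mathcal{O}_Y\otimes_{\mathbb{Z}[Q]}\mathbb{Z}[Q^{\sat}]$ locally. Then \Cref{monoidflatness}, applied to the injective Kummer homomorphism of fine monoids $Q\to Q^{\sat}$, gives both directions. One combines this with \Cref{chartindependent}, and, for the converse, with the fact that $X\to Y$ is finite and surjective, so that the relevant local rings correspond.
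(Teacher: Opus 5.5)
Your forward direction is essentially the paper's. $f$ is log flat, and the first half of \Cref{klftlf} gives: $(Y,M_Y)$ t-flat at $f(x)$ implies $(X,M_X)$ t-flat at $x$. Drop the integrality claim, though. $Q\to Q^{\sat}$ is in general not integral: for $Q=\langle 2,3\rangle\subset\mathbb{N}$, the map $\mathbb{Z}[t^2,t^3]\to\mathbb{Z}[t]$ is not flat. Log flatness (indeed log \'etaleness) comes instead from your chart-criterion argument, since $Q^{\gp}=(Q^{\sat})^{\gp}$ and $X\to Y\times_{\D(Q)}\D(Q^{\sat})$ is an isomorphism.

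Your converse has a false step. $f$ is in general \emph{not} Kummer at $x$, because $\overline{M}_{Y,f(x)}\to\overline{M}_{X,x}$ need not be injective; \Cref{saturationtorsion} (iii) only says the kernel on groups is torsion. For example, take $Q\subset\mathbb{Z}\oplus\mathbb{Z}/2$ generated by $e_1=(1,0)$ and $e_2=(1,1)$. Then $Q$ is sharp and fine, $Q^{\sat}=\mathbb{N}\oplus\mathbb{Z}/2$, and $(0,1)$ is a unit of $Q^{\sat}$. So wherever $\overline{M}_{X,x}\cong\mathbb{N}$, both $e_1$ and $e_2$ map to its generator. Hence \Cref{neatcharts} (v) does not give you a chart with $P'\to M_X$ neat at $x$. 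The paper avoids this by using the chart $Q\to Q^{\sat}$ itself, which \Cref{saturationtorsion} (ii) shows is a neat Kummer chart for $f$ at $x$, and feeding it into \Cref{klftlf}.

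More seriously, your fallback shows what monoidal flatness actually controls, and it is weaker than the statement. The ring $\mathcal{O}_{Y,f(x)}\otimes_{\mathbb{Z}[Q]}\mathbb{Z}[Q^{\sat}]$ is the product of the strict local rings $\mathcal{O}_{X,x'}$ over all points $x'$ of $X$ above $f(x)$. So \Cref{monoidflatness} only yields: $(Y,M_Y)$ is t-flat at $f(x)$ if and only if $(X,M_X)$ is t-flat at \emph{every} $x'$ above $f(x)$. Your phrase ``the relevant local rings correspond'' does not get you down to a single $x$, and no argument can.

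Here is a counterexample with the same $Q$. Let $k$ be separably closed of characteristic $\neq 2$, and let $Y=\Spec k[[s]]$ with the log structure of $e_1\mapsto s$, $e_2\mapsto -s$. Then $X=\Spec\bigl(k[[s]][\epsilon]/(\epsilon^2-1,\,s(1+\epsilon))\bigr)\cong\Spec k\sqcup\Spec k[[s]]$. The second component carries the standard log structure of the uniformiser $s$, so $(X,M_X)$ is log regular, hence t-flat, at its closed point $x$. However, $(Y,M_Y)$ is not t-flat at $f(x)$: in $M_{Y,f(x)}=Q\oplus\mathcal{O}_{Y,f(x)}^*$ the distinct elements $(e_1,1)$ and $(e_2,-1)$ both map to $s$, which contradicts \Cref{logstrinjective}.

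The paper's route has the same hole. The converse in \Cref{klftlf} reduces to $X=Y\times_{\D(Q)}\D(P)$ and then applies \Cref{monoidflatness}, which is exactly your fibrewise argument. What can be proved is the equivalence with ``t-flat at all points above $f(x)$''. The pointwise statement also holds when the fibre over $f(x)$ is a single point, e.g.\ when $\overline{M}^{\gp}_{Y,f(x)}$ is torsion free.
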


We will need a lemma.

\begin{lemma}\label{saturationtorsion}
Let $f:(X,M_X)\to (Y,M_Y)$ be the saturation of a fine log scheme and $x\to X$ a geometric point.
\begin{enumerate}[label=\normalfont(\roman*)]
\item $f$ is s-finite.
\item If $Q\to M_Y$ is a chart that is exact at $f(x)$, then $Q^{\sat}\to M_X$ is an exact chart at $x$ and $Q\to Q^{\sat}$ is a neat Kummer chart for $f$ at $x$. Moreover, if $Q$ is fine, then so is $Q^{\sat}$.
\item $\ker(\overline{M}_{Y,f(x)}^{\gp}\to\overline{M}_{X,x}^{\gp})$ is torsion.
\end{enumerate}
\end{lemma}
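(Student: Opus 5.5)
We first recall how the saturation is constructed. Étale locally at $f(x)$ we may choose a fine chart $Q\to M_Y$. By \Cref{neatcharts} (i), after shrinking we may take it exact at $f(x)$. The saturation is then $(X,M_X)=(Y,M_Y)\times_{\D(Q)}\D(Q^{\sat})$, taken in the category of fs log schemes, i.e. the fine fibre product followed by saturation. Its underlying scheme is $Y\times_{\Spec\mathbb{Z}[Q]}\Spec\mathbb{Z}[Q^{\sat}]$, and $Q^{\sat}\to M_X$ is a chart. Since $Q$ is fine, $Q^{\sat}$ is fine: Gordan's lemma shows it is finitely generated, and it is integral as a submonoid of $Q^{\gp}$. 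This gives the last sentence of (ii).

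The homomorphism $Q\to Q^{\sat}$ is injective, and every element of $Q^{\sat}$ has a power in $Q$, so it is Kummer. Moreover $Q^{\sat}$ is a finitely generated $Q$-module (\cite[I, 4.3.6]{logbook}), and $Q^{\gp}=(Q^{\sat})^{\gp}$, so the cokernel of the induced map on groups is trivial; in particular the chart $Q\to Q^{\sat}$ is neat.

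The main step is exactness of $Q^{\sat}\to M_{X,x}$, from which (i) and (iii) follow. Exactness of $Q\to M_{Y,f(x)}$ gives $\overline{Q}\cong\overline{M}_{Y,f(x)}$, where $\overline{Q}=Q/F$ for the face $F$ of elements mapping to units at $f(x)$. The point $x$ lies over $f(x)$, and the face $G$ of $Q^{\sat}$ of elements invertible at $x$ satisfies $G\cap Q=F$, because $Q\to Q^{\sat}$ is local on these faces. Then $\overline{M}_{X,x}\cong Q^{\sat}/G$, and I want $G=F^{\sat}$-type, namely that $G$ consists of the elements having a power in $F$. This holds since $q\in G$ implies $q^n\in Q\cap G=F$. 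Hence $G^{\gp}$ is the saturation of $F^{\gp}$ in $Q^{\gp}$, and $Q^{\sat}/G\cong(Q/F)^{\sat}$. Thus the chart induces $\overline{Q^{\sat}}\cong\overline{M}_{X,x}$, which is exactness at $x$ (\cite[II, 2.3.1]{logbook}). The obstacle is making this localisation computation precise: either pass to the localisation $Q_F$, noting that $(Q_F)^{\sat}=(Q^{\sat})_G$, or quote the identification of the saturated log structure's stalk $\overline{M}_{X,x}=\overline{M}_{Y,f(x)}^{\sat}$ from \cite{logbook}.

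Given this identification, (i) follows because $\overline{M}_{X,x}\cong\overline{M}_{Y,f(x)}^{\sat}$ is finitely generated over $\overline{M}_{Y,f(x)}$, again by \cite[I, 4.3.6]{logbook}. For (iii), the map on groups is $\overline{M}_{Y,f(x)}^{\gp}=Q^{\gp}/F^{\gp}\to Q^{\gp}/G^{\gp}$. Its kernel is $G^{\gp}/F^{\gp}$, which is torsion because every element of $G$ has a power in $F$.
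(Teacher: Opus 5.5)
Your reduction matches the paper's: the local model is $(Y,M_Y)\times_{\D(Q)}\D(Q^{\sat})$ with $Q$ exact at $f(x)$, you work with the face $G\subset Q^{\sat}$ of elements invertible at $x$, and you observe that each $q\in G$ has a power in $G\cap Q=F$. However, the key step, exactness of $Q^{\sat}\to M_{X,x}$, is not validly derived.

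First, the isomorphism $Q^{\sat}/G\cong(Q/F)^{\sat}$ is false whenever $\overline{M}_{Y,f(x)}^{\gp}$ has torsion, and so is the identification $\overline{M}_{X,x}=\overline{M}_{Y,f(x)}^{\sat}$ you suggest quoting instead. Torsion is exactly the case in which (iii) has content. The reason is that $\overline{M}_{X,x}$ is sharp, while $(Q/F)^{\sat}$ contains the torsion of $Q^{\gp}/F^{\gp}$ as units. For instance, let $Q\subset\mathbb{Z}\oplus\mathbb{Z}/2$ be generated by $(1,0)$ and $(1,1)$, and let $f(x)$ be a point of $\D(Q)$ where $Q\setminus\{0\}$ maps into the maximal ideal. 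Then $F=\{0\}$ and $G=(Q^{\sat})^*=\{0\}\oplus\mathbb{Z}/2$, so $Q^{\sat}/G\cong\mathbb{N}$, whereas $(Q/F)^{\sat}=\mathbb{N}\oplus\mathbb{Z}/2$. In general $Q^{\sat}/G$ is only $(Q/F)^{\sat}$ modulo its units.

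Second, ``thus $\overline{Q^{\sat}}\cong\overline{M}_{X,x}$'' is a non sequitur. With $\overline{Q}:=Q/F$ as you define it, exactness at $f(x)$ enters your argument only through $Q/F\cong\overline{M}_{Y,f(x)}$, and that holds for every chart. For non-exact charts the conclusion fails: take $Q=\mathbb{N}\mapsto 1$ charting the trivial log structure, where $G=\mathbb{N}\neq(Q^{\sat})^*$.

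What is missing is the actual content of exactness: $Q\to M_{Y,f(x)}$ is local, i.e.\ $F=Q^*$. Then $q\in G$ gives $q^n\in G\cap Q=Q^*$, hence $q\in(Q^{\sat})^*$. So $G=(Q^{\sat})^*$ and $\overline{M}_{X,x}\cong Q^{\sat}/G=\overline{Q^{\sat}}$. This is the paper's argument, and no detour through $G^{\gp}$ or $(Q/F)^{\sat}$ is needed.

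Neatness of $Q\to Q^{\sat}$ also needs more than you wrote. It asks that $\cok(Q^{\gp}\to(Q^{\sat})^{\gp})\to\cok(\overline{M}_{Y,f(x)}^{\gp}\to\overline{M}_{X,x}^{\gp})$ be an isomorphism, so you must also show the target vanishes. It does: $Q^{\gp}=(Q^{\sat})^{\gp}\to\overline{M}_{X,x}^{\gp}$ is surjective by the chart property and factors through $\overline{M}_{Y,f(x)}^{\gp}$.

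After these repairs your (i) survives. $\overline{M}_{X,x}\cong Q^{\sat}/G$ is a quotient of the finitely generated $Q$-module $Q^{\sat}$, with $Q$ acting through $Q/F\cong\overline{M}_{Y,f(x)}$.

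Your (iii) is correct as written and is in fact cleaner than the paper's. You compute the kernel as $G^{\gp}/F^{\gp}$ from $\overline{M}_{Y,f(x)}\cong Q/F$ and $\overline{M}_{X,x}\cong Q^{\sat}/G$, which is valid for any chart. This avoids the paper's fppf localisation to $Q\cong\overline{M}_{Y,f(x)}$ and its appeal to torsion units in saturations of sharp monoids.
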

\begin{proof}
This is local so we may assume $(X,M_X)= (Y,M_Y)\times_{\D(Q)}\D(Q^{\sat})$ for a chart $Q\to M_Y$ (cf. \cite[12.2.37]{gabberramero}), that we can assume to be exact at $f(x)$ (\Cref{neatcharts} (i)). Then, by \cite[II, 1.1.8 (1)]{logbook}, we have $Q^{\sat}/\alpha^{-1}(\mathcal{O}_{X,x}^*)\cong\overline{M}_{X,x}$, where $\alpha:Q^{\sat}\to\mathcal{O}_{X,x}$ is the canonical homomorphism. If $q\in \alpha^{-1}(\mathcal{O}_{X,x}^*)\subset Q^{\sat}$, then there is an integer $n\geq 1$ such that $q^n\in Q$. Since $Q\to M_{Y,f(x)}$ is local (it is exact), so is $Q\to\mathcal{O}_{X,x}$, hence $q^n\in Q^*$. Thus, $\alpha^{-1}(\mathcal{O}_{X,x}^*)\subset (Q^{\sat})^*$ and, since the reverse inclusion trivially holds, we have $\overline{Q^{\sat}}\cong \overline{M}_{X,x}$ and $Q^{\sat}\to M_{X,x}$ is exact by \cite[I, 4.2.1 (3)]{logbook}. By \cite[I, 1.3.5]{logbook}, this implies that $Q^{\gp}=(Q^{\sat})^{\gp}\to\overline{M}_{X,x}^{\gp}$ is surjective, hence so is $\overline{M}_{Y,f(x)}^{\gp}\to\overline{M}_{X,x}^{\gp}$, thus $Q\to Q^{\sat}$ is neat. Since it is trivially Kummer, this proves the first assertion of (ii); the second is \cite[6.4.1 (ii)]{gabberramero}.

For (i), by \cite[I, 4.3.6]{logbook}, it suffices to show that $\overline{M}_{Y,f(x)}\to\overline{M}_{X,x}$ is $\mathbb{Q}$-surjective, which, by (ii), follows from the analogous fact for $Q\to Q^{\sat}$. For (iii), up to localising for the fppf topology at $f(x)$, we may assume $Q\cong\overline{M}_{Y,f(x)}$. Then, by (ii), the diagram \cref{commdiag} shows that $\ker(\overline{M}_{Y,f(x)}^{\gp}\to\overline{M}_{X,x}^{\gp})\cong (Q^{\sat})^*$. Now (iii) follows from the fact that the group of units of the saturation of a sharp integral monoid is torsion (\cite[I, 1.3.5 (5)]{logbook}).
\end{proof}

\begin{proof}[Proof of \Cref{saturationtflat}]
By \Cref{klftlf}, this follows from \Cref{neatcharts} (i) and  \Cref{saturationtorsion} (ii).
\end{proof}

In the presence of t-flatness, we now relate log flatness to the flatness of the logarithmic strata.

\begin{theorem}\label{goodcharts}
Let $f:(X,M_X)\to (Y,M_Y)$ be a morphism of fine log schemes. Assume that at a geometric point $x\to X$ there exists an injective chart $Q\to P$ of fine monoids for $f$, and consider the following.
\begin{enumerate}[label=\normalfont(\roman*)]
\item $(X,M_X)\to (Y,M_Y)\times_{\D(Q)}\D(P)$ is strict log flat at $x$.
\item $(X,M_X)\to (Y,M_Y)$ is log flat at $x$.
\item $C(M_{X,x})\to C(M_{Y,f(x)})$ is flat.
\end{enumerate}
We have \textup{(i)} $\Rightarrow$ \textup{(ii)} $\Rightarrow$ \textup{(iii)}. If $(X,M_X)$ is t-flat at $x$ and $P\to M_X$ (resp. $Q\to M_Y$) is neat at $x$ (resp. $f(x)$), then all conditions are equivalent.
\end{theorem}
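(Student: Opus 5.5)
The plan is to prove the two forward implications by base-change and Kato-type arguments, and the converse by a local flatness argument that uses t-flatness of $X$.

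\textbf{(i) $\Rightarrow$ (ii).} Since $Q\to P$ is injective, $\D(P)\to\D(Q)$ is log flat. This is Kato's criterion: the cokernel of $Q^{\gp}\to P^{\gp}$ need not be torsion, but log flatness of $\D(P)\to\D(Q)$ only requires injectivity. Hence the base change $(Y,M_Y)\times_{\D(Q)}\D(P)\to(Y,M_Y)$ is log flat. Composing with the strict log flat morphism of (i) gives (ii).

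\textbf{(ii) $\Rightarrow$ (iii).} We may localise and pass to the fppf charts of \Cref{neatcharts}. Log flatness is checked fppf locally, and flatness of strata descends along faithfully flat maps. This lets us take a neat chart $Q'\to P'$ for $f$ at $x$ with $(X,M_X)\to(Y,M_Y)\times_{\D(Q')}\D(P')$ strict and flat at $x$, by \Cref{neatcharts} (iv), (vi). Write $Z=(Y,M_Y)\times_{\D(Q')}\D(P')$ and let $z$ be the image of $x$. Because the morphism $X\to Z$ is strict, $C(M_{X,x})=C(M_{Z,z})\times_{Z}X$, so $C(M_{X,x})$ is flat over $C(M_{Z,z})$. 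It remains to see that $C(M_{Z,z})\to C(M_{Y,f(x)})$ is flat. Locally, $C(M_{Z,z})$ is the localisation of
\[ \mathcal{O}_{C(M_{Y,f(x)})}\otimes_{\mathbb{Z}[Q']/(\mathfrak{q})}\mathbb{Z}[P']/(\mathfrak{p}'), \]
where $\mathfrak{q}$ and $\mathfrak{p}'$ are the relevant maximal ideals. Since $\mathfrak{p}'\cap Q'=\mathfrak{q}$, the ring $\mathbb{Z}[P']/(\mathfrak{p}')$ is a free module over $\mathbb{Z}[Q']/(\mathfrak{q})$, with basis the elements of $P'\setminus\mathfrak{p}'$. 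One has to check that the neat chart makes $\mathfrak{q}$ generate exactly the ideal of $C(M_{Y,f(x)})$, and this follows from the exactness of the chart. This gives (iii).

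\textbf{Converse, assuming t-flatness and neat charts.} It suffices to prove (iii) $\Rightarrow$ (i). Put $Z=(Y,M_Y)\times_{\D(Q)}\D(P)$. The map $X\to Z$ is strict because $P$ is a chart for $M_X$, so we must show $\mathcal{O}_{Z,z}\to\mathcal{O}_{X,x}$ is flat. Neatness of $P$ at $x$ means $P$ is sharp and $\mathfrak{p}=P\setminus\{1\}$ maps into the maximal ideal. The strategy is the local flatness criterion \cite[0\textsubscript{III}, 10.2.2]{ega} applied to the ideal $\mathfrak{p}\mathcal{O}_{Z,z}$, which requires two inputs.
\begin{enumerate}[label=\normalfont(\alph*)]
\item $\mathcal{O}_{X,x}/\mathfrak{p}$ is flat over $\mathcal{O}_{Z,z}/\mathfrak{p}$.
\item $\Tor_1^{\mathcal{O}_{Z,z}}(\mathcal{O}_{Z,z}/\mathfrak{p},\mathcal{O}_{X,x})=0$.
\end{enumerate}
For (a), neatness of $Q$ at $f(x)$ gives $\mathfrak{q}=\mathfrak{p}\cap Q=Q\setminus\{1\}$. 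Then $\mathcal{O}_{Z,z}/\mathfrak{p}$ is a localisation of
\[ \mathcal{O}_{C(M_{Y,f(x)})}\otimes_{\mathbb{Z}}\mathbb{Z}[P^{*}]=\mathcal{O}_{C(M_{Y,f(x)})}, \]
and $\mathcal{O}_{X,x}/\mathfrak{p}=\mathcal{O}_{C(M_{X,x})}$. So (a) is exactly hypothesis (iii). For (b), I would mimic \Cref{firstclaim}. By t-flatness and \Cref{chartindependent}, the map $(\mathfrak{p})\otimes_{\mathbb{Z}[P]}\mathcal{O}_{X,x}\to\mathcal{O}_{X,x}$ is injective. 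It factors through the surjection onto $\mathfrak{p}\mathcal{O}_{Z,z}\otimes_{\mathcal{O}_{Z,z}}\mathcal{O}_{X,x}$. Hence $\mathfrak{p}\mathcal{O}_{Z,z}\otimes\mathcal{O}_{X,x}\to\mathcal{O}_{X,x}$ is injective, which is the required $\Tor_1$ vanishing.

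The criterion also needs $\mathcal{O}_{X,x}$ to be $\mathfrak{p}$-adically ideal-separated, which holds because the rings are noetherian and local. I expect the main obstacle to be this converse bookkeeping, specifically identifying $\mathcal{O}_{Z,z}/\mathfrak{p}\mathcal{O}_{Z,z}$ with the stratum of $Y$. This is where neatness of both charts is genuinely used, and it requires care about $P^{*}$ and the units of $Q$.
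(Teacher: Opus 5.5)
Your proposal is correct and follows the paper's route. Both arguments get (i)$\Rightarrow$(ii) essentially from the definition of log flatness. Both get (ii)$\Rightarrow$(iii) as in \Cref{flatstrata}~(i): reduce to a strict flat map to $(Y,M_Y)\times_{\D(Q)}\D(P)$, whose stratum is a strict localisation of $C(M_{Y,f(x)})\times_{\D(Q^*)}\D(P^*)$. Both get the converse from the local flatness criterion, with the $\Tor_1$-vanishing supplied by t-flatness as in \Cref{firstclaim}; the paper compresses this step into one line.

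The only slip is harmless. $\mathbb{Z}[P'^*]$ is free over $\mathbb{Z}[Q'^*]$ on coset representatives of $P'^*/Q'^*$, not on all of $P'\setminus\mathfrak{p}'$. The issue disappears if you first take $Q'$ neat at $f(x)$, so that $Q'^*=1$, as the paper does.
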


Note that the implication \textup{(i)} $\Rightarrow$ \textup{(ii)} follows from the definition of log flatness (\cite[1.10]{logflat}). We first show that condition (iii) makes sense.

\begin{lemma}\label{logstratacharts}
Let $f:(X,M_X)\to (Y,M_Y)$ be a morphism of log schemes and $x\to X$ a geometric point.
\begin{enumerate}[label=\normalfont(\roman*)]
\item A local homomorphism of monoids $P\to M_{X,x}$ induces a closed immersion $C(M_{X,x})\to X_{(x)}\times_{\Spec(\mathbb{Z}[P])}\Spec(\mathbb{Z}[P^*])$. It is an isomorphism if, in addition, $P\to M_{X,x}$ induces a chart at $x$.
\item $f$ induces a morphism $C(M_{X,x})\to C(M_{Y,f(x)})$ such that $C(M_{X,x})\to C(M_{Y,f(x)})\times_{Y_{(f(x))}}X_{(x)}$ is a closed immersion (resp. an isomorphism if $f$ is strict).
\item Assume $(X,M_X)\cong (Y,M_Y)\times_{\D(Q)}\D(P)$ for a chart $Q\to P$ of $f$ at $x$. If $Q\to P$ and $P\to M_{X,x}$ are local homomorphisms, then $C(M_{X,x})$ is a strict localisation of $C(M_{Y,f(x)})\times_{\D(Q^*)}\D(P^*)$.
\item Assume $(X,M_X)$ is fine. For any geometric point $y\to C(M_{X,x})$, we have $\overline{M}_{X,y}\cong\overline{M}_{X,x}$ and $C(M_{X,x})_{(y)}=C(M_{X,y})$.
\end{enumerate}
\end{lemma}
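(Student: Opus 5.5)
We prove the four assertions in turn. Throughout, $X_{(x)}=\Spec(\mathcal{O}_{X,x})$ and $M_{X,x}\to\mathcal{O}_{X,x}$ is the structure map, so $C(M_{X,x})=\Spec(\mathcal{O}_{X,x}/I_x)$ with $I_x$ the ideal generated by the image of $M_{X,x}\setminus M_{X,x}^*$. In each part we compare generators of ideals; the only non-formal input is the chart property in (i) and (iv).

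For (i), let $\beta:P\to M_{X,x}$ be local, so $\beta^{-1}(M_{X,x}^*)=P^*$. The scheme $X_{(x)}\times_{\Spec(\mathbb{Z}[P])}\Spec(\mathbb{Z}[P^*])$ is cut out by the ideal generated by the images of $P\setminus P^*$, since $\mathbb{Z}[P^*]=\mathbb{Z}[P]/(P\setminus P^*)$ and $P\setminus P^*$ is an ideal. Locality gives $\beta(P\setminus P^*)\subset M_{X,x}\setminus M_{X,x}^*$, so that ideal lies in $I_x$, and we get the closed immersion. If $P$ is a chart at $x$, then $M_{X,x}=\beta(P)\cdot\mathcal{O}_{X,x}^*$. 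Hence every non-unit of $M_{X,x}$ has the form $\beta(p)u$ with $p\notin P^*$, and $I_x$ is generated by $\beta(P\setminus P^*)$, giving equality.

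For (ii), the local map $f^\flat:M_{Y,f(x)}\to M_{X,x}$ sends non-units to non-units. Thus $I_{f(x)}\mathcal{O}_{X,x}\subset I_x$, which yields both the morphism and the closed immersion into $C(M_{Y,f(x)})\times_{Y_{(f(x))}}X_{(x)}=\Spec(\mathcal{O}_{X,x}/I_{f(x)}\mathcal{O}_{X,x})$. If $f$ is strict, then $M_{X,x}=f^\flat(M_{Y,f(x)})\cdot\mathcal{O}_{X,x}^*$, so the two ideals coincide, as in (i).

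For (iii), apply (i) to the chart $P\to M_{X,x}$: $C(M_{X,x})=X_{(x)}\times_{\mathbb{Z}[P]}\mathbb{Z}[P^*]$. Since $X_{(x)}$ is a strict localisation of $Y\times_{\D(Q)}\D(P)$, we have
\[
C(M_{X,x})=\bigl(Y\times_{\Spec\mathbb{Z}[Q]}\Spec\mathbb{Z}[P]\times_{\Spec\mathbb{Z}[P]}\Spec\mathbb{Z}[P^*]\bigr)_{(x)} .
\]
Because $Q\to P$ is local, $\mathbb{Z}[Q]\to\mathbb{Z}[P]\to\mathbb{Z}[P^*]$ kills $Q\setminus Q^*$ and therefore factors through $\mathbb{Z}[Q^*]$. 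The scheme in brackets is then $(Y\times_{\mathbb{Z}[Q]}\mathbb{Z}[Q^*])\times_{\D(Q^*)}\D(P^*)$. After strictly localising $Y$ at $f(x)$, the factor $Y\times_{\mathbb{Z}[Q]}\mathbb{Z}[Q^*]$ becomes $C(M_{Y,f(x)})$ by (i) applied to the chart $Q$, which is local via $P$. Taking strict localisation at $x$ then gives the claim.

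For (iv), which is the main point, choose a fine chart $P\to M_X$ near $x$ and localise it to be exact at $x$ (\Cref{neatcharts} (i)); in particular it is local at $x$. By (i), $C(M_{X,x})=\Spec(\mathcal{O}_{X,x}/(P\setminus P^*))$. Let $y\to C(M_{X,x})$. Then every $p\in P\setminus P^*$ maps into the maximal ideal at $y$, while $P^*$ maps to units. Hence $\beta_y^{-1}(\mathcal{O}_{X,y}^*)=P^*$, and since $P$ is a chart at $y$,
\[
\overline{M}_{X,y}\cong P/\beta_y^{-1}(\mathcal{O}_{X,y}^*)=P/P^*\cong\overline{M}_{X,x},
\]
the last isomorphism by exactness at $x$. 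The same computation shows that $P\to M_{X,y}$ is local. Applying (i) at $y$, $C(M_{X,y})$ is cut out in $X_{(y)}$ by $P\setminus P^*$, and this is exactly the strict localisation at $y$ of $C(M_{X,x})$.
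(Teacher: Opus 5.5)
Your proof is correct and follows essentially the same route as the paper: generator comparisons in (i)--(iii), with (ii) amounting to (i) for $M_{Y,f(x)}\to M_{X,x}$. For (iv) you also use an exact chart at $x$ and the fact that $P\setminus P^*$ equals the inverse image of the maximal ideal at $y$. The only cosmetic difference is that the paper first reduces to a Zariski log structure and cites Tsuji's formula $\overline{M}_{X,y}\cong\overline{P_{\mathfrak{p}}}$, whereas you compute $\overline{M}_{X,y}$ directly at geometric points via the chart.
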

\begin{proof}
For (i), since $P\to M_{X,x}$ is local it follows that the image of $P\setminus P^*$ lies in $M_{X,x}\setminus M_{X,x}^*$, hence the first assertion. If $P\to M_{X,x}$ induces a chart at $x$, then $\overline{P}\cong\overline{M}_{X,x}$ (cf. \cite[II, 1.1.8 (1)]{logbook}), which implies the second assertion of (i). Now (ii) follows by applying (i) with $P=M_{Y,f(x)}$.

For (iii), we may assume $Y$ is strictly local. By (i), we have $\mathcal{O}(C(M_{X,x}))\cong\mathcal{O}(X_{(x)})\otimes_{\mathbb{Z}[P]}\mathbb{Z}[P^*]$. Since $\mathcal{O}(X)\otimes_{\mathbb{Z}[P]}\mathbb{Z}[P^*]\cong\mathcal{O}(Y)\otimes_{\mathbb{Z}[Q]}\mathbb{Z}[P^*]$, $\mathcal{O}(C(M_{X,x}))$ is a strict localisation of the latter. Now, $Q\to P$ is local, so the image of $Q\setminus Q^*$ lies in $P\setminus P^*$ and $\mathbb{Z}[Q]\to\mathbb{Z}[P]\to\mathbb{Z}[P^*]$ factors over $\mathbb{Z}[Q^*]$. Hence,
\[ \begin{split}
\mathcal{O}(Y)\otimes_{\mathbb{Z}[Q]}\mathbb{Z}[P^*] &\cong\mathcal{O}(Y)\otimes_{\mathbb{Z}[Q]}\mathbb{Z}[Q^*]\otimes_{\mathbb{Z}[Q^*]}\mathbb{Z}[P^*] \\
&\cong\mathcal{O}(C(M_{Y,f(x)}))\otimes_{\mathbb{Z}[Q^*]}\mathbb{Z}[P^*]
\end{split} \]
which proves (iii).

For (iv), we may assume $X=X_{(x)}$ and there is an exact fine chart $P\to M_{X,x}$ (\Cref{neatcharts} (i)). Then, the log structure $M_X$ is Zariski (\cite[III, 1.4.1 (2)]{logbook}) and it suffices to show the assertion for a point $y\in C(M_{X,x})$. By \cite[II.1.4]{tsuji}, $\overline{P_{\mathfrak{p}}}\cong\overline{M}_{X,y}$, where $\mathfrak{p}\subset P$ is the (prime) inverse image of the maximal ideal of $\mathcal{O}_{X,y}$. Since $P\to M_{X,x}$ is exact, $\overline{P}\cong\overline{M}_{X,x}$ and $C(M_{X,x})\subset X$ is defined by the ideal generated by the image of $P\setminus P^*$ in $\mathcal{O}_{X,x}$. The condition $y\in C(M_{X,x})$ implies that the image of $P\setminus P^*$ in $\mathcal{O}_{X,y}$ is contained in the maximal ideal. Thus, $P\setminus P^*\subset\mathfrak{p}$, hence $P\setminus P^*=\mathfrak{p}$, whence $\overline{M}_{X,y}\cong\overline{M}_{X,x}$. Since, by (i), $C(M_{X,y})$ is the closed subscheme of $X_{(y)}$ defined by the ideal generated by the image of $\mathfrak{p}$, this implies (iv).
\end{proof}

Next, we show \textup{(ii)} $\Rightarrow$ \textup{(iii)} in \Cref{goodcharts}. Recall (\cref{conventions}) that, given a geometric point $x\to X$, we denote by $k(x)$ the residue field of $\mathcal{O}_{X,x}$.

\begin{proposition}\label{flatstrata}
Let $f:(X,M_X)\to (Y,M_Y)$ be a morphism of fine log schemes, $x\to X$ a geometric point, and $K:=\ker(\overline{M}_{Y,f(x)}^{\gp}\to\overline{M}_{X,x}^{\gp})$.
\begin{enumerate}[label=\normalfont(\roman*)]
\item If $f$ is log flat, then $C(M_{X,x})\to C(M_{Y,f(x)})$ is flat.
\item If $f$ is log \'etale, then $C(M_{X,x})\to C(M_{Y,f(x)})$ is a strict localisation of a finite type Cohen--Macaulay morphism of relative dimension $\rk K$, and setting $\varepsilon(x):=\rk K-\td (k(x)/k(f(x)))$, we have $\varepsilon(x)\geq 0$ and
\begin{equation}\label{stratadimdepth}
\delta (C(M_{X,x}))=\delta (C(M_{Y,f(x)}))+\varepsilon(x)
\end{equation}
for $\delta=\dim$ (resp. $\delta=\depth$).
\item If $f$ is log \'etale and $K$ is torsion free, then $C(M_{X,x})\to C(M_{Y,f(x)})$ is a strict localisation of a smooth morphism of relative dimension $\rk K$.
\item If $f$ is log \'etale and $K=0$, then $C(M_{X,x})\cong C(M_{Y,f(x)})$.
\end{enumerate}
\end{proposition}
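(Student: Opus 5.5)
The plan is to reduce everything to a chart computation and then apply \Cref{logstratacharts}. For (i), work fppf-locally at $x$ and $f(x)$, which is harmless because flatness of $C(M_{X,x})\to C(M_{Y,f(x)})$ can be checked after faithfully flat base change and the log strata are compatible with strict flat base change by \Cref{logstratacharts} (ii). By \Cref{neatcharts} (ii) and (iv), choose a chart $Q\to M_Y$ neat at $f(x)$ and a chart $Q\to P$ for $f$ with $P\to M_X$ exact at $x$. By \Cref{neatcharts} (vi), $g\colon (X,M_X)\to Z:=(Y,M_Y)\times_{\D(Q)}\D(P)$ is then strict log flat at $x$, so $X\to Z$ is flat at $x$.

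By \Cref{logstratacharts} (ii), $C(M_{X,x})\cong C(M_{Z,g(x)})\times_{Z_{(g(x))}}X_{(x)}$, which is flat over $C(M_{Z,g(x)})$. It then remains to treat $Z\to Y$. Since $Q$ is sharp (neat chart) and $P\to M_{X,x}$ is exact, hence local, the homomorphism $Q\to P$ is local. \Cref{logstratacharts} (iii) therefore shows that $C(M_{Z,g(x)})$ is a strict localisation of $C(M_{Y,f(x)})\times_{\D(Q^*)}\D(P^*)=C(M_{Y,f(x)})\times\D(P^*)$. This is flat over $C(M_{Y,f(x)})$ because $\mathbb{Z}[P^*]$ is free over $\mathbb{Z}$. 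Composing the two flat maps gives (i).

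For (ii)--(iv), use \Cref{neatcharts} (vii), which gives the same picture étale-locally, with $X\to Z$ strict log étale, hence étale. The strata map is then a strict localisation of $C(M_{Y,f(x)})\times\D(P^*)\to C(M_{Y,f(x)})$ composed with an étale map. The commutative diagram \cref{commdiag} identifies $P^*\cong K$ when $Q\cong\overline{M}_{Y,f(x)}$ is neat, and $P^{\gp}/Q^{\gp}$ has order invertible at $x$. Three consequences follow:
\begin{enumerate}[label=\normalfont(\alph*)]
\item $\D(P^*)$ is a diagonalisable group of relative dimension $\rk K$ over $\mathbb{Z}$. It is flat, and its fibres are finite disjoint unions of tori, with nonreduced factors $\mu_{p^r}$ possible when torsion of $K$ has order divisible by the residue characteristic. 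The fibres are $0$-dimensional complete intersections times tori, so they are Cohen--Macaulay, and the morphism is CM of relative dimension $\rk K$.
\item If $K$ is torsion free, $\D(P^*)\cong\mathbb{G}_m^{\rk K}$ is smooth, which gives (iii).
\item If $K=0$, the map is a strict localisation of an étale map inducing an isomorphism on strict henselisations, hence an isomorphism, which gives (iv).
\end{enumerate}

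The formula \cref{stratadimdepth} comes from the flat local dimension and depth formulas $\delta(B)=\delta(A)+\delta(B/\mathfrak{m}_AB)$ for a flat local map $A\to B$ of noetherian local rings. The closed fibre here is a localisation of a CM scheme of finite type of dimension $\rk K$ over $k(f(x))$ at a point whose residue field has transcendence degree $\td(k(x)/k(f(x)))$. Its depth therefore equals its dimension, which is $\rk K-\td(k(x)/k(f(x)))=\varepsilon(x)\geq 0$ by the dimension formula for finite-type algebras over a field. Strict localisation does not change $k(x)$ up to separable algebraic extensions, so the transcendence degree is unaffected.

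The main obstacle is bookkeeping. One has to verify that $P^*\cong K$ with the chosen neat/exact charts, and that passing between strict localisations and the fibres of the finite-type map preserves the transcendence-degree computation. The fppf-versus-étale descent in (i) also needs a check, namely that the strata commute with the flat localisations used.
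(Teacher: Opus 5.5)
Your proposal is correct and follows the paper's proof essentially step for step. You localise to get a neat chart $Q\to M_Y$ at $f(x)$ and a neat chart $Q\to P$ for $f$ with $P\to M_X$ exact at $x$ (\Cref{neatcharts}), factor through $(Y,M_Y)\times_{\D(Q)}\D(P)$, use \Cref{logstratacharts} (ii)--(iii) together with $P^*\cong K$ from \cref{commdiag} to see the strata map as a strict localisation of a base change of $\D(P^*)$, and read off the dimension/depth formula from the closed fibre, exactly as the paper does.

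The only slip is your justification that $Q\to P$ is local. Sharpness of $Q$ and exactness of $P\to M_{X,x}$ do not suffice on their own. What you need is that the composite $Q\to M_{Y,f(x)}\to M_{X,x}$ is local, which holds because $Q\to M_Y$ is neat at $f(x)$ and maps of log structures induce local maps on stalks.
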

\begin{proof}
By \Cref{logstratacharts} (ii), this is local for the fpqc topology on $Y$. By \Cref{neatcharts} (ii), we may therefore assume $M_Y$ admits a fine chart $Q\to M_Y$ that is neat at $f(x)$.

Suppose $f$ is log flat (resp. log \'etale). Then, by \Cref{neatcharts}, there is a flat quasi-finite (resp. \'etale) morphism $X'\to X$, a geometric point $x'\to X'$ lying above $x$, and a neat chart $Q\to P$ for $f':(X',M_{X'})\to (Y,M_Y)$, where $M_{X'}=M_X|_{X'}$, with $P\to M_{X',x'}$ exact, such that we have a commutative diagram of fine log schemes
\[ \begin{tikzcd}
(X',M_{X'}) \arrow[d] \arrow[r] & (Y,M_Y)\times_{\D(Q)}\D(P) \arrow[d] \\
(X,M_X) \arrow[r] & (Y,M_Y)
\end{tikzcd} \]
where the top horizontal map is strict log flat (resp. strict log \'etale). Note that $Q\to \mathcal{O}_{Y,f(x)}$ is local (since $Q\to M_{Y,f(x)}$ is neat), hence so is the composition $Q\to\mathcal{O}_{Y,f(x)}\to\mathcal{O}_{X',x'}$. Thus, $Q\to P$ is local and, since $P\to M_{X',x'}$ is as well (it is exact), it follows from \Cref{logstratacharts} that we have a commutative diagram of schemes
\begin{equation}\label{stratadiagram}
\begin{tikzcd}
C(M_{X',x'}) \arrow[d] \arrow[r] & C(M_{Y,f(x)})\times\D(P^*) \arrow[d] \\
C(M_{X,x}) \arrow[r] & C(M_{Y,f(x)})
\end{tikzcd}
\end{equation}
where the left vertical map is flat (resp. an isomorphism) and the top horizontal map is flat (resp. a strict localisation). Thus, the composition $C(M_{X',x'})\to C(M_{X,x}) \to C(M_{Y,f(x)})$ is flat, which, since $C(M_{X',x'})\to C(M_{X,x})$ is faithfully flat (\Cref{logstratacharts} (ii)), implies (i).

For (ii), note that $P^*\cong K$ since $Q\to P$ is neat (cf. \cref{commdiag}). In particular, $P^*$ is a finitely generated abelian group, so \cref{stratadiagram} implies the first assertion of (ii). If $K$ is torsion free, then $P^*$ is free, hence (iii), and (iv) follows from (iii). For the equation \cref{stratadimdepth}, by \cite[IV\textsubscript{2}, 6.1.2]{ega} (resp. \cite[IV\textsubscript{2}, 6.3.1]{ega}) it suffices to show $\dim C(M_{X,x})_y=\varepsilon(x)$, where $C(M_{X,x})_y$ denotes the closed fibre over $C(M_{Y,y})$. Since $\Spec(k(y)[P^*])$ is equidimensional of dimension $\rk K$, this follows from \cite[IV\textsubscript{2}, 5.2.3]{ega}.
\end{proof}

\begin{proof}[Proof of \Cref{goodcharts}]
It remains to show the final assertion. Since $(X,M_X)$ is t-flat at $x$, it follows from the assumption on $P$ and $Q$ and \cite[0\textsubscript{III}, 10.2.2]{ega} that $(X,M_X)\to (Y,M_Y)\times_{\D(Q)}\D(P)$ is strict log flat if and only if $C(M_{X,x})\to C(M_{Y,f(x)})$ is flat.
\end{proof}

\begin{corollary}\label{Kummerflatstrata}
Let $f:(X,M_X)\to (Y,M_Y)$ be a morphism of fine log schemes and $x\to X$ a geometric point. If $f$ is s-injective at $x$ and $(X,M_X)$ is t-flat at $x$, then the following are equivalent.
\begin{enumerate}[label=\normalfont(\roman*)]
\item $f$ is log flat at $x$.
\item $C(M_{X,x})\to C(M_{Y,f(x)})$ is flat.
\end{enumerate}
\end{corollary}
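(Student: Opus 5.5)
The plan is to reduce \Cref{Kummerflatstrata} to \Cref{goodcharts}. The implication (i) $\Rightarrow$ (ii) holds without any hypotheses by \Cref{flatstrata} (i). So the real work is (ii) $\Rightarrow$ (i), and for this we want to manufacture neat charts to which the final assertion of \Cref{goodcharts} applies.

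First we localise. Log flatness at $x$ is local for the fppf topology on $X$ and on $Y$. By \Cref{logstratacharts} (ii) and (iv), flatness of $C(M_{X,x})\to C(M_{Y,f(x)})$ can be checked after passing to flat neighbourhoods. Indeed, for a flat neighbourhood $Y'\to Y$ the stratum of $Y'$ is the base change of $C(M_{Y,f(x)})$ up to strict localisation, and similarly on $X$. Moreover, by \Cref{flatcover}, t-flatness at $x$ is preserved under strict flat neighbourhoods. Hence we may pass to fppf neighbourhoods freely.

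Next we choose the charts. By \Cref{neatcharts} (ii), after an fppf localisation at $f(x)$ there is a fine chart $Q\to M_Y$ neat at $f(x)$. By \Cref{neatcharts} (iv), after an fppf localisation at $x$ there is a neat chart $Q\to P$ for $f$ with $P\to M_X$ exact at $x$. Since $f$ is s-injective at $x$, \Cref{neatcharts} (v) lets us take $P\to M_X$ neat at $x$. Concretely, the diagram \cref{commdiag} identifies $P^*$ with $\ker(\overline{M}_{Y,f(x)}^{\gp}\to\overline{M}_{X,x}^{\gp})$, and this kernel is trivial. The chart $Q\to P$ is injective, as provided by \Cref{neatcharts} (iv) (neatness gives $Q^{\gp}\to P^{\gp}$ injective, and $Q$ is integral).

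With these charts, all hypotheses of the final assertion of \Cref{goodcharts} hold: $(X,M_X)$ is t-flat at $x$, $P\to M_X$ is neat at $x$, and $Q\to M_Y$ is neat at $f(x)$. Therefore (ii) of the corollary, i.e. (iii) of \Cref{goodcharts}, implies (ii) of \Cref{goodcharts}, which is the log flatness of $f$ at $x$ after localisation. Log flatness then descends along the fppf neighbourhoods.

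The main obstacle I expect is the bookkeeping in the localisation step. We must check that the neighbourhoods used to produce the charts do not change the strata beyond a flat base change, so that flatness of strata transfers in both directions. For the direction we need, we must pass flatness of strata to the cover: the cover's stratum is flat over the original one (as in the proof of \Cref{flatstrata}), so the composite remains flat. We must also ensure that the neatness at $x$ coming from \Cref{neatcharts} (v) applies to a lift $x'$ of $x$ in the neighbourhood. This should follow because stalks of $\overline{M}$ are unchanged under strict maps.
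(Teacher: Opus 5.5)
Your proposal is correct and follows essentially the same route as the paper. The paper gets (i) $\Rightarrow$ (ii) from \Cref{flatstrata} (i). For (ii) $\Rightarrow$ (i) it localises fppf and uses \Cref{neatcharts} (ii), (iv), (v), with s-injectivity forcing $P^*$ to be trivial, to obtain an injective chart $Q\to P$ with both charts neat, and then applies the final assertion of \Cref{goodcharts}. Your additional bookkeeping on how strata, t-flatness and s-injectivity behave under strict flat neighbourhoods simply spells out what the paper compresses into ``this is fppf local''.
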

\begin{proof}
By \Cref{flatstrata} (i), it suffices to prove (ii) implies (i). This is fppf local so, by \Cref{neatcharts} (v), we may assume the existence of a chart as in \Cref{goodcharts} and (i) follows.
\end{proof}

\begin{corollary}
Let $f:(X,M_X)\to (Y,M_Y)$ and $g:(Y,M_Y)\to (Z,M_Z)$ be morphisms of fine log schemes and $x\to X$ a geometric point. Assume $f$ and $g\circ f$ are log flat $x$. If $g$ is $s$-injective at $f(x)$ and $(Y,M_Y)$ is t-flat at $f(x)$, then $g$ is log flat at $f(x)$.
\end{corollary}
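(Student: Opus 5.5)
The plan is to reduce to \Cref{Kummerflatstrata} applied to $g$ at the geometric point $f(x)$. Since $g$ is s-injective at $f(x)$ and $(Y,M_Y)$ is t-flat at $f(x)$, that corollary says it suffices to show that $C(M_{Y,f(x)})\to C(M_{Z,g(f(x))})$ is flat.

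For this I would use \Cref{flatstrata} (i) twice. Applied to $f$ at $x$, it shows that $C(M_{X,x})\to C(M_{Y,f(x)})$ is flat. Applied to $g\circ f$ at $x$, it shows that the composite $C(M_{X,x})\to C(M_{Y,f(x)})\to C(M_{Z,gf(x)})$ is flat. The functoriality of the maps on strata comes from \Cref{logstratacharts} (ii). All three strata are spectra of local rings: $C(M_{X,x})$ is a closed subscheme of the strictly local scheme $X_{(x)}$, defined by an ideal inside the maximal ideal. The maps between them are local homomorphisms, since they are induced from the local maps $\mathcal{O}_{Z,gf(x)}\to\mathcal{O}_{Y,f(x)}\to\mathcal{O}_{X,x}$.

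The remaining step is standard commutative algebra. Let $A\to B\to C$ be local homomorphisms of noetherian local rings with $C$ flat over $B$ and over $A$. Since $B\to C$ is flat and local, it is faithfully flat. For any injection $N'\to N$ of $A$-modules, the map $N'\otimes_A B\to N\otimes_A B$ becomes injective after applying $-\otimes_B C$, because the result is $N'\otimes_A C\to N\otimes_A C$. Faithful flatness then gives that $N'\otimes_A B\to N\otimes_A B$ is injective, so $B$ is flat over $A$.

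Hence $C(M_{Y,f(x)})\to C(M_{Z,g(f(x))})$ is flat, and \Cref{Kummerflatstrata} gives that $g$ is log flat at $f(x)$.

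The only point needing care is that the hypotheses of \Cref{Kummerflatstrata} concern $g$ at $f(x)$: s-injectivity of $g$ and t-flatness of the source $(Y,M_Y)$. These are exactly what is assumed, so I expect no real obstacle beyond checking the faithful-flatness step and the locality of the maps on strata.
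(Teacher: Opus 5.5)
Your proposal is correct and follows the paper's proof. Both reduce via \Cref{Kummerflatstrata} to flatness of $C(M_{Y,f(x)})\to C(M_{Z,g(f(x))})$ and obtain this from \Cref{flatstrata} (i); you simply make explicit the step the paper leaves implicit, namely applying \Cref{flatstrata} (i) to both $f$ and $g\circ f$ and using faithfully flat descent along the local map $C(M_{X,x})\to C(M_{Y,f(x)})$.
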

\begin{proof}
By \Cref{Kummerflatstrata} it suffice to show that $C(M_{Y,f(x)})\to C(M_{Z,(g\circ f)(x)})$ is flat. This follows from \Cref{flatstrata} (i).
\end{proof}

The previous result can be compared to \cite[0.2]{logflatdescent}.

\begin{corollary}\label{pointstrataflat}
Let $f:(X,M_X)\to (Y,M_Y)$ be a morphism of fine log schemes and $x\to X$ a geometric point such that $C(M_{Y,f(x)})$ is the spectrum of a field. If $f$ is s-injective at $x$ (e.g., Kummer) and $(X,M_X)$ is t-flat at $x$, then $f$ is log flat at $x$.
\end{corollary}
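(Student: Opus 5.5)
By \Cref{Kummerflatstrata}, which applies because $f$ is s-injective at $x$ and $(X,M_X)$ is t-flat at $x$, it suffices to show that $C(M_{X,x})\to C(M_{Y,f(x)})$ is flat. By hypothesis $C(M_{Y,f(x)})$ is the spectrum of a field, and any module over a field is free. So this morphism is automatically flat, and \Cref{Kummerflatstrata} (ii) $\Rightarrow$ (i) gives that $f$ is log flat at $x$.

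The parenthetical ``e.g., Kummer'' also needs a word. A Kummer homomorphism of integral monoids is injective in the sense of \cite[I, 4.3.1]{logbook}. Hence if $f$ is Kummer at $x$, then $\overline{M}_{Y,f(x)}\to\overline{M}_{X,x}$ is injective, which is exactly s-injectivity at $x$.

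The only point requiring care is that $C(M_{X,x})\to C(M_{Y,f(x)})$ is a genuine morphism. This holds by \Cref{logstratacharts} (ii). With that in place the proof is a direct specialisation of \Cref{Kummerflatstrata}, and I do not expect any real obstacle.
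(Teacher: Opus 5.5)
Your proposal is correct and matches the paper's own proof, which simply invokes \Cref{Kummerflatstrata}; flatness of $C(M_{X,x})\to C(M_{Y,f(x)})$ is automatic when the target is the spectrum of a field. Your remarks on Kummer implying s-injective and on \Cref{logstratacharts} (ii) supplying the morphism on strata are accurate supporting details.
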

\begin{proof}
This follows from \Cref{Kummerflatstrata}.
\end{proof}

For a group scheme $G/S$, viewed as an fs log scheme with trivial log structure, we call a $G$-torsor in the log flat topology a \emph{log $G$-torsor}. A standard example, which we recall since we will use repeatedly, is the following.

\begin{example}\label{standardlogtorsor}
Let $h:Q\to P$ be Kummer homomorphism of fs monoids and $G=\D(\cok(h^{\gp}))$. There is a natural action of $G$ on $\D(P)$ and the associated morphism $\D(h):\D(P)\to\D(Q)$ is a log $G$-torsor. To see this, note that it is trivially log flat and, by \cite[3.4 (c)]{illusieoverview}, the action of $G$ on $\D(P)$ induces an isomorphism of fs log schemes
\[ G\times\D(P)\cong \D(P)\times_{\D(Q)}\D(P) \]
Moreover, since $h$ is a Kummer homomorphism of fs monoids, it is locally exact (\cite[I, 4.3.5 (3)]{logbook}), hence $\D(h)$ is an exact morphism (\cite[III, 2.2.2 (2)]{logbook}), which implies that $\D(h)$ is Kummer (cf. \cite[III, 2.2.1 (2)]{logbook}). Finally, $\D(h)$ is a covering in the log flat topology since its underlying morphism is of finite presentation and surjective (by `going up', being the spectrum of a finite injective homomorphism of rings).
\end{example}

\begin{corollary}\label{finiteflatdegree}
Suppose $G/Y$ is a finite flat group scheme and $f:(X,M_X)\to (Y,M_Y)$ a morphism of fs log schemes that is a log $G$-torsor. If $X\to Y$ is finite, then, for any geometric point $x\to X$, $C(M_{X,x})\to C(M_{Y,f(x)})$ is finite flat of degree dividing the order of $G$. If $G$ has prime order, then $C(M_{X,x})\to C(M_{Y,f(x)})$ is either an isomorphism or a $G$-torsor.
\end{corollary}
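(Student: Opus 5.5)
Since $f$ is a log $G$-torsor, it is log flat and Kummer, so \Cref{flatstrata} (i) tells us that $C(M_{X,x})\to C(M_{Y,f(x)})$ is flat. It is also finite. Indeed, $X\to Y$ is finite, and by \Cref{logstratacharts} (ii) the map $C(M_{X,x})\to C(M_{Y,f(x)})\times_{Y_{(f(x))}}X_{(x)}$ is a closed immersion. Here $X_{(x)}\to Y_{(f(x))}$ is a strict localisation of a finite morphism, hence finite, since the strict localisation of a finite $Y_{(f(x))}$-scheme splits as a product of strictly local rings. So the map of strata is finite flat, and its degree is a well-defined integer, namely the rank over the strictly local, hence connected, base. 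The remaining task is to compute that degree.

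To compute the degree, I would trivialise the torsor log flat locally in a way adapted to charts. By \Cref{goodchart}, after passing to quasi-finite flat neighbourhoods we get neat charts $P\to M_X$ and $Q\to M_Y$, together with a Kummer homomorphism $h:Q\to P$ of fs monoids, such that $X\to Y':=Y\times_{\D(Q)}\D(P)$ is strict log flat at $x$. These base changes do not change the degree of the map of strata, because the strata commute with strict flat base change (\Cref{logstratacharts} (ii)) and degree is preserved by flat base change. Since $P$ and $Q$ are sharp, \Cref{logstratacharts} (iii) identifies the stratum of $Y'$ through the image of $x$ with a strict localisation of $C(M_{Y,f(x)})$. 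So $C(M_{X,x})\to C(M_{Y,f(x)})$ factors as a strict localisation of $C(M_{Y',y'})$ followed by a finite flat map coming from the strict flat morphism $X\to Y'$.

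Next I would exploit the torsor structure through \Cref{standardlogtorsor}. The morphism $Y'\to Y$ is a log $H$-torsor with $H=\D(\cok h^{\gp})$. Both $X$ and $Y'$ are log flat covers of $Y$ near $x$, so the fibre product $X\times_Y Y'$, taken in fs log schemes, is on one hand a $G$-torsor over $Y'$ and on the other an $H$-torsor over $X$. On strata I expect $H$ to act with $H$-torsors degenerating to isomorphisms, by \Cref{flatstrata} (iv): the relevant kernel $K$ is $0$ because the charts are neat. Along the $G$ side, the stratum of a classical $G$-torsor is finite flat of degree $|G|$. The upshot is that the stratum of $X\times_YY'$ over $C(M_{Y',y'})$ has degree $|G|$. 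That stratum decomposes, via the $H\times$-structure, into a disjoint union of copies of the stratum map $C(M_{X,x})\to C(M_{Y,f(x)})$ indexed by the components. This would give $\deg\cdot m=|G|$ for some integer $m$, so the degree divides $|G|$.

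For the prime-order statement, the degree is then $1$ or $p=|G|$. If the degree is $1$, the map is an isomorphism. If the degree is $p$, the stabiliser in $G$ of the stratum is all of $G$, since the degrees match. In that case the action map $G\times C(M_{X,x})\to C(M_{X,x})\times_{C(M_{Y,f(x)})}C(M_{X,x})$ is a closed immersion, being the restriction of the isomorphism $G\times X\cong X\times_YX$ of log schemes to strata via \Cref{logstratacharts} (ii). Both sides are finite flat of the same degree $p^2$ over $C(M_{Y,f(x)})$, so it is an isomorphism, and the map of strata is a $G$-torsor.

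The main obstacle is the degree bookkeeping in the fs fibre product. The fs fibre product is not the scheme-theoretic one, so identifying its strata, and showing that the $G$-action restricts compatibly to strata, requires care with saturation. This step would rest on \Cref{logstratacharts} (ii) for the closed immersion and on the neatness of the charts to control $K$.
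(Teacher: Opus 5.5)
Your first paragraph (the map of strata is finite flat) is fine. Your overall strategy is essentially the paper's: make the torsor strict by an fs base change along a Kummer cover built from a neat chart, then compare degrees of strata. But the degree bookkeeping, which is the whole content of the statement, fails at three points.

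(a) Passing to a quasi-finite flat neighbourhood $X_1\to X$ of $x$ is a precomposition of the map of strata, not a base change. It multiplies the degree by that of $C(M_{X_1,x_1})\to C(M_{X,x})$. Moreover $X_1\to Y$ is no longer a log $G$-torsor, so your third paragraph cannot be applied to it. You need no chart for $f$ at all: with a neat chart $Q\to M_Y$, simply take $P:=\overline{M}_{X,x}$.

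(b) You never justify that $Z:=X\times_YY'\to Y'$ is a classical $G$-torsor. This is exactly what \Cref{katolemma} supplies. Part (iii) gives strictness at the points above $x$; the paper instead uses part (i), base changing along $\mathbf{n}:\D(Q)\to\D(Q)$, which makes $f$ strict at all points above $f(x)$.

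(c) \Cref{flatstrata} (iv) needs log \'etaleness. This fails for the log $H$-torsor $Z\to X$ when the residue characteristic divides $|\cok h^{\gp}|$, which is precisely the infinitesimal case that matters. Take $k$ separably closed of characteristic $p$, and let $f$ be the strict localisation at the origin of the morphism $\D(\mathbb{N})\times\Spec(k)\to\D(\mathbb{N})\times\Spec(k)$ induced by $\mathbb{N}\overset{p}{\to}\mathbb{N}$. This is a log $\mu_p$-torsor by \Cref{standardlogtorsor}. Then $Y'=X$ and $Z\cong\mu_p\times X$. The stratum of $Z$ at its closed point is $\mu_{p,k}$, of degree $p$ over $C(M_{X,x})=\Spec(k)$, although the relevant kernel $K$ vanishes. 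The preimage of $C(M_{Y',y'})$ in $Z$ is this single component, not a union of copies of the degree-$1$ map $C(M_{X,x})\to C(M_{Y,f(x)})$. In any case, summing over components would require equal degrees at all points of $X$ above $f(x)$, which you do not prove.

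The missing idea is to look at a single stratum upstairs and use multiplicativity of degrees. Take $z\to Z$ above $x$ with image $y'$. The map $C(M_{Z,z})\to C(M_{X,x})$ is finite flat (\Cref{flatstrata} (i)), and $C(M_{Y',y'})\cong C(M_{Y,f(x)})$ by \Cref{logstratacharts} (iii), since $P^*=\{1\}$. Hence the degree of $C(M_{X,x})\to C(M_{Y,f(x)})$ divides that of $C(M_{Z,z})\to C(M_{Y',y'})$. The latter is the degree of a connected component of a $G$-torsor over a strictly local scheme, equal to the order of $G^0$, hence dividing that of $G$. The $H$-side need not be an isomorphism.

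Your prime-order step has a related defect. The isomorphism $G\times X\cong X\times_YX$ is with the \emph{fs} fibre product. Its underlying scheme maps to the scheme-theoretic fibre product by a finite morphism that need not be a closed immersion. In the example above, $k[s_1,s_2]/((s_1-s_2)^p)\to k[s,u]/(u^p-1)$, $s_1\mapsto us$, $s_2\mapsto s$, misses $u$. So \Cref{logstratacharts} (ii) does not give your closed immersion $G\times C(M_{X,x})\to C(M_{X,x})\times_{C(M_{Y,f(x)})}C(M_{X,x})$.

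Argue on the strict side instead, as the paper does. If $C(M_{Z,z})\to C(M_{Y',y'})$ has degree $p$, the closed immersion $C(M_{Z,z})\to C(M_{Y',y'})\times_{Y'}Z$ is an isomorphism for degree reasons, so $C(M_{Z,z})\to C(M_{Y',y'})$ is a $G$-torsor. If the degree downstairs is also $p$, then $C(M_{Z,z})\to C(M_{X,x})$ has degree $1$, hence is an isomorphism, and the torsor structure descends.
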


We will need a lemma (cf. \cite[2.7 (2)]{logflat}).

\begin{lemma}\label{katolemma}
Suppose $f:(X,M_X)\to (Y,M_Y)$ is a Kummer morphism of fs log schemes and $Q\to M_Y$ a chart that is neat at a geometric point $y\to Y$.
\begin{enumerate}[label=\normalfont(\roman*)]
\item If $X$ is quasi-compact, then there is an integer $n\geq 1$, such that, denoting $\mathbf{n}:\D(Q)\to\D(Q)$ the morphism induced by elevation to the power $n$ on $Q$, the fs base change $f\times_{\D(Q),\mathbf{n}}\D(Q)$ is strict at the points above $y$.
\item If $f$ is log \'etale, then, in \textup{(i)}, we can take $n$ to be invertible in a neighbourhood of $y$.
\item If $x\to X$ is a geometric point above $y$, then the fs base change $f\times_{\D(Q)}\D(\overline{M}_{X,x})$ is strict at geometric points above $x$.
\end{enumerate}
\end{lemma}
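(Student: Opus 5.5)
The plan is to reduce everything to a statement about monoids, using the neat chart $Q\to M_Y$ at $y$ together with Kummer charts for $f$. \Cref{neatcharts} (v) supplies these: since $f$ is Kummer, fppf locally at a geometric point $x$ above $y$ (étale locally if $f$ is log étale, by \Cref{neatcharts} (vii)) there is a Kummer chart $Q\to P$ for $f$ with $P\to M_X$ neat at $x$. Strictness of a morphism of fs log schemes at a point means $\overline{M}_{Y,y}\to\overline{M}_{X,x}$ is an isomorphism. Hence it suffices to show that after the base change in question, the induced map on characteristic monoids at every point above $y$ becomes an isomorphism.

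For (iii), set $P':=\overline{M}_{X,x}$. Since the chart $Q\to M_Y$ is neat at $y$, we have $Q\cong\overline{M}_{Y,y}$, and $Q\to P'$ is Kummer. Locally, the fs fibre product $X\times_{\D(Q)}\D(P')$ is computed through the chart. Its characteristic monoid at a point above $x$ is a quotient of the saturated pushout $(P\oplus_Q P')^{\sat}$ modulo units, and $P\cong P'$ up to units by neatness. So the key computation is the classical one (cf. \cite[2.7 (2)]{logflat}):
\[ \bigl((P'\oplus_QP')^{\sat}\bigr)/\text{units}\cong P'. \]
Indeed, the pushout $P'\oplus_Q P'$ maps to $P'\times\cok(Q^{\gp}\to P'^{\gp})$, which is Kummer over $P'$ with finite group part, so after saturation and modding out by units only the second factor $P'$ survives. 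This gives strictness of the projection to $\D(P')$ at points above $x$. Strictness must be checked at all geometric points above $x$, not only the one given; by the same local computation each such point has the same characteristic monoid.

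For (i), I would make this uniform. Since $f$ is Kummer, for each $x$ above $y$ the map $Q\cong\overline{M}_{Y,y}\to\overline{M}_{X,x}$ is Kummer, so there is $n_x$ with $\overline{M}_{X,x}^{n_x}\subset$ image of $Q$. The multiplication-by-$n$ map factors as $Q\to\overline{M}_{X,x}\to\frac1n Q\cong Q$ whenever $n_x\mid n$. Applying the argument of (iii) with $P'=\frac1nQ$, the base change is strict at points above $x$. Quasi-compactness of $X$ together with \Cref{gentflat}-style openness (constructibility of $\overline{M}_X$) shows that only finitely many isomorphism classes of Kummer extensions $Q\to\overline{M}_{X,x}$ occur over points above $y$. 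We therefore take $n$ to be the lcm of the $n_x$.

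For (ii), if $f$ is log étale, \Cref{neatcharts} (vii) says the order of $P^{\gp}/Q^{\gp}$ is invertible at $x$. The exponent $n_x$ can be chosen dividing that order, so the lcm is invertible in a neighbourhood of $y$.

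The main obstacle is the monoid computation in (iii): verifying that the saturated pushout, after quotienting by the units at the points lying over $x$, is exactly $P'$, and checking this at all points above $x$. A second, more technical issue is the finiteness in (i), namely that the set of Kummer types $\overline{M}_{X,x}$ over $y$ is finite. For this I would use the fact that $\overline{M}_X$ is constructible on a quasi-compact $X$.
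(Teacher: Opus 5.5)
Your proposal is correct and follows essentially the paper's proof. Part (iii) goes via the characteristic monoid of the fs fibre product and the isomorphism $(P\oplus_QP)^{\sat}\cong (P^{\gp}/Q^{\gp})\oplus P$ for the Kummer map $Q\to P$. The uniformity over all points above $x$ that you flag does hold: every $p\neq 1$ in $\overline{M}_{X,x}$ has a power in $Q\setminus\{1\}$, which vanishes at $y$ by neatness. So all such points lie over the closed stratum of $\D(\overline{M}_{X,x})$, and only the units of the saturated pushout become invertible there. Part (i) factors $\mathbf{n}$ through $\overline{M}_{X,x}$ when $n_x\mid n$ and takes an lcm, and part (ii) takes $n_x$ to be an exponent of $P^{\gp}/Q^{\gp}$, which works because a Kummer map out of the saturated monoid $Q$ is exact.

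The only variation is where finiteness comes from in (i). The paper spreads strictness from the points above $x$ to an étale neighbourhood $U_x$ and covers by finitely many $U_x$. You instead bound the possible maps $Q\to\overline{M}_{X,x}$ using finitely many charts for $f$ and the finitely many faces of each chart monoid; this, rather than anything like \Cref{gentflat}, is the relevant finiteness, and either route works.
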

\begin{proof}
We first show (iii). Let $N$ be the log structure of $(X,M_X)\times_{\D(Q)}\D(\overline{M}_{X,x})$ and $x'$ a geometric point of the underlying scheme with $\pr_1(x')=x$. Let $P=\overline{M}_{X,x}$. If $z:=\pr_2(x')\to\D(P)$, then $\overline{M}_{\D(P),z}\cong \overline{P_{\mathfrak{p}}}$ for some prime ideal $\mathfrak{p}\subset P$ (\cite[II.1.4 (1)]{tsuji}). By \cite[2.1]{nakayamalogetale}, we have
\[ \overline{N}_{x'}\cong (P\oplus_Q\overline{P_{\mathfrak{p}}})/\text{torsion} \]
where the coproduct is taken in the category of fs monoids. Since $f$ is Kummer and $P$ fine, there is an integer $n_x\geq 1$ such that $p^{n_x}\in\text{im}(Q)$ for all $p\in P$, so by \cite[3.3]{illusieoverview} we have
\[ P\oplus_QP\cong (P^{\gp}/Q^{\gp})\oplus P \]
Thus,
\[ \overline{N}_{x'}\cong (P\oplus_QP\oplus_P\overline{P_{\mathfrak{p}}})/\text{torsion}\cong \overline{P_{\mathfrak{p}}} \]
Since $Y\times_{\D(Q)}\D(P)\to\D(P)$ is strict, this proves (iii). Note that, if $n_x\mid n$, then elevation to the power $n$ on $Q$ factors over $P$, hence $f\times_{\D(Q),\mathbf{n}}\D(Q)$ is also strict at the points above $x$. It follows that $f\times_{\D(Q),\mathbf{n_x}}\D(Q)$ is strict over an \'etale neighbourhood $U_x$ of $x$. If $X$ is quasi-compact, then finitely many $U_x$ cover $X$, so (i) follows by taking $n$ to be the lcm of the integers $n_x$ for these $U_x$. Finally, if $f$ is log \'etale, then the cokernel of $Q^{\gp}\to P^{\gp}$ is annihilated by an integer $m$ invertible at $x$ (cf. \cite[IV, 2.3.5]{logbook}). Since $Q\to P$ is exact (\cite[I, 4.3.5 (3)]{logbook}), we may take $n_x=m$, hence (ii).
\end{proof}

\begin{proof}[Proof of \Cref{finiteflatdegree}]
By \Cref{logstratacharts} (ii), the claim is local for the fppf topology at $f(x)\to Y$, so we may assume $Y$ is strictly local and there is a neat chart $Q\to M_Y$ (\Cref{neatcharts}). Let $(Y',M_{Y'})=(Y,M_Y)\times_{\D(Q),\mathbf{n}}\D(Q)$ as in \Cref{katolemma} (i), $(X',M_{X'}):=(X,M_X)\times_{(Y,M_Y)}(Y',M_{Y'})$, and $f':(X',M_{X'})\to (Y',M_{Y'})$ the morphism deduced from $f$. Then, $f'$ is a strict log $G$-torsor, so choosing a geometric point $x'\to X'$ lying above $x$, it follows that $C(M_{X',x'})$ is a connected component of $C(M_{Y',f'(x')})\times_{Y'}X'$ (\Cref{logstratacharts} (ii)), hence $C(M_{X',x'})\to C(M_{Y',f'(x')})$ is finite flat. Now, the degree of a connected component of $G$-torsor over a strictly local base is equal to the order of $G^0$, in particular divides the order of $G$, so the same holds for the degree of $C(M_{X',x'})\to C(M_{Y',f'(x')})$. Since $Q\to M_Y$ is neat, the map $C(M_{Y',f'(x')})\to C(M_{Y,f(x)})$ is an isomorphism (\Cref{logstratacharts} (iii)). Thus, the degree of $C(M_{X,x})\to C(M_{Y,f(x)})$ divides that of $C(M_{X',x'})\to C(M_{Y',f'(x')})$, which proves the first assertion. For the second, we may assume $C(M_{X',x'})\to C(M_{Y',f'(x')})$ is not an isomorphism, hence has prime degree. In this case, the closed immersion $C(M_{X',x'})\to C(M_{Y',f'(x')})\times_{Y'}X'$ is an isomorphism for degree reasons, hence $C(M_{X',x'})\to C(M_{Y',f'(x')})$ is a $G$-torsor. If $C(M_{X,x})\to C(M_{Y,f(x)})\cong C(M_{Y',f'(x')})$ is not an isomorphism, we must have $C(M_{X',x'})\cong C(M_{X,x})$ and the assertion follows.
\end{proof}

\subsection{Logarithmic dimension and logarithmic depth}

\begin{definition}
The \emph{logarithmic dimension} of the strict localisation of a fine log scheme $(X,M_X)$ at a geometric point $x\to X$, denoted $\dim(X_{(x)},M_X)$, is defined
\[ \dim(X_{(x)},M_X)=\rk\overline{M}_{X,x}^{\gp}+\dim C(M_{X,x}) \]
\end{definition}

\begin{remark} This is denoted $d(x,M_X)$ in \cite[12.5.5]{gabberramero}. 
\end{remark}

\begin{proposition}\label{tflatprop}
Let $f:(X,M_X)\to (Y,M_Y)$ be a morphism of fine log scheme, $x\to X$ a geometric point, $y=f(x)$, and $K:=\ker(\overline{M}_{Y,y}^{\gp}\to\overline{M}_{X,x}^{\gp})$.
\begin{enumerate}[label=\normalfont(\roman*)]
\item If $f$ is a closed immersion of log schemes \textup{(\cite[3.1]{log})}, then $\dim(X_{(x)},M_X)\leq\dim(Y_{(y)},M_Y)$.
\item If $f$ is log \'etale, then $\dim(X_{(x)},M_X)=\dim(Y_{(y)},M_Y)-\td (k(x)/k(y))$. Moreover, $\td (k(x)/k(y))=0$ if $K$ is torsion.
\item If $f$ is log flat and $K$ is torsion, then $\dim(X_{(x)},M_X)\geq\dim(Y_{(y)},M_Y)$. If, in addition, $f$ is small\footnote{in the sense of \cite[III, 2.3.2]{logbook}} at $x$ and $X\to Y$ quasi-finite, then $\dim(X_{(x)},M_X)=\dim(Y_{(y)},M_Y)$.
\end{enumerate}
\end{proposition}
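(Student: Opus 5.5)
We treat the three assertions separately. Throughout, $\dim(X_{(x)},M_X)=\rk\overline{M}_{X,x}^{\gp}+\dim C(M_{X,x})$, so each part compares ranks of characteristic groups and dimensions of logarithmic strata.

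For (i), a closed immersion of log schemes is strict up to an exact closed immersion. Locally we may factor $f$ as an exact closed immersion into a log scheme $(Y',M_{Y'})$ log étale over $(Y,M_Y)$. Simpler still, for a closed immersion $\overline{M}_{Y,y}\to\overline{M}_{X,x}$ is surjective, so $\rk\overline{M}_{X,x}^{\gp}\le\rk\overline{M}_{Y,y}^{\gp}$. However, $C(M_{X,x})$ is not a closed subscheme of $C(M_{Y,y})$ when the rank drops, so this alone does not suffice. Instead we pass to a chart $Q\to M_Y$ that is exact at $y$ (\Cref{neatcharts} (i)). Let $\mathfrak{q}\subset Q$ be the prime of elements mapping into $\mathfrak{m}_x$, so that $Q_{\mathfrak{q}}$ gives a chart for $X$ at $x$. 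Then $C(M_{X,x})$ is cut out in $X_{(x)}$ by $\mathfrak{q}\setminus Q_{\mathfrak q}^*$, which is a closed subscheme of $Y_{(y)}$ defined by the image of $\mathfrak q$. By \cite[6.4.27]{gabberramero}, i.e.\ inequality \cref{dimineqeq} applied to $\mathcal{O}_{Y,y}/(\mathfrak{p}_Q)$ with the face $Q\setminus\mathfrak q$, we get
\[ \dim C(M_{X,x})\le \dim C(M_{Y,y})+\dim(Q/\text{face})-\dim(\text{smaller face}), \]
and the difference of ranks is exactly $\rk\overline{M}_{Y,y}^{\gp}-\rk\overline{M}_{X,x}^{\gp}$. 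Thus (i) reduces to the monoid dimension inequality $\dim S\le\dim P+\dim S/\mathfrak pS$ applied to $S=\mathcal O(C(M_{X,x}))$ viewed over the localised monoid. This is the step I expect to be the main obstacle: identifying $C(M_{X,x})$ with the correct quotient and checking locality of the monoid homomorphism.

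For (ii), we use \Cref{neatcharts} (vii) with a neat chart $Q\to M_Y$ at $y$. Étale locally there is a chart $Q\to P$ such that $(X,M_X)\to(Y,M_Y)\times_{\D(Q)}\D(P)$ is strict étale and $P^*\cong K$ (by \cref{commdiag}). Then $\rk\overline{M}_{X,x}^{\gp}=\rk P^{\gp}-\rk P^*=\rk\overline{M}_{Y,y}^{\gp}+\rk(P^{\gp}/Q^{\gp})-\rk K$, and $\rk(P^{\gp}/Q^{\gp})=0$ since the cokernel is finite. Hence $\rk\overline{M}_{X,x}^{\gp}=\rk\overline{M}_{Y,y}^{\gp}-\rk K$. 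On the other hand, \Cref{flatstrata} (ii) gives $\dim C(M_{X,x})=\dim C(M_{Y,y})+\rk K-\td(k(x)/k(y))$. Adding these yields the formula. If $K$ is torsion, then $\D(P^*)$ is finite over the base, so the strata map is quasi-finite and $\td=0$; equivalently, $\varepsilon(x)\ge0$ forces $\td\le\rk K=0$.

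For (iii), we use \Cref{goodchart}-type charts from \Cref{neatcharts} (iv)--(vi). Since $K$ is torsion, we may take $Q\to P$ with $P^*$ finite, and the map $(X,M_X)\to(Y,M_Y)\times_{\D(Q)}\D(P)$ is strict log flat. By \Cref{flatstrata} and diagram \cref{stratadiagram}, $C(M_{X,x})\to C(M_{Y,y})$ is flat, and its composite with $C(M_{X',x'})$ factors through $C(M_{Y,y})\times\D(P^*)$, which is finite over $C(M_{Y,y})$. The ranks agree, since $\rk\overline{M}_{X,x}^{\gp}=\rk\overline{M}_{Y,y}^{\gp}+\rk(P^{\gp}/Q^{\gp})-\rk K$ with the last term zero and the middle term nonnegative. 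Moreover, flatness gives $\dim C(M_{X,x})\ge\dim C(M_{Y,y})$ by \cite[IV\textsubscript{2}, 6.1.2]{ega}, which proves the inequality. If moreover $f$ is small at $x$, i.e.\ $P^{\gp}/Q^{\gp}$ is finite, the ranks are equal. If $X\to Y$ is quasi-finite, the closed fibre of the flat strata map has dimension $0$, so equality of stratum dimensions follows from the same EGA formula.
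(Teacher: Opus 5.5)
Your treatment of (ii) and (iii) is fine and is essentially the paper's argument. You do bookkeeping of $\rk\overline{M}^{\gp}$ and combine it with \Cref{flatstrata} (ii) for (ii), and with \Cref{flatstrata} (i) plus \cite[IV\textsubscript{2}, 6.1.2]{ega} for (iii). The only difference is that the paper reads the rank count directly off the torsion kernel/cokernel rather than going through $P^*\cong K$, which is cosmetic.

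The genuine gap is in (i). Your reason for abandoning the direct argument is false: in this situation $C(M_{X,x})$ \emph{is} always a closed subscheme of $C(M_{Y,y})$.

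\begin{enumerate}[label=\normalfont(\alph*)]
\item For any morphism of log schemes, $M_{Y,y}\to M_{X,x}$ is local. This is because $\mathcal{O}_{Y,y}\to\mathcal{O}_{X,x}$ is local and $\alpha^{-1}(\mathcal{O}^*)=M^*$.
\item So \Cref{logstratacharts} (ii) gives a closed immersion $C(M_{X,x})\to C(M_{Y,y})\times_{Y_{(y)}}X_{(x)}$. Since $X_{(x)}\to Y_{(y)}$ is also a closed immersion, $C(M_{X,x})\to C(M_{Y,y})$ is a closed immersion, and hence $\dim C(M_{X,x})\le\dim C(M_{Y,y})$.
\item In fact, surjectivity of $f^*M_Y\to M_X$ shows that every non-unit of $M_{X,x}$ is a unit times the image of a non-unit of $M_{Y,y}$. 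Hence $C(M_{X,x})=C(M_{Y,y})\times_{Y_{(y)}}X_{(x)}$ exactly.
\end{enumerate}

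Together with your correct observation that $\overline{M}_{Y,y}\to\overline{M}_{X,x}$ is surjective (so the rank can only drop), (i) is immediate. This is all the paper means by ``immediate from the definitions''.

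Here is an example where the rank drops but the strata do not change. On $\D(\mathbb{N}^2)=\Spec\mathbb{Z}[a,b]$, take $X=V(a-b)\cong\Spec\mathbb{Z}[t]$ with the log structure of $\mathbb{N}\to\mathbb{Z}[t]$, $1\mapsto t$, mapped to via $(m,n)\mapsto m+n$. At the origin the rank drops from $2$ to $1$, while the two strata coincide.

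The chart argument you substitute does not work as written, for three reasons.
\begin{enumerate}[label=\normalfont(\arabic*)]
\item Since $Q\to M_{Y,y}$ is exact (hence local) and $\mathcal{O}_{Y,y}\to\mathcal{O}_{X,x}$ is local, your prime $\mathfrak{q}$ is all of $Q\setminus Q^*$. Localising at it therefore changes nothing.
\item $Q$ (or $Q_{\mathfrak{q}}$) is in general not a chart for $M_X$, which is only a quotient of $f^*M_Y$, as in the diagonal example above.
\item The displayed inequality involving ``face'' and ``smaller face'' is not well defined, and you leave the key identification unresolved.
\end{enumerate}
As it stands, (i) is not proved. Replace that paragraph with the short argument above.
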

\begin{proof}
(i) is immediate from the definitions. For the remaining assertions, note that if $f$ is small at $x$ (e.g., log \'etale, cf. \Cref{neatcharts} (vii)), then $\cok(\overline{M}_{Y,y}^{\gp}\to\overline{M}_{X,x}^{\gp})$ is torsion, so $\rk\overline{M}_{Y,y}^{\gp}=\rk\overline{M}_{X,x}^{\gp}+\rk\ker(\overline{M}_{Y,y}^{\gp}\to\overline{M}_{X,x}^{\gp})$. Now (ii) follows from \Cref{flatstrata} (ii). In (iii), we have $\rk\overline{M}_{X,x}^{\gp}\geq\rk\overline{M}_{Y,y}^{\gp}$ with equality for $f$ small, so (iii) reduces to showing $\dim C(M_{X,x})\geq\dim C(M_{Y,y})$ with equality for $X\to Y$ quasi-finite. This follows from \Cref{flatstrata} (i) and \cite[IV\textsubscript{2}, 6.1.2]{ega}.
\end{proof}

\begin{corollary}\label{ldimlogblowup}
If $f:(X,M_X)\to (Y,M_Y)$ is a log blowup of fine log schemes, then
\[ \dim(X_{(x)},M_X)=\dim(Y_{(f(x))},M_Y)-\td (k(x)/k(f(x))) \]
for any geometric point $x\to X$.
\end{corollary}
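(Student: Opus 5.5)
The plan is to derive the formula from \Cref{tflatprop} (ii), using that a log blowup is log \'etale. Recall that a log blowup $f:(X,M_X)\to (Y,M_Y)$ along a coherent ideal of $M_Y$ is, \'etale locally on $Y$ with a fine chart $Q\to M_Y$ and a finitely generated ideal $I\subset Q$, the base change $(Y,M_Y)\times_{\D(Q)}B_I(\D(Q))$. Here $B_I(\D(Q))$ is covered by the affine charts $\D(Q[I-a])$ for generators $a$ of $I$, where $Q[I-a]\subset Q^{\gp}$ is the submonoid generated by $Q$ and $b-a$ for $b\in I$. Each homomorphism $Q\to Q[I-a]$ induces an isomorphism on groupifications, so each chart $\D(Q[I-a])\to\D(Q)$ is log \'etale; indeed it is even an open immersion on the generic locus. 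Log \'etaleness is stable under base change in the category of fine log schemes. It is also local on the source and on the target for the \'etale topology. Hence $f$ is log \'etale.

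Applying \Cref{tflatprop} (ii) at an arbitrary geometric point $x\to X$ gives directly
\[ \dim(X_{(x)},M_X)=\dim(Y_{(f(x))},M_Y)-\td (k(x)/k(f(x))), \]
which is the claimed formula.

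The main point requiring care is the log \'etaleness when the base change is taken in fine log schemes. The fibre product there involves integralization, which is a closed immersion on underlying schemes, and is not simply the naive scheme-theoretic pullback. However, log \'etaleness is characterised by the infinitesimal lifting criterion in the fine category (\cite[3.3]{log}), and that criterion is preserved under fine base change. Alternatively one can invoke Kato's chart criterion: the chart $Q\to Q[I-a]$ is injective, has trivial (in particular finite and invertible-order) cokernel on groups, and the morphism to the base change is strict \'etale (an isomorphism). So this step goes through with either argument.
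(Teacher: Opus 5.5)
Your proof is correct and follows the paper's route: the paper's proof is a single line, citing \cite[12.4.43]{gabberramero} for the fact that log blowups of fine log schemes are log \'etale and then applying \Cref{tflatprop} (ii), whereas you sketch that log \'etaleness yourself via the charts $\D(Q[I-a])$ and Kato's chart criterion. Your integralization worry is in fact moot: since $Q\to M_Y$ is a chart, the log fibre product $(Y,M_Y)\times_{\D(Q)}\D(Q[I-a])$ already carries the fine log structure induced by $Q[I-a]$, so it agrees with the naive scheme-theoretic pullback.
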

\begin{proof}
Since log blowups of fine log schemes are log \'etale (cf. \cite[12.4.43]{gabberramero}), the assertion follows from \Cref{tflatprop} (ii).
\end{proof}

\begin{corollary}\label{ldimsaturation}
If $f:(X,M_X)\to (Y,M_Y)$ is the saturation of a fine log scheme, then $\dim(X_{(x)},M_X)=\dim(Y_{(f(x))},M_Y)$ for any geometric point $x\to X$.
\end{corollary}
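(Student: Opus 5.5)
The plan is to reduce to \Cref{tflatprop} (iii). The saturation $f:(X,M_X)\to (Y,M_Y)$ is log flat: étale locally it is the base change $(Y,M_Y)\times_{\D(Q)}\D(Q^{\sat})$ along a chart $Q\to M_Y$ (cf. \cite[12.2.37]{gabberramero}, as in the proof of \Cref{saturationtorsion}). The morphism $\D(Q^{\sat})\to\D(Q)$ is log flat, since $Q\to Q^{\sat}$ is injective and the underlying map of schemes, coming from $\mathbb{Z}[Q]\to\mathbb{Z}[Q^{\sat}]$, is the identity on the torsion-free cokernel part, so the chart criterion applies. Log flatness is stable under fine base change. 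By \Cref{saturationtorsion} (iii), $K:=\ker(\overline{M}_{Y,f(x)}^{\gp}\to\overline{M}_{X,x}^{\gp})$ is torsion. The first part of \Cref{tflatprop} (iii) then gives $\dim(X_{(x)},M_X)\geq\dim(Y_{(f(x))},M_Y)$.

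For the reverse inequality I would use the second part of \Cref{tflatprop} (iii). This requires checking two things.
\begin{enumerate}[label=\normalfont(\alph*)]
\item $f$ is small at $x$. By \Cref{saturationtorsion} (i), $f$ is s-finite, so $\cok(\overline{M}_{Y,f(x)}^{\gp}\to\overline{M}_{X,x}^{\gp})$ is torsion. Together with $K$ being torsion, this should be what smallness demands in \cite[III, 2.3.2]{logbook}. If smallness is stated differently there, I would instead argue directly that $\rk\overline{M}_{X,x}^{\gp}=\rk\overline{M}_{Y,f(x)}^{\gp}$, which holds because both the kernel and the cokernel are torsion.
\item $X\to Y$ is quasi-finite. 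Locally it is the base change of $\Spec\mathbb{Z}[Q^{\sat}]\to\Spec\mathbb{Z}[Q]$. This map is finite, since $Q^{\sat}$ is a finitely generated $Q$-module for $Q$ fine (\cite[6.4.1]{gabberramero}, or the integrality of $\mathbb{Z}[Q^{\sat}]$ over $\mathbb{Z}[Q]$ together with finite generation). Hence $X\to Y$ is finite, in particular quasi-finite.
\end{enumerate}
With both in hand, \Cref{tflatprop} (iii) yields the equality.

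As a cross-check, there is an alternative route avoiding smallness. The ranks agree by the rank computation in (a). For the strata, the diagram in the proof of \Cref{flatstrata}, applied with the neat Kummer chart $Q\to Q^{\sat}$ of \Cref{saturationtorsion} (ii), identifies $C(M_{X,x})$ as a strict localisation of $C(M_{Y,f(x)})\times\D((Q^{\sat})^*)$. Here $(Q^{\sat})^*$ is finite, so this is quasi-finite flat over $C(M_{Y,f(x)})$, and \cite[IV\textsubscript{2}, 6.1.2]{ega} gives equality of dimensions of the strata.

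The main obstacle I expect is confirming log flatness of the saturation morphism, since saturation is not fs base change from a point but a base change along $\D(Q^{\sat})\to\D(Q)$. Care is needed to verify that the chart criterion for log flatness applies to the non-saturated injective homomorphism $Q\to Q^{\sat}$, in which $(Q^{\sat})^*$ may be nontrivial finite torsion.
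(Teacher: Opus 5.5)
Your proof is correct, but it uses a different part of \Cref{tflatprop} than the paper does. The paper's proof is one line. The saturation is finite and log \'etale, so \Cref{tflatprop} (ii) applies. The transcendence degree term vanishes because $X\to Y$ is finite, or because $K$ is torsion by \Cref{saturationtorsion} (iii).

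You use only that $f$ is log flat and apply \Cref{tflatprop} (iii). This means you must check smallness and quasi-finiteness separately, and both of your checks are right. In fact $\cok(\overline{M}^{\gp}_{Y,f(x)}\to\overline{M}^{\gp}_{X,x})=0$: the proof of \Cref{saturationtorsion} (ii) shows that $Q^{\gp}=(Q^{\sat})^{\gp}$ surjects onto $\overline{M}^{\gp}_{X,x}$. The first half of (iii) is then redundant, since the second half already gives equality.

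The trade-off between the two routes is as follows. The paper needs the stronger input that saturation is log \'etale, and through \Cref{tflatprop} (ii) it rests on \Cref{flatstrata} (ii), the Cohen--Macaulay, relative-dimension-$\rk K$ description of the map on strata. Your route needs only log flatness and \Cref{flatstrata} (i), at the cost of two elementary verifications.

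The obstacle you anticipate does not arise. By the definition of log flatness (the implication (i) $\Rightarrow$ (ii) of \Cref{goodcharts}), it suffices to have the injective chart $Q\to Q^{\sat}$ for which $(X,M_X)\to (Y,M_Y)\times_{\D(Q)}\D(Q^{\sat})$ is strict flat. Here that map is an isomorphism, so nontrivial units in $Q^{\sat}$ play no role. Your phrase about the underlying map being ``the identity on the torsion-free cokernel part'' should be replaced by this observation.

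In your cross-check, two statements need $Q$ to be neat, hence sharp, at $f(x)$: writing the product as $C(M_{Y,f(x)})\times\D((Q^{\sat})^*)$, and asserting that $(Q^{\sat})^*$ is finite. Such a chart exists after an fppf localisation by \Cref{neatcharts} (ii). Otherwise the product must be taken over $\D(Q^*)$, and you should use that $(Q^{\sat})^*/Q^*$ is torsion.
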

\begin{proof}
Indeed, $f$ is finite log \'etale so this follows from \Cref{tflatprop} (ii).
\end{proof}

\begin{theorem}\label{ldimineq}
Let $(X,M_X)$ be a fine log scheme and $x\to X$ a geometric point. We have $\dim(X_{(x)},M_X)\geq\dim X_{(x)}$, with equality if $(X,M_X)$ is t-flat at $x$.
\end{theorem}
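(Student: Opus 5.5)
The plan is to reduce to \Cref{diminequality} by choosing a good chart at $x$. Both sides depend only on the strict localisation $X_{(x)}$ with the restricted log structure, so I will assume $X=X_{(x)}$ is strictly local with closed point $x$.

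First I choose a chart adapted to $x$. By \Cref{neatcharts} (ii), fppf-locally at $x$ there is a fine chart $P\to M_X$ that is neat at $x$, so $P$ is sharp and $P\cong\overline{M}_{X,x}$.

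I need to check that this fppf localisation changes neither side. Choose a flat quasi-finite local morphism $X'\to X$ with $x'\mapsto x$, and give $X'$ the pulled-back log structure, so $X'\to X$ is strict.
\begin{itemize}
\item Since $X'\to X$ is flat and quasi-finite at $x'$, the fibre has dimension $0$, and \cite[IV\textsubscript{2}, 6.1.2]{ega} gives $\dim X'_{(x')}=\dim X_{(x)}$.
\item Strictness gives $\overline{M}_{X',x'}\cong\overline{M}_{X,x}$.
\item By \Cref{logstratacharts} (ii), $C(M_{X',x'})\cong C(M_{X,x})\times_{X_{(x)}}X'_{(x')}$. This is flat with zero-dimensional closed fibre over $C(M_{X,x})$, so the two strata have the same dimension, and hence $\dim(X'_{(x')},M_{X'})=\dim(X_{(x)},M_X)$.
\item By \Cref{flatcover}, t-flatness at $x$ is equivalent to t-flatness at $x'$.
\end{itemize}
So the fppf base change is harmless, and I may assume the neat chart $P\to M_X$ exists on $X=X_{(x)}$ itself.

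Now set $S=\mathcal{O}_{X,x}$, a noetherian local ring. Since $P$ is sharp and the chart is neat, hence local at $x$, the image of $\mathfrak{p}=P\setminus\{1\}$ lies in the maximal ideal of $S$. By \Cref{logstratacharts} (i), with $P^*=\{1\}$, $C(M_{X,x})\cong\Spec(S/\mathfrak{p}S)$. Also $\rk P^{\gp}=\rk\overline{M}_{X,x}^{\gp}$. Therefore $\dim(X_{(x)},M_X)=\rk P^{\gp}+\dim S/\mathfrak{p}S$, and \Cref{diminequality} gives
\[\dim S\leq \rk P^{\gp}+\dim S/\mathfrak{p}S.\]
If $(X,M_X)$ is t-flat at $x$, then $S$ is $P$-flat by \Cref{chartindependent}, and the equality case of \Cref{diminequality} finishes the proof.

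The only delicate point is the reduction step: the neat chart exists only fppf-locally unless $\overline{M}^{\gp}_{X,x}$ is torsion free, so the invariance of both sides under flat quasi-finite strict localisation has to be checked carefully, as above. Everything else is a direct application of the earlier results.
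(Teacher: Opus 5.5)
Your proof is correct and follows essentially the same route as the paper: pass to a quasi-finite flat covering on which a neat chart $P\to M_X$ exists, then apply \Cref{diminequality}, using \Cref{chartindependent} to get $P$-flatness in the t-flat case. The only difference is that you check directly that $\dim X_{(x)}$, the logarithmic dimension and t-flatness are unchanged under this strict covering (via \cite[IV\textsubscript{2}, 6.1.2]{ega}, \Cref{logstratacharts} (ii) and \Cref{flatcover}), whereas the paper cites \Cref{tflatprop} (iii) for the logarithmic dimension and leaves the other two invariances implicit.
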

\begin{proof}
By \Cref{tflatprop} (iii), we may replace $X$ by a quasi-finite flat covering. Thus, by \Cref{neatcharts} (ii), we may assume $X$ is strictly local and there is a neat chart $P\to M_{X}$ at $x$. Then, $P$ is sharp and $P\to\mathcal{O}_{X,x}$ is local, so we can apply \Cref{diminequality}.
\end{proof}

\begin{corollary}\label{trivdense}
Let $(X,M_X)$ be a t-flat fine log scheme. If $\overline{M}_{X}^{\gp}$ is torsion free, then $X_{\triv}\subset X$ is dense.
\end{corollary}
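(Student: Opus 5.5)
To prove density of $X_{\triv}$ we must show that every nonempty open of $X$ meets $X_{\triv}$. Equivalently, every generic point of an irreducible component of $X$ lies in $X_{\triv}$. Equivalently again, for every geometric point $x\to X$ the open subset $(X_{(x)})_{\triv}$ is dense in $X_{(x)}$. The plan is to argue locally at a geometric point $x\to X$, taking a minimal prime $\mathfrak{Q}$ of $\mathcal{O}_{X,x}$ and showing that the log structure is trivial at the corresponding point.

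Since $\overline{M}_{X,x}^{\gp}$ is torsion free, \Cref{neatcharts} (ii) gives, étale locally, a fine chart $P\to M_X$ neat at $x$. Then $P$ is sharp with $P\cong\overline{M}_{X,x}$, and $P\to\mathcal{O}_{X,x}$ is local. By \Cref{chartindependent}, $S:=\mathcal{O}_{X,x}$ is $P$-flat. Let $\mathfrak{q}:=\mathfrak{Q}\times_SP$ be the inverse image of $\mathfrak{Q}$ in $P$; it is a prime ideal of $P$.

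The key step uses the going-down property already exploited in the proof of \Cref{diminequality}. By \cite[6.4.34]{gabberramero}, for any prime $\mathfrak{r}\subset\mathfrak{q}$ of $P$ there is a prime $\mathfrak{R}\subset\mathfrak{Q}$ of $S$ with $\mathfrak{R}\times_SP=\mathfrak{r}$. Apply this with $\mathfrak{r}=\emptyset$. Minimality of $\mathfrak{Q}$ forces $\mathfrak{R}=\mathfrak{Q}$, hence $\mathfrak{q}=\emptyset$. Thus no element of $P$ maps into $\mathfrak{Q}$, i.e. every element of $P$ becomes a unit in $S_{\mathfrak{Q}}$.

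It remains to translate this into triviality of the log structure at the point $y$ corresponding to $\mathfrak{Q}$. Since $P$ is a chart, the stalk satisfies $\overline{M}_{X,y}\cong\overline{P_{\mathfrak{q}'}}$, where $\mathfrak{q}'$ is the inverse image of the maximal ideal of $\mathcal{O}_{X,y}$ (\cite[II.1.4]{tsuji}, as in \Cref{logstratacharts} (iv)). Here $\mathfrak{q}'=\mathfrak{q}=\emptyset$, so $P_{\mathfrak{q}'}=P^{\gp}$ and $\overline{M}_{X,y}=0$. Hence $y\in X_{\triv}$. Since $X_{\triv}$ is open and contains all generic points of $X_{(x)}$, it is dense.

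The main subtlety is the use of going-down with $\mathfrak{r}=\emptyset$. This requires checking that the empty ideal is admissible as a prime in \cite[6.4.34]{gabberramero}, via the pointed formalism of \Cref{idealcorrespondence}, where $\emptyset$ corresponds to $\{0\}$. If that fails, the fallback is to deduce from $P$-flatness that no element $[p]$, $p\neq 1$, is a zero divisor on $S$. That would follow from injectivity of $(pP)\otimes S\to S$ together with the fact that $\mathbb{Z}[P]$ is a domain when $P^{\gp}$ is torsion free, which is precisely where the torsion-free hypothesis enters. Since minimal primes of the noetherian local ring $S$ consist of zero divisors, this again gives $\mathfrak{q}=\emptyset$.
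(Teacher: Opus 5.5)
Your proof is correct, but it takes a more direct route than the paper's.

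\textbf{The paper's route.} The paper applies \Cref{ldimineq} at a geometric point $x$ over a generic point of $X$. t-flatness gives $\rk\overline{M}_{X,x}^{\gp}+\dim C(M_{X,x})=\dim X_{(x)}=0$, so the rank vanishes, and torsion-freeness then gives $\overline{M}_{X,x}=\{1\}$.

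\textbf{Your main route.} You bypass the dimension formula and go back to the mechanism behind \Cref{diminequality}: going-down from \cite[6.4.34]{gabberramero}. Using it with $\mathfrak{r}=\emptyset$ is legitimate. The empty ideal corresponds to $\{0\}$ in $P_\circ$ under \Cref{idealcorrespondence}, and the paper already uses it implicitly when it lifts a chain of length $\rk P^{\gp}$ starting at $\emptyset$. Going-down forces $\mathfrak{Q}\times_SP=\emptyset$ for every minimal prime $\mathfrak{Q}$, so the chart lands in units there.

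\textbf{Your fallback.} This is genuinely more elementary and is arguably the cleanest proof. $P$-flatness for the principal ideal $pP$ says exactly that the image of $p$ is a nonzerodivisor on $S$, and minimal primes of a noetherian ring consist of zero divisors. This avoids both Gabber--Ramero's going-down and all dimension theory.

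\textbf{Trade-off.} The paper's proof is two lines only because \Cref{ldimineq} is already available, and it needs no chart at the point, since the fppf reduction is hidden inside that theorem. Yours proves the slightly stronger local statement directly at every minimal prime of every $\mathcal{O}_{X,x}$.

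\textbf{One correction, about the torsion-free hypothesis.} It is not what makes $[p]$ a nonzerodivisor on $\mathbb{Z}[P]$. Integrality of $P$ already makes multiplication by $[p]$ injective on the basis $P$, so $(pP)\cong\mathbb{Z}[P]$ for any integral $P$. In your argument, torsion-freeness is used only to obtain an \'etale-local neat chart. An exact chart from \Cref{neatcharts} (i) would serve equally well: \Cref{chartindependent} still gives $P$-flatness, and then every $p\in P$ maps to a nonzerodivisor. Likewise, the paper's argument does not need the hypothesis, since a sharp integral monoid with finite groupification is trivial: each element has a power equal to $1$, hence is a unit. So both proofs actually establish the corollary without assuming $\overline{M}_X^{\gp}$ torsion free.
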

\begin{proof}
It suffices to show that $X_{\triv}$ contains the generic point of every irreducible component. If $x\to X$ lies above a generic point, then by \Cref{ldimineq} we have $\dim(X_{(x)},M_X)=\dim X_{(x)}=0$ , hence $\rk\overline{M}_{X,x}^{\gp}=0$, whence $\overline{M}_{X,x}^{\gp}=0$ since $\overline{M}_{X,x}^{\gp}$ is torsion free. Thus, $\overline{M}_{X,x}=\{1\}$ and $x\to X_{\triv}$.
\end{proof}

\begin{corollary}\label{ldimineqgen}
Let $(X,M_X)$ be a fine log scheme and $x\to X$ a geometric point. For any geometric point $y\to X_{(x)}$, we have $\dim (X_{(y)},M_X)\leq\dim (X_{(x)},M_X)$.
\end{corollary}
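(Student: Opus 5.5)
We need to show $\dim(X_{(y)},M_X)\leq\dim(X_{(x)},M_X)$ for a geometric point $y\to X_{(x)}$, i.e.
\[ \rk\overline{M}_{X,y}^{\gp}+\dim C(M_{X,y})\leq\rk\overline{M}_{X,x}^{\gp}+\dim C(M_{X,x}). \]
The plan is to reduce to a statement about a fine exact chart, and then compare the two strata inside $X_{(x)}$.

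First, we may assume $X=X_{(x)}$ is strictly local and, by \Cref{neatcharts} (i), that there is a fine chart $P\to M_X$ exact at $x$. Then $M_X$ is Zariski (as in the proof of \Cref{logstratacharts} (iv)), so we may take $y$ to be a point of $X$. Let $\mathfrak{q}\subset P$ be the prime ideal given by the inverse image of the maximal ideal of $\mathcal{O}_{X,y}$, and let $\mathfrak{p}=P\setminus P^*$ be the maximal ideal of $P$. By \cite[II.1.4]{tsuji}, $\overline{M}_{X,y}\cong\overline{P_{\mathfrak{q}}}$, and $C(M_{X,y})$ is cut out in $X_{(y)}$ by the image of $\mathfrak{q}$ (\Cref{logstratacharts} (i)). Let $F:=P\setminus\mathfrak{q}$ be the corresponding face. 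Then $\rk\overline{M}_{X,x}^{\gp}=\dim P$ and $\rk\overline{M}_{X,y}^{\gp}=\dim P-\dim F$. The latter holds because $\overline{P_{\mathfrak{q}}}^{\gp}=P^{\gp}/F^{\gp}$, and $\rk F^{\gp}-\rk P^*$ is the height of the face, using \cite[I, 1.4.7]{logbook}.

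So it suffices to show that $\dim C(M_{X,y})+\dim P-\dim F\leq \dim P+\dim C(M_{X,x})$, i.e.
\[ \dim C(M_{X,y})\leq\dim F+\dim C(M_{X,x}). \]
Let $Z\subset X$ be the closed subscheme defined by the ideal generated by the image of $\mathfrak{q}$. Then $C(M_{X,y})$ is the strict localisation of $Z$ at $y$, so $\dim C(M_{X,y})\leq\dim\mathcal{O}_{Z,\bar z}$ for the closed point $\bar z=x$ of $Z$ (localisation at a generization lowers dimension). Now $Z$ carries the homomorphism $F\to\mathcal{O}_Z$, and the face $F$ modulo its units, $\overline{F}$, is sharp fine with $\rk\overline{F}^{\gp}=\dim F$. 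The image of $\overline{F}\setminus\{1\}$, namely $\mathfrak{p}\cap F$, lands in the maximal ideal of $\mathcal{O}_{Z,x}$. Modulo this ideal, $Z$ becomes $C(M_{X,x})$, since $\mathfrak{q}\cup(\mathfrak{p}\cap F)=\mathfrak{p}$.

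Applying the inequality part of \Cref{diminequality} to $\overline{F}$ would give $\dim\mathcal{O}_{Z,x}\leq\dim F+\dim C(M_{X,x})$, which concludes. The main obstacle is that $F\to\mathcal{O}_Z$ need not factor through the sharp monoid $\overline{F}$, since units of $F$ map to units rather than to $1$. To handle this, I would pass to a neat chart instead (fppf locally, \Cref{neatcharts} (ii)), which is allowed because the logarithmic dimension is invariant under the relevant quasi-finite flat strict covers (\Cref{tflatprop} (iii)). Then $P$ is sharp, $F$ is sharp, and \Cref{diminequality} applies directly to $F\to\mathcal{O}_{Z,x}$.

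Alternatively, one can bypass \Cref{diminequality} and apply Krull's principal ideal bound directly: $\mathcal{O}_{C(M_{X,x})}=\mathcal{O}_{Z,x}/(F\setminus F^*)$, and the ideal $(F\setminus F^*)$ is, up to radical, generated by $\dim F$ elements, as in \cite[6.4.27]{gabberramero}.
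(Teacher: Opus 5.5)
Your proof is correct and is essentially the paper's argument. You use the same reduction to an exact chart on $X_{(x)}$, the same face $F=P\setminus\mathfrak{q}$ and closed subscheme $Z$, and the same rank additivity from \cite[I, 1.4.7]{logbook}. The only difference is where the neat-chart reduction happens. The paper equips $Z$ with the log structure associated to $F\to\mathcal{O}_Z$ and applies \Cref{ldimineq}, whose proof performs that reduction on $Z$. You perform it on $X$ beforehand, lifting $y$ to the cover by flatness, so that $F$ is already sharp and \Cref{diminequality} applies directly.

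Your closing \emph{alternative} via Krull's bound is best dropped. Read inside the monoid, it is false in general: a radical generating set of $F\setminus F^*$ must contain an element on every one-dimensional face, so the three-dimensional cone over a square needs four. Read in $\mathcal{O}_{Z,x}$, it would need its own justification. The inequality you actually need is exactly what \Cref{diminequality} (via \cite[6.4.27]{gabberramero}) already provides.
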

\begin{proof}
We may assume $X=X_{(x)}$ and there is an exact fine chart $P\to M_{X,x}$ (so that $\overline{P}\cong\overline{M}_{X,x}$). Then, the log structure $M_X$ is Zariski (\cite[III, 1.4.1 (2)]{logbook}) and it suffices to show the assertion for a point $y\in X$. By \cite[II.1.4]{tsuji}, $\overline{P_{\mathfrak{p}}}\cong\overline{M}_{X,y}$, where $\mathfrak{p}\subset P$ is the (prime) inverse image of the maximal ideal of $\mathcal{O}_{X,y}$, and $C(M_{X,y})\subset X_{(y)}$ is the closed subscheme of ideal generated by the image of $\mathfrak{p}$ (\Cref{logstratacharts} (i)).

Let $Y\subset X$ be the closed subscheme of ideal $\mathfrak{p}\mathcal{O}_{X,x}$; then, $C(M_{X,y})=Y_{(y)}$. Let $M_Y$ denote the fine log structure on $Y$ associated to $P\setminus\mathfrak{p}\to\mathcal{O}_Y$. Note that $P\setminus\mathfrak{p}\to\mathcal{O}_{Y,x}$ is local (since $P\to\mathcal{O}_{X,x}$ is), hence so is $P\setminus\mathfrak{p}\to M_{Y,x}$; this implies $\overline{P\setminus\mathfrak{p}}\cong\overline{M}_{Y,x}$. By \cite[I, 1.4.7]{logbook}, we have $\rk\overline{P}^{\gp}=\rk\overline{P_{\mathfrak{p}}}^{\gp}+\rk\overline{P\setminus\mathfrak{p}}^{\gp}$, hence
\begin{equation}\label{primeidealequality}
\rk\overline{M}_{X,x}^{\gp}=\rk\overline{M}_{X,y}^{\gp}+\rk\overline{M}_{Y,x}^{\gp}
\end{equation}
Since $C(M_{Y,x})=C(M_{X,x})$, we have $\dim (Y_{(x)},M_Y)=\dim C(M_{X,x})+\rk\overline{M}_{Y,x}^{\gp}$. By \Cref{ldimineq},
$\dim (Y_{(x)},M_Y)\geq\dim Y$, so, since $\dim Y\geq \dim Y_{(y)}=\dim C(M_{X,y})$, we obtain
\[ \dim C(M_{X,y})\leq \dim C(M_{X,x})+\rk\overline{M}_{Y,x}^{\gp} \]
By \cref{primeidealequality} we get
\[ \dim C(M_{X,y})+\rk\overline{M}_{X,y}^{\gp}\leq \dim C(M_{X,x})+\rk\overline{M}_{X,x}^{\gp}=\dim (X_{(x)},M_X) \]
as required.
\end{proof}

\begin{corollary}\label{logetaledim}
Let $f:(X,M_X)\to (Y,M_Y)$ be a log \'etale morphism and $x\to X$ a geometric point. If $(Y,M_Y)$ is t-flat at $f(x)$, then
\[ \dim X_{(x)}=\dim Y_{(f(x))}-\td (k(x)/k(f(x))) \]
Moreover, $\td (k(x)/k(f(x)))=0$ if $\ker(\overline{M}_{Y,f(x)}^{\gp}\to\overline{M}_{X,x}^{\gp})$ is torsion.
\end{corollary}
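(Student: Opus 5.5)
The plan is to combine \Cref{ldimineq} with \Cref{tflatprop} (ii). First, since $f$ is log \'etale it is in particular log flat. Because $(Y,M_Y)$ is t-flat at $f(x)$, \Cref{klftlf} then shows that $(X,M_X)$ is t-flat at $x$. So both $(X,M_X)$ at $x$ and $(Y,M_Y)$ at $f(x)$ are t-flat fine log schemes.

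Next, I apply \Cref{ldimineq} at both points. This identifies the logarithmic dimensions with the usual dimensions:
\[ \dim(X_{(x)},M_X)=\dim X_{(x)},\qquad \dim(Y_{(f(x))},M_Y)=\dim Y_{(f(x))}. \]
Then \Cref{tflatprop} (ii) gives
\[ \dim(X_{(x)},M_X)=\dim(Y_{(f(x))},M_Y)-\td (k(x)/k(f(x))). \]
Substituting the two identities into this equation yields the desired formula. The ``moreover'' assertion is exactly the second statement of \Cref{tflatprop} (ii), which applies verbatim.

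The only point needing care is the t-flatness of $X$ at $x$. \Cref{klftlf} is stated for log flat morphisms of log schemes, and its direct implication (t-flatness ascends from the target) requires no Kummer hypothesis, so it applies here. Its proof uses the fppf-local chart of \Cref{neatcharts} (iv), (vi) together with \Cref{monoidflatness} (i) $\Rightarrow$ (ii), and all of these are available for log \'etale morphisms of fine log schemes. Once that step is granted, everything else is a formal substitution, so I expect no genuine obstacle.
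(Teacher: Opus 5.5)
Your proposal is correct and is essentially the paper's proof: it first gets t-flatness of $(X,M_X)$ at $x$ from \Cref{klftlf}, then combines \Cref{ldimineq} at both points with \Cref{tflatprop} (ii). The ``moreover'' part also comes directly from \Cref{tflatprop} (ii), as you say.
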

\begin{proof}
Indeed, $(X,M_X)$ is t-flat at $x$ (\Cref{klftlf}), hence the result follows from  \Cref{ldimineq} and \Cref{tflatprop} (ii).
\end{proof}

\begin{corollary}\label{dimlogstrata}
Let $f:(X,M_X)\to (Y,M_Y)$ be a morphism of t-flat fine log schemes and $x\to X$ a geometric point such that $\dim X_{(x)}=\dim Y_{(f(x))}$. If $\rk\overline{M}_{X,x}^{\gp}=\rk\overline{M}_{Y,f(x)}^{\gp}$, then $\dim C(M_{X,x})=\dim C(M_{Y,f(x)})$.
\end{corollary}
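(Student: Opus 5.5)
The plan is to apply \Cref{ldimineq} to both $X$ at $x$ and $Y$ at $f(x)$, and then cancel ranks. Since $(X,M_X)$ is t-flat at $x$, \Cref{ldimineq} gives $\dim X_{(x)}=\dim(X_{(x)},M_X)$. By the definition of logarithmic dimension this reads
\[ \dim X_{(x)}=\rk\overline{M}_{X,x}^{\gp}+\dim C(M_{X,x}). \]
Similarly, t-flatness of $(Y,M_Y)$ at $f(x)$ gives
\[ \dim Y_{(f(x))}=\rk\overline{M}_{Y,f(x)}^{\gp}+\dim C(M_{Y,f(x)}). \]

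The hypothesis $\dim X_{(x)}=\dim Y_{(f(x))}$ makes the two right-hand sides equal. The hypothesis that the ranks of $\overline{M}^{\gp}$ agree then lets us cancel the rank terms, which leaves $\dim C(M_{X,x})=\dim C(M_{Y,f(x)})$.

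The only point needing care is that these quantities are finite, so that the cancellation is legitimate. The ranks are finite because the log structures are fine. The strict localisations are noetherian local schemes, hence finite-dimensional, and each $C(M_{-,-})$ is a closed subscheme of one of them, so it has finite dimension as well. There is no real obstacle here; notably, the morphism $f$ plays no role beyond defining the point $f(x)$.
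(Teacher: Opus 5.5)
Your proposal is correct and is essentially the paper's argument: the paper's proof simply cites \Cref{ldimineq}, which is exactly what you do at $x$ and at $f(x)$ before cancelling the equal ranks. Your remarks on finiteness (noetherian strict localisations, finite ranks for fine log structures) and on $f$ serving only to define the point $f(x)$ are accurate.
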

\begin{proof}
This follows from \Cref{ldimineq}.
\end{proof}

\begin{corollary}\label{logregcharacterisation}
An fs log scheme $(X,M_X)$ is log regular at $x\to X$ if and only if the following conditions hold
\begin{enumerate}[label=\normalfont(\arabic*)]
\item $(X,M_X)$ is t-flat at $x$
\item $C(M_{X,x})$ is regular.
\end{enumerate}
\end{corollary}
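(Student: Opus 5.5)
We reduce to a local statement about a complete noetherian local ring with a neat chart, and then play the dimension formula of \Cref{ldimineq} against Kato's definition of log regularity. Recall that Kato defines $(X,M_X)$ to be log regular at $x$ when $C(M_{X,x})$ is regular and $\dim\mathcal{O}_{X,x}=\rk\overline{M}_{X,x}^{\gp}+\dim C(M_{X,x})$. For fs log structures this is the same as Kato's condition on the ideal $I(x)$ generated by $M_{X,x}\setminus M_{X,x}^*$, since $C(M_{X,x})=\Spec(\mathcal{O}_{X,x}/I(x))$.

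\emph{Sufficiency.} Assume (1) and (2). By \Cref{ldimineq}, t-flatness at $x$ gives
\[ \dim X_{(x)}=\dim(X_{(x)},M_X)=\rk\overline{M}_{X,x}^{\gp}+\dim C(M_{X,x}). \]
Together with the regularity of $C(M_{X,x})$ from (2), these are exactly Kato's two conditions. Hence $(X,M_X)$ is log regular at $x$.

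\emph{Necessity.} Assume $(X,M_X)$ is log regular at $x$. Condition (2) is part of the definition. For (1), we invoke Kato's theorem \cite[6.1]{logreg}, already used in the example following \Cref{tflatdef}. It states that $\mathcal{O}_{X,x}$ is flat over the log structure, i.e.\ that $\Tor^{\mathbb{Z}[P]}_i(\mathcal{O}_{X,x},\mathbb{Z}[P]/(J))=0$ for $i>0$ for a chart $P$ and every ideal $J$. By \Cref{primeideals} (iii), this is $P$-flatness, and \Cref{chartindependent} then gives t-flatness at $x$.

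If one prefers not to quote \cite[6.1]{logreg} wholesale, the argument can be made self-contained along the following lines.
\begin{enumerate}[label=\normalfont(\alph*)]
\item Reduce to the completion using \Cref{tlfcomplete}. Then pass to a neat chart $P$ with $P$ sharp fs: such a chart exists étale locally by \Cref{neatcharts} (ii), since $\overline{M}^{\gp}_{X,x}$ is torsion free in the fs case.
\item Apply \Cref{maxideals}, which reduces $P$-flatness to injectivity of $(\mathfrak{p})\otimes_{\mathbb{Z}[P]}\mathcal{O}\to\mathcal{O}$ for the single ideal $\mathfrak{p}=P\setminus\{1\}$.
\item Use Kato's structure theorem. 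It presents $\hat{\mathcal{O}}_{X,x}$ as $R[[P]]/(\theta)$ with $R$ a complete regular local ring (Cohen ring or field) and $\theta$ either $0$ or of constant term $p$ not in $\mathfrak{p}$.
\item Conclude with \Cref{depthlemma2}: $R[[P]]$ is $P$-flat (flat over $R[P]$, which is $P$-flat), and $\theta$ is a nonzerodivisor modulo $\mathfrak{p}$, so the quotient is $P$-flat.
\end{enumerate}

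The main obstacle lies in step (c) of this self-contained route. One must verify that $\theta$ remains regular on $R[[P]]/\mathfrak{p}R[[P]]\cong R$, which is immediate because its image is $p\neq0$ in the domain $R$. One must also check that $P$-flatness passes from $R[P]$ to $R[[P]]$, which uses the flatness of $R[P]\to R[[P]]$ as in the proof of \Cref{pdlemma}.
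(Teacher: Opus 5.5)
Your proposal is correct and follows the paper's proof: sufficiency comes from \Cref{ldimineq} together with the definition of log regularity ($C(M_{X,x})$ regular and $\dim(X_{(x)},M_X)=\dim X_{(x)}$), and necessity comes from Kato's theorem \cite[6.1]{logreg} that log regularity implies t-flatness. Your optional self-contained route via Kato's structure theorem and \Cref{depthlemma2} also works after two small fixes. First, $R$ is a power series ring $R_0[[T_1,\dots,T_r]]$ over a Cohen ring or field $R_0$, so the image of $\theta$ in $R[[P]]/\mathfrak{p}R[[P]]\cong R$ is only congruent to $p$ modulo $(T_1,\dots,T_r)$, though it is still nonzero. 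Second, \Cref{depthlemma2} also requires $\theta$ to be regular on $R[[P]]$ itself, which holds because $R[[P]]$ is a domain.
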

\begin{proof}
By definition (\cite{logreg}, \cite[12.5.7]{gabberramero}), $(X,M_X)$ is log regular at $x\to X$ if and only if $C(M_{X,x})$ is regular and $\dim(X_{(x)},M_X)=\dim X_{(x)}$. So, by \Cref{ldimineq}, conditions (1) and (2) are sufficient. Conversely, since log regularity implies t-flatness (\cite[6.1]{logreg}), they are also necessary.
\end{proof}

This motivates the following definition.

\begin{definition}\label{tregdef}
We say that a fine log scheme $(X,M_X)$ is \emph{t-regular at a geometric point $x\to X$} if it is t-flat at $x$ and $C(M_{X,x})$ is regular.
\end{definition}

\begin{proposition}\label{logetaleregular}
Let $f:(X,M_X)\to (Y,M_Y)$ be a log flat morphism of a fine log schemes and $x\to X$ a geometric point.
\begin{enumerate}[label=\normalfont(\roman*)]
\item If $(X,M_X)$ is t-regular at $x$ and $(Y,M_Y)$ t-flat at $f(x)$, then $(Y,M_Y)$ is t-regular at $f(x)$.
\item Assume $f$ is log \'etale. If $(Y,M_Y)$ is t-regular at $f(x)$ and $\overline{M}_{Y,f(x)}^{\gp}$ is torsion free, then $(X,M_X)$ is t-regular at $x$.
\end{enumerate}
Assume that, locally for the fppf topology, there is a neat Kummer chart $Q\to P$ of fine monoids for $f$ at $x$.
\begin{enumerate}[label=\normalfont(\roman*),resume]
\item If $(X,M_X)$ is t-regular at $x$, then $(Y,M_Y)$ is t-regular at $f(x)$.
\end{enumerate}
\end{proposition}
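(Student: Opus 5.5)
The plan is to reduce all three assertions to statements about the map of logarithmic strata $C(M_{X,x})\to C(M_{Y,f(x)})$, using \Cref{flatstrata}, and then apply standard facts on regularity under flat maps. T-flatness is handled separately in each case: it is assumed in (i), it comes from \Cref{klftlf} in (ii), and it comes from the converse part of \Cref{klftlf} in (iii).

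For (i), \Cref{flatstrata} (i) shows that $C(M_{X,x})\to C(M_{Y,f(x)})$ is flat. Since this is a flat morphism of noetherian schemes, it is local on strict localisations at the chosen geometric points, hence faithfully flat. Regularity descends along faithfully flat local homomorphisms of noetherian local rings (\cite[0\textsubscript{IV}, 17.3.3]{ega}). Thus $C(M_{Y,f(x)})$ is regular because $C(M_{X,x})$ is. Since $(Y,M_Y)$ is t-flat at $f(x)$ by hypothesis, it is t-regular there.

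For (ii), \Cref{klftlf} shows that $(X,M_X)$ is t-flat at $x$, because $f$ is log \'etale, hence log flat. It remains to show that $C(M_{X,x})$ is regular. The kernel $K=\ker(\overline{M}_{Y,f(x)}^{\gp}\to\overline{M}_{X,x}^{\gp})$ is a subgroup of the torsion free group $\overline{M}_{Y,f(x)}^{\gp}$, so it is torsion free. Then \Cref{flatstrata} (iii) says that $C(M_{X,x})\to C(M_{Y,f(x)})$ is a strict localisation of a smooth morphism. A scheme smooth over a regular scheme is regular, and strict localisation preserves regularity, so $C(M_{X,x})$ is regular. This is the one step needing care: it is exactly where torsion freeness is used. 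Without it, the morphism of strata is only Cohen--Macaulay, for instance $\D(\mu_p)$-type fibres in characteristic $p$, and regularity would fail.

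For (iii), assume that fppf locally there is a neat Kummer chart $Q\to P$ for $f$ at $x$. The converse part of \Cref{klftlf} then shows that $(Y,M_Y)$ is t-flat at $f(x)$. Once that is known, we are in the situation of (i), which gives t-regularity of $(Y,M_Y)$ at $f(x)$. I expect no real obstacle here beyond citing the descent of t-flatness correctly.
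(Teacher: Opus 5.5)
Your proposal is correct and follows the paper's proof. For (i) you use flatness of $C(M_{X,x})\to C(M_{Y,f(x)})$ from \Cref{flatstrata} together with descent of regularity. For (ii) you use \Cref{klftlf} and \Cref{flatstrata} (iii), and you helpfully make explicit that $K$ is torsion free because it is a subgroup of $\overline{M}_{Y,f(x)}^{\gp}$. For (iii) you use the converse part of \Cref{klftlf} to get t-flatness of $(Y,M_Y)$ at $f(x)$ and then reduce to (i).
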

\begin{proof}
Since $C(M_{X,x})\to C(M_{Y,f(x)})$ is flat (\Cref{flatstrata}), $C(M_{Y,f(x)})$ is regular if $C(M_{X,x})$ is (\cite[0\textsubscript{IV}, 17.3.3]{ega}), hence (i). In (ii), $(X,M_X)$ is t-flat at $x$ (\Cref{klftlf}), so the assertion follows from \Cref{flatstrata} (iii).

For (iii), note that, by \Cref{klftlf}, under the stated assumption $(X,M_X)$ is t-flat at $x$ if and only if $(Y,M_Y)$ is t-flat at $f(x)$. So (iii) follows from (i).
\end{proof}

This implies the following result due to Gabber (cf. \cite[3.5]{gabberVIII}).

\begin{corollary}\label{saturationregular}
Let $f:(X,M_X)\to (Y,M_Y)$ be the saturation of a fine log scheme. If $(X,M_X)$ is log regular at $x\to X$, then $(Y,M_Y)$ is t-regular at $f(x)$. The converse holds if, in addition, $\overline{M}_{Y,f(x)}^{\gp}$ is torsion free.
\end{corollary}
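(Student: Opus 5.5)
Both assertions should follow from \Cref{logetaleregular} applied to the saturation morphism $f$, once its hypotheses are checked.

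The basic facts about $f$ are these. Saturation of a fine log scheme is finite and log \'etale, as already used in the proof of \Cref{ldimsaturation}. In particular $f$ is log flat. By \Cref{neatcharts} (i), locally at $f(x)$ there is a fine chart $Q\to M_Y$ exact at $f(x)$. By \Cref{saturationtorsion} (ii), $Q\to Q^{\sat}$ is then a neat Kummer chart of fine monoids for $f$ at $x$. So the extra hypothesis needed for part (iii) of \Cref{logetaleregular} holds, already \'etale locally and hence fppf locally.

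For the direct implication, suppose $(X,M_X)$ is log regular at $x$. By \Cref{logregcharacterisation} it is t-flat at $x$ and $C(M_{X,x})$ is regular, i.e.\ $(X,M_X)$ is t-regular at $x$ (\Cref{tregdef}). By \Cref{logetaleregular} (iii), $(Y,M_Y)$ is then t-regular at $f(x)$. Concretely, \Cref{saturationtflat} descends t-flatness, and the flatness of $C(M_{X,x})\to C(M_{Y,f(x)})$ from \Cref{flatstrata} (i) descends regularity.

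For the converse, assume $(Y,M_Y)$ is t-regular at $f(x)$ and $\overline{M}_{Y,f(x)}^{\gp}$ is torsion free. Since $f$ is log \'etale, \Cref{logetaleregular} (ii) shows that $(X,M_X)$ is t-regular at $x$. The log structure $M_X$ is fs, being a saturation, so \Cref{logregcharacterisation} applies and gives that $(X,M_X)$ is log regular at $x$.

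The only real issue is that \Cref{logetaleregular} (ii) needs the torsion-freeness hypothesis. Without it, $\ker(\overline{M}_{Y,f(x)}^{\gp}\to\overline{M}_{X,x}^{\gp})\cong (Q^{\sat})^*$ could be a nontrivial finite group. In that case \Cref{flatstrata} would give only a Cohen--Macaulay morphism of strata rather than a smooth one, and regularity could fail to ascend. When the group is torsion free, \Cref{saturationtorsion} (iii) shows this kernel is torsion and torsion free, hence $0$. Then \Cref{flatstrata} (iv) gives directly $C(M_{X,x})\cong C(M_{Y,f(x)})$, which is a more transparent alternative to invoking \Cref{logetaleregular} (ii).
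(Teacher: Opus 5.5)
Your proposal is correct and follows the paper's own proof. The paper likewise takes an exact chart from \Cref{neatcharts} (i) and the neat Kummer chart $Q\to Q^{\sat}$ from \Cref{saturationtorsion} (ii), then applies \Cref{logetaleregular} (ii)--(iii), with \Cref{logregcharacterisation} (which you make explicit) translating between log regularity and t-regularity since $M_X$ is fs. Your closing observation is a valid shortcut the paper doesn't spell out: the kernel is torsion by \Cref{saturationtorsion} (iii) and also torsion free, hence zero, so \Cref{flatstrata} (iv) identifies the strata outright.
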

\begin{proof}
Indeed, locally for the \'etale topology at $f(x)$ there is a fine chart $Q\to M_Y$ that is exact at $f(x)$ (\Cref{neatcharts} (i)) and, by \Cref{saturationtorsion} (ii), $Q\to Q^{\sat}$ is a neat Kummer chart of fine monoids for $f$ at $x$, so the assertions follow from \Cref{logetaleregular} (ii)--(iii).
\end{proof}

\begin{corollary}
Let $(X,M_X)$ be a fine log scheme and $x\to X$ a geometric point. If $\overline{M}_{X,x}^{\gp}$ is torsion free, then the following are equivalent.
\begin{enumerate}[label=\normalfont(\roman*)]
\item $(X,M_X)$ is t-regular at $x$.
\item $\dim X_{(x)}=\dim (X_{(x)},M_X)$ and $C(M_{X,x})$ is regular.
\end{enumerate}
\end{corollary}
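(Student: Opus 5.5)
We prove the two implications separately. Throughout we write $\overline{M}:=\overline{M}_{X,x}$, which is a fine monoid. Since $\overline{M}^{\gp}$ is torsion free, it is also sharp. By \Cref{neatcharts} (ii), étale locally at $x$ there is a neat fine chart $P\to M_X$ with $P\cong\overline{M}$. Both conditions (i) and (ii) are étale local at $x$: for t-flatness this is \Cref{flatcover}; for dimension and for regularity of $C(M_{X,x})$ it holds because the strict localisation is unchanged. So we may assume $X=X_{(x)}$ is strictly local with such a chart.

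\textbf{(i) $\Rightarrow$ (ii).} Assume $(X,M_X)$ is t-regular at $x$. Then it is t-flat at $x$, so \Cref{ldimineq} gives $\dim X_{(x)}=\dim(X_{(x)},M_X)$. Regularity of $C(M_{X,x})$ is part of the definition (\Cref{tregdef}).

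\textbf{(ii) $\Rightarrow$ (i).} Here we must show that $S:=\mathcal{O}_{X,x}$ is $P$-flat, given that $S/\mathfrak{p}S=\mathcal{O}(C(M_{X,x}))$ is regular and that
\[ \dim S=\rk P^{\gp}+\dim S/\mathfrak{p}S, \qquad \mathfrak{p}:=P\setminus\{1\}. \]
By \Cref{tlfcomplete} we may pass to the completion, so assume $S$ is complete. Choose a regular system of parameters $\bar t_1,\dots,\bar t_d$ of $S/\mathfrak{p}S$, with $d=\dim S/\mathfrak{p}S$, and lift it to $t_1,\dots,t_d\in S$. Then the ideal $(\mathfrak{p},t_1,\dots,t_d)$ of $S$ equals the maximal ideal $\mathfrak{m}$, so the images of $\mathfrak{p}$ together with the $t_i$ generate $\mathfrak{m}$.

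The plan is to avoid Kato's proof of flatness for log regular rings, which uses saturatedness via the Cohen--Macaulay property of $k[P]$. Instead we work over a ring that is regular in the classical sense. Let $\Lambda$ be a Cohen ring (or field) for the residue field $k$, and set $A:=\Lambda[[t_1,\dots,t_d]]$. This is regular, and there is a map $A[[P]]\to S$.

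First, this map is surjective. Indeed, $S/\mathfrak{m}S=k$, and the images of $\mathfrak{p}$ and the $t_i$ generate $\mathfrak{m}$ modulo the residue characteristic, so completeness gives surjectivity.

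Second, count dimensions. By \Cref{diminequality} applied to $A[[P]]$, which is $P$-flat as in the proof of \Cref{tlflogflat}, we get
\[ \dim A[[P]]=\dim A+\rk P^{\gp}. \]
In equal characteristic, $\dim A=d$, so $\dim A[[P]]=\dim S$. Since $P^{\gp}$ is torsion free, $A[[P]]$ is a domain. A surjection of noetherian local rings of the same dimension from a domain is an isomorphism, so $S\cong A[[P]]$ is $P$-flat.

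In mixed characteristic, $\dim A=d+1$. The kernel $I$ of $A[[P]]\to S$ then has height one, and is generated by one element $\theta$ because $A[[P]]$ is a domain and is a UFD-like situation. It is simplest to argue that $\theta$ is nonzero modulo $\mathfrak{p}$: since $S/\mathfrak{p}S\cong A/(\bar\theta)$ is regular of dimension $d$, $\bar\theta\neq 0$ in $A$. Then $\theta$ is regular on $A[[P]]/\mathfrak{p}A[[P]]=A$, which is a domain. So \Cref{depthlemma2}, applied with $M=A[[P]]$, shows that $A[[P]]/(\theta)$ is $P$-flat. What remains is that $I=(\theta)$. For this one argues that $A[[P]]/(\theta)$ is $P$-flat, so by \Cref{diminequality} it has dimension $\rk P^{\gp}+d=\dim S$, and that it is a domain. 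The domain property would follow if its closed stratum $A/(\bar\theta)$ is regular and t-flatness forces integrality. Alternatively, choose $\theta\in I$ lifting a generator of $\ker(A\to S/\mathfrak{p}S)$, so that $A[[P]]/(\theta)\to S$ is surjective with the same closed stratum. Then the snake-lemma argument of \Cref{depthlemma2} shows that the kernel $J$ satisfies $J/\mathfrak{p}J=0$. Nakayama then gives $J=0$.

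\textbf{Main obstacle.} The key difficulty is this last step in mixed characteristic: showing that the surjection $A[[P]]/(\theta)\to S$ is injective. I expect the Nakayama argument to work: the flatness of $A[[P]]/(\theta)$ over $P$ gives the vanishing of $\Tor_1(\,\cdot\,,\mathbb{Z}[P]/\mathfrak{p})$, and comparing the closed strata then yields $J\otimes\mathbb{Z}[P]/\mathfrak{p}=0$.
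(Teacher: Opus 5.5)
Your reduction to a complete strictly local $S$ with a neat chart $P\cong\overline{M}_{X,x}$ is sound, and so are the implication (i)$\Rightarrow$(ii) and the equal-characteristic case. The equal-characteristic argument takes a genuinely different route from the paper. The paper never works with the ring directly. It passes to the saturation, checks that (ii) survives there (so the saturation is log regular), invokes Kato's theorem that log regular implies t-flat, and descends along the neat Kummer chart $Q\to Q^{\sat}$ via \Cref{saturationregular}. Your argument instead gives the structure statement $S\cong k[[t_1,\dots,t_d]][[P]]$ for possibly non-saturated $P$. The claim that $A[[P]]$ is a domain needs a word of justification: embed $P\hookrightarrow\mathbb{N}^m$, which is possible because $P$ is sharp with $P^{\gp}$ torsion free, so that $A[[P]]\subset A[[y_1,\dots,y_m]]$.

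The mixed-characteristic case, however, has a genuine gap.

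\textbf{The principal-kernel claim.} Your assertion that $I$ is principal because $A[[P]]$ is ``UFD-like'' is unfounded. $A[[P]]$ is not factorial in general (take $P=\langle(2,0),(1,1),(0,2)\rangle$), and $I$ is not even known to be prime.

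\textbf{The Nakayama fallback is circular.} Put $B:=A[[P]]/(\theta)$ and $J:=\ker(B\to S)$. The long exact $\Tor^{\mathbb{Z}[P]}(-,\mathbb{Z}[P]/(\mathfrak{p}))$ sequence, together with $\Tor_1(B,-)=0$ and $B/\mathfrak{p}B\cong S/\mathfrak{p}S$, gives $J/\mathfrak{p}J\cong\Tor_1^{\mathbb{Z}[P]}(S,\mathbb{Z}[P]/(\mathfrak{p}))$. By \Cref{maxideals}, the vanishing of this group is precisely the $P$-flatness of $S$ that you want to prove.

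That argument also never uses $\dim S=\rk P^{\gp}+d$, and it cannot work without it. Take $P=\mathbb{N}^2$ and $S=W(k)[[x,y]]/(p-x,xy)$, so $B=W(k)[[x,y]]/(p-x)$. Then $B/\mathfrak{p}B=S/\mathfrak{p}S=k$, yet $J=(xy)\neq0$ and $x=p$ is a zero divisor on $S$. Here $\dim S=1<2$.

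\textbf{What is actually needed.} You need $B$ to be a domain. Then $\dim B\le\dim A[[P]]-1=\dim S$ forces the surjection $B\to S$ to be an isomorphism, and $B$ is $P$-flat by \Cref{depthlemma2}. This is the substantive input: for saturated $P$ it is part of Kato's theorem that log regular rings are normal domains, which is what the paper relies on. Your proposal leaves it unproved.

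It can be supplied within your framework, as follows.
\begin{enumerate}
\item Take an embedding $P\hookrightarrow\mathbb{N}^m$.
\item Let $g\in A[[\mathbb{N}^m]]$ be supported off $P$ with $\theta g\in A[[P]]$. At a monomial $n$ of minimal total degree in the support of $g$, the coefficient of $\theta g$ is $\bar\theta g_n$. This must vanish, and $A$ is a domain with $\bar\theta\neq0$, so $g=0$.
\item Hence $\theta A[[\mathbb{N}^m]]\cap A[[P]]=\theta A[[P]]$, so $B\subset A[[y_1,\dots,y_m]]/(\theta)$.
\item The latter ring is regular, because $\bar\theta\notin\mathfrak{m}_A^2$: indeed $A/(\bar\theta)\cong S/\mathfrak{p}S$ is regular of dimension $\dim A-1$. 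So $B$ is a domain.
\end{enumerate}
Without this argument, or the paper's appeal to Kato via saturation, the case where $S$ contains no field is not established.
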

\begin{proof}
By \Cref{ldimineq}, it is enough to show (ii) $\Rightarrow$ (i);  equivalently, by \Cref{saturationregular}, the saturation of $(X,M_X)$ is log regular at the geometric points above $x$. For this, it suffices to show that (ii) also holds for the saturation of $(X,M_X)$ at the geometric points above $x$. The first condition follows from \Cref{saturationtflat} and \Cref{ldimineq}, and since $\overline{M}_{X,x}^{\gp}$ is torsion free and saturation is log \'etale, the second follows from \Cref{saturationtorsion} (iii) and \Cref{flatstrata} (iii).
\end{proof}

\begin{remark}
Using \Cref{saturationregular}, one can extend properties of log regularity to t-regular log schemes $(X,M_X)$ with $\overline{M}_X^{\gp}$ is torsion free, cf. \cite{thompson}.
\end{remark}

We now come to the logarithmic depth.

\begin{definition}
Let $(X,M_X)$ be a fine log scheme. The \emph{logarithmic depth} of $(X,M_X)$ at a geometric point $x\to X$, denoted $\depth(X_{(x)},M_X)$, is defined
\[ \depth(X_{(x)},M_X)=\rk\overline{M}^{\gp}_{X,x}+\depth (\mathcal{O}_{C(M_{X,x}),x}) \]
\end{definition}

Clearly, $\depth(X_{(x)},M_X)\leq\dim (X_{(x)},M_X)$.

\begin{proposition}\label{ldepthlogetale}
If $f:(X,M_X)\to (Y,M_Y)$ is a log \'etale morphism of fine log schemes, then
\[ \depth(X_{(x)},M_X)=\depth(Y_{(f(x))},M_Y)-\td (k(x)/k(f(x))) \]
for any geometric point $x\to X$. Moreover, $\td (k(x)/k(f(x)))=0$ if $\ker(\overline{M}_{Y,f(x)}^{\gp}\to\overline{M}_{X,x}^{\gp})$ is torsion.
\end{proposition}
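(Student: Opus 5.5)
This is the depth analogue of \Cref{tflatprop} (ii), and I would prove it the same way, replacing the dimension equality \cref{stratadimdepth} by its depth version. Write $y=f(x)$ and $K:=\ker(\overline{M}_{Y,y}^{\gp}\to\overline{M}_{X,x}^{\gp})$. By definition,
\[ \depth(X_{(x)},M_X)-\depth(Y_{(y)},M_Y)=\bigl(\rk\overline{M}_{X,x}^{\gp}-\rk\overline{M}_{Y,y}^{\gp}\bigr)+\bigl(\depth\mathcal{O}_{C(M_{X,x}),x}-\depth\mathcal{O}_{C(M_{Y,y}),y}\bigr). \]
So the proof splits into a rank computation for the monoid part and a depth computation for the strata.

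For the rank term, note that a log \'etale morphism is small at $x$, as recalled in the proof of \Cref{tflatprop} via \Cref{neatcharts} (vii). Hence $\cok(\overline{M}_{Y,y}^{\gp}\to\overline{M}_{X,x}^{\gp})$ is torsion. Additivity of rank along
\[ 0\to K\to\overline{M}_{Y,y}^{\gp}\to\overline{M}_{X,x}^{\gp}\to\cok\to 0 \]
then gives $\rk\overline{M}_{X,x}^{\gp}=\rk\overline{M}_{Y,y}^{\gp}-\rk K$.

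For the strata term, apply \Cref{flatstrata} (ii) with $\delta=\depth$. It says that $C(M_{X,x})\to C(M_{Y,y})$ is a strict localisation of a finite type Cohen--Macaulay morphism of relative dimension $\rk K$. It also gives
\[ \depth C(M_{X,x})=\depth C(M_{Y,y})+\rk K-\td(k(x)/k(y)). \]
The depths of the strict local schemes $C(M_{X,x})$ and $C(M_{Y,y})$ are those of their local rings at $x$ and $y$, which are the quantities appearing in the definition of logarithmic depth. Adding the two contributions, the $\rk K$ terms cancel and we obtain
\[ \depth(X_{(x)},M_X)=\depth(Y_{(y)},M_Y)-\td(k(x)/k(y)), \]
as claimed.

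For the last assertion, suppose $K$ is torsion. Then $\rk K=0$, and $\varepsilon(x)=\rk K-\td(k(x)/k(y))\geq 0$ from \Cref{flatstrata} (ii) forces $\td(k(x)/k(y))=0$. This also follows directly from \Cref{tflatprop} (ii).

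There is no serious obstacle, since the work is already done in \Cref{flatstrata}. The only point needing care is that the depth formula there, derived from \cite[IV\textsubscript{2}, 6.3.1]{ega}, needs the fibre of $C(M_{X,x})\to C(M_{Y,y})$ to be Cohen--Macaulay, with depth equal to its dimension $\varepsilon(x)$. This holds because the fibre is a localisation of $\Spec(k(y)[P^*])$, the spectrum of the group algebra of a finitely generated abelian group, which is Cohen--Macaulay.
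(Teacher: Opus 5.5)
Your proposal is correct and is the paper's proof: log \'etaleness makes $\cok(\overline{M}_{Y,f(x)}^{\gp}\to\overline{M}_{X,x}^{\gp})$ torsion (via \Cref{neatcharts} (vii)), which gives $\rk\overline{M}_{X,x}^{\gp}=\rk\overline{M}_{Y,f(x)}^{\gp}-\rk K$ with $K$ the kernel. Then \Cref{flatstrata} (ii), applied with $\delta=\depth$, supplies the strata term, the $\rk K$ contributions cancel, and $\varepsilon(x)\geq 0$ gives the vanishing of the transcendence degree when $K$ is torsion. Your closing remark, that the fibre is a localisation of $\Spec(k(y)[P^*])$ and hence Cohen--Macaulay, correctly makes explicit a point the paper leaves implicit in the depth case of \cref{stratadimdepth}.
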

\begin{proof}
Since $\cok(\overline{M}_{Y,f(x)}^{\gp}\to\overline{M}_{X,x}^{\gp})$ is torsion (cf. \Cref{neatcharts} (vii)), we have $\rk\overline{M}_{Y,f(x)}^{\gp}=\rk\overline{M}_{X,x}^{\gp}+\rk\ker(\overline{M}_{Y,f(x)}^{\gp}\to\overline{M}_{X,x}^{\gp})$, and the assertion follows from \Cref{flatstrata} (ii).
\end{proof}

\begin{corollary}\label{ldepthlogblowup}
If $f:(X,M_X)\to (Y,M_Y)$ is a log blowup of fine log schemes, then $\depth(X_{(x)},M_X)=\depth(Y_{(f(x))},M_Y)-\td (k(x)/k(f(x)))$ for any geometric point $x\to X$.
\end{corollary}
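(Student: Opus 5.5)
This should follow from \Cref{ldepthlogetale} in exactly the way \Cref{ldimlogblowup} was deduced from \Cref{tflatprop} (ii). The plan is to verify that a log blowup of fine log schemes satisfies the hypothesis of \Cref{ldepthlogetale}. The one non-formal input is that log blowups of fine log schemes are log \'etale; this is \cite[12.4.43]{gabberramero}, already used in the proof of \Cref{ldimlogblowup}. Granting it, \Cref{ldepthlogetale} applies to $f$ at every geometric point $x\to X$ and yields
\[ \depth(X_{(x)},M_X)=\depth(Y_{(f(x))},M_Y)-\td (k(x)/k(f(x))). \]
This is the assertion.

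For a self-contained argument, I would unwind the proof of \Cref{ldepthlogetale} in this case. By \Cref{neatcharts} (vii), $\cok(\overline{M}_{Y,f(x)}^{\gp}\to\overline{M}_{X,x}^{\gp})$ is torsion, so
\[ \rk\overline{M}_{Y,f(x)}^{\gp}=\rk\overline{M}_{X,x}^{\gp}+\rk K, \qquad K:=\ker(\overline{M}_{Y,f(x)}^{\gp}\to\overline{M}_{X,x}^{\gp}). \]
\Cref{flatstrata} (ii), with $\delta=\depth$, gives
\[ \depth C(M_{X,x})=\depth C(M_{Y,f(x)})+\rk K-\td (k(x)/k(f(x))). \]
Adding $\rk\overline{M}_{X,x}^{\gp}$ to both sides and substituting the rank identity produces the formula.

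For a log blowup one expects $K$ to be torsion-free, even zero, since charts are given by $P\to P'$ with $P'^{\gp}=P^{\gp}$. The transcendence degree term need not vanish, however: over the exceptional locus the residue field extension can have positive transcendence degree. So that term should remain in the statement, as it does.

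The only real obstacle is the log \'etaleness of log blowups in the fine, not necessarily saturated, setting, and the citation covers it. Everything else is bookkeeping already done in \Cref{ldepthlogetale}.
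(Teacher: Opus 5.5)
Your argument is correct and is exactly the paper's: log blowups of fine log schemes are log \'etale, so \Cref{ldepthlogetale} applies directly, and your unwinding via \Cref{neatcharts} (vii) and \Cref{flatstrata} (ii) is just that proposition's proof. Your aside that $K$ should be ``even zero'' is wrong and contradicts your next sentence. The equality $P'^{\gp}=P^{\gp}$ does not control the stalks of $\overline{M}^{\gp}$, which are quotients of this group by the unit groups of different localisations. Moreover, if $K$ were torsion, the last assertion of \Cref{ldepthlogetale} would force $\td (k(x)/k(f(x)))=0$. For example, for the blowup of $\D(\mathbb{N}^2)$ along the ideal generated by the basis of $\mathbb{N}^2$, one has $K\cong\mathbb{Z}$ at points of the exceptional divisor. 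This remark plays no role in the proof, though.
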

\begin{proof}
Since log blowups are log \'etale, this follows from \Cref{ldepthlogetale}.
\end{proof}

\begin{corollary}\label{ldepthsaturation}
If $f:(X,M_X)\to (Y,M_Y)$ is the saturation of a fine log scheme, then $\depth(X_{(x)},M_X)=\depth(Y_{(f(x))},M_Y)$ for any geometric point $x\to X$.
\end{corollary}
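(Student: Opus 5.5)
The plan is to mirror the proof of \Cref{ldimsaturation}, with \Cref{ldepthlogetale} in place of \Cref{tflatprop} (ii). Recall from \cite[12.2.37]{gabberramero}, as used in the proof of \Cref{saturationtorsion}, that the saturation $f:(X,M_X)\to (Y,M_Y)$ is étale locally of the form $(Y,M_Y)\times_{\D(Q)}\D(Q^{\sat})$ for a fine chart $Q\to M_Y$. This morphism is log étale, since $Q^{\gp}=(Q^{\sat})^{\gp}$ and so the relevant cokernel is trivial.

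Applying \Cref{ldepthlogetale} then gives
\[ \depth(X_{(x)},M_X)=\depth(Y_{(f(x))},M_Y)-\td (k(x)/k(f(x))), \]
together with the vanishing $\td (k(x)/k(f(x)))=0$ as soon as $\ker(\overline{M}_{Y,f(x)}^{\gp}\to\overline{M}_{X,x}^{\gp})$ is torsion. That this kernel is torsion is exactly \Cref{saturationtorsion} (iii). This already yields the claimed equality.

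As an independent check, the transcendence degree vanishes for a second reason. The underlying morphism $X\to Y$ is finite, because $\mathbb{Z}[Q^{\sat}]$ is finite over $\mathbb{Z}[Q]$ for $Q$ fine. Hence $k(x)/k(f(x))$ is algebraic, so $\td (k(x)/k(f(x)))=0$ even without invoking \Cref{saturationtorsion} (iii).

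The only point requiring care is the log étaleness of saturation in the fine setting. I will justify it either by the local description above together with Kato's criterion \cite[IV, 3.3.1]{logbook}, or by citing the same fact already used in \Cref{ldimsaturation}. Everything else is a direct substitution into \Cref{ldepthlogetale}.
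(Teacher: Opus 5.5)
Your proposal is correct and is essentially the paper's proof: apply \Cref{ldepthlogetale} to the (log \'etale) saturation morphism and use \Cref{saturationtorsion} (iii) to get $\td (k(x)/k(f(x)))=0$. Your extra observation is also valid but redundant: since $X\to Y$ is finite, the residue field extension is algebraic, which gives the same vanishing.
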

\begin{proof}
By \Cref{saturationtorsion} (iii), this follows from \Cref{ldepthlogetale}.
\end{proof}

The logarithmic depth behaves well in the log flat topology.

\begin{proposition}\label{ldepthrises}
If $f:(X,M_X)\to (Y,M_Y)$ is a log flat morphism of fine log schemes, then, at any geometric point $x\to X$ at which $\ker(\overline{M}_{Y,f(x)}^{\gp}\to\overline{M}_{X,x}^{\gp})$ is torsion, we have
\[ \depth(X_{(x)},M_X)\geq\depth(Y_{(f(x))},M_Y) \]
with equality if $f$ is small and $X\to Y$ quasi-finite.
\end{proposition}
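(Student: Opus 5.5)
The plan is to split the logarithmic depth into its two summands, the rank of $\overline{M}^{\gp}$ and the depth of the logarithmic stratum, and compare each summand separately, as in the proof of \Cref{tflatprop} (iii).

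\emph{Rank term.} Write $y=f(x)$ and consider the exact sequence
\[ 0\to K\to\overline{M}_{Y,y}^{\gp}\to\overline{M}_{X,x}^{\gp}\to\cok\to 0, \]
where $K$ is torsion by hypothesis. Additivity of rank then gives $\rk\overline{M}_{X,x}^{\gp}=\rk\overline{M}_{Y,y}^{\gp}+\rk\cok\geq\rk\overline{M}_{Y,y}^{\gp}$. If $f$ is small at $x$, the cokernel is torsion and this becomes an equality, exactly as in the proof of \Cref{tflatprop}.

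\emph{Stratum term.} The statement is now reduced to showing
\[ \depth\mathcal{O}_{C(M_{X,x}),x}\geq\depth\mathcal{O}_{C(M_{Y,y}),y}, \]
with equality when $X\to Y$ is quasi-finite. By \Cref{flatstrata} (i), the morphism $C(M_{X,x})\to C(M_{Y,y})$ is flat. It is also local, since it is induced by the local homomorphism $\mathcal{O}_{Y,y}\to\mathcal{O}_{X,x}$ via \Cref{logstratacharts} (ii). For a flat local homomorphism of noetherian local rings $A\to B$, \cite[IV\textsubscript{2}, 6.3.1]{ega} gives
\[ \depth B=\depth A+\depth(B/\mathfrak{m}_AB)\geq\depth A, \]
which yields the inequality.

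\emph{Equality case.} The main point to verify is that the closed fibre $F$ of $C(M_{X,x})\to C(M_{Y,y})$ has depth $0$ when $X\to Y$ is quasi-finite. By \Cref{logstratacharts} (ii), $C(M_{X,x})$ embeds as a closed subscheme of $C(M_{Y,y})\times_{Y_{(y)}}X_{(x)}$. Hence $F$ is a closed subscheme of the closed fibre of $X_{(x)}\to Y_{(y)}$. That fibre is zero-dimensional because $X\to Y$ is quasi-finite at $x$, so $\dim F=0$ and therefore $\depth F=0$. This is the same reasoning as in \Cref{tflatprop} (iii), with \cite[IV\textsubscript{2}, 6.1.2]{ega} replaced by its depth analogue.

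Adding the two comparisons gives the inequality, with equality when both conditions hold. The only potential subtlety is ensuring everything remains noetherian local under strict localisation, and this holds by the standing assumption that all schemes are locally noetherian.
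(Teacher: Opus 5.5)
Your proof is correct and follows the paper's argument. You compare the rank terms using the torsion kernel (and the torsion cokernel when $f$ is small), then compare the stratum depths via the flatness of $C(M_{X,x})\to C(M_{Y,f(x)})$ from \Cref{flatstrata} (i) and \cite[IV\textsubscript{2}, 6.3.1]{ega}; your check that the closed fibre is zero-dimensional in the quasi-finite case just spells out a step the paper leaves implicit.
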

\begin{proof}
By assumption, we have $\rk\overline{M}_{Y,f(x)}^{\gp}\leq\rk\overline{M}_{X,x}^{\gp}$, with equality if $f$ is small. So the assertion follows from \Cref{flatstrata} and \cite[IV\textsubscript{2}, 6.3.1]{ega}.
\end{proof}

Using the logarithmic dimension and depth, we can define a logarithmic analogue of Cohen--Macaulayness.

\begin{definition}
We say that a fine log scheme $(X,M_X)$ is \emph{log Cohen--Macaulay} at a geometric point $x\to X$ if
\begin{equation}\label{CMequality0}
\depth(X_{(y)},M_X)=\dim(X_{(y)},M_X)
\end{equation}
for every geometric point $y\to X_{(x)}$.
\end{definition}

\begin{lemma}\label{CMstrata}
$(X,M_X)$ is log Cohen--Macaulay at a geometric point $x\to X$ if and only if $C(M_{X,y})$ is Cohen--Macaulay for every geometric point $y\to X_{(x)}$.
\end{lemma}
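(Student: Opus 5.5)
By definition, $(X,M_X)$ is log Cohen--Macaulay at $x$ if and only if \cref{CMequality0} holds for every geometric point $y\to X_{(x)}$. The plan is to compare the two sides of \cref{CMequality0} term by term. Unwinding the definitions of logarithmic dimension and logarithmic depth, \cref{CMequality0} at $y$ reads
\[ \rk\overline{M}^{\gp}_{X,y}+\depth(\mathcal{O}_{C(M_{X,y}),y})=\rk\overline{M}^{\gp}_{X,y}+\dim C(M_{X,y}). \]
The rank terms cancel, so \cref{CMequality0} at $y$ is equivalent to
\[ \depth(\mathcal{O}_{C(M_{X,y}),y})=\dim C(M_{X,y}). \]

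The only point to check is that this last equality really says that the local ring of $C(M_{X,y})$ at its closed point is Cohen--Macaulay. By definition $C(M_{X,y})$ is a closed subscheme of the strict localisation $X_{(y)}$ containing the closed point $y$. Hence $C(M_{X,y})$ is the spectrum of a noetherian local ring $\mathcal{O}_{X,y}/I$, with $I$ the ideal generated by the image of $M_{X,y}\setminus M_{X,y}^*$. Its dimension as a scheme is therefore the Krull dimension of this local ring, and $\mathcal{O}_{C(M_{X,y}),y}$ is this same local ring. So the equality above says exactly that the noetherian local ring $\mathcal{O}(C(M_{X,y}))$ is Cohen--Macaulay. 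For a local scheme this is equivalent to $C(M_{X,y})$ being Cohen--Macaulay, since localisations of a Cohen--Macaulay local ring remain Cohen--Macaulay.

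Combining the two steps, \cref{CMequality0} holds for all $y\to X_{(x)}$ if and only if $C(M_{X,y})$ is Cohen--Macaulay for all such $y$, which is the statement. There is no real obstacle. The one point of care is the identification of $\dim C(M_{X,y})$ with the dimension of the local ring at $y$, which holds because $C(M_{X,y})$ is local with closed point $y$. No quantification over points of $C(M_{X,y})$ other than its closed point is needed.
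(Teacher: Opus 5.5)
Your proposal is correct and is exactly the definitional unwinding the paper has in mind; the paper's proof is simply ``Clear''. The rank terms $\rk\overline{M}^{\gp}_{X,y}$ cancel, leaving $\depth=\dim$ for the noetherian local ring $\mathcal{O}(C(M_{X,y}))$. That ring is nonzero because the log structure axiom forces non-units of $M_{X,y}$ to map into the maximal ideal of $\mathcal{O}_{X,y}$.
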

\begin{proof}
Clear.
\end{proof}

\begin{corollary}
Let $f:(X,M_X)\to (Y,M_Y)$ be a log flat morphism of fine log schemes and $x\to X$ a geometric point such that $X\to Y$ is quasi-finite at $x$. If $(Y,M_Y)$ is log Cohen--Macaulay at $f(x)$, then $(X,M_X)$ is log Cohen--Macaulay at $x$.
\end{corollary}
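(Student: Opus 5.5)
The plan is to reduce, via \Cref{CMstrata}, to a statement about the logarithmic strata, and then use the classical fact that Cohen--Macaulayness ascends along flat local homomorphisms with Cohen--Macaulay closed fibre. By \Cref{CMstrata}, it suffices to show that $C(M_{X,y})$ is Cohen--Macaulay for every geometric point $y\to X_{(x)}$. Since $X\to Y$ is quasi-finite at $x$ and quasi-finiteness is an open condition, we may shrink $X$ around $x$ (\'etale locally) so that $X\to Y$ is quasi-finite at every point of $X_{(x)}$; in particular it is quasi-finite at $y$.

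First we fix a geometric point $y\to X_{(x)}$. Its image $f(y)$ is a geometric point of $Y_{(f(x))}$, since $X_{(x)}\to Y$ factors through $Y_{(f(x))}$. Because $(Y,M_Y)$ is log Cohen--Macaulay at $f(x)$, \Cref{CMstrata} gives that $C(M_{Y,f(y)})$ is Cohen--Macaulay. Next, since $f$ is log flat, \Cref{flatstrata} (i) shows that $C(M_{X,y})\to C(M_{Y,f(y)})$ is flat. This map is a local morphism of noetherian local schemes, because both are closed subschemes of strict localisations and the map is induced by $f$.

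It remains to control the closed fibre. By \Cref{logstratacharts} (ii), $C(M_{X,y})$ is a closed subscheme of $C(M_{Y,f(y)})\times_{Y_{(f(y))}}X_{(y)}$. Hence the closed fibre of $C(M_{X,y})\to C(M_{Y,f(y)})$ is a closed subscheme of the closed fibre of $X_{(y)}\to Y_{(f(y))}$. That fibre is a localisation of an ind-\'etale extension of the fibre of $X\to Y$ through $y$, which is zero-dimensional by quasi-finiteness. So the closed fibre is a zero-dimensional noetherian local scheme, hence artinian and therefore Cohen--Macaulay.

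Finally, a flat local homomorphism of noetherian local rings with Cohen--Macaulay base and Cohen--Macaulay closed fibre has Cohen--Macaulay source (\cite[IV\textsubscript{2}, 6.3.1]{ega}, since depth and dimension both add). This shows $C(M_{X,y})$ is Cohen--Macaulay, which concludes the proof.

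The main point needing care is the fibre-dimension step. One has to check that quasi-finiteness at $x$ really propagates to all geometric points $y$ of the strict localisation, and that the fibre of strict localisations remains zero-dimensional. Both follow from openness of the quasi-finite locus together with the fact that strict localisation does not change fibre dimension. Equivalently, one could invoke \Cref{ldepthrises} together with \Cref{tflatprop} (iii), but those require smallness, so the direct stratum-level argument above is preferable.
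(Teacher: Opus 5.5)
Your proposal is correct and takes essentially the same route as the paper. You reduce to the strata via \Cref{CMstrata}, get flatness of $C(M_{X,y})\to C(M_{Y,f(y)})$ from \Cref{flatstrata}~(i), and use quasi-finiteness to make the closed fibre zero-dimensional, so that Cohen--Macaulayness passes from the target stratum to the source stratum. The only cosmetic difference is that the paper first reduces (via EGA IV$_4$, 18.5.11) to $X\to Y$ finite between strict localisations, so the strata maps are finite flat, whereas you check the zero-dimensionality of the closed fibre directly.
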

\begin{proof}
Since $X\to Y$ is quasi-finite, we may assume $Y$ (resp. $X$) is strictly local of closed point $f(x)$ (resp. $x$) and $X\to Y$ finite (\cite[IV\textsubscript{4}, 18.5.11 (c')]{ega}). Then, for $y\to X$, $C(M_{X,y})\to C(M_{Y,f(y)})$ is finite and flat (\Cref{flatstrata}), hence $C(M_{X,y})$ is Cohen--Macaulay if and only if $C(M_{Y,f(y)})$ is. Now apply \Cref{CMstrata}.
\end{proof}

Over a t-regular log scheme, we have `miracle log flatness'.

\begin{proposition}\label{miracleflatness}
Let $f:(X,M_X)\to (Y,M_Y)$ be a Kummer morphism of fine log schemes and $x\to X$ a geometric point such that $X\to Y$ is quasi-finite at $x$ and $\dim X_{(x)}=\dim Y_{f(x)}$. If $(X,M_X)$ is log Cohen--Macaulay and t-flat at $x$, and $(Y,M_Y)$ is t-regular at $f(x)$, then $f$ is log flat at $x$.
\end{proposition}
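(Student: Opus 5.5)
By \Cref{Kummerflatstrata} (a Kummer morphism is s-injective, and $(X,M_X)$ is t-flat at $x$), it suffices to show that $C(M_{X,x})\to C(M_{Y,f(x)})$ is flat. I will deduce this from classical miracle flatness \cite[0\textsubscript{IV}, 17.3.5]{ega} (or \cite[IV\textsubscript{2}, 6.1.5]{ega}). Its hypotheses are: a local homomorphism from a regular local ring to a Cohen--Macaulay local ring, with $\dim$ of the target equal to $\dim$ of the source plus $\dim$ of the closed fibre. Here the source $C(M_{Y,f(x)})$ is regular because $(Y,M_Y)$ is t-regular at $f(x)$. The target $C(M_{X,x})$ is Cohen--Macaulay by \Cref{CMstrata}, since $(X,M_X)$ is log Cohen--Macaulay at $x$.

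It remains to check the dimension condition. Since $X\to Y$ is quasi-finite at $x$, the closed fibre of $C(M_{X,x})\to C(M_{Y,f(x)})$ is zero-dimensional. By \Cref{logstratacharts} (ii), $C(M_{X,x})$ is a closed subscheme of $C(M_{Y,f(x)})\times_{Y_{(f(x))}}X_{(x)}$, whose closed fibre is the fibre of $X\to Y$ at $x$. So we need $\dim C(M_{X,x})=\dim C(M_{Y,f(x)})$, and I would obtain this from \Cref{dimlogstrata}. Both log schemes are t-flat at the relevant points, and $\dim X_{(x)}=\dim Y_{(f(x))}$ by hypothesis. (If \Cref{dimlogstrata} is read as requiring t-flatness globally, I would first shrink to étale neighbourhoods using \Cref{gentflat}, or just apply \Cref{ldimineq} at the two points directly.) The remaining input is $\rk\overline{M}_{X,x}^{\gp}=\rk\overline{M}_{Y,f(x)}^{\gp}$. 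This holds because $f$ is Kummer: the map $\overline{M}_{Y,f(x)}\to\overline{M}_{X,x}$ is injective and $\mathbb{Q}$-surjective, so on groups it has torsion kernel and torsion cokernel. In fact, injectivity of an integral monoid map that is Kummer gives an injective map on $\gp$ up to torsion, which is enough for the ranks to agree.

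Combining these, $\dim C(M_{X,x})=\dim C(M_{Y,f(x)})+0$, so miracle flatness gives flatness of $C(M_{X,x})\to C(M_{Y,f(x)})$, and \Cref{Kummerflatstrata} yields log flatness of $f$ at $x$.

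The main obstacle is the dimension bookkeeping: identifying the closed fibre of the strata map as zero-dimensional from quasi-finiteness of $X\to Y$, and making sure the rank equality holds at stalks. For the rank, I would argue with \cite[I, 4.3.6]{logbook}: Kummer means injective with every element having a power in the image, so the map on $\gp$ becomes an isomorphism after tensoring with $\mathbb{Q}$. A minor point is that the hypothesis is written $\dim Y_{f(x)}$ rather than $\dim Y_{(f(x))}$; I read it as the strict localisation. The two agree in any case, since strict henselisation preserves dimension.
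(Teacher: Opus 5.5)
Your proposal is correct and follows the paper's proof. Both reduce via \Cref{Kummerflatstrata} to flatness of $C(M_{X,x})\to C(M_{Y,f(x)})$, then apply miracle flatness \cite[0\textsubscript{IV}, 17.3.5]{ega}, using regularity from t-regularity, Cohen--Macaulayness from \Cref{CMstrata}, and $\dim C(M_{X,x})=\dim C(M_{Y,f(x)})$ from \Cref{dimlogstrata}. Your additional points just make explicit what the paper leaves implicit: the closed fibre is zero-dimensional by quasi-finiteness, and the ranks of $\overline{M}^{\gp}$ agree because $f$ is Kummer.
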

\begin{proof}
By \Cref{Kummerflatstrata}, it suffices to show that $C(M_{X,x})\to C(M_{Y,f(x)})$ is flat. Note that it is quasi-finite (since $X\to Y$). Since $C(M_{Y,f(x)})$ is regular and $C(M_{X,x})$ is Cohen--Macaulay (\Cref{CMstrata}), by \cite[IV\textsubscript{1}, 17.3.5]{ega} it suffices to show that $\dim C(M_{X,x})=\dim C(M_{Y,f(x)})$. By \Cref{dimlogstrata}, this follows from the assumptions.
\end{proof}

The next result extends \Cref{depthequality} to log schemes.

\begin{theorem}\label{depthequality2}
Let $(X,M_X)$ be a fine log scheme. If $(X,M_X)$ is t-flat at $x\to X$, then $\depth(X_{(x)},M_X)\geq \depth(\mathcal{O}_{X,x})$, with equality if $M_X$ is saturated.
\end{theorem}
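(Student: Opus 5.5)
The plan is to reduce to the local algebra statement \Cref{depthequality}, in the same way \Cref{ldimineq} was deduced from \Cref{diminequality}.

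\emph{Reduction to a neat chart.} Both sides of the inequality should be insensitive to a quasi-finite flat localisation. Let $g:(X',M_{X'})\to(X,M_X)$ be strict and flat with $X'\to X$ quasi-finite at a geometric point $x'$ above $x$. Then the right-hand side is unchanged: $\mathcal{O}_{X,x}\to\mathcal{O}_{X',x'}$ is flat local with zero-dimensional (artinian) closed fibre, so \cite[IV\textsubscript{2}, 6.3.1]{ega} gives $\depth\mathcal{O}_{X',x'}=\depth\mathcal{O}_{X,x}$. The left-hand side is also unchanged: $g$ is strict, so $\overline{M}_{X',x'}\cong\overline{M}_{X,x}$; moreover $C(M_{X',x'})\to C(M_{X,x})$ is flat and quasi-finite by \Cref{logstratacharts} (ii), so the depth of the stratum is preserved by the same EGA result. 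This is also \Cref{ldepthrises} with equality, since $g$ is small. T-flatness transfers along $g$ by \Cref{flatcover}, and saturatedness is preserved because $g$ is strict.

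\emph{Choosing the chart.} By \Cref{neatcharts} (ii) we may therefore assume that $X$ is strictly local with closed point $x$ and that there is a fine chart $P\to M_X$ neat at $x$. Then $P\cong\overline{M}_{X,x}$ is sharp and fine, and it is saturated if $M_X$ is fs. Set $S=\mathcal{O}_{X,x}$. The map $P\to S$ is local, so $\mathfrak{p}=P\setminus\{1\}$ maps into the maximal ideal of $S$. By \Cref{chartindependent}, $S$ is $P$-flat. By \Cref{logstratacharts} (i), $C(M_{X,x})=\Spec(S/\mathfrak{p}S)$, and $\rk\overline{M}^{\gp}_{X,x}=\rk P^{\gp}$. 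Hence
\[\depth(X_{(x)},M_X)=\rk P^{\gp}+\depth(S/\mathfrak{p}S).\]
\Cref{depthequality} now gives $\depth S\leq \rk P^{\gp}+\depth(S/\mathfrak{p}S)$, with equality when $P$ is saturated. This is exactly the claimed statement.

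\emph{Main obstacle.} The key step is the fppf reduction, which is needed when $\overline{M}^{\gp}_{X,x}$ has torsion. There one must check that the cover produced by \Cref{neatcharts} (ii) can be taken quasi-finite and strict, by refining as in \Cref{goodchart}, so that neither side changes. A secondary point is that for fs $M_X$ the neat chart $P\cong\overline{M}_{X,x}$ is indeed saturated, so that the equality case of \Cref{depthequality} applies.
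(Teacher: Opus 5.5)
Your proposal is correct and matches the paper's proof. Both pass to a strict quasi-finite flat cover, which changes neither $\depth(\mathcal{O}_{X,x})$ nor the logarithmic depth (cf.\ \Cref{ldepthrises}), reduce to a chart neat at $x$ via \Cref{neatcharts}, and then apply \Cref{depthequality}; your extra checks (t-flatness via \Cref{flatcover}, $C(M_{X,x})=\Spec(S/\mathfrak{p}S)$ via \Cref{logstratacharts} (i), and saturatedness of $P\cong\overline{M}_{X,x}$) just make explicit what the paper leaves implicit.
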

\begin{proof}
Up to passing to a quasi-finite flat covering (which affects neither the depth nor the logarithmic depth, cf. \Cref{ldepthrises}), by \Cref{neatcharts} we may assume there is a neat chart for $M_X$ and we can apply \Cref{depthequality}.
\end{proof}

\begin{corollary}\label{CM}
Let $(X,M_X)$ be an fs log scheme and $x\to X$ be a geometric point. If $(X,M_X)$ is t-flat at $x$, then the following are equivalent.
\begin{enumerate}[label=\normalfont(\roman*)]
\item $(X,M_X)$ is log Cohen--Macaulay at $x$.
\item $C(M_{X,x})$ is Cohen--Macaulay.
\item $X_{(x)}$ is Cohen--Macaulay.
\end{enumerate}
\end{corollary}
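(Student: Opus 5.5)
The plan is to compare the depth and dimension of $X_{(x)}$ with their logarithmic counterparts. Under t-flatness these coincide: \Cref{ldimineq} gives this for dimension, and \Cref{depthequality2} gives it for depth since $M_X$ is saturated. The Cohen--Macaulay property of $X_{(x)}$ then becomes a statement about the logarithmic stratum, and \Cref{CMstrata} translates log Cohen--Macaulayness into a statement about all strata near $x$.

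First, (ii) $\Leftrightarrow$ (iii). By \Cref{ldimineq} and \Cref{depthequality2},
\[ \dim X_{(x)}=\rk\overline{M}_{X,x}^{\gp}+\dim C(M_{X,x}), \qquad \depth \mathcal{O}_{X,x}=\rk\overline{M}_{X,x}^{\gp}+\depth \mathcal{O}_{C(M_{X,x}),x}. \]
Subtracting the two identities, $\dim X_{(x)}-\depth\mathcal{O}_{X,x}=\dim C(M_{X,x})-\depth\mathcal{O}_{C(M_{X,x}),x}$. Hence $X_{(x)}$ is Cohen--Macaulay if and only if $C(M_{X,x})$ is, because $C(M_{X,x})$ is local with closed point $x$.

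Next, (i) $\Rightarrow$ (ii) is immediate from \Cref{CMstrata} applied with $y=x$.

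For (ii) $\Rightarrow$ (i), I would argue as follows.
\begin{enumerate}[label=\normalfont(\alph*)]
\item By \Cref{CMstrata}, it suffices to show that $C(M_{X,y})$ is Cohen--Macaulay for every geometric point $y\to X_{(x)}$.
\item By \Cref{gentflat}, or simply because generisation preserves flatness (the relevant Tor vanishing localises, cf. \Cref{chartindependent}), $(X,M_X)$ is t-flat at every such $y$. The log structure stays fs.
\item Applying the equivalence (ii) $\Leftrightarrow$ (iii) at $y$, it suffices to show that $X_{(y)}$ is Cohen--Macaulay.
\item Using (ii) $\Leftrightarrow$ (iii) at $x$, $X_{(x)}$ is Cohen--Macaulay. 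Since $X_{(y)}$ is a localisation of $X_{(x)}$ at a prime, or of a strict henselisation thereof, it is Cohen--Macaulay as well: Cohen--Macaulayness localises, and strict henselisation, being flat with regular (indeed zero-dimensional separable) fibres, preserves it.
\end{enumerate}
This closes the circle.

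The main obstacle is the verification in (b) and (d) that t-flatness and the Cohen--Macaulay property pass from $x$ to the generisations $y\to X_{(x)}$, with correct handling of geometric points and strict localisations. For t-flatness, I would pull back along $X_{(x)}\to X$, which is strict and flat, and then use \Cref{flatcover} together with \Cref{gentflat}. Alternatively, I would use the Tor characterisation of \Cref{primeideals} (iii) with a chart, since Tor commutes with localisation. For Cohen--Macaulayness, the standard results apply, e.g.\ \cite[IV\textsubscript{2}, 6.3.1 and 6.4.1]{ega}.
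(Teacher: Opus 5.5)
Your proposal is correct and takes the same approach as the paper, whose proof just cites \Cref{gentflat}, \Cref{ldimineq}, \Cref{CMstrata} and \Cref{depthequality2}. Your argument spells out that one-line proof, using the standard fact that Cohen--Macaulayness passes to localisations and strict henselisations to go from $x$ to the geometric points $y\to X_{(x)}$.
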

\begin{proof}
This follows from \Cref{gentflat}, \Cref{ldimineq}, \Cref{CMstrata} and \Cref{depthequality2}.
\end{proof}

\begin{corollary}
Let $f:(X,M_X)\to (Y,M_Y)$ be a log \'etale morphism of fs log schemes and $x\to X$ a geometric point. If $(Y,M_Y)$ is t-flat at $f(x)$, then
\[ \depth(\mathcal{O}_{X,x})=\depth(\mathcal{O}_{Y,f(x)})-\td (k(x)/k(f(x))) \]
Moreover, $\td (k(x)/k(f(x)))=0$ if $\ker(\overline{M}_{Y,f(x)}^{\gp}\to\overline{M}_{X,x}^{\gp})$ is torsion.
\end{corollary}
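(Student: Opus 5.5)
The plan is to combine the logarithmic depth formula for log \'etale morphisms (\Cref{ldepthlogetale}) with the comparison between logarithmic depth and ordinary depth for t-flat fs log schemes (\Cref{depthequality2}).

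First, since $f$ is log \'etale and $(Y,M_Y)$ is t-flat at $f(x)$, \Cref{klftlf} shows that $(X,M_X)$ is t-flat at $x$. Both log schemes are fs, so \Cref{depthequality2} applies on both sides and gives the equalities
\[ \depth(\mathcal{O}_{X,x})=\depth(X_{(x)},M_X),\qquad \depth(\mathcal{O}_{Y,f(x)})=\depth(Y_{(f(x))},M_Y). \]

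Second, \Cref{ldepthlogetale} gives, for the log \'etale morphism $f$,
\[ \depth(X_{(x)},M_X)=\depth(Y_{(f(x))},M_Y)-\td (k(x)/k(f(x))). \]
Substituting the two equalities from the first step yields the displayed formula. The final assertion, that $\td (k(x)/k(f(x)))=0$ when $\ker(\overline{M}_{Y,f(x)}^{\gp}\to\overline{M}_{X,x}^{\gp})$ is torsion, is part of \Cref{ldepthlogetale}. Alternatively, it follows from the same statement in \Cref{logetaledim}, since that corollary also applies under t-flatness of $Y$.

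The only point needing care is that the saturation hypothesis of \Cref{depthequality2} holds on both sides, which is why the fs assumption is made. It also matters that t-flatness transfers from $Y$ to $X$ under log \'etale (hence log flat) morphisms; the first implication of \Cref{klftlf} requires no Kummer hypothesis. I do not expect any real obstacle beyond checking these hypotheses.
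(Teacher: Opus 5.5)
Your proposal is correct and follows the paper's own proof, which likewise just combines \Cref{klftlf} (t-flatness passes from $Y$ to $X$), \Cref{ldepthlogetale} (the logarithmic depth formula and the vanishing of the transcendence degree) and \Cref{depthequality2} (logarithmic depth equals depth for t-flat fs log schemes). Your hypothesis checks are also the right ones: saturation is needed on both sides, and the first implication of \Cref{klftlf} needs no Kummer assumption.
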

\begin{proof}
This follows from \Cref{klftlf}, \Cref{ldepthlogetale} and \Cref{depthequality2}.
\end{proof}

\begin{corollary}\label{klfdepth}
Let $f:(X,M_X)\to (Y,M_Y)$ be a log flat morphism of fine log schemes. If $M_X$ is saturated and $(Y,M_Y)$ is t-flat, then
\[ \depth(\mathcal{O}_{X,x})\geq\depth(\mathcal{O}_{Y,f(x)}) \]
for any geometric point $x\to X$ at which $\ker(\overline{M}_{Y,f(x)}^{\gp}\to\overline{M}_{X,x}^{\gp})$ is torsion, with equality if $f$ is small, $X\to Y$ is quasi-finite and $M_Y$ is saturated.
\end{corollary}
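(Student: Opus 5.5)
The plan is to chain three earlier results: \Cref{depthequality2} on $X$, \Cref{ldepthrises} for $f$, and \Cref{depthequality2} again on $Y$.

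Since $(Y,M_Y)$ is t-flat at $f(x)$ and $f$ is log flat, \Cref{klftlf} shows that $(X,M_X)$ is t-flat at $x$. Because $M_X$ is saturated, \Cref{depthequality2} then gives the equality
\[ \depth(\mathcal{O}_{X,x})=\depth(X_{(x)},M_X). \]
Next, at a geometric point where $\ker(\overline{M}_{Y,f(x)}^{\gp}\to\overline{M}_{X,x}^{\gp})$ is torsion, \Cref{ldepthrises} gives
\[ \depth(X_{(x)},M_X)\geq\depth(Y_{(f(x))},M_Y), \]
with equality when $f$ is small and $X\to Y$ is quasi-finite. Finally, applying \Cref{depthequality2} to $Y$, which is t-flat at $f(x)$, gives
\[ \depth(Y_{(f(x))},M_Y)\geq\depth(\mathcal{O}_{Y,f(x)}) \]
with no saturation needed. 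Chaining the three relations yields $\depth(\mathcal{O}_{X,x})\geq\depth(\mathcal{O}_{Y,f(x)})$.

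For the equality case, assume in addition that $f$ is small, $X\to Y$ is quasi-finite and $M_Y$ is saturated. Then the second relation is an equality by \Cref{ldepthrises}, and the third is an equality by \Cref{depthequality2}, so every step in the chain is an equality.

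The argument is essentially bookkeeping. The one point to check is that \Cref{klftlf} applies without any Kummer hypothesis: its forward direction needs only log flatness of $f$ and t-flatness of the target. The smallness and torsion conditions enter only through \Cref{ldepthrises}.
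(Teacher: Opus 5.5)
Your proposal is correct and is the paper's argument: the paper proves this by exactly the chain you give, namely t-flatness of $X$ at $x$ via \Cref{klftlf}, equality $\depth(\mathcal{O}_{X,x})=\depth(X_{(x)},M_X)$ from \Cref{depthequality2} using saturation of $M_X$, the log-depth inequality (and its equality case) from \Cref{ldepthrises}, and \Cref{depthequality2} again on $Y$, with equality there once $M_Y$ is saturated. Your check that the forward direction of \Cref{klftlf} needs no Kummer hypothesis is also right.
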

\begin{proof}
This follows from \Cref{klftlf}, \Cref{ldepthrises} and \Cref{depthequality2}.
\end{proof}

In particular, for any surjective Kummer log flat morphism of t-flat fs log schemes, the depth of the source is always at least that of the target. For quasi-finite morphisms this implies the following.

\begin{corollary}
Let $f:(X,M_X)\to (Y,M_Y)$ be a Kummer log flat morphism of fs log schemes with $X\to Y$ quasi-finite. If $(Y,M_Y)$ is t-flat and $Y$ is Cohen--Macaulay, then $X$ is Cohen--Macaulay. In particular, if $Y$ is regular, then $X\to Y$ is flat.
\end{corollary}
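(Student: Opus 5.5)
Both assertions come from \Cref{klfdepth} applied at each geometric point, together with elementary facts about quasi-finite maps. Fix a geometric point $x\to X$ and put $y=f(x)$. Since $f$ is Kummer, $\overline{M}_{Y,y}\to\overline{M}_{X,x}$ is injective and $\mathbb{Q}$-surjective. So $\overline{M}_{Y,y}^{\gp}\to\overline{M}_{X,x}^{\gp}$ has torsion kernel; in fact the kernel is trivial, because both groups sit inside the same $\mathbb{Q}$-vector space. The cokernel is torsion, so $f$ is small at $x$. By \Cref{klftlf}, $(X,M_X)$ is t-flat at $x$, and $M_X$, $M_Y$ are saturated since we are in the fs category. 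Thus all hypotheses of \Cref{klfdepth}, including those for equality, hold at $x$:
\[ \depth(\mathcal{O}_{X,x})=\depth(\mathcal{O}_{Y,y}). \]

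Next I compare dimensions. By \Cref{tflatprop} (iii), using $K$ torsion, smallness and quasi-finiteness, we get $\dim(X_{(x)},M_X)=\dim(Y_{(y)},M_Y)$. Since both log schemes are t-flat, \Cref{ldimineq} turns this into $\dim X_{(x)}=\dim Y_{(y)}$. If $Y$ is Cohen--Macaulay, then $\depth\mathcal{O}_{Y,y}=\dim\mathcal{O}_{Y,y}$, and therefore $\depth\mathcal{O}_{X,x}=\dim\mathcal{O}_{X,x}$. Cohen--Macaulayness of $X$ can be checked on strict localisations, since $\mathcal{O}_{X,x}$ is faithfully flat over the Zariski local ring with regular closed fibre. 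Hence $X$ is Cohen--Macaulay.

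For the last assertion, suppose $Y$ is regular; then it is in particular Cohen--Macaulay, so $X$ is Cohen--Macaulay by the first part. The morphism $X_{(x)}\to Y_{(y)}$ is a local homomorphism of noetherian local rings with regular target and Cohen--Macaulay source. It is quasi-finite, so its closed fibre has dimension $0$, and $\dim X_{(x)}=\dim Y_{(y)}$ by the previous paragraph. Miracle flatness (\cite[IV\textsubscript{1}, 17.3.5]{ega}, as used in \Cref{miracleflatness}) then gives flatness of $\mathcal{O}_{Y,y}\to\mathcal{O}_{X,x}$. Flatness at all geometric points means $X\to Y$ is flat.

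The only step needing care is the bookkeeping that Kummer implies both the torsion-kernel and the smallness hypotheses. The dimension equality is the key point behind the miracle flatness step, and it is supplied by the t-flatness comparison in \Cref{ldimineq}.
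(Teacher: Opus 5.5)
Your proof is correct, and the main input matches the paper's. Kummer gives torsion (indeed trivial) kernel and smallness, \Cref{klftlf} makes $(X,M_X)$ t-flat, and \Cref{klfdepth} then gives $\depth\mathcal{O}_{X,x}=\depth\mathcal{O}_{Y,f(x)}$.

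The difference is in how you finish. You first derive $\dim X_{(x)}=\dim Y_{(f(x))}$ from \Cref{tflatprop} (iii) and \Cref{ldimineq}, and then obtain flatness over a regular base by miracle flatness. The paper instead reduces, via \cite[IV\textsubscript{4}, 18.5.11]{ega}, to a finite morphism of strictly local schemes and applies Auslander--Buchsbaum. Since $\mathcal{O}_{Y,f(x)}$ is regular, $\PD_{\mathcal{O}_{Y,f(x)}}\mathcal{O}_{X,x}=\depth\mathcal{O}_{Y,f(x)}-\depth\mathcal{O}_{X,x}=0$, so $\mathcal{O}_{X,x}$ is free. The paper's route therefore needs only the depth equality and no dimension count. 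Yours avoids the reduction to the finite case, but uses t-flatness a second time through \Cref{ldimineq}.

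For the Cohen--Macaulay assertion alone you do not need \Cref{ldimineq}. The general inequality $\dim\mathcal{O}_{X,x}\leq\dim\mathcal{O}_{Y,f(x)}$ for a local homomorphism with zero-dimensional closed fibre already gives $\dim\mathcal{O}_{X,x}\leq\dim\mathcal{O}_{Y,f(x)}=\depth\mathcal{O}_{Y,f(x)}\leq\depth\mathcal{O}_{X,x}$.

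A small point of justification: the kernel of $\overline{M}_{Y,f(x)}^{\gp}\to\overline{M}_{X,x}^{\gp}$ is trivial because an injective homomorphism into an integral monoid induces an injective map on groupifications. Your phrase about ``the same $\mathbb{Q}$-vector space'' does not actually prove this, though only torsionness is used.
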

\begin{proof}
The first assertion follows from \Cref{klfdepth}. For the second,  we may assume $X$ and $Y$ strictly local and $X\to Y$ finite, in which case it follows from the first assertion and the Auslander--Buchsbaum formula.
\end{proof}

More generally, we can define a logarithmic analogue of Serre's $S_n$ property.

\begin{definition}
We say that a fine log scheme $(X,M_X)$ is \emph{log $S_n$} at a geometric point $x\to X$, if
\[  \depth(X_{(y)},M_X)\geq\min\{n,\dim(X_{(y)},M_X)\} \]
for every geometric point $y\to X_{(x)}$.
\end{definition}

Here is a variant of \Cref{CM}.

\begin{corollary}\label{logSn}
Assume $(X,M_X)$ is fs and t-flat at a geometric point $x\to X$. $(X,M_X)$ is log $S_n$ at $x$ if and only if $X$ is $S_n$ at $x$.
\end{corollary}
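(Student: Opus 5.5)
The plan is to reduce the $S_n$ condition at every geometric point $y\to X_{(x)}$ to a comparison of the classical and logarithmic invariants, using \Cref{ldimineq} and \Cref{depthequality2}. First, by \Cref{gentflat}, $(X,M_X)$ is t-flat in an \'etale neighbourhood of $x$. Since $X_{(x)}$ is a limit of such neighbourhoods, every geometric point $y\to X_{(x)}$ (viewed as a geometric point of $X$) is a point at which $(X,M_X)$ is t-flat. Moreover, $M_X$ is saturated, being fs.

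At each such $y$ I then apply \Cref{ldimineq} and \Cref{depthequality2}. The first gives $\dim(X_{(y)},M_X)=\dim X_{(y)}$. The second, in its saturated case, gives $\depth(X_{(y)},M_X)=\depth(\mathcal{O}_{X,y})$. Substituting these into the definition of log $S_n$ turns the condition at $y$ into
\[ \depth(\mathcal{O}_{X,y})\geq\min\{n,\dim \mathcal{O}_{X,y}\}, \]
so that $(X,M_X)$ is log $S_n$ at $x$ if and only if this inequality holds for every geometric point $y\to X_{(x)}$.

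It remains to identify this with $X$ being $S_n$ at $x$, i.e. with $\mathcal{O}_{X,x}$ (equivalently the strict henselisation, by flatness with zero-dimensional closed fibre) satisfying Serre's condition $S_n$. By definition, this means $\depth\mathcal{O}_{X,\mathfrak{q}}\geq\min\{n,\dim\mathcal{O}_{X,\mathfrak{q}}\}$ for all primes $\mathfrak{q}$ of the strict localisation. Geometric points $y\to X_{(x)}$ lie over the points of $X_{(x)}$, and $\mathcal{O}_{X,y}$ is the strict henselisation of the local ring of $X_{(x)}$ at the image point. Strict henselisation is a flat local map with fibre a separable field extension, so it preserves both depth and dimension (\cite[IV\textsubscript{2}, 6.1.2, 6.3.1]{ega}). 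Hence the conditions match prime by prime, using that $S_n$ of a local ring is equivalent to $S_n$ of its strict henselisation and is inherited by localisations.

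The only potentially delicate step is this last bookkeeping between geometric points of $X_{(x)}$ and primes of $\mathcal{O}_{X,x}$; it is routine. The substantive input is entirely in \Cref{depthequality2}, and that is where saturation is needed.
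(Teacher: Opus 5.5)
Your proof is correct and follows the paper's own argument: \Cref{gentflat} gives t-flatness at every geometric point of $X_{(x)}$, and then \Cref{ldimineq} and the saturated case of \Cref{depthequality2} identify the logarithmic dimension and depth with the classical ones at each such point. The paper leaves implicit the bookkeeping between geometric points and primes; you handle it explicitly via invariance of depth and dimension under strict henselisation, which is fine.
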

\begin{proof}
By \Cref{gentflat}, this follows from \Cref{depthequality2} and \Cref{ldimineq}.
\end{proof}

\begin{corollary}
Assume $(X,M_X)$ is fs and t-flat at a geometric point $x\to X$. If $X_{\triv}$ is $S_1$, then $X$ is $S_1$ at $x$.
\end{corollary}
\begin{proof}
Indeed, by \Cref{gentflat} and \Cref{depthequality2} we have $\depth(\mathcal{O}_{X,y})\geq 1$ for any generisation $y$ of $x$ with $y\not\in X_{\triv}$.
\end{proof}

\begin{proposition}\label{logSnrises}
Let $f:(X,M_X)\to (Y,M_Y)$ be a log flat morphism of fine log schemes and $x\to X$ a geometric point. Assume $f$ is small, $X\to Y$ is quasi-finite, and $\ker(\overline{M}_{Y,f(y)}^{\gp}\to\overline{M}_{X,y}^{\gp})$ is torsion for any geometric point $y\to X_{(x)}$. If $(Y,M_Y)$ is log $S_n$ at $f(x)$, then $(X,M_X)$ is log $S_n$ at $x$.
\end{proposition}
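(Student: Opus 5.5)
We must show: for every geometric point $z\to X_{(x)}$,
\[ \depth(X_{(z)},M_X)\geq\min\{n,\dim(X_{(z)},M_X)\}. \]
The plan is to compare both sides with the corresponding quantities at $f(z)\to Y_{(f(x))}$. We then invoke the log $S_n$ hypothesis on $(Y,M_Y)$ at $f(x)$, which applies since $f(z)$ is a geometric point of $Y_{(f(x))}$.

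Fix $z\to X_{(x)}$. The first step is to check that the hypotheses at $x$ propagate to $z$. Smallness and log flatness of $f$ are global assumptions, and quasi-finiteness of $X\to Y$ holds at every point. The torsion condition on $\ker(\overline{M}_{Y,f(z)}^{\gp}\to\overline{M}_{X,z}^{\gp})$ holds at $z$ by hypothesis, since $z\to X_{(x)}$.

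The second step applies the equality case of \Cref{ldepthrises} at $z$: $f$ is log flat, small and quasi-finite, and the kernel is torsion, so
\[ \depth(X_{(z)},M_X)=\depth(Y_{(f(z))},M_Y). \]
Similarly, the equality case of \Cref{tflatprop} (iii) gives
\[ \dim(X_{(z)},M_X)=\dim(Y_{(f(z))},M_Y). \]

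The third step combines these. Since $(Y,M_Y)$ is log $S_n$ at $f(x)$ and $f(z)\to Y_{(f(x))}$,
\[ \depth(Y_{(f(z))},M_Y)\geq\min\{n,\dim(Y_{(f(z))},M_Y)\}. \]
Substituting the two equalities yields the required inequality at $z$.

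The only point needing care is that \Cref{ldepthrises} and \Cref{tflatprop} (iii) are stated at a geometric point where $f$ is small and $X\to Y$ is quasi-finite, so these conditions must hold at $z$ and not merely at $x$. This is automatic from the global hypotheses, so I expect no real obstacle. If quasi-finiteness were only assumed at $x$, one would first shrink $X$ to an étale neighbourhood of $x$ on which $X\to Y$ is quasi-finite, which does not change $X_{(x)}$.
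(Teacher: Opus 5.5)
Your proposal is correct and follows essentially the same route as the paper: at each geometric point $z\to X_{(x)}$, compare the logarithmic depth via \Cref{ldepthrises} and the logarithmic dimension via the equality case of \Cref{tflatprop} (iii), then use the log $S_n$ inequality at $f(z)\to Y_{(f(x))}$. One small difference is that you only need the inequality $\depth(X_{(z)},M_X)\geq\depth(Y_{(f(z))},M_Y)$, so invoking the equality case of \Cref{ldepthrises} is more than the argument requires.
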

\begin{proof}
By \Cref{ldepthrises}, it suffices to show $\dim(X_{(y)},M_X)=\dim(Y_{(f(y))},M_Y)$ for any geometric point $y\to X_{(x)}$. This follows from \Cref{tflatprop} (iii).
\end{proof}

\subsection{Log cotangent complex and t-flatness}\label{logcotangentsection}
In \cite{logcotangent}, Olsson gives two different constructions of log cotangent complexes for fine log schemes, one of which is due to Gabber, and provides conditions for the two complexes to be the same. We begin by remarking that t-flatness ensures this.

\begin{proposition}\label{GabberOlsson}
Let $f:(X,M_X)\to (Y,M_Y)$ be a morphism of fine log schemes. If $(Y,M_Y)$ is t-flat, then the canonical map
\[ \tau_{\geq n}L_{(X,M_X)/(Y,M_Y)}^G\to \tau_{\geq n}L_{(X,M_X)/(Y,M_Y)}^O \]
from Gabber's log cotangent complex to Olsson's log cotangent complex is a quasi-isomorphism for all $n\in\mathbb{Z}$.
\end{proposition}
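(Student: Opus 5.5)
Olsson \cite{logcotangent} gives a criterion for the map $L^G\to L^O$ to be a quasi-isomorphism: it suffices that the morphism be, étale locally, compared via a chart for which the relevant monoid-algebra base change has vanishing higher Tor (Olsson, \S8). The plan is to reduce to this criterion and check the Tor condition with t-flatness.

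\textbf{Step 1: localise and choose charts.} Both complexes are compatible with étale localisation, so the question is étale local on $X$ and $Y$. By \Cref{neatcharts} (i) and (iii), we may therefore assume the following: there is a fine chart $Q\to M_Y$, and an injective chart $Q\to P$ of $f$ with $P$ fine. Olsson's comparison then factors $f$ as $(X,M_X)\to (Y,M_Y)\times_{\D(Q)}\D(P)\to (Y,M_Y)$. Via the transitivity triangles for both theories, it reduces the comparison to the map $\D(P)\to\D(Q)$ base changed to $Y$.

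\textbf{Step 2: reduce to a Tor-independence statement.} For $\D(P)\to\D(Q)$ the two complexes agree; both equal $\mathcal{O}\otimes(P^{\gp}/Q^{\gp})$ in degree $0$, since $\D(P)\to\D(Q)$ is a morphism of log flat toric type. The discrepancy after base change along $Y\to\D(Q)$ is controlled by the derived tensor product $\mathcal{O}_Y\otimes^L_{\mathbb{Z}[Q]}\mathbb{Z}[P]$. More precisely, Gabber's complex is computed from the derived base change, while Olsson's is computed from the underived fibre product. The two therefore agree in all degrees exactly when
\[ \Tor_i^{\mathbb{Z}[Q]}(\mathcal{O}_{Y,y},\mathbb{Z}[P])=0 \quad (i>0) \]
at the relevant geometric points $y$. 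More generally one needs the analogous vanishing with $\mathbb{Z}[P]$ replaced by $\mathbb{Z}[P]/K\mathbb{Z}[P]$, to handle the boundary strata of the log structure. I would verify this step against Olsson's \S8 comparison theorem, which phrases the condition in this form. Establishing this identification precisely, and in particular confirming that the truncations $\tau_{\geq n}$ are what Olsson's statement requires, is the main obstacle. The remaining steps are formal.

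\textbf{Step 3: apply t-flatness.} Since $(Y,M_Y)$ is t-flat, \Cref{chartindependent} shows that $\mathcal{O}_{Y,y}$ is $Q$-flat for the fine chart $Q$. \Cref{primeideals} (ii), applied to the injective homomorphism $Q\to P$ of integral monoids with $K=\emptyset$ or arbitrary ideals $K$, then gives precisely the Tor vanishing required in Step 2. Hence the canonical map is a quasi-isomorphism étale locally. Since both sides are complexes of sheaves and the map is canonical, it is a quasi-isomorphism globally, after each truncation $\tau_{\geq n}$.
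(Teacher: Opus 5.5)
Your proposal is essentially the paper's proof. The paper \'etale-localises to an injective fine chart $Q\to P$ (\Cref{neatcharts} (iii)) and uses \cite[proof of 8.32]{logcotangent} to reduce to the base-change map of classical cotangent complexes
\[ \mathcal{O}_{X,x}\otimes_{\mathbb{Z}[P]}L_{\mathbb{Z}[P]/\mathbb{Z}[Q]}\to \mathcal{O}_{X,x}\otimes_{\mathcal{O}_{Y,y}\otimes_{\mathbb{Z}[Q]}\mathbb{Z}[P]}L_{\mathcal{O}_{Y,y}\otimes_{\mathbb{Z}[Q]}\mathbb{Z}[P]/\mathcal{O}_{Y,y}}. \]
By \cite[II, 2.2.1]{cotangent1} this map is a quasi-isomorphism once $\Tor_i^{\mathbb{Z}[Q]}(\mathbb{Z}[P],\mathcal{O}_{Y,y})=0$ for $i>0$, and that vanishing is \Cref{primeideals} (ii) with $K=\emptyset$, exactly as in your Step 3.

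Your Step 2 should be stated in this precise form. Only sufficiency of the Tor vanishing is needed, not ``exactly when'', and nonempty $K$ plays no role. Also, the transitivity triangle you invoke for Olsson's complex in Step 1 is not available for arbitrary compositions. It is legitimate here only because $(X,M_X)\to(Y,M_Y)\times_{\D(Q)}\D(P)$ is strict.
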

\begin{proof}
Let $x\to X$ be a geometric point and $y=f(x)$. The claim is \'etale local; let $Q\to P$ be a fine chart for $f$ at $x$. By \Cref{neatcharts} (iii), we may assume $Q\to P$ to be injective. By \cite[proof of 8.32]{logcotangent} it suffices to show that the base change morphism of (classical) cotangent complexes
\[ \mathcal{O}_{X,x}\otimes_{\mathbb{Z}[P]}L_{\mathbb{Z}[P]/\mathbb{Z}[Q]}\to \mathcal{O}_{X,x}\otimes_{\mathcal{O}_{Y,y}\otimes_{\mathbb{Z}[Q]}\mathbb{Z}[P]}L_{\mathcal{O}_{Y,y}\otimes_{\mathbb{Z}[Q]}\mathbb{Z}[P]/\mathcal{O}_{Y,y}} \]
is a quasi-isomorphism. By \cite[II, 2.2.1]{cotangent1}, it suffices that $\Tor_i^{\mathbb{Z}[Q]}(\mathbb{Z}[P],\mathcal{O}_{Y,y})=0$ for $i>0$, which follows from \Cref{primeideals} (ii).
\end{proof}

\begin{corollary}
Let $(X,M_X)\to (Y,M_Y)$ be a log smooth morphism of fine log schemes. If $(Y,M_Y)$ is t-flat, then $L_{(X,M_X)/(Y,M_Y)}^G\cong\Omega^1_{(X,M_X)/(Y,M_Y)}[0]$.
\end{corollary}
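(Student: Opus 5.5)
The plan is to reduce the statement to the classical fact that the cotangent complex of a smooth morphism is concentrated in degree zero. We pass through \Cref{GabberOlsson}, which identifies Gabber's complex with Olsson's, and then use the good local structure of log smooth morphisms.

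First, since $(Y,M_Y)$ is t-flat, \Cref{GabberOlsson} gives $\tau_{\geq n}L^G\cong\tau_{\geq n}L^O$ for all $n$, hence $L^G_{(X,M_X)/(Y,M_Y)}\cong L^O_{(X,M_X)/(Y,M_Y)}$. It therefore suffices to show that Olsson's complex is $\Omega^1_{(X,M_X)/(Y,M_Y)}[0]$, or to compute Gabber's complex directly using the comparison map. The question is \'etale local on $X$.

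Second, fix a geometric point $x\to X$. By the structure theorem for log smooth morphisms (Kato), \'etale locally there is an injective chart $Q\to P$ of fine monoids for $f$ at $x$ with the following properties: the torsion part of $\cok(Q^{\gp}\to P^{\gp})$ has order invertible on $X$, and $X\to Y\times_{\D(Q)}\D(P)$ is strict and classically smooth. Using the transitivity triangle for Gabber's log cotangent complex, applied to $X\to Y':=(Y,M_Y)\times_{\D(Q)}\D(P)\to(Y,M_Y)$, we reduce to two pieces.

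1. The strict smooth map. Here $L^G_{X/Y'}$ is the classical cotangent complex $\Omega^1_{X/Y'}[0]$, since Gabber's complex for strict morphisms agrees with the classical one.
2. The base change of $\D(P)\to\D(Q)$. The same argument as in the proof of \Cref{GabberOlsson} applies: the vanishing $\Tor_i^{\mathbb{Z}[Q]}(\mathbb{Z}[P],\mathcal{O}_{Y,y})=0$ for $i>0$ from \Cref{primeideals} (ii) gives flat base change. We then have $L^G_{Y'/Y}\cong\mathcal{O}_{Y'}\otimes L^G_{\D(P)/\D(Q)}$. For toric morphisms, Gabber's complex is computed explicitly as $\mathcal{O}\otimes(P^{\gp}/Q^{\gp})[0]$, using the invertibility of the torsion order (cf. Olsson, log cotangent complex, the toric computation). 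This is $\Omega^1_{Y'/Y}[0]$.

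Finally, the transitivity triangle is compatible with the exact sequence of log differentials
\[ 0\to \Omega^1_{Y'/Y}|_X\to\Omega^1_{X/Y}\to\Omega^1_{X/Y'}\to 0, \]
which is exact and left exact because of log smoothness. The triangle then shows $L^G$ is concentrated in degree $0$ with $H_0=\Omega^1_{(X,M_X)/(Y,M_Y)}$.

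The main obstacle I expect is step 2, the base change identification for the toric piece. Here t-flatness is exactly what kills the higher Tor, and one must make sure the correct fine (not merely coherent) fibre product is used. I would handle this by citing the analogous step in \Cref{GabberOlsson}'s proof, via \cite[8.32]{logcotangent} and \cite[II, 2.2.1]{cotangent1}.
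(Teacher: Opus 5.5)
Your argument is correct, but it does not follow the paper's route. The paper's proof is one line. By \Cref{GabberOlsson}, the comparison map $L^G\to L^O$ is a quasi-isomorphism when the target is t-flat, and Olsson's theorem \cite[1.1 (iii)]{logcotangent} gives $L^O\cong\Omega^1[0]$ for any log smooth morphism of fine log schemes.

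You announce this reduction but never use it. What you actually do is compute $L^G$ directly from Kato's local structure theorem. You use only properties of Gabber's complex: transitivity \cite[8.29]{logcotangent}, agreement with the classical complex for strict maps, and the toric computation \cite[8.23]{logcotangent}. To these you add a Tor-independent base change for $Y'=(Y,M_Y)\times_{\D(Q)}\D(P)\to (Y,M_Y)$. That base change is the only place t-flatness enters, via \Cref{chartindependent} and \Cref{primeideals} (ii) applied to the injective chart $Q\to P$. Your first step can therefore be deleted.

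The two approaches buy different things. The paper's is shorter because it delegates the log smooth case to Olsson's theorem for $L^O$. Yours stays inside Gabber's theory and shows exactly where t-flatness is needed, which is the base change of the chart piece.

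Three points should be tightened.

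\begin{enumerate}
\item Base change for $L^G$ is not literally ``the same argument'' as in \Cref{GabberOlsson}. Justify it by comparing the transitivity triangles for $Y'\to (Y,M_Y)\to \D(Q)$ and $Y'\to \D(P)\to\D(Q)$. Since $Y\to\D(Q)$ and $Y'\to\D(P)$ are strict, the composite $g^*L_{Y/\D(Q)}\to L_{Y'/\D(Q)}\to L_{Y'/\D(P)}$ is the classical base change map. It is an isomorphism by the Tor vanishing and \cite[II, 2.2.1]{cotangent1}. The octahedral axiom then gives $h^*L_{\D(P)/\D(Q)}\simeq L_{Y'/Y}$.

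\item The invertibility of the torsion order is not needed for the toric computation. For any injective $Q\to P$, transitivity over $\mathbb{Z}$ and \cite[8.23]{logcotangent} give $L_{\D(P)/\D(Q)}\cong (P^{\gp}/Q^{\gp})\otimes_{\mathbb{Z}}\mathbb{Z}[P][0]$. It is needed in the derived pullback. That pullback is $(P^{\gp}/Q^{\gp})\otimes^L_{\mathbb{Z}}\mathcal{O}_{Y'}$, whose $H^{-1}=\Tor_1^{\mathbb{Z}}(P^{\gp}/Q^{\gp},\mathcal{O}_{Y'})$ need not vanish on all of $Y'$. So ``$L_{Y'/Y}\cong\Omega^1_{Y'/Y}[0]$'' holds only after pulling back to $X$, or after shrinking $Y'$ to the open set where the order is invertible.

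\item Exactness of the sequence of log differentials is an output of the triangle, not an input. It follows because $X\to Y'$ is flat and the outer terms are concentrated in degree $0$.
\end{enumerate}
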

\begin{proof}
This follows from \Cref{GabberOlsson} and \cite[1.1 (iii)]{logcotangent}.
\end{proof}

In the sequel, for a morphism $(X,M_X)\to (Y,M_Y)$ of fine log schemes we will denote by $L_{(X,M_X)/(Y,M_Y)}$ Gabber's log cotangent complex. The main advantage of Gabber's complex is that it satisfies the usual distinguished triangles for compositions of morphisms of log schemes (\cite[8.29]{logcotangent}). For trivial log structures we will omit the log structures from notation; in this case, by \cite[8.21]{logcotangent} there is a canonical quasi-isomorphism with the classical cotangent complex, so there is essentially no ambiguity.

\begin{lemma}\label{strictcotangent}
For $(X,M_X)\to (Y,M_Y)$ a strict morphism of fine log schemes, there is a functorial quasi-isomorphism $L_{(X,M_X)/(Y,M_Y)}\cong L_{X/Y}$.
\end{lemma}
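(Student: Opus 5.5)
The plan is to use Gabber's construction of the log cotangent complex together with the distinguished triangle for compositions (\cite[8.29]{logcotangent}). The comparison with the classical cotangent complex for trivial log structures (\cite[8.21]{logcotangent}) will identify the remaining terms. Write $f:(X,M_X)\to (Y,M_Y)$ for the given strict morphism, so that $M_X=f^*M_Y$.

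First I would factor $f$ as
\[ (X,M_X)\xrightarrow{\ g\ }(Y,M_Y)\times_Y X=(X,f^*M_Y)\cong (X,M_X), \]
which reduces everything to understanding Gabber's complex for a strict morphism of the form $(X,f^*M_Y)\to (Y,M_Y)$. Recall that Gabber's complex is defined as the cotangent complex of the morphism of ringed topoi associated with the log structures. Concretely, it is computed from a simplicial resolution by free log algebras, i.e.\ algebras of the form $\mathcal{O}_Y[\mathbb{N}^{(S)}\oplus \mathbb{N}^{(T)}]$ with log structure generated by the second factor.

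The key step is to check that this resolution can be chosen compatibly with strictness. Since $M_X=f^*M_Y$, the map $f^{-1}M_Y\to M_X$ becomes an isomorphism after passing to associated log structures. In Gabber's framework one resolves the pair $(\mathcal{O}_X,M_X)$ over $(f^{-1}\mathcal{O}_Y,f^{-1}M_Y)$. I would argue that one may take the monoid part of the resolution to be constant. More precisely, the standard resolution contains free monoid generators on $M_X$, but because $M_X$ is generated by $f^{-1}M_Y$ and $\mathcal{O}_X^*$, the contribution of these generators to the relative differentials is killed by the term $\Omega$ relation $d\log(u)=u^{-1}du$ for units. The resulting difference is acyclic. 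Cleaner: Gabber's complex is defined via the cotangent complex of the map of ``log rings'' $(f^{-1}\mathcal{O}_Y,f^{-1}M_Y)\to(\mathcal{O}_X,M_X)$. For the map $(f^{-1}\mathcal{O}_Y,f^{-1}M_Y)\to(\mathcal{O}_X,f^{-1}M_Y)$ (prelog structure pulled back), the monoid part is the identity, so the resolution only needs to resolve the ring part, giving $L_{X/Y}$. The passage from the prelog structure $f^{-1}M_Y$ to its associated log structure $M_X$ does not change Gabber's complex (\cite[8.20 or so]{logcotangent}: invariance under passing to associated log structures).

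The main obstacle is this last invariance: that replacing a prelog structure by its associated log structure gives a quasi-isomorphic cotangent complex. I would try first to cite Olsson's lemma stating exactly this for Gabber's complex. Failing that, I would prove it directly by showing $L$ of $(\mathcal{O}_X,\beta)\to(\mathcal{O}_X,\beta^a)$ vanishes. This holds since $\beta^a$ is the pushout $\beta\oplus_{\beta^{-1}\mathcal{O}^*}\mathcal{O}^*$, and $\mathcal{O}^*\to\mathcal{O}$ contributes nothing log-wise. Functoriality of the final quasi-isomorphism is clear from the functoriality of the canonical map $L_{X/Y}\to L_{(X,M_X)/(Y,M_Y)}$ induced by forgetting log structures.
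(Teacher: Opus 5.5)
Your argument is correct and is essentially the paper's. The paper notes only that the canonical composite $L_{X/Y}\to L_{(X,M_X)/Y}\to L_{(X,M_X)/(Y,M_Y)}$ is a quasi-isomorphism, which is local on $X$ and so reduces to the log-ring case treated in Olsson's \S8. Your ``cleaner'' step unpacks that log-ring statement: you compute Gabber's complex of $(f^{-1}\mathcal{O}_Y,f^{-1}M_Y)\to(\mathcal{O}_X,f^{-1}M_Y)$ by a resolution whose monoid part is constant, and then use invariance under passage to the associated log structure $M_X=(f^{-1}M_Y)^a$.

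Some caveats. Your opening factorisation through $(Y,M_Y)\times_Y X$ is vacuous, and your fallback ``direct proof'' of the invariance is not an argument, so, like the paper, you rely on Olsson for that step. You should also state that Gabber's complex can be computed by any degreewise-free resolution, not only the standard one.
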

\begin{proof}
The composition $L_{X/Y}\to L_{(X,M_X)/Y}\to L_{(X,M_X)/(Y,M_Y)}$ is a quasi-isomor\-phism: this is local on $X$, so we reduce to the case of log rings shown in \cite[\S 8]{logcotangent}.
\end{proof}

The main result of this section, \Cref{distinguished} below, is a characterisation of t-flatness by the vanishing of the homology of a certain object of the derived category defined using log cotangent complexes. In the sequel we work in derived categories, so in particular, for a morphism of schemes, the inverse image functor on complexes is meant in the derived sense.

Let $(X,M_X)$ be a fine log scheme, $x\to X$ a geometric point, $C=C(M_{X,x})$ the logarithmic stratum through $x$, and $i:C\to X$ the canonical morphism. If $\overline{M}_{X,x}^{\gp}$ is torsion free, we can define an object $L_{C/(X,M_X)}$ as follows. Let $P\to M_X$ be a neat chart at $x$ (it exists in an \'etale neighbourhood since $\overline{M}_{X,x}^{\gp}$ is torsion free), $j:\mathcal{S}\to\D(P)$ the strict closed immersion of log schemes defined by the ideal $(P\setminus\{1\})\subset\mathbb{Z}[P]$, and $g:(C,i^*M_X)\to \mathcal{S}$ the induced morphism. Note that $g$ is strict and the underlying scheme of $\mathcal{S}$ is $\Spec(\mathbb{Z})$.

We have canonical morphisms
\[ i^*L_{(X,M_X)/\mathbb{Z}}\to L_{(C,i^*M_X)/\mathbb{Z}}\to L_{(C,i^*M_X)/\mathcal{S}} \]
and a quasi-isomorphism $L_{(C,i^*M_X)/\mathcal{S}}\cong L_{C/\mathbb{Z}}$, since $(C,i^*M_X)\to\mathcal{S}$ is strict. So we obtain a morphism
\[ i^*L_{(X,M_X)/\mathbb{Z}}\to L_{C/\mathbb{Z}} \]
and we define $L_{C/(X,M_X)}$ as its cone, so that we have a distinguished triangle
\[ i^*L_{(X,M_X)/\mathbb{Z}}\to L_{C/\mathbb{Z}}\to L_{C/(X,M_X)} \]
More generally, for any morphism $s:B\to C$, we define $L_{B/(X,M_X)}$ by the distinguished triangle
\begin{equation}\label{Lxdef}
s^*i^*L_{(X,M_X)/\mathbb{Z}}\to L_{B/\mathbb{Z}}\to L_{B/(X,M_X)}
\end{equation}
and then we have a distinguished triangle
\[  s^*L_{C/(X,M_X)}\to L_{B/(X,M_X)}\to L_{B/C} \]
In particular, we will be interested in the case $B\subset C$ is the closed point.

\begin{remark}\label{logcotangentremark}
The notation $L_{C/(X,M_X)}$ suggests it is the cotangent complex of a morphism $C\to (X,M_X)$. Although there is no such morphism, one can formally adjoin $0$ to the trivial log structure of $C$ to obtain a log structure $N_C$ and a canonical morphism $(C,N_C)\to (X,M_X)$, and $L_{C/(X,M_X)}$ should be the log cotangent complex of this morphism. Since this requires the introduction of non-integral log schemes, we do not pursue this here, but see \cite[\S 6]{bhattderived} and \cite[\S 4]{SSV} for the case of log rings.
\end{remark}

\begin{theorem}\label{distinguished}
Assume $\overline{M}^{\gp}_{X,x}$ torsion free. The following are equivalent.
\begin{enumerate}[label=\normalfont(\roman*)]
\item $(X,M_X)$ is t-flat at $x$.
\item $H^iL_{C/(X,M_X)}=0$ for $i\neq -1$.
\item $H^{-2}L_{C/(X,M_X)}=0$
\end{enumerate}
Moreover, if this holds, then $L_{C/(X,M_X)}\cong \overline{M}_X^{\gp}\otimes_{\mathbb{Z}}\mathcal{O}_C[1]$.
\end{theorem}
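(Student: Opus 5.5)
We reduce to a local computation with a neat chart. Everything is étale local at $x$, and $C$ and $L_{C/(X,M_X)}$ are compatible with strict localisation, so we may assume $X=X_{(x)}$ is strictly local with a neat chart $P\to M_X$ at $x$. Then $P$ is sharp fine, $P^{\gp}\cong\overline{M}_{X,x}^{\gp}$ is free of rank $r$, and $C=\Spec(S/\mathfrak{p}S)$, where $S=\mathcal{O}_{X,x}$ and $\mathfrak{p}=P\setminus\{1\}$. Set $A:=\mathbb{Z}[P]$ and $A_0:=A/(\mathfrak{p})\cong\mathbb{Z}$.

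The first step is to compare the defining triangle with classical cotangent complexes. Write $L_{(X,M_X)/\mathbb{Z}}$ via the transitivity triangle for $(X,M_X)\to\D(P)\to\Spec\mathbb{Z}$:
\[ L_{\D(P)/\mathbb{Z}}\otimes\mathcal{O}_X\to L_{(X,M_X)/\mathbb{Z}}\to L_{(X,M_X)/\D(P)}. \]
Since $(X,M_X)\to\D(P)$ is strict, \Cref{strictcotangent} identifies the last term with $L_{X/\Spec A}$. Since $\D(P)$ is log smooth over $\mathbb{Z}$, the first term is $P^{\gp}\otimes\mathcal{O}_X[0]$. Likewise $L_{C/\mathbb{Z}}=L_{(C,i^*M_X)/\mathcal{S}}$ is computed from $g$, and $\mathcal{S}\to\D(P)$ is strict with $L_{\mathcal{S}/\D(P)}=L_{A_0/A}$. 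Comparing the two transitivity triangles (the octahedral axiom, applied through the square $C\to X$, $\mathcal{S}\to\D(P)$) should produce a distinguished triangle
\[ P^{\gp}\otimes\mathcal{O}_C[1]\to L_{C/(X,M_X)}\to \mathrm{cone}\big(i^*L_{S/A}\oplus L_{A_0/A}\otimes\mathcal{O}_C\to L_{C/A}\big). \]
The third term is precisely the cone of the base change map for the square $A\to A_0$, $A\to S$, with $S\otimes_A A_0=S/\mathfrak{p}S$.

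The second step is the flat base change criterion. By \cite[II, 2.2.1]{cotangent1}, the third term vanishes if $\Tor_i^A(S,A_0)=0$ for $i>0$. Conversely, its lowest homology is controlled by the first nonvanishing $\Tor$ group: in degree $-2$ one gets $\Tor_1^A(S,A_0)$, via $H_1L_{C/A}$ and the comparison of $H_1$ terms with $\Tor_1$. So the plan is to show
\[ H^{-2}L_{C/(X,M_X)}\cong\Tor_1^A(S,A_0), \]
and that when all higher $\Tor$ groups vanish, $L_{C/(X,M_X)}\cong P^{\gp}\otimes\mathcal{O}_C[1]$, which is the last assertion.

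The implications then close up as follows. For (i)$\Rightarrow$(ii), \Cref{primeideals} (iii) gives $\Tor_i^A(S,A/(\mathfrak{p}))=0$ for $i>0$, so the third term vanishes and (ii) together with the formula follow. (ii)$\Rightarrow$(iii) is trivial. For (iii)$\Rightarrow$(i), vanishing of $\Tor_1^A(S,A/(\mathfrak{p}))$ means the map $(\mathfrak{p})\otimes_AS\to S$ is injective, and \Cref{maxideals} gives $P$-flatness; \Cref{chartindependent} then gives t-flatness at $x$.

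The main obstacle is the identification $H^{-2}L_{C/(X,M_X)}\cong\Tor_1^A(S,A_0)$ without assuming any $\Tor$ vanishing. I would get it from the low-degree exact sequence relating $H_1$ of cotangent complexes in a tensor square, namely the exact sequence $\Tor_2\to\cdots\to\Tor_1\to 0$ in \cite{cotangent1}. A secondary check is that the octahedral comparison of the two triangles is compatible with the canonical maps used in \cref{Lxdef}.
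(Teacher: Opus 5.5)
Your overall route matches the paper's: a neat chart, the triangle $P^{\gp}\otimes\mathcal{O}_C[1]\to L_{C/(X,M_X)}\to K_C$ with $K_C$ the cone of the base-change map for $A\to A_0$, $A\to S$ (exactly the triangle obtained in the proof of \Cref{canonicalmap} (iii)), then $H^iK_C=0$ for $i\geq -1$ and $H^{-2}K_C\cong\Tor_1^A(S,A_0)$, and finally \Cref{maxideals} and \Cref{chartindependent}. From your triangle, (i)$\Rightarrow$(ii) and the final formula do follow; you still need to identify $P^{\gp}\otimes\mathcal{O}_C$ with $\overline{M}_X^{\gp}\otimes\mathcal{O}_C$, using \Cref{logstratacharts} (iv).

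The gap is in (iii)$\Rightarrow$(i). The low-degree sequence you cite computes $H^{-2}$ of the \emph{third} term, not of $L_{C/(X,M_X)}$. Since the first term sits in degree $-1$, the long exact sequence of your triangle only gives
\[ 0\to H^{-2}L_{C/(X,M_X)}\to \Tor_1^A(S,A_0)\xrightarrow{\ \delta\ }P^{\gp}\otimes_{\mathbb{Z}}\mathcal{O}_C\to H^{-1}L_{C/(X,M_X)}\to\cdots \]
So (iii) only tells you that the connecting map $\delta$ embeds $\Tor_1^A(S,A_0)$ into a free $\mathcal{O}_C$-module. Nothing formal rules this out. For the standard log point ($P=\mathbb{N}$, $S=k$ separably closed, $t\mapsto 0$) one has $\Tor_1^A(S,A_0)\cong k\cong P^{\gp}\otimes\mathcal{O}_C$. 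Hence your claimed isomorphism $H^{-2}L_{C/(X,M_X)}\cong\Tor_1^A(S,A_0)$ requires $\delta=0$, and the proposal neither states nor proves this.

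The missing idea is that the triangle splits, which the paper proves in \Cref{canonicalmap}.
\begin{enumerate}[label=\normalfont(\arabic*)]
\item Take the pair (zero on $\mathcal{O}_X$, $m\mapsto\bar m\otimes 1$ on $M_X$). It is a log derivation into $\overline{M}_X^{\gp}\otimes\mathcal{O}_C$, because $\alpha(m)$ vanishes on $C$ for $m\notin M_X^*$.
\item It gives $i^*L_{(X,M_X)/\mathbb{Z}}\to\Omega^1_{(X,M_X)/\mathbb{Z}}\otimes\mathcal{O}_C\to\overline{M}_X^{\gp}\otimes\mathcal{O}_C$.
\item Composing with the connecting map $L_{C/(X,M_X)}\to i^*L_{(X,M_X)/\mathbb{Z}}[1]$ of \cref{Lxdef} gives a morphism $L_{C/(X,M_X)}\to\overline{M}_X^{\gp}\otimes\mathcal{O}_C[1]$.
\item Its composite with the first arrow of your triangle is an isomorphism. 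By functoriality it suffices to check this for $(X,M_X)=\D(P)$, $B=\mathcal{S}$, where $K_B=0$.
\end{enumerate}
This yields $L_{C/(X,M_X)}\simeq K_C\oplus P^{\gp}\otimes\mathcal{O}_C[1]$, so $\delta=0$ and $H^{-2}L_{C/(X,M_X)}\cong\Tor_1^A(S,A_0)$. With this added, the rest of your argument closes (iii)$\Rightarrow$(i) as in the paper's \Cref{tflatiso}: vanishing of $\Tor_1$ gives injectivity of $(\mathfrak{p})\otimes_AS\to S$, and then \Cref{maxideals} applies.
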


\begin{remark}
By analogy with the classical cotangent complex characterisation of regular immersions (\cite[III, 3.2.4]{cotangent1}), \Cref{distinguished} should be interpreted as saying that $(X,M_X)$ is t-flat at $x$ if and only if $(C,N_C)\to (X_{(x)},M_X)$ is a `log regular' immersion, where $N_C$ is the log structure associated to $0\to\mathcal{O}_C$ (\Cref{logcotangentremark})---cf. \cite{cotangentlogreg}. Thus, $\overline{M}_X^{\gp}\otimes_{\mathbb{Z}}\mathcal{O}_C$ should be the `log conormal' sheaf of this morphism. In light of \Cref{depthequality2}, this suggests that, homologically, $(X,M_X)$ is locally at $x$ a vector bundle over $(C,N_C)$.
\end{remark}

For the proof of \Cref{distinguished} we begin with a general result.

\begin{proposition}\label{canonicalmap}
\begin{enumerate}[label=\normalfont(\roman*)]
\item There is a canonical morphism
\[ i^*L_{(X,M_X)/\mathbb{Z}}\to \overline{M}_{X}^{\gp}\otimes_{\mathbb{Z}}\mathcal{O}_C[0] \]
If $\overline{M}_{X,x}^{\gp}$ is torsion free, then this morphism has a section.
\item For any morphism $s:B\to C$, the morphism of \textup{(i)} induces a morphism $L_{B/(X,M_X)}\to \overline{M}_{X}^{\gp}\otimes_{\mathbb{Z}}\mathcal{O}_B[1]$.
\item If $\overline{M}_{X,x}^{\gp}$ is torsion free, then the morphism of \textup{(ii)} has a section inducing a quasi-isomorphism
\[ L_{B/(X,M_X)}\simeq K_B\oplus (\overline{M}_{X}^{\gp}\otimes_{\mathbb{Z}}\mathcal{O}_B)[1] \]
where $K_B$ is the cone of the canonical morphism
\[ s^*g^*L_{\mathcal{S}/\D(P)}\to L_{(B,s^*i^*M_X)/(X,M_X)}\cong L_{B/X} \]
with $P\to M_X$ a chart that is neat at $x$.
\end{enumerate}
\end{proposition}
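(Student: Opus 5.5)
The plan is to construct the morphism in (i) from the natural map from the log cotangent complex to its sheaf of log differentials $\Omega^1_{(X,M_X)/\mathbb{Z}}$. Concretely, we take the composite
\[ i^*L_{(X,M_X)/\mathbb{Z}}\to i^*H^0L_{(X,M_X)/\mathbb{Z}}[0]\cong i^*\Omega^1_{(X,M_X)/\mathbb{Z}}[0]\to \overline{M}^{\gp}_X\otimes_{\mathbb{Z}}\mathcal{O}_C[0]. \]
The last arrow should be the residue map. It is the $\mathcal{O}_C$-linear map sending $\mathrm{dlog}\, m\mapsto \bar m\otimes 1$ and $da\mapsto 0$. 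We must check that it is well defined on $i^*\Omega^1$, which amounts to checking that the relations of $\Omega^1_{(X,M_X)}$ die on $C$. The relation $d\alpha(m)=\alpha(m)\,\mathrm{dlog}\,m$ is fine: if $m\notin M^*$, then $\alpha(m)$ vanishes on $C$; if $m\in M^*$, then $\bar m=1$. So the map is well defined and canonical, being independent of charts.

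For the section in (i), we assume $\overline{M}^{\gp}_{X,x}$ is torsion free and use the description of Gabber's complex via charts. Étale locally there is a neat chart $P\to M_X$ with $P^{\gp}\cong\overline{M}^{\gp}_{X,x}$ free. The complex $L_{\D(P)/\mathbb{Z}}$ is quasi-isomorphic to $P^{\gp}\otimes\mathcal{O}_{\D(P)}[0]$, since $\D(P)$ is log smooth over $\mathbb{Z}$, and the Gabber and Olsson complexes agree here. Functoriality for $(X,M_X)\to\D(P)$ then gives a morphism
\[ P^{\gp}\otimes\mathcal{O}_C[0]\cong i^*(\text{pullback of }L_{\D(P)/\mathbb{Z}})\to i^*L_{(X,M_X)/\mathbb{Z}}, \]
and composing with the map of (i) recovers the identity, because $\mathrm{dlog}\,p\mapsto \bar p$. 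This is a splitting. Since $\overline{M}^{\gp}_X\otimes\mathcal{O}_C$ is constant on $C$ by \Cref{logstratacharts} (iv), the section makes sense on all of $C$.

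For (ii), we pull back along $s$. Because $\overline{M}_X^{\gp}\otimes\mathcal{O}_B$ lies in degree $0$, the map of (i) kills $L_{B/\mathbb{Z}}$ composed appropriately, and so it extends over the cone in \cref{Lxdef}. More precisely, we must check that the composite $s^*i^*L_{(X,M_X)/\mathbb{Z}}\to L_{B/\mathbb{Z}}$ is compatible. I would do this by factoring through the strict morphism $(B,s^*i^*M_X)\to\mathcal{S}$. There the map of (i) factors as $s^*i^*L_{(X,M_X)/\mathbb{Z}}\to L_{(B,\dots)/\mathbb{Z}}\to s^*g^*L_{\mathcal{S}/\mathbb{Z}}$-type terms, and $L_{(B,\dots)/\mathbb{Z}}\to L_{(B,\dots)/\mathcal{S}}\cong L_{B/\mathbb{Z}}$ has cone $L_{\mathcal{S}/\mathbb{Z}}$ pulled back, shifted by $[1]$. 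So the canonical morphism is the connecting map of the triangle for $(B,\dots)\to\mathcal{S}\to\mathbb{Z}$, with $L_{\mathcal{S}/\mathbb{Z}}\to \overline{M}^{\gp}\otimes\mathcal{O}$ given by residue.

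For (iii), we apply the octahedral axiom to the composition $(B,s^*i^*M_X)\to(X,M_X)\to\D(P)$ together with $\mathcal{S}\to\D(P)$. This compares $L_{B/(X,M_X)}$, $L_{(B,\dots)/(X,M_X)}\cong L_{B/X}$ (by \Cref{strictcotangent}, since $(B,\dots)\to(X,M_X)$ is strict), and $s^*g^*L_{\mathcal{S}/\D(P)}$. The difference between these is exactly the pullback of $L_{\D(P)/\mathbb{Z}}\cong P^{\gp}\otimes\mathcal{O}[0]$, shifted into degree $-1$. The section constructed in (i) then splits off this summand, giving $L_{B/(X,M_X)}\simeq K_B\oplus(\overline{M}^{\gp}_X\otimes\mathcal{O}_B)[1]$.

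The main obstacle is bookkeeping: making the octahedron compatible with the splitting, and showing that the resulting decomposition is independent of the chart up to canonical isomorphism. I would settle this by checking that the map of (ii) is chart-free, so that only the choice of complement $K_B$ depends on $P$.
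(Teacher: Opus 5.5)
Your (i) is the paper's construction: the log derivation that is $0$ on $\mathcal{O}_X$ and $M_X\to\overline{M}_X^{\gp}$ on $M_X$, composed with $L\to H^0L\cong\Omega^1$, with the section coming from $L_{\D(P)/\mathbb{Z}}\cong P^{\gp}\otimes_{\mathbb{Z}}\mathcal{O}_{\D(P)}[0]$ and \Cref{logstratacharts} (iv). For (iii) you reach the same triangle $h^*j^*L_{\D(P)/\mathbb{Z}}[1]\to L_{B/(X,M_X)}\to K_B$ (with $h=g\circ s$) by a different device. The paper maps the direct sum of the pulled-back triangles for $(X,M_X)\to\D(P)\to\mathbb{Z}$ and $\mathcal{S}\to\D(P)\to\mathbb{Z}$ onto the triangle for $(B,s^*i^*M_X)\to\D(P)\to\mathbb{Z}$, and then reads off the three cones. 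You use octahedra instead. Made explicit, the octahedron on $s^*i^*L_{(X,M_X)/\mathbb{Z}}\to L_{(B,s^*i^*M_X)/\mathbb{Z}}\to L_{B/\mathbb{Z}}$ exhibits $L_{B/(X,M_X)}$ as the cone of the canonical map $h^*L_{\mathcal{S}/\mathbb{Z}}\to L_{B/X}$, together with an arrow $L_{B/(X,M_X)}\to h^*L_{\mathcal{S}/\mathbb{Z}}[1]$. Since that canonical map factors through $h^*L_{\mathcal{S}/\D(P)}$, the octahedron on $h^*L_{\mathcal{S}/\mathbb{Z}}\to h^*L_{\mathcal{S}/\D(P)}\to L_{B/X}$ then yields the triangle. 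This is a sound alternative, and it avoids taking cones of a morphism of triangles, a step that needs some care in a bare triangulated category.

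Two points need repair.

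First, (ii) as written does not typecheck. The map of (i) is not defined on $L_{B/\mathbb{Z}}$. The connecting map of the triangle for $(B,s^*i^*M_X)\to\mathcal{S}\to\mathbb{Z}$ has source $L_{B/\mathbb{Z}}$, not $L_{B/(X,M_X)}$; moreover, followed by the residue it is zero, because the residue on $h^*L_{\mathcal{S}/\mathbb{Z}}$ extends to $L_{(B,s^*i^*M_X)/\mathbb{Z}}$. In fact nothing needs to be extended. \cref{Lxdef} comes with its third arrow $L_{B/(X,M_X)}\to s^*i^*L_{(X,M_X)/\mathbb{Z}}[1]$, and the map of (ii) is that arrow followed by the shift of (i); this is the paper's definition. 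What your detour through $\mathcal{S}$ can correctly give is the following: this map equals, up to sign, the arrow $L_{B/(X,M_X)}\to h^*L_{\mathcal{S}/\mathbb{Z}}[1]$ of your first octahedron followed by the residue. The reason is that both residues are restrictions of the residue on $L_{(B,s^*i^*M_X)/\mathbb{Z}}$, which is defined because $B$ maps to $C$.

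Second, the splitting is not a direct consequence of the section in (i), since that is a section of a different map. What you need is that the composite $h^*j^*L_{\D(P)/\mathbb{Z}}[1]\to L_{B/(X,M_X)}\to(\overline{M}_X^{\gp}\otimes_{\mathbb{Z}}\mathcal{O}_B)[1]$ is an isomorphism. Given the previous remark, this follows from the compatibility built into the second octahedron: the arrow $h^*j^*L_{\D(P)/\mathbb{Z}}[1]\to L_{B/(X,M_X)}\to h^*L_{\mathcal{S}/\mathbb{Z}}[1]$ is the shifted canonical map up to sign. Hence the composite is the residue $P^{\gp}\otimes_{\mathbb{Z}}\mathcal{O}_B\to\overline{M}_X^{\gp}\otimes_{\mathbb{Z}}\mathcal{O}_B$, which is an isomorphism by neatness. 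The paper instead verifies this in the universal case $(X,M_X)=\D(P)$, $B=\mathcal{S}$, where $K_B=0$.

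Finally, independence of the decomposition from the chart is not part of the statement, so that part of your plan can be dropped.
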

\begin{proof}
Consider the homomorphism $\mathcal{O}_X\oplus M_X\to \overline{M}_{X}^{\gp}\otimes_{\mathbb{Z}}\mathcal{O}_C$ defined as the sum of the zero map on $\mathcal{O}_X$ and the canonical map $M_X\to\overline{M}_X^{\gp}$. One checks without difficulty that it is a log derivation, hence corresponds to a canonical homomorphism $\Omega^1_{(X,M_X)/\mathbb{Z}}\to \overline{M}_{X}^{\gp}\otimes_{\mathbb{Z}}\mathcal{O}_C$. Composing with the canonical map $L_{(X,M_X)/\mathbb{Z}}\to H^0L_{(X,M_X)/\mathbb{Z}}\cong\Omega^1_{(X,M_X)/\mathbb{Z}}$ (\cite[8.9]{logcotangent}), we obtain a canonical morphism
\[ i^*L_{(X,M_X)/\mathbb{Z}}\to \overline{M}_{X}^{\gp}\otimes_{\mathbb{Z}}\mathcal{O}_C[0] \]
Assume $\overline{M}_{X,x}^{\gp}$ is torsion free and let $P\to M_X$ be a chart that is neat at $x$. Since $L_{\D(P)/\mathbb{Z}}\cong P^{\gp}\otimes_{\mathbb{Z}}\mathcal{O}_{\D(P)}[0]$ (\cite[8.23]{logcotangent}), we have $g^*j^*L_{\D(P)/\mathbb{Z}}\cong P^{\gp}\otimes_{\mathbb{Z}}\mathcal{O}_C[0]$, which is isomorphic to $\overline{M}_{X}^{\gp}\otimes_{\mathbb{Z}}\mathcal{O}_C[0]$ by \Cref{logstratacharts} (iv). The canonical morphism $g^*j^*L_{\D(P)/\mathbb{Z}}\to i^*L_{(X,M_X)/\mathbb{Z}}$ provides a section, proving (i). For (ii), compose the morphism of (i) with the canonical map $L_{B/(X,M_X)}\to s^*i^*L_{(X,M_X)/\mathbb{Z}}[1]$ \cref{Lxdef}.

For (iii), let $h=g\circ s$ and consider the commutative diagram of distinguished triangles
\[
\begin{tikzcd}
(h^*j^*L_{\D(P)/\mathbb{Z}})^{\oplus 2} \arrow[d] \arrow[r] & s^*i^*L_{(X,M_X)/\mathbb{Z}}\oplus h^*L_{\mathcal{S}/\mathbb{Z}} \arrow[r] \arrow[d] & s^*i^*L_{(X,M_X)/\D(P)}\oplus h^*L_{\mathcal{S}/\D(P)} \arrow[d] \\
h^*j^*L_{\D(P)/\mathbb{Z}} \arrow[r] & L_{(B,s^*i^*M_X)/\mathbb{Z}} \arrow[r] & L_{(B,s^*i^*M_X)/\D(P)}
\end{tikzcd}
\]
where the vertical arrows are the sums of the canonical morphisms. Let us compute the cones of the vertical arrows. For the left arrow it is obviously $h^*j^*L_{\D(P)/\mathbb{Z}}[1]$. For the middle arrow, the commutative diagram of distinguished triangles
\[ \begin{tikzcd}
h^*L_{\mathcal{S}/\mathbb{Z}} \arrow[d,equal] \arrow[r] & s^*i^*L_{(X,M_X)/\mathbb{Z}}\oplus h^*L_{\mathcal{S}/\mathbb{Z}} \arrow[r] \arrow[d] & s^*i^*L_{(X,M_X)/\mathbb{Z}} \arrow[d] \\
h^*L_{\mathcal{S}/\mathbb{Z}} \arrow[r] & L_{(B,s^*i^*M_X)/\mathbb{Z}} \arrow[r] & L_{(B,s^*i^*M_X)/\mathcal{S}}
\end{tikzcd} \]
provides a quasi-isomorphism with $L_{B/(X,M_X)}$. For the right arrow, the commutative diagram of distinguished triangles
\[ \begin{tikzcd}
s^*i^*L_{(X,M_X)/\D(P)} \arrow[d,equal] \arrow[r] & s^*i^*L_{(X,M_X)/\D(P)}\oplus h^*L_{\mathcal{S}/\D(P)} \arrow[r] \arrow[d] & h^*L_{\mathcal{S}/\D(P)} \arrow[d] \\
s^*i^*L_{(X,M_X)/\D(P)} \arrow[r] & L_{(B,s^*i^*M_X)/\D(P)} \arrow[r] & L_{(B,s^*i^*M_X)/(X,M_X)}
\end{tikzcd} \]
provides a quasi-isomorphism with $K_B$. Thus, we have a distinguished triangle
\[ h^*j^*L_{\D(P)/\mathbb{Z}}[1]\to L_{B/(X,M_X)}\to K_B \]
which is split by the map of (ii): this can be checked with $(X,M_X)=\D(P)$ and $B=\mathcal{S}$, in which case $K_B=0$ and it is obvious.
\end{proof}

Now \Cref{distinguished} follows from \Cref{canonicalmap} (iii) and the next lemma.

\begin{lemma}\label{tflatiso}
Assume $\overline{M}_{X,x}^{\gp}$ is torsion free and let $P\to M_X$ be a chart that is neat at $x$. Let $K_C$ be the cone of the canonical morphism $g^*L_{\mathcal{S}/\D(P)}\to L_{(C,i^*M_X)/(X,M_X)}$. Then, $H^iK_C=0$ for $i\geq -1$ and the following are equivalent.
\begin{enumerate}[label=\normalfont(\roman*)]
\item $(X,M_X)$ is t-flat at $x$.
\item $(K_C)_x\simeq 0$
\item $H^{-2}(K_C)_x=0$
\item $H^{-2}(x^*K_C)=0$
\end{enumerate}
\end{lemma}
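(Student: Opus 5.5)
The plan is to reduce everything to commutative algebra at the strict localisation, and to identify $K_C$ with the cotangent complex of the map from a derived tensor product to its $\pi_0$. Since all four conditions concern the stalk at $x$ (and $x^*$), I would first replace $X$ by $X_{(x)}$. Put $A=\mathcal{O}_{X,x}$; since the chart is neat, $P$ is sharp fine and $\mathfrak{p}=P\setminus\{1\}$ maps into the maximal ideal of $A$. By \Cref{logstratacharts} (i), $C=\Spec(A/\mathfrak{p}A)$. By \Cref{strictcotangent}, $L_{(C,i^*M_X)/(X,M_X)}\cong L_{C/X}$, which is the classical $L_{(A/\mathfrak{p}A)/A}$, and $L_{\mathcal{S}/\D(P)}$ is the classical $L_{\mathbb{Z}/\mathbb{Z}[P]}$. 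So $K_C$ is the cone of the base-change map
\[ (A/\mathfrak{p}A)\otimes_{\mathbb{Z}}L_{\mathbb{Z}/\mathbb{Z}[P]}\to L_{(A/\mathfrak{p}A)/A}. \]

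The central step is to interpret this cone through the derived tensor product $B:=A\otimes^L_{\mathbb{Z}[P]}\mathbb{Z}$, taken as a simplicial commutative ring, whose homotopy groups are $\pi_iB=\Tor_i^{\mathbb{Z}[P]}(A,\mathbb{Z})$ and $\pi_0B=A/\mathfrak{p}A$.
\begin{enumerate}[label=\normalfont(\alph*)]
\item Flat (derived) base change gives $L_{B/A}\simeq B\otimes^L_{\mathbb{Z}}L_{\mathbb{Z}/\mathbb{Z}[P]}$.
\item The transitivity triangle for $A\to B\to\pi_0B$ then identifies $K_C$ with $L_{\pi_0B/B}$.
\item The map $B\to\pi_0B$ is an isomorphism on $\pi_0$ and surjective on $\pi_1$. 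The standard connectivity estimate for cotangent complexes therefore gives $H^iL_{\pi_0B/B}=0$ for $i\geq-1$, which is the first claim.
\item The Hurewicz-type identification of the first nonvanishing homology gives $H^{-2}L_{\pi_0B/B}\cong\pi_1B=\Tor_1^{\mathbb{Z}[P]}(A,\mathbb{Z})$.
\end{enumerate}
If one prefers to avoid simplicial rings, the same conclusions should follow from Illusie's base change results in \cite{cotangent1}, comparing through the truncation.

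With these identifications the equivalences become short.
\begin{itemize}
\item (i)$\Rightarrow$(ii): \Cref{primeideals} (iii) gives $\Tor_i^{\mathbb{Z}[P]}(A,\mathbb{Z})=0$ for all $i>0$. Hence $B\simeq\pi_0B$ and $K_C\simeq0$.
\item (ii)$\Rightarrow$(iii): trivial.
\item (iii)$\Rightarrow$(i): since $\mathbb{Z}=\mathbb{Z}[P]/(\mathfrak{p})$, we have $\Tor_1^{\mathbb{Z}[P]}(A,\mathbb{Z})=\ker((\mathfrak{p})\otimes_{\mathbb{Z}[P]}A\to A)$. So (iii) says exactly that the map \cref{idealtensormap} is injective for $J=\mathfrak{p}$, which is the maximal ideal of $P$ and equals the inverse image of the maximal ideal of $A$. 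By \Cref{maxideals} this is equivalent to $P$-flatness of $A$, hence to t-flatness by \Cref{chartindependent}.
\item (iii)$\Leftrightarrow$(iv): $K_C$ has no cohomology in degrees $\geq-1$, so $H^{-2}$ is its top cohomology and $H^{-2}(x^*K_C)\cong H^{-2}(K_C)_x\otimes k(x)$ by right exactness. The module $\Tor_1^{\mathbb{Z}[P]}(A,\mathbb{Z})$ is a finitely generated $A/\mathfrak{p}A$-module (compute it with a resolution of $\mathbb{Z}$ by finite free $\mathbb{Z}[P]$-modules, $\mathbb{Z}[P]$ being noetherian), so Nakayama gives the equivalence.
\end{itemize}

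I expect the main obstacle to be the central step: making the identification $K_C\simeq L_{\pi_0B/B}$ and the computation $H^{-2}\cong\pi_1B$ precise, including the connectivity bound. In particular, one must check that the base-change map in the definition of $K_C$ agrees with the one arising from the transitivity triangle. The other steps are formal given \Cref{maxideals} and \Cref{primeideals}.
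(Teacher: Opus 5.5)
Your proposal is correct and follows the paper's proof. The paper likewise uses \Cref{strictcotangent} to identify $K_C$ with the cone of the classical base-change map, and obtains $H^iK_C=0$ for $i\geq -1$ and $H^{-2}(K_C)_x\cong\Tor_1^{\mathbb{Z}[P]}(\mathbb{Z},\mathcal{O}_{X,x})$ by citing the Stacks Project (Tag 09DB), where you instead re-derive these via $A\otimes^L_{\mathbb{Z}[P]}\mathbb{Z}$; it then concludes with Tor-independent base change, right exactness plus Nakayama, and \Cref{maxideals}, as you do (your remark that $\Tor_1$ is finitely generated, needed for Nakayama, is left implicit there).
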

\begin{proof}
By \Cref{strictcotangent}, $K_C$ is quasi-isomorphic to the cone of the morphism of the cotangent complexes of the underlying schemes. Thus, by \cite[\href{https://stacks.math.columbia.edu/tag/09DB}{Tag 09DB}]{stacks-project}, we have $H^iK_C=0$ for $i\geq -1$ and $H^{-2}(K_C)_x\cong\Tor_1^{\mathbb{Z}[P]}(\mathbb{Z},\mathcal{O}_{X,x})$. If (i) holds, then $\Tor_i^{\mathbb{Z}[P]}(\mathbb{Z},\mathcal{O}_{X,x})=0$ for $i>0$ and $(K_C)_x\simeq 0$ by \cite[II, 2.2.3]{cotangent1}, hence (ii)--(iii). (iii) implies $(K_C)_x\cong \tau_{\leq -3}(K_C)_x$ and (iv) follows. Assume (iv). Since $K_C\cong \tau_{\leq -2}K_C$, the spectral sequence for $\Tor$ implies 
\[ H^{-2}(x^*K_C)\cong H^{-2}(K_C)_x\otimes_{\mathcal{O}_{X,x}}k(x)\cong\Tor_1^{\mathbb{Z}[P]}(\mathbb{Z},\mathcal{O}_{X,x})\otimes_{\mathcal{O}_{X,x}}k(x) \]
hence Nakayama's lemma implies $\Tor_1^{\mathbb{Z}[P]}(\mathbb{Z},\mathcal{O}_{X,x})=0$ and (i) follows from \Cref{maxideals}.
\end{proof}

As a consequence of \Cref{distinguished}, we recover a characterisation of log regularity due to Conde-Lago and Majadas \cite{cotangentlogreg}.

\begin{corollary}\label{logregvanishing}
Assume $\overline{M}_{X,x}^{\gp}$ torsion free and let $\bar{x}\in X_{(x)}$ be the closed point. The following are equivalent.
\begin{enumerate}[label=\normalfont(\roman*)]
\item $(X,M_X)$ is t-flat at $x$ and $C(M_{X,x})$ is regular.\footnote{i.e., $(X,M_X)$ is t-regular at $x$ (\Cref{tregdef})}
\item $H^iL_{\bar{x}/(X,M_X)}=0$ for $i\neq -1$.
\item $H^{-2}L_{\bar{x}/(X,M_X)}=0$
\end{enumerate}
In particular, if $M_X$ is saturated, then $(X,M_X)$ is log regular at $x\to X$ if and only if \textup{(iii)} holds.
\end{corollary}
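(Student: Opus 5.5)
We apply the distinguished triangle $s^*L_{C/(X,M_X)}\to L_{\bar{x}/(X,M_X)}\to L_{\bar{x}/C}$ with $B=\bar{x}$, where $s:\bar{x}\to C$ is the inclusion of the closed point, and combine it with \Cref{distinguished} and the classical cotangent complex characterisation of regularity of the local ring $\mathcal{O}_{C,x}$. Since $C$ is strictly local with residue field $k(x)$, \cite[III, 3.2.4]{cotangent1} says that $\mathcal{O}_{C,x}$ is regular, i.e. $\bar{x}\to C$ is a regular immersion, if and only if $H^iL_{\bar{x}/C}=0$ for $i\neq -1$. This holds if and only if $H^{-2}L_{\bar{x}/C}=0$, because $H^{-2}L_{\bar{x}/C}$ is the second André--Quillen homology of $\mathcal{O}_{C,x}\to k(x)$. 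Also $H^iL_{\bar{x}/C}=0$ for $i\geq 0$ always, and $H^{-1}L_{\bar{x}/C}=\mathfrak{m}_C/\mathfrak{m}_C^2$.

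For the decomposition we use \Cref{canonicalmap} (iii) with $B=\bar{x}$, which gives
\[ L_{\bar{x}/(X,M_X)}\simeq K_{\bar{x}}\oplus (\overline{M}_X^{\gp}\otimes_{\mathbb{Z}}k(x))[1]. \]
The second summand lives in degree $-1$ only, so (ii) and (iii) for $L_{\bar{x}/(X,M_X)}$ are equivalent to the corresponding statements for $K_{\bar{x}}$. By construction $K_{\bar{x}}$ is the cone of $s^*g^*L_{\mathcal{S}/\D(P)}\to L_{\bar{x}/X}$. Factor this as $s^*(g^*L_{\mathcal{S}/\D(P)}\to L_{C/X})$ followed by $s^*L_{C/X}\to L_{\bar{x}/X}$. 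The octahedral axiom then gives a distinguished triangle
\[ s^*K_C\to K_{\bar{x}}\to L_{\bar{x}/C}. \]

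(i) $\Rightarrow$ (ii): by \Cref{tflatiso}, t-flatness gives $(K_C)_x\simeq 0$, hence $s^*K_C\simeq 0$. Then $K_{\bar{x}}\simeq L_{\bar{x}/C}$, whose homology vanishes outside degree $-1$ by regularity of $C$.

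(ii) $\Rightarrow$ (iii) is trivial.

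(iii) $\Rightarrow$ (i): this is the main step. By \Cref{tflatiso}, $H^iK_C=0$ for $i\geq -1$, so $H^i(s^*K_C)=0$ for $i\geq -1$. Since $s^*$ is right exact on connective complexes, $H^{-2}(s^*K_C)=H^{-2}(x^*K_C)$. Take the long exact sequence of the triangle
\[ \cdots\to H^{-2}(s^*K_C)\to H^{-2}K_{\bar{x}}\to H^{-2}L_{\bar{x}/C}\to H^{-1}(s^*K_C)=0. \]
If $H^{-2}K_{\bar{x}}=0$, surjectivity of the last map gives $H^{-2}L_{\bar{x}/C}=0$, so $C$ is regular. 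What is still missing is $H^{-2}(s^*K_C)=0$, and the exact sequence gives this only if the boundary map $H^{-3}L_{\bar{x}/C}\to H^{-2}(s^*K_C)$ is zero. We handle this by a two-stage argument. First obtain regularity of $C$ as above. Regularity makes $L_{\bar{x}/C}$ concentrated in degree $-1$, so $H^{-3}L_{\bar{x}/C}=0$. Then $H^{-2}(x^*K_C)\hookrightarrow H^{-2}K_{\bar{x}}=0$, and \Cref{tflatiso} (iv) $\Rightarrow$ (i) gives t-flatness.

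In the saturated case, \Cref{logregcharacterisation} identifies (i) with log regularity at $x$, which gives the final assertion. The main points to verify are the octahedral identification of the triangle $s^*K_C\to K_{\bar{x}}\to L_{\bar{x}/C}$ and the compatibility $s^*K_C\simeq x^*K_C$ in degree $-2$; the rest is formal.
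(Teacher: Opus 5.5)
Your proposal is correct and is essentially the paper's own proof. It uses the same triangle $s^*K_C\to K_{\bar{x}}\to L_{\bar{x}/C}$ coming from the transitivity triangle, the same splitting from \Cref{canonicalmap} (iii), and the same two-stage argument for (iii) $\Rightarrow$ (i): $H^{-2}K_{\bar{x}}=0$ and $H^{-1}(s^*K_C)=0$ force $H^{-2}L_{\bar{x}/C}=0$, then regularity kills $H^{-3}L_{\bar{x}/C}$ and hence $H^{-2}(s^*K_C)$, and \Cref{tflatiso} finishes. Two small points: ``regular iff $H^{-2}L_{\bar{x}/C}=0$'' is André's theorem (\cite[VI, prop. 26]{andre}), not a consequence of merely identifying $H^{-2}$ with André--Quillen homology; and the compatibility of $s^*K_C$ with $x^*K_C$ is a non-issue, since the two differ at most by a field extension, which is faithfully flat.
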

\begin{proof}
Without loss of generality, we may assume $x=\bar{x}$. The distinguished triangle
\[ x^*L_{C/X}\to L_{x/X}\to L_{x/C} \]
provides a triangle
\begin{equation}\label{triangleregular}
x^*K_C\to K_x\to L_{x/C}
\end{equation}
where $K_{?}$ is as in \Cref{canonicalmap} (iii). By \Cref{tflatiso}, $(X,M_X)$ is t-flat if and only if $x^*K_C\simeq 0$ if and only if $H^{-2}(x^*K_C)$ $=$ $0$. On the other hand, recall that $C$ is regular if and only if  $H^iL_{x/C}=0$ for $i\neq -1$ if and only if $H^{-2}L_{x/C}=0$ (\cite[VI, prop. 26]{andre}). So if (i) holds, then $K_x\simeq L_{x/C}$ is concentrated in degree $-1$ and (ii) follows from \Cref{canonicalmap} (iii). Conversely, if (iii) holds, then \Cref{canonicalmap} (iii) implies $H^{-2}K_x=0$, hence $H^{-2}L_{x/C}\to H^{-1}(x^*K_C)$ is injective. But $H^{-1}(x^*K_C)=0$ by \Cref{tflatiso}, so $H^{-2}L_{x/C}=0$, hence $C$ is regular and $H^iL_{x/C}=0$ for $i\neq -1$. In particular, $H^{-3}L_{x/C}=0$ so $H^{-2}(x^*K_C)=0$ and $(X,M_X)$ is t-flat at $x$ by \Cref{tflatiso}, so (i) holds. This proves the equivalence of (i)--(iii) and the final assertion follows from \Cref{logregcharacterisation}.
\end{proof}

\begin{remark}
The fact that \Cref{logregvanishing} is equivalent to the criterion of \cite{cotangentlogreg} is not immediate; in the notation of \Cref{logcotangentremark}, one needs $L_{C/\mathbb{Z}}\cong L_{(C,N_C)/\mathbb{Z}}$, which follows from \cite[4.3]{cotangentlogreg}.
\end{remark}

There is also variant for complete intersections, cf. \cite{cotangentlogreg}. Recall that a locally noetherian scheme is a \emph{complete intersection \emph{(\emph{ci})}} at a point if its local ring at that point is a complete intersection in the sense of \cite[IV\textsubscript{4}, 19.3.1]{ega}. This property is stable under generisation (\cite[\S 3]{avramov}). A locally noetherian scheme is a \emph{local complete intersection \emph{(\emph{lci})}} if it is ci at all of its points (cf. \cite[\href{https://stacks.math.columbia.edu/tag/09Q3}{Tag 09Q3}]{stacks-project}).

\begin{corollary}\label{lcivanishing}
Assume $x\to X$ is the closed point of $X_{(x)}$ and $\overline{M}_{X,x}^{\gp}$ is torsion free. The following are equivalent.
\begin{enumerate}[label=\normalfont(\roman*)]
\item For any factorisation $x\to U\to X$ with $x^*L_{U/X}\simeq 0$ and strict closed immersion $(U,M_X|_U)\to (Z,M_Z)$ with $(Z,M_Z)$ satisfying the equivalent conditions of \Cref{logregvanishing}, the underlying closed immersion $U\to Z$ is regular at the image of $x$.
\item For some factorisation $x\to U\to X$ with $x^*L_{U/X}\simeq 0$ and strict closed immersion $(U,M_X|_U)\to (Z,M_Z)$ with $(Z,M_Z)$ satisfying the equivalent conditions of \Cref{logregvanishing}, the underlying closed immersion $U\to Z$ is regular at the image of $x$.
\item $H^iL_{x/(X,M_X)}=0$ for $i<-2$.
\item $H^{-3}L_{x/(X,M_X)}=0$
\end{enumerate}
If $\dim(X_{(x)},M_X)=\dim X_{(x)}$, then these conditions are also equivalent to
\begin{enumerate}[label=\normalfont(\roman*),resume]
\item $(X,M_X)$ is t-flat at $x$ and $C(M_{X,x})$ is lci.
\end{enumerate}
\end{corollary}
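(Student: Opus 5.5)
The plan is to mimic the proof of \Cref{logregvanishing}. We use the triangle \cref{triangleregular} together with \Cref{canonicalmap} (iii), which identifies $H^iL_{x/(X,M_X)}$ with $H^iK_x$ for $i\leq -2$. We also use \Cref{tflatiso}, which says $H^iK_C=0$ for $i\geq -1$. The basic tool is the classical criterion (André, \cite[VI]{andre}): a noetherian local ring $A$ with residue field $k$ is ci if and only if $H^{-3}L_{k/A}=0$, if and only if $H^iL_{k/A}=0$ for $i<-2$.

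\textbf{(iii) $\Leftrightarrow$ (iv) and (ii) $\Rightarrow$ (iii).} For (iii) $\Leftrightarrow$ (iv), I would use the factorisation $x\to U\to X$ and the strict immersion $(U,M_X|_U)\to (Z,M_Z)$. The condition $x^*L_{U/X}\simeq 0$ gives $L_{x/(X,M_X)}\simeq L_{x/(U,M_U)}$. The strict closed immersion $U\to Z$ gives the transitivity triangle
\[ x^*L_{U/Z}\to L_{x/(Z,M_Z)}\to L_{x/(U,M_U)} \]
using \Cref{strictcotangent} and the defining triangles \cref{Lxdef}; checking that these triangles are compatible is a routine diagram chase. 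Since $(Z,M_Z)$ satisfies \Cref{logregvanishing}, $L_{x/(Z,M_Z)}$ is concentrated in degree $-1$. Hence $H^iL_{x/(U,M_U)}\cong H^{i+1}(x^*L_{U/Z})$ for $i\leq -2$.

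By \cite[III, 3.2.4]{cotangent1} and André's results, $U\to Z$ is regular at $x$ if and only if $x^*L_{U/Z}$ is concentrated in degree $-1$. For a closed immersion, $H^iL_{U/Z}=0$ for $i\geq 0$ and $H^{-1}$ is the conormal module, so regularity is equivalent to $H^{-2}(x^*L_{U/Z})=0$. Under the shift this is $H^{-3}L_{x/(X,M_X)}=0$, and then all $H^i$ vanish for $i<-2$. This gives (ii) $\Rightarrow$ (iii) directly. It also gives (iv) $\Rightarrow$ (i) and (iv) $\Rightarrow$ (iii), once at least one such factorisation exists; the implication (i) $\Rightarrow$ (ii) likewise needs existence.

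\textbf{Existence of a factorisation (main obstacle).} To produce one, I would take $U=\hat X_{(x)}$; then $x^*L_{U/X}\simeq 0$ because completion is flat with trivial residue extension. Choose a neat chart $P\to M_X$, so $P$ is sharp and $P^{\gp}$ is torsion free. By Cohen, $\mathcal{O}_U$ is a quotient of $Z=\Spec R[[P]]$ with $R$ complete regular, and the log structure from $P$ makes $U\to Z$ strict. Then $(Z,M_Z)$ is t-flat, since $R[[P]]$ is flat over $R[P]$, and its stratum $\Spec R$ is regular, so it satisfies \Cref{logregvanishing} (i). I expect the main effort to be in checking the compatibility of the defining triangles for a strict immersion.

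\textbf{Equivalence with (v).} Assume $\dim(X_{(x)},M_X)=\dim X_{(x)}$. Using the triangle \cref{triangleregular},
\[ x^*K_C\to K_x\to L_{x/C}. \]
If (v) holds, t-flatness gives $x^*K_C\simeq 0$ by \Cref{tflatiso}, so $K_x\simeq L_{x/C}$; since $C$ is ci, this gives (iii). Conversely, from (iii) one would like to deduce t-flatness. Here the dimension hypothesis enters. In the setup above, $U=\Spec R[[P]]/I$ with $I$ generated by a regular sequence, of length $\dim R+\rk P^{\gp}-\dim U$. The dimension equality forces $\dim C=\dim R-\mathrm{ht}(I)$, so that sequence stays regular modulo $\mathfrak{p}$, by a dimension count in the ci ring $R=R[[P]]/\mathfrak{p}$. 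Then \Cref{depthlemma2} gives that $U$ is $P$-flat, and $C=\Spec R/\bar I$ is ci. This dimension count is the delicate point, and I would try it first via \Cref{diminequality}.
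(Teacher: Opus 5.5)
Your proposal is correct and follows the paper's proof essentially step for step. For (i)--(iv) it uses the same transitivity triangle $x^*L_{U/Z}\to L_{x/(Z,M_Z)}\to L_{x/(X,M_X)}$ together with \Cref{completioncotangent} and André's criterion. The existence construction is also the same, with $U=\hat{X}_{(x)}$; the paper uses $\D(P)\times S$, i.e.\ $R[P]$, where you use $R[[P]]$, which is immaterial. For (v) you use \cref{triangleregular} in one direction, and in the other the dimension count in the regular ring $R$ (the paper cites EGA $0_{\mathrm{IV}}$, 16.5.6) followed by \Cref{depthlemma2}, plus \Cref{tlfcomplete} to pass from $U$ back to $X$.

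One small slip: $H^iL_{x/(U,M_U)}\cong H^{i+1}(x^*L_{U/Z})$ only holds for $i\leq -3$. At $i=-2$ you get merely an injection into the conormal module, but $i\leq -3$ is all you use.
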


\begin{lemma}\label{completioncotangent}
If $x\to X$ factorises over a morphism $U\to X$ with $x^*L_{U/X}\simeq 0$, then $L_{x/(X,M_X)}\simeq L_{x/(U,M_X|_U)}$. In particular, this holds for $U=\hat{X}_{(x)}$, the completion of the strict localisation $X_{(x)}$ of $X$ at $x$.
\end{lemma}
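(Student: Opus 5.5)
Since $x$ factorises through $s\colon x\to U$ and $u\colon U\to X$, we have $x\to U\to X\to \mathbb{Z}$. I would compare the defining triangles \cref{Lxdef} for $L_{x/(X,M_X)}$ and $L_{x/(U,M_X|_U)}$, which are
\[ x^*L_{(X,M_X)/\mathbb{Z}}\to L_{x/\mathbb{Z}}\to L_{x/(X,M_X)}, \qquad x^*L_{(U,M_U)/\mathbb{Z}}\to L_{x/\mathbb{Z}}\to L_{x/(U,M_U)}, \]
where $M_U:=M_X|_U$. Pulling back along $x$ is computed by first pulling back to the stratum and then to the point. Because $(U,M_U)\to(X,M_X)$ is strict, the transitivity triangle for Gabber's complex (\cite[8.29]{logcotangent}) applies. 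Pulling that triangle back along $x$ gives
\[ x^*u^*L_{(X,M_X)/\mathbb{Z}}\to x^*L_{(U,M_U)/\mathbb{Z}}\to x^*L_{(U,M_U)/(X,M_X)}. \]
By \Cref{strictcotangent}, the third term is $x^*L_{U/X}$, which vanishes by hypothesis. Hence the natural map $x^*L_{(X,M_X)/\mathbb{Z}}\to x^*L_{(U,M_U)/\mathbb{Z}}$ is a quasi-isomorphism.

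This map is compatible with the maps to $L_{x/\mathbb{Z}}$, by functoriality of $L_{(-)/\mathbb{Z}}$ along $x\to U\to X$. We therefore get a morphism of distinguished triangles whose first two components are quasi-isomorphisms. One could worry that the definition of $L_{x/(X,M_X)}$ uses the chart-based identification $L_{(C,i^*M_X)/\mathcal{S}}\cong L_{C/\mathbb{Z}}$. However, $C(M_{U,x})=C(M_{X,x})$, the stratum of $U$ maps to that of $X$, and a neat chart $P\to M_X$ at $x$ restricts to a neat chart of $M_U$. So the same $\mathcal{S}$ serves for both, and naturality holds. It follows that the map on cones $L_{x/(U,M_U)}\to L_{x/(X,M_X)}$ is a quasi-isomorphism. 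Alternatively, one can argue via the octahedral axiom: there is a triangle $x^*L_{(U,M_U)/(X,M_X)}\to L_{x/(X,M_X)}\to L_{x/(U,M_U)}$ whose first term vanishes.

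For the particular case $U=\hat X_{(x)}$, it remains to check that $x^*L_{\hat X_{(x)}/X}\simeq 0$. The map factors as $\hat X_{(x)}\to X_{(x)}\to X$. The morphism $X_{(x)}\to X$ is a limit of étale maps, so its cotangent complex vanishes. For the completion map, write $A=\mathcal{O}_{X,x}$ and $\hat A$ for its completion, which is flat over $A$ because $A$ is noetherian. By flat base change, $L_{\hat A/A}\otimes^L_{\hat A}k(x)$ is computed by $L_{k(x)/k(x)}$, since $\hat A\otimes_A k(x)=k(x)$. Explicitly, the transitivity triangle for $A\to\hat A\to k$ together with the isomorphism $L_{\hat A/A}\otimes_{\hat A}\hat A/\mathfrak{m}^n\cong L_{(\hat A/\mathfrak{m}^n\hat A)/(A/\mathfrak{m}^n)}=0$ gives the vanishing. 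I expect this computation to be the only mildly delicate step, namely justifying the base change identity, which follows from flatness via \cite[II, 2.2.1]{cotangent1}. The rest is formal.
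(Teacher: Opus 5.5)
Your proposal is correct and is essentially the paper's argument: the same morphism of the defining triangles with cone $x^*L_{(U,M_X|_U)/(X,M_X)}\cong x^*L_{U/X}\simeq 0$ by strictness, and the same factorisation $\hat X_{(x)}\to X_{(x)}\to X$ with $L_{X_{(x)}/X}\simeq 0$ and flat base change for the completion. One small slip: $C(M_{U,x})$ need not equal $C(M_{X,x})$ (for $U=\hat X_{(x)}$ it is the completion of $C(M_{X,x})$), but your argument only needs the map between the strata and the compatibility of the maps to $\mathcal{S}$ through the restricted neat chart, and you do check these.
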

\begin{proof}
We have a commutative diagram of distinguished triangles
\[ \begin{tikzcd}
x^*L_{(X,M_X)/\mathbb{Z}} \arrow[d] \arrow[r] & L_{x/\mathbb{Z}} \arrow[d,equal] \arrow[r] & L_{x/(X,M_X)} \arrow[d,dotted] \\
x^*L_{(U,M_X|_{U})/\mathbb{Z}} \arrow[r] & L_{x/\mathbb{Z}} \arrow[r] & L_{x/(U,M_X|_{U})}
\end{tikzcd} \]
and the cone of the left vertical map is $x^*L_{(U,M_X|_U)/(X,M_X)}\cong x^*L_{U/X}\simeq 0$, hence the first assertion. For the second, note that $x^*L_{\hat{X}_{(x)}/X_{(x)}}\cong L_{x/x}$ since $\hat{X}_{(x)}\to X_{(x)}$ is flat (\cite[II, 2.2.3]{cotangent1}), and $L_{X_{(x)}/X}\simeq 0$, so $x^*L_{\hat{X}_{(x)}/X}\simeq 0$.
\end{proof}

\begin{proof}[Proof of \Cref{lcivanishing}]
Let $x\to U\to X$ be a factorisation with $x^*L_{U/X}\simeq 0$ and $(U,M_X|_U)\to (Z,M_Z)$ a strict closed immersion with $(Z,M_Z)$ satisfying the equivalent conditions of \Cref{logregvanishing}. Note that such factorisations exist: by \Cref{completioncotangent} we may take $U=\hat{X}_{(x)}$ and then we can take $(Z,M_Z)=\D(P)\times S$, where $S$ is a regular local scheme containing $U$ and $P\to M_X$ is a chart that is neat at the image of $x$. In particular, (i) implies (ii).

We have a distinguished triangle
\[ x^*L_{U/Z}\to L_{x/(Z,M_Z)}\to L_{x/(X,M_X)} \]
and $L_{x/(Z,M_Z)}$ is concentrated in degree $-1$ by \Cref{logregvanishing}. Since $U\to Z$ is a regular closed immersion at the image of $x$ if and only if $H^{-2}(x^*L_{U/Z})=0$ if and only if $H^i(x^*L_{U/Z})=0$ for $i\neq -1$ (\cite[VI, th. 25]{andre}), this implies the equivalence of (i)--(iv).

Now assume $\dim(X_{(x)},M_X)=\dim\mathcal{O}_{X,x}$. Recall that $C=C(M_{X,x})$ is lci if and only if $H^iL_{x/C}=0$ for $i<-2$ if and only if $H^{-3}L_{x/C}=0$ (\cite[VI, prop. 27]{andre}). If (v) holds, then from \Cref{tflatiso} and \cref{triangleregular} we obtain (iv) via \Cref{canonicalmap} (iii). Conversely, if (i) holds, then let $U=\hat{X}_{(x)}$ and $(Z,M_Z)=\D(P)\times S$ where $S$ is a regular scheme containing $U$ and $P\to M_X$ a chart that is neat at $x$; then, $\dim Z=\dim S+\rk P^{\gp}$. Since $\dim X_{(x)}=\dim(X_{(x)},M_X)=\dim C+\rk P^{\gp}$, it follows that $\dim Z-\dim U=\dim S-\dim C$. Thus, since $U\to Z$ is a regular closed immersion, so is $C\times_{X_{(x)}}U\to S$ by \cite[0\textsubscript{IV}, 16.5.6]{ega}. So $C$ is lci and \Cref{depthlemma2} implies $(U,M_X|_{U})$ is t-flat at $x$, hence so is $(X,M_X)$ (\Cref{tlfcomplete}), whence (v).
\end{proof}

\begin{remark}\label{lciremark}
In \cite{cotangentlogreg}, condition (iv) of \Cref{lcivanishing} is taken as the definition of \emph{log complete intersection}. This is a sensible definition as, thanks to the equivalence with (ii), up to localising and completing, there is a regular scheme $S$ and a \emph{log lci}\footnote{in the sense of \cite[4.4.2]{kato-saitoI}} morphism $(X,M_X)\to S$, and $L_{(X,M_X)/S}$ has tor amplitude in $[-1,0]$, just as in the classical case (\cite[III, 3.2.6]{cotangent1}). However, as the next example shows, this condition alone does not imply anything at all about $C(M_{X,x})$.
\end{remark}

\begin{example}\label{nonlciexample}
Let $R$ be a complete regular local ring and $r_1,...,r_n$ elements of its maximal ideal. Consider the scheme $Z=\Spec(R[\mathbb{N}^n])$ with the natural log structure $M_Z$ induced by $\mathbb{N}^n$. Define $X\subset Z$ as the closed subscheme of ideal $(t_1-r_1,...,t_n-r_n)$, where $t_1,...,t_n\in R[\mathbb{N}^n]$ are the images of the basis elements of $\mathbb{N}^n$. The map $R[\mathbb{N}^n]\to R$, sending $t_i$ to $r_i$ for $1\leq i\leq n$, provides an isomorphism $X\cong\Spec(R)$. Thus, $X\subset Z$ is a regular closed immersion and, letting $M_X$ be the inverse image log structure of $M_Z$, condition (ii) of \Cref{lcivanishing} is satisfied at the closed point $x\in X$. In this case, $\mathcal{O}(C(M_{X,x}))=R/(r_1,...,r_n)$ could be any complete noetherian local ring.
\end{example}

\subsection{t-lci log schemes}
\Cref{lcivanishing} (v) and \Cref{lciremark} suggest introducing a version of log complete intersection that includes t-flatness.

\begin{definition}\label{logsyndef}
A fine log scheme $(X,M_X)$ is \emph{t-lci} at a geometric point $x\to X$ if it is t-flat at $x$ and $C(M_{X,x})$ is lci.
\end{definition}

We immediately deduce from \Cref{lcivanishing} and \Cref{ldimineq} the following characterisation.

\begin{proposition}\label{logsynchar}
Assume $x\to X$ is the closed point of $X_{(x)}$ and $\overline{M}_{X,x}^{\gp}$ is torsion free. The following are equivalent.
\begin{enumerate}[label=\normalfont(\roman*)]
\item $(X,M_X)$ is t-lci at $x$.
\item $\dim(X_{(x)},M_X)=\dim X_{(x)}$ and one of the following conditions hold
\begin{enumerate}[label=\normalfont(\alph*)]
\item for any factorisation $x\to U\to X$ with $x^*L_{U/X}\cong 0$ and strict closed immersion $(U,M_X|_U)\to (Z,M_Z)$ with $(Z,M_Z)$ satisfying the equivalent conditions of \Cref{logregvanishing}, the underlying closed immersion $U\to Z$ is regular at the image of $x$
\item $H^{-3}L_{x/(X,M_X)}=0$
\end{enumerate}
\end{enumerate}
\end{proposition}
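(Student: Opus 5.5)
The plan is to combine \Cref{lcivanishing} with \Cref{ldimineq}. First recall that conditions (ii) and (iv) of \Cref{lcivanishing} are (ii)(a) and (ii)(b) here. They are equivalent to each other without any dimension hypothesis, since the equivalence of (i)--(iv) in \Cref{lcivanishing} only requires that $x$ be the closed point of $X_{(x)}$ and that $\overline{M}_{X,x}^{\gp}$ be torsion free. So in (ii) the phrase ``one of the following conditions'' is harmless: (a) holds if and only if (b) holds.

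For (i) $\Rightarrow$ (ii): if $(X,M_X)$ is t-lci at $x$, it is in particular t-flat at $x$. \Cref{ldimineq} then gives $\dim(X_{(x)},M_X)=\dim X_{(x)}$. With this equality in hand, \Cref{lcivanishing} applies in its full form, and condition (v) there, namely t-flat and $C(M_{X,x})$ lci, is exactly (i). Hence (iii)/(iv) of \Cref{lcivanishing} hold, which gives (b), and by the remark above also (a).

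For (ii) $\Rightarrow$ (i): assume $\dim(X_{(x)},M_X)=\dim X_{(x)}$ and that (a) or (b) holds. By the equivalence of (i)--(iv) in \Cref{lcivanishing}, both hold. Since the dimension equality is assumed, the extended equivalence in \Cref{lcivanishing} yields condition (v). That is, $(X,M_X)$ is t-flat at $x$ and $C(M_{X,x})$ is lci, which is (i).

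There is essentially no obstacle beyond bookkeeping. The one point to check is that the dimension hypothesis in \Cref{lcivanishing} is needed only for (v), and that \Cref{ldimineq} supplies it in the direction (i) $\Rightarrow$ (ii). Both are immediate from the statements as given.
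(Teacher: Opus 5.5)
Your proof is correct and is exactly the paper's argument: the statement is deduced directly from \Cref{lcivanishing}, whose condition (v) becomes available once the dimension equality holds, together with \Cref{ldimineq}, which supplies that equality from t-flatness. One small slip: condition (a) here is condition (i) of \Cref{lcivanishing} (``for any factorisation''), not (ii) (``for some''), but since (i)--(iv) there are equivalent under the standing hypotheses this changes nothing.
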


In the following we use the notion of log syntomic morphism of Nizio{\l} \cite{niziolKlog}.

\begin{proposition}\label{syntomiclemma}
Let $f:(X,M_X)\to (Y,M_Y)$ be a log flat morphism of fine log schemes and $x\to X$ a geometric point.
\begin{enumerate}[label=\normalfont(\roman*)]
\item If $(X,M_X)$ is t-lci at $x$ and $(Y,M_Y)$ is t-flat at $f(x)$, then $(Y,M_Y)$ is t-lci at $f(x)$.
\item If $(X,M_X)$ is t-lci at $x$ and, locally for the fppf topology at $x$, there is a neat Kummer chart $Q\to P$ for $f$, then $(Y,M_Y)$ is t-lci at $f(x)$. In particular, this holds for $f$ Kummer at $x$.
\item If $f$ is a log syntomic morphism of fs log schemes and $(Y,M_Y)$ is t-lci at $f(x)$, then $(X,M_X)$ is t-lci at $x$.
\end{enumerate}
\end{proposition}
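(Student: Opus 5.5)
For (i), I will copy the proof of \Cref{logetaleregular} (i), with ``regular'' replaced by ``lci''. \Cref{flatstrata} (i) says that $C(M_{X,x})\to C(M_{Y,f(x)})$ is a flat local homomorphism of noetherian local rings, and $\mathcal{O}(C(M_{X,x}))$ is a complete intersection. A flat local homomorphism with complete intersection target has complete intersection source: this is the descent of the ci property under flat local maps (Avramov, \cite[\S 3]{avramov}). So $C(M_{Y,f(x)})$ is ci at its closed point. Since the ci property is stable under generisation, $C(M_{Y,f(x)})$ is lci, being the spectrum of that local ring. Together with the assumed t-flatness at $f(x)$, this gives (i).

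For (ii), \Cref{klftlf} shows that, under the neat Kummer chart hypothesis, $(Y,M_Y)$ is t-flat at $f(x)$ because $(X,M_X)$ is t-flat at $x$. Then (i) applies. In the Kummer case, \Cref{neatcharts} (v) (see \Cref{goodchart}) provides such a chart fppf locally at $x$.

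For (iii), t-flatness of $(X,M_X)$ at $x$ follows from \Cref{klftlf}, since $f$ is log flat and $(Y,M_Y)$ is t-flat. The remaining task is to show that $C(M_{X,x})$ is lci; I expect this to be the main obstacle. My plan is to use Nizio{\l}'s local description of a log syntomic morphism. Fppf locally at $x$, and possibly after a Kummer base change that does not affect the strata up to flat covers, there is a chart $Q\to P$ of fine monoids for $f$ such that $X\to Y\times_{\D(Q)}\D(P)$ is the composite of a regular closed immersion with a smooth morphism, with the integral monoid map $Q\to P$ having finite cokernel torsion of invertible order. I would shrink the chart to be neat at $x$ and $f(x)$ using \Cref{neatcharts} (ii) and (iv). Then, by \Cref{logstratacharts} (ii)--(iii), $C(M_{X,x})$ is a strict localisation of a closed subscheme of a smooth scheme over $C(M_{Y,f(x)})\times\D(P^*)$. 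I must check that this closed subscheme is a regular immersion. Its ideal is the restriction of the regular ideal defining $X$, so I would argue regularity by flatness.

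That flatness argument uses t-flatness of the ambient $Y\times_{\D(Q)}\D(P)$ (\Cref{klftlf}, \Cref{monoidflatness}) and of $X$. Via \Cref{depthlemma2}, these say that a regular sequence cutting out $X$ remains regular modulo the ideal generated by $P\setminus P^*$. Equivalently, I can use \Cref{flatstrata} (i) together with the local flatness criterion: a sequence which is regular in the ambient scheme and whose quotient is flat over $C(M_{Y,f(x)})$ stays regular on the fibre. Once $C(M_{X,x})$ is exhibited as a regular closed subscheme of a scheme that is smooth over $C(M_{Y,f(x)})\times\D(P^*)$, the conclusion follows formally. Here $\D(P^*)$ is smooth, or finite flat lci over $C(M_{Y,f(x)})$ when $P^*$ has torsion, because $\mathbb{Z}[\text{finite group}]$ is lci. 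Hence $C(M_{X,x})$ is lci over the lci scheme $C(M_{Y,f(x)})$, which shows $(X,M_X)$ is t-lci at $x$.

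The delicate points are extracting a neat chart from Nizio{\l}'s definition and checking that the regular sequence survives restriction to the stratum. If this chart approach proves awkward, I would instead use the cotangent complex, via \Cref{logsynchar} (ii)(b), \Cref{ldimineq}, and the transitivity triangle for $x\to(X,M_X)\to(Y,M_Y)$. For this alternative I would use the fact that $L_{(X,M_X)/(Y,M_Y)}$ has tor amplitude in $[-1,0]$ for log syntomic $f$.
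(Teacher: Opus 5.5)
Your parts (i) and (ii) are the paper's argument: flatness of $C(M_{X,x})\to C(M_{Y,f(x)})$ from \Cref{flatstrata} plus flat descent of lci, and \Cref{klftlf} to get t-flatness of $(Y,M_Y)$. Part (iii) has the right ingredients: Nizio{\l}'s chart, \Cref{logstratacharts}, and syntomicity of $\D(P^*)$. However, both steps you flag as delicate are not established by what you cite.

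\textbf{The regular-sequence step.} Write $X\hookrightarrow W\to X':=Y\times_{\D(Q)}\D(P)$, with $W$ smooth over $X'$ and $X$ cut out by a regular sequence $t=(t_1,\dots,t_r)$.
\begin{itemize}
\item \Cref{depthlemma2} does not apply as you use it. Its hypothesis is $P$-flatness of \emph{every} $\mathcal{O}_{W}/(t_1,\dots,t_m)$ for $0\le m\le r$. t-flatness of $W$ and of $X$ only gives $m=0$ and $m=r$.
\item This can be repaired with a Koszul argument. $K(t;\mathcal{O}_{W,w})$ resolves $\mathcal{O}_{X,x}$ by $P$-flat modules. Hence $K(t;\mathcal{O}_{W,w}/(P\setminus P^*))\simeq \mathcal{O}_{X,x}\otimes^L_{\mathbb{Z}[P]}\mathbb{Z}[P]/(P\setminus P^*)$, which is concentrated in degree $0$. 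Vanishing of $H_1$ of a Koszul complex over a noetherian local ring then gives regularity.
\item Your ``equivalent'' variant does not work as stated. The stratum is not a fibre over $C(M_{Y,f(x)})$, and the flatness you invoke presupposes the regularity on $C(M_{W,w})$ you are trying to prove. The base that works is $X'$: $\mathcal{O}_X$ is flat over $X'$, so $t$ stays regular after any base change of $W\to X'$.
\item The issue disappears entirely once you use that $X\to X'$ is \emph{strict} and syntomic. By \Cref{logstratacharts} (ii), $C(M_{X,x})$ is a strict localisation of $C(M_{X',x'})\times_{X'}X$. That is a base change of a syntomic morphism, hence syntomic over $C(M_{X',x'})$.
\end{itemize}
This is how the paper reduces to $X=X'$. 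It needs no decomposition into an immersion followed by a smooth map, and no t-flatness of $W$ or $X$.

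\textbf{The chart step.} \Cref{neatcharts} (ii) and (iv) produce new charts, but nothing you cite guarantees that $X\to Y\times_{\D(Q)}\D(P)$ remains syntomic for them. \Cref{neatcharts} (vi) only gives strict log flatness. Neatness is also more than you need. \Cref{logstratacharts} (iii) only requires $Q\to P$ and $P\to M_{X',x'}$ to be local.

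The paper obtains this from \cite[2.8]{niziolKlog} together with \Cref{neatcharts}. It arranges $Q\to M_Y$ exact at $f(x)$ and $Q\to P$ injective while keeping the syntomic factorisation; locality of $Q\to P$ follows. Alternatively, you can localise Nizio{\l}'s chart at the faces of elements invertible at $f(x)$ and at $x$. This only replaces $X'$ by an open subscheme, so the factorisation stays syntomic, and $Q\to P$ stays injective.

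Either way, $C(M_{X',x'})$ is a strict localisation of $C(M_{Y,f(x)})\times_{\D(Q^*)}\D(P^*)$. What remains is that $\D(P^*)\to\D(Q^*)$ is syntomic \cite[2.5]{niziolKlog}, which your group-algebra remark essentially supplies. Note that $\D(P^*)$ is flat with lci fibres, not finite, unless $P^*$ is finite. The Kummer base change and the invertible-torsion condition you mention play no role.
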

\begin{proof}
Recall that the lci property descends under flat morphisms and is also local for the syntomic topology (cf. \cite[1.9.2]{avramov}, \cite[\href{https://stacks.math.columbia.edu/tag/09Q2}{Tag 09Q2}]{stacks-project}).

For (i), it suffices to show that $C(M_{Y,f(x)})$ is lci. Since $C(M_{X,x})$ is lci, this follows from the flatness of $C(M_{X,x})\to C(M_{Y,f(x)})$ (\Cref{flatstrata}). For (ii) it now suffices to note that $(Y,M_Y)$ is t-flat at $f(x)$, as follows from \Cref{klftlf}.

For (iii), first note that $(X,M_X)$ is t-flat at $x$ (\Cref{klftlf}), so it remains to show that $C(M_{X,x})$ is lci and for this it suffices to show that $C(M_{X,x})\to C(M_{Y,f(x)})$ is the strict localisation of a syntomic morphism. If $f$ is strict and $X\to Y$ is syntomic, then this follows from \Cref{logstratacharts} (ii). In general, locally for the syntomic topology $f$ factors over a strict log syntomic morphism $(X,M_X)\to (Y,M_Y)\times_{\D(Q)}\D(P)$ (by definition, \cite[2.1]{niziolKlog}) and so we reduce to the case $(X,M_X)=(Y,M_Y)\times_{\D(Q)}\D(P)$. Moreover, by \cite[2.8]{niziolKlog} (and \Cref{neatcharts}) we can assume $Q\to M_Y$ is exact at $f(x)$ and $Q\to P$ injective. In particular, $Q\to\mathcal{O}_{Y,f(x)}$ is local, hence so is $Q\to P$. In this case, $C(M_{X,x})$ is a strict localisation of $C(M_{Y,f(x)})\times_{\D(Q^*)}\D(P^*)$ (\Cref{logstratacharts} (iii)) and $\D(P^*)$ is syntomic over $\D(Q^*)$ by \cite[2.5]{niziolKlog}, hence $C(M_{X,x})$ is lci.
\end{proof}

\begin{proposition}\label{logsynfield}
Let $(X,M_X)$ be an fs log scheme, with $X$ of finite type over the spectrum $S$ of a field, and $x\to X$ a geometric point. The following are equivalent.
\begin{enumerate}[label=\normalfont(\roman*)]
\item $(X,M_X)$ is t-lci at $x$.
\item $(X,M_X)\to S$ is log syntomic in a neighbourhood of $x$.
\end{enumerate}
\end{proposition}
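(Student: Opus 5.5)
The plan is to reduce both directions to a chart $P\to M_X$ that is neat at $x$, and to a morphism $X\to\D(P)\times S$ built from it; then compare $C(M_{X,x})$ with the fibre of this morphism over the vertex.

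\emph{(ii) $\Rightarrow$ (i).} By definition of log syntomic (\cite[2.1]{niziolKlog}), locally for the syntomic (in particular fppf) topology at $x$ there is a chart $P\to M_X$ such that $(X,M_X)\to \D(P)\times S$ is strict and syntomic at $x$. Since $S$ is a field with trivial log structure, the morphism $\D(P)\times S\to S$ is log flat. Hence $(X,M_X)\to S$ is log flat, and \Cref{tlflogflat} (i) shows that $(X,M_X)$ is t-flat at $x$. For the lci part:
\begin{itemize}
\item $\D(P)\times S$ is t-flat and its logarithmic strata are strict localisations of $\D(P^*)\times S$ (\Cref{logstratacharts} (iii)), which are regular, hence lci.
\item So $\D(P)\times S$ is t-lci, and \Cref{syntomiclemma} (iii) gives that $(X,M_X)$ is t-lci at $x$.
\item Alternatively, the strict syntomic map induces a syntomic map of strata by \Cref{logstratacharts} (ii).
\end{itemize}
Since the lci property descends along fppf coverings (\cite[1.9.2]{avramov}) and t-flatness is fppf local (\Cref{flatcover}), the local statement suffices.

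\emph{(i) $\Rightarrow$ (ii).} First, \Cref{tlflogflat} (ii) shows that $(X,M_X)\to S$ is log flat at $x$. By \Cref{neatcharts} (ii), after a flat quasi-finite localisation we get a chart $P\to M_X$ neat at $x$ with $P$ sharp fs. The proof of \Cref{tlflogflat} (ii) then shows that $h\colon X\to \Spec(k[P])$ is flat at $x$. Its fibre over the vertex $\Spec(k)$ (the ideal $(P\setminus\{1\})$) is $X\times_{\D(P)}\mathcal{S}_k$, and its strict localisation at $x$ is $C(M_{X,x})$ by \Cref{logstratacharts} (i). This fibre is lci at $x$ by hypothesis. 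A flat morphism of finite type whose fibre at $x$ is lci is syntomic at $x$ (\cite[\href{https://stacks.math.columbia.edu/tag/09Q2}{Tag 09Q2}]{stacks-project}, or \cite[IV\textsubscript{4}, 19.3]{ega}). Therefore $h$ is syntomic at $x$, and hence in an open neighbourhood. So $(X,M_X)\to\D(P)\times S$ is strict and syntomic there, which makes $(X,M_X)\to S$ log syntomic on that neighbourhood.

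The main obstacle is descent. The chart was only obtained after a flat quasi-finite (fppf) localisation, so I must check that log syntomicity of $(X,M_X)\to S$ is fppf local on the source. I expect this to follow from the definition in \cite{niziolKlog} (local in the syntomic topology on $X$), possibly after refining the flat cover to a syntomic one. The point is that when the extension of residue fields is separable, \Cref{neatcharts} (ii) already produces an étale neighbourhood. In general I would use the quasi-finite flat neighbourhood constructed in \cite[III, 1.2.7]{logbook}, which adjoins roots of units and is therefore syntomic. Finally, I need \Cref{gentflat} together with the openness of lci fibres to pass from the geometric point $x$ to a neighbourhood.
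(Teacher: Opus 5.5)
Your proof is correct. For (ii) $\Rightarrow$ (i) it is essentially the paper's argument, but it takes a detour. The paper applies \Cref{syntomiclemma} (iii) directly to $(X,M_X)\to S$, since $S$, a field with trivial log structure, is trivially t-lci. Passing through a chart and $\D(P)\times S$ is therefore unnecessary. Note also that \Cref{syntomiclemma} (iii) is stated for fs log schemes, so applying it to $(X,M_X)\to\D(P)\times S$ would need $P$ saturated. Your alternative via \Cref{logstratacharts} (ii) avoids that issue.

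For (i) $\Rightarrow$ (ii) you take a genuinely different route.

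\emph{The paper's route.} It embeds $X$ locally into a smooth $S$-scheme $Y$. It then uses the cotangent-complex characterisation \Cref{logsynchar} (that is, \Cref{lcivanishing}) to see that $X\to Y\times\D(P)$ is a regular immersion at $x$. Combined with the flatness of $X\to S\times\D(P)$, this gives syntomicity.

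\emph{Your route.} You use the fibrewise criterion for syntomicity instead:
\begin{itemize}
\item $X\to\Spec(k[P])$ is flat at $x$, by t-flatness and the local flatness criterion.
\item $x$ lies over the vertex, because $P$ is sharp and $P\to\mathcal{O}_{X,x}$ is local.
\item By \Cref{logstratacharts} (i), the strict localisation at $x$ of the fibre over the vertex is $C(M_{X,x})$. This is a complete intersection, and the property descends to the Zariski local ring.
\end{itemize}
Flatness plus an lci fibre at $x$ gives syntomicity at $x$, hence on an open neighbourhood. This is shorter and avoids the log cotangent complex altogether. The paper's route is chosen to showcase \Cref{logsynchar}.

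The ``main obstacle'' you flag does not arise. Since $M_X$ is fs, $\overline{M}_{X,x}^{\gp}$ is torsion free, so \Cref{neatcharts} (ii) already provides a neat chart \'etale locally at $x$, exactly as the paper uses it. The relevant condition is this torsion-freeness, not separability of residue field extensions. Consequently:
\begin{itemize}
\item no descent of log syntomicity along flat quasi-finite covers is needed, and your root-of-unity fallback is superfluous;
\item \Cref{gentflat} is not needed either, since being syntomic at a point is already an open condition.
\end{itemize}
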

\begin{proof}
Assume (i). Locally for the \'etale topology at $x$, there is a neat chart $P\to M_X$ (\Cref{neatcharts} (ii)) and then, as in the proof of \Cref{tlflogflat} (ii), the morphism $(X,M_X)\to S\times\D(P)$ is strict log flat. Up to localising further at $x$, there is a closed immersion $X\to Y$ with $Y$ smooth over $S$. Then, the closed immersion $(X,M_X)\to (Z,M_Z):=Y\times\D(P)$ is strict and, by \Cref{logsynchar}, the closed immersion $X\to Z$ is regular at the image of $x$. Thus, $(X,M_X)\to S\times\D(P)$ is strict log syntomic in a neighbourhood of $x$ and (ii) holds. The converse follows from \Cref{syntomiclemma} (iii).
\end{proof}

\bibliographystyle{amsplain}
\bibliography{new1.bib}

\end{document}